\documentclass[a4paper, reqno]{amsart}

\usepackage{latexsym}
\usepackage{amsmath}
\usepackage{amsfonts}
\usepackage{amssymb}
\usepackage{amsxtra}
\usepackage{mathrsfs}
\usepackage{eucal}
\usepackage{tikz} \usetikzlibrary{calc} \tikzset{>=latex} \usetikzlibrary{backgrounds}
\usepackage[all]{xy}
\usepackage{color}
\usepackage{braket}
\usepackage{graphics, setspace}
\usepackage{etoolbox, textcomp}
\usepackage{comment}
\usepackage{changepage}
\usepackage{xcolor}
\definecolor{rouge}{rgb}{0.85,0.1,.4}
\definecolor{bleu}{rgb}{0.1,0.2,0.9}
\definecolor{violet}{rgb}{0.7,0,0.8}
\usepackage[colorlinks=true,linkcolor=bleu,urlcolor=violet,citecolor=rouge]{hyperref}
\usepackage{cleveref}
\usepackage{lscape}
\usepackage{subcaption}
\usepackage{hyperref}

\usepackage{tikz, tikz-cd}

\usepackage{enumerate} 

\usepackage{mathtools}

\DeclareFontFamily{U}{wncy}{}
\DeclareFontShape{U}{wncy}{m}{n}{<->wncyr10}{}
\DeclareSymbolFont{mcy}{U}{wncy}{m}{n}
\DeclareMathSymbol{\sha}{\mathord}{mcy}{"58} 

\usepackage{stmaryrd} 

\usepackage{bbm} 

\usepackage[abbrev,msc-links, alphabetic, backrefs]{amsrefs}
\usepackage{amsrefs}

\BibSpec{arXiv}{%
  +{}{\PrintAuthors}{author}
  +{,}{ \textit}{title}
  +{,}{ arXiv }{eprint}
}
\BibSpec{other}{%
  +{}{\PrintAuthors}{author}
  +{,}{ \textit}{title}
}

\usepackage{thmtools}
\newtheoremstyle{introTheorems}
  {\topsep}
  {\topsep}
  {\itshape}
  {0pt}
  {\bfseries}
  {}
  { }
  {\thmname{#1}
  \textnormal{\bf\thmnote{#3}.\!\!}
  }

\newtheorem{theorem}{Theorem}[section]
\newtheorem{assumption}[theorem]{Assumption}

\newtheorem{corollary}[theorem]{Corollary}
\newtheorem{definition}[theorem]{Definition}
\newtheorem{example}[theorem]{Example}
\newtheorem{lemma}[theorem]{Lemma}
\newtheorem{problem}[theorem]{Problem}

\newtheorem{remark}[theorem]{Remark}

\theoremstyle{introTheorems}
\newtheorem{introTheorem}{Theorem}

\def\CC{\mathbb{C}}

\def\KK{\mathbb{K}}

\def\ZZ{\mathbb{Z}}

\newcommand\cA{\mathcal{A}}
\newcommand\cB{\mathcal{B}}
\newcommand\cC{\mathcal{C}}
\newcommand\cD{\mathcal{D}}
\newcommand\cE{\mathcal{E}}

\newcommand\cH{\mathcal{H}}

\newcommand\cL{\mathcal{L}}
\newcommand\cM{\mathcal{M}}

\newcommand\cP{\mathcal{P}}

\newcommand\cS{\mathcal{S}}

\newcommand\cU{\mathcal{U}}

\newcommand\cZ{\mathcal{Z}}

\newcommand{\coker}{\textup{coker}}

\newcommand{\gr}{\textup{gr}}

\newcommand\id{\textup{id}}

\newcommand{\Ind}{\textup{Ind}}

\newcommand\Rep{\textup{Rep}}
\newcommand{\Res}{\textup{Res}}

\newcommand\triv{\textup{triv}}

\newcommand{\Vect}{\textup{Vect}}

\newcommand\Hom{\textup{Hom}}

\newcommand{\Ext}{\textup{Ext}}

\renewcommand\sl{\mathfrak{sl}}
\newcommand\Vir{\textup{Vir}}
\newcommand\loc{\textup{loc}}

\newcommand\quash[1]{}

\newcommand\vac{\mathbf{1}}

\renewcommand\a\alpha
\renewcommand\b\beta
\newcommand\g\gamma
\renewcommand\d\delta
\newcommand\D\Delta

\newcommand{\y}{\eta}

\DeclareSymbolFont{bbold}{U}{bbold}{m}{n}
\DeclareSymbolFontAlphabet{\mathbbold}{bbold}
\renewcommand{\1}{\mathbbold{1}}

\title{Affine $\mathfrak{sl}_2$ at admissible levels and  quantum $\mathfrak{sl}_{2|1}$}
\author{Thomas Creutzig and Simon Lentner}

\begin{document}

\begin{abstract}
We prove braided tensor equivalences between the categories of weight modules of the affine vertex algebra of $\mathfrak{sl}_2$ at any admissible level and categories associated to partially semisimplified versions of quantum $\mathfrak{sl}_{2|1}$ at roots of unity.
\end{abstract}

\maketitle

\newcommand{\catC}{\mathcal{C}}
\newcommand{\catB}{\mathcal{B}}
\newcommand{\catD}{\mathcal{D}}
\newcommand{\catZ}{\mathcal{Z}}

\newcommand{\C}{\mathbb{C}}
\newcommand{\Z}{\mathbb{Z}}

\newcommand{\Nichols}{H}
\newcommand{\NicholsOf}{\mathfrak{B}}

\newcommand{\YD}{\mathcal{YD}}

\renewcommand{\i}{\mathrm{i}}
\newcommand{\Comod}{\mathrm{Comod}}
\renewcommand{\split}{\mathrm{split}}
\renewcommand{\H}{\mathrm{H}} 
\newcommand{\Laugwitz}{\mathbb{V}} \newcommand{\coLaugwitz}{\mathrm{co}\mathbb{V}}

\newcommand{\NicholsAlgebraCounit}{\varepsilon}

\newcommand{\R}{\mathbb{R}}
\newcommand{\h}{\mathfrak{h}}
\renewcommand{\k}{\mathsf{k}}
\newcommand{\VirPhi}{L^\k}
\renewcommand{\g}{\mathfrak{g}}
\renewcommand{\sl}{\mathfrak{sl}}
\newcommand{\Uq}{\mathrm{U}_q}
\newcommand{\slhat}{\smash{\widehat{\sl}}}

\renewcommand{\YD}[1]{{^{#1}_{#1}}\mathcal{YD}}

\newcommand{\integral}{\Lambda}
\newcommand{\ellNichols}{\mathsf{a}}
\newcommand{\lambdaNichols}{\alpha}
\newcommand{\tillhere}{\newpage --------------- {\bf till here}}

\newcommand{\ra}{\mathrm{ra}}
\newcommand{\la}{\mathrm{la}}
\newcommand{\Fra}{{G}}

\newcommand{\Split}{\mathrm{split}}
\newcommand{\compare}{\mathrm{comp}}
\newcommand{\Schauenburg}{\mathcal{Sch}}

\newcommand{\monT}{{\mathsf{T}}}
\newcommand{\funT}{\Ind_\monT}
\newcommand{\monR}{{\mathsf{R}}}
\newcommand{\funR}{\Ind_{\monR}}
\newcommand{\comonC}{\mathsf{C}}
\newcommand{\comonZ}{\mathsf{Z}}
\newcommand{\funC}{\Ind_\comonC}
\newcommand{\forget}{f}

\newcommand{\funE}{E}
\newcommand{\funD}{D}
\newcommand{\funDD}{\tilde{D}}
\newcommand{\catDD}{\tilde{\catD}}

\newcommand{\fpullback}{h}

\newcommand{\cat}{\catB}

\frenchspacing

\newpage
\tableofcontents

\newpage

\section{Introduction}

A major source of braided tensor categories are categories of modules of quantum groups as well as certain semisimplifications. Another major source are vertex tensor categories, which are particular categories of modules over vertex algebras. Key vertex algebras are those associated to affine Lie (super)algebras and their $W$-algebras. Ordinary modules of affine vertex algebras are those whose top space is an integrable module for the horizontal subalgebra. Beyond the category of ordinary modules it is already  very difficult to establish the mere existence of a vertex tensor category structure. 
Building on the work of many over many years \cites{A, AM, ACK, CR1, CR2, CRW, KR, RW, Ri1, Ri2, Ri3} the existence of vertex tensor category for the category of weight modules of the simple affine vertex algebra $L_\k(\sl_2)$ of $\sl_2$ at any admissible level $\k$ has only recently been established \cite{Cr24}.

    As a main result of this paper we prove that this braided tensor category is equivalent to a partially semisimplified version of the category of modules of the quantum super group $\Uq(\sl(2|1))$ at roots of unity times a semisimplified quantum group $\Uq(\sl_2)$ (the latter however plays a rather minor role).  More precisely, we construct a novel nonsemisimple modular tensor category as a relative Drinfeld center of a Nichols algebra of the standard representation $M$ in the semisimplified category of modules of $\Uq(\sl_2)$. In view of $\sl_{2|1}$, the $\sl_2$ corresponds to the bosonic root and the standard representation $M$ corresponds to the two fermionic roots. 

    This equivalence is part of a larger program  that is motivated by  the Kazhdan-Lusztig correspondence \cites{KL93a, KL93b, KL93c} between affine Lie algebras $\hat{\g}$ at generic level and quantum groups $\Uq(\g)$ \cites{CLR21, CLR23, CN25, Len25} and which was developed by the authors to prove the logarithmic Kazhdan-Lusztig conjecture \cites{FGST05, FT10} between the Feigin-Tipunin vertex algebra and the small quantum group $u_q(\g)$ at roots of unity; with the main result that indeed the category of the singlet and triplet vertex algebras are braided equivalent to the category of the unrolled small quantum group and small quantum group of $\sl_2$ at corresponding root of unity.  Our proof technique from \cite{CLR23} can be summarized to be a reconstruction of an arbitrary braided tensor category from a known category of modules over a  commutative algebra, and the result is a relative Drinfeld center of a Nichols algebra. The functor to the relative Drinfeld center is called the Schauenburg functor and the desired equivalence amounts to showing that the Schauenburg functor is an equivalence. Criteria for being fully faithful have been derived in \cite{CLR23}  and in particular \cite[Theorem 3.20]{CMSY24}. Surjectivity however could only be verified via an explicit computation in \cite{CLR23}.
    The main technical results in the present article are significant improvements of this strategy by using monadic arguments and proving the surjectivity of the Schauenburg functor, as discussed below, and the application of this result to cases much beyond the original setup. 

    In the previously treated cases \cite{CLR23}, this reconstruction produced the category of representations of a vertex algebra from its free field realization and a Nichols algebra of screenings. The present article relies on the inverse quantum Hamiltonian reduction \cite{A}, which gives a realization of affine $\sl_2$ in the Virasoro algebra,   which corresponds to $\Uq(\sl_2) \otimes \mathrm{U}_{q'}(\sl_2)$, with additional free fields, together with an $\sl_2$-doublet  of screening operators. At admissible level, these quantum groups have to be replaced by their semisimplification and modularization. Nichols algebras in this modular tensor category were not studied systematically, to our knowledge, but in the situation before semisimplification compare \cite{CL17, AA20}. 
    
    The case of affine $\sl_2$ is of course only the smallest case of a general program to determine the braided tensor category of representations of affine Lie algebras and their $W$-algebras, which we discuss in the subsequent section as an outlook.

\subsection{Acknowledgments}

We thank M. Stroiński for valuable comments on monads and ideas from \cite{SZ24}, V. Ostrik for exposing us to the ideas in \cite{Ost97,CEO25}, see Problem \ref{prob:Ostrik}, and E. Mukhin for discussing \cite{FJM25,FJM26}, see Remark \ref{rem:Mukhin}. \\  SL thanks the CRC 1624 at the University Hamburg and the KAKHENI grant of the Japan Society for the Promotion of Science for partial support.

\section{Overview of results and outlook}

\subsection{Summary of results}

Our construction depends on the choice of an admissible level $\k=-2+\frac{u}{v}$ with $u,v$ relatively prime positive integers, both greater than one.

\bigskip

In Section \ref{sec:Cartan} we set up a semisimple category $\cC^\k$. From the perspective of the quantum group, this will replace the Cartan algebra, or rather its category or representations, the category of weight spaces. From the perspective of $\slhat_2$ at admissible level $\k$, this is a category of representations of the free field realization over the Virasoro algebra times a half lattice vertex algebra.

In Section \ref{subsec:CartanAn} we first recall certain semisimple braided tensor categories $\cA(p,p+q)$ depending on a parameter $s=e^{\pi\i\frac{p+q}{q}}$ with simple objects $X_0,\ldots,X_{p-2}$, whose fusion rules are called to be of type $A_n$ and are essentially truncated $\sl_2$ fusion rules. In Example~\ref{exm_tripleProduct} we in particular compute for later use the braiding and associator of the following double and triple product 
$$X_1\otimes X_1\cong X_0\oplus X_2,\qquad X_1\otimes X_1\otimes X_1\cong2X_1\oplus X_3$$

In Section \ref{subsec:CartanVir} we review the semisimple braided tensor category $\cC(u,v)$ of vertex algebra modules of 
the simple Virasoro algebra at central charge $c_{u,v}$ with simple objects $\VirPhi_{r,s}$ for $1 \leq  r \leq  u-1$ and $1 \leq s \leq v-1$. 

It has two tensor subcategories $\cC^L(u,v)$ and $\cC^R(u,v)$ with simple objects $\VirPhi_{r,1}=X^L_{r-1,1}$ and $\VirPhi_{1,s}=X^R_{1,s-1}$ and 
\[
\cC^L(u, v) \cong \cA(u, \tilde v), \qquad \cC^R(u, v) \cong \cA(v, \tilde u)
\]
with $\tilde v$ the smallest positive integer in $v + 4u \ZZ$ and $\tilde u $ the smallest positive integer in $u + 4v \ZZ$. 

In Section \ref{subsec:CartanLattice} we set up the semisimple braided tensor category $\Vect_{\Gamma}^Q$ for $\Gamma=\R/\Z\oplus \Z$ and a certain quadratic form, with simple objects $\Pi_\ell(\lambda)$ for $\ell\in\Z$ and $\lambda\in \R/\Z$, in a parametrization that will later be useful. It is the category of representations of the half lattice vertex algebra $\Pi(0)$, which is a rank 2 Heisenberg vertex algebra with generators $c,d$ and a hyperbolic inner product, extended by a rank 1 lattice along one of the isotropic directions $c$.

In Section \ref{subsec:Cartan} we combine the previous notions to define the \emph{Cartan category} for our article
\[\cC^\k = \cC(u,v)\boxtimes \Pi(0)\] with simple objects $\VirPhi_{1,s}\boxtimes\Pi_\ell(\lambda)$. We can also take the right Virasoro category giving
\[\cC^\k_R = \cC^R(u,v)\boxtimes \Pi(0)\]
and/or the Heisenberg vertex algebra or a full rank lattice algebra, this would lead  to slight variants of the quantum group, similar to unrolled or uprolled quantum groups, see subsection \ref{sec:variations}.

\bigskip

In Section \ref{sec:QuantumGroup} we construct a particular nonsemisimple nondegenerately braided tensor category $\cU^\k$ as a \emph{generalized quantum group} with \emph{Cartan part} $\cC^\k$ from the previous section.

In Section \ref{subsec:introNichols} we briefly recall the Nichols algebra $\NicholsOf(M)$ of an object $M$ in a braided tensor category $\cC$. The Nichols algebra  is a Hopf algebra in $\cC$ fulfilling several equivalent universal properties. As such, its category of representations inside $\cC$ has the structure of a tensor category, with the coproduct in $\cC$ and an $M$ acting like a braided derivation. A standard textbook for Nichols algebras is \cite{HS20}, the reader is also referred to the lecture notes \cite{Len26}. We also give examples of Nichols algebras relevant to our article, namely the trivial cases of a symmetric or exterior algebra, the truncated polynomial rings, and most importantly the Borel part of a small quantum group $u_q(\g)$.

In Section \ref{subsec:NicholsAlgebra} we compute the Nichols algebra $\NicholsOf=\NicholsOf(M)$ of the simple object $M=\VirPhi_{1,2}\boxtimes \Pi_\ellNichols(\lambdaNichols)$ for $v\geq 3$ with fixed parameters $(\ellNichols,\lambdaNichols)$ such that a certain expression \eqref{formula_NicholsParameterCondition} is an odd integer. As it turns out, as a graded object this Nichols algebra is $\NicholsOf=1\oplus M\oplus \integral$ with the invertible object $\integral=\VirPhi_{1,1}\boxtimes \Pi_{2\ellNichols}(2\lambdaNichols)$. Intuitively, for $v>3$ this Nichols algebra resembles  an exterior algebra of a $2$-dimensional vector space, which becomes a simple module in $\cC^\k$, namely the standard representation of $\sl_2$. However, the category $\cC^\k$ has non-integer Frobenius Perron dimensions and there is no fiber functor to the category of vector spaces. In the degenerate case $v=3$, where $M\otimes M=\Pi$, the Nichols algebra has the same form, but the defining relations are different, instead of an exterior algebra it resembles $\C[x]/x^3$. In the degenerate case $v=2$ the object $M$ does not make sense in $\cC^\k$, but we consider instead the Nichols algebra of $M=\integral$, which is $\NicholsOf(M)=1\oplus \integral$ and resembles $\C[x]/x^2$.

In Section \ref{subsec:QuantumBorel} we define the \emph{Borel category} $\cB^\k$ as the nonsemisimple tensor category $(\cC^\k)_{\NicholsOf}$ of modules of the Nichols algebra $\NicholsOf=\NicholsOf(M)$ inside $\cC^\k$, with the tensor product of $\cC^\k$ and $\NicholsOf$ acting via its bialgebra structure.
The following properties of the  category $(\cC^\k)_\NicholsOf$ are shared by any Nichols algebra, and are closely related to the behavior of a basic algebra: The simple objects are of the form $X_\NicholsAlgebraCounit$ for all simple objects $X\in \cC^\k$ endowed with trivial $\NicholsOf$-action. There is a distinguished indecomposable extension of $\NicholsOf$-modules   
$$0\to M_\NicholsAlgebraCounit\to E\to \1_\NicholsAlgebraCounit\to 0$$
with nontrivial action induced by $M\otimes \1\to M$. In fact, this determines all $\Ext^1(Y_\NicholsAlgebraCounit,X_\NicholsAlgebraCounit)$. Note that in our case $M$ is simple. The projective objects in $\cB^\k$ are the induced objects $\NicholsOf(M)\otimes_{\cC^\k} X$. In our case, we determine the Loewy diagram of the induced module for any $X=\VirPhi_{1, s} \boxtimes \Pi_{\ell}(\lambda)$. 
\begin{center}
\begin{tikzpicture}[scale=1]
\node (top) at (0,3) [] {$\VirPhi_{1, s} \boxtimes \Pi_{\ell}(\lambda)$};
\node (left) at (-3,0) [] {$\VirPhi_{1, s - 1} \boxtimes \Pi_{\ell+\ellNichols}(\lambda+\lambdaNichols)$};
\node (right) at (3,0) [] {$\VirPhi_{1, s + 1} \boxtimes \Pi_{\ell+\ellNichols}(\lambda+\lambdaNichols)$};
\node (bottom) at (0,-3) [] {$\VirPhi_{1, s} \boxtimes \Pi_{\ell+2\ellNichols}(\lambda+2\lambdaNichols)$};
\draw[->, thick] (top) -- (left);
\draw[->, thick] (top) -- (right);
\draw[->, thick] (left) -- (bottom);
\draw[->, thick] (right) -- (bottom);
\end{tikzpicture}
\end{center}   
In Section \ref{subsec:QuantumGroup} we define the \emph{quantum group category} $\cU^\k$, a nonsemisimple nondegenerately braided tensor category, as the relative  Drinfeld center of $\cB^\k$ or equivalently the category of Yetter-Drinfeld modules 
\[
\cU^\k:=\cZ_{\cC^\k}(\cB^\k)=\YD{\NicholsOf(M)}(\cC^\k)
\]
Under finiteness conditions there exist  right and left adjoint functors $\cB^\k\to \cU^\k$, also called coinduction and induction.
In the context of categorical Yetter-Drinfeld modules \cite{LW21} Theorem 3.13 give a  nontrivial explicit construction for the right adjoints, also in infinite cases, which we want to call below $\coLaugwitz$, similarly there is an explicit left adjoint. Composed with the previous functor $\cC^\k \to\cB^\k$ endowing an object with trivial action, these left resp. right adjoints produce modules similar to Verma and coVerma modules for quantum groups.  

In Section \ref{subsec:PartialSemisimplification} we discuss qualitatively how the category $\cU^\k$ is related to the category of representations of the quantum supergroup $\Uq(\sl_{2|1})$ at $q=e^{2\pi\i(u/v)}$ by a process we call \emph{partial semisimplification}. For usual semisimplification of quantum groups see for example \cites{AP95,BK01}. In our case, only the $q$-deformed bosonic root is semisimplified, while the fermionic roots remains nonsemisimple. Technically, this is done by parabolically decomposing the quantum group, into the quantum group $\Uq(\sl_2)$ associated to the bosonic root $\alpha_1$ and a Nichols algebra $\NicholsOf(M)$ in its category of representations, where $M$ is spanned by the the fermionic roots $\alpha_2,\alpha_{12}$ and forms an irreducible $2$-dimensional representation of  $\Uq(\sl_2)$. If we semisimplify $\Uq(\sl_2)$ we essentially recover the category $\cC^\k$ above and the Nichols algebra $\NicholsOf(M)$ above. We want to mention recent results in \cite{CEO25} in classifying all abelian tensor ideals of the category of representations of a quantum group and associating them to nilpotent orbits.  It would be interesting to relate our construction to a subset of this list, related to parabolic situations, and have a similar result for quantum supergroups.

\bigskip

In Section \ref{subsec:affine-sl2-wt-mods} we explain in detail the category $\cD^\k=\cC_\k^{\mathrm{wt}}(\sl_2)$ of weight modules of the affine vertex algebra $L_\k(\sl_2)$ at admissible level $\k$, from the first authors recent work \cites{Cr24, CMY1, NORW24, ACK}. It has a certain tensor subcategory $\cD^\k_R$. 

In Section \ref{subsec:sl2abelian} we review the abelian structure of the category. In particular there are simple modules $\mathcal{D}^\pm_{r,s}$ obtained from highest weight modules of $\sl_2$ and simple modules $\mathcal{E}^\pm_{\lambda,\Delta_{r,s}}=\mathcal{E}_{\lambda,\Delta_{r,s}}$ for generic $\lambda$ obtained from dense $\sl_2$-modules, as well as their spectral flows $\sigma^\ell(\mathcal{D}^\pm_{r,s})$ and $\sigma^\ell(\mathcal{E}^\pm_{\lambda,\Delta_{r,s}})$ with identification $\sigma^{\ell}(\mathcal{D}^-_{u-r,v-s}) \cong \sigma^{\ell-1}(\mathcal{D}^+_{r,s-1})$. For special values $\lambda=\lambda_{r,s}$ the dense modules produce an indecomposable extension
 $$0 \longrightarrow \sigma^{\ell}(\cD_{r,s}^+) \longrightarrow \sigma^{\ell+1}(\cE_{u-r,v-s-1}^-) \longrightarrow \sigma^{\ell+1}(\cD^+_{r,s+1}) \longrightarrow 0$$
and similarly for $\cE_{u,s}^+$.
The projective covers are built from $\sigma^{\ell}(\cE_{u-r,v-s}^-)$ as well as $\sigma^{\ell+1}(\cE_{u-r,v-s-1}^-)$ and have the following shape


 \begin{center}
\begin{tikzpicture}[scale=1]
\node (top) at (0,3) [] {$\sigma^{\ell}(\mathcal{D}^+_{r,s})$};
\node (left) at (-3,0) [] {$\sigma^{\ell-1}(\mathcal{D}^+_{r,s-1})$};
\node (right) at (3,0) [] {$\sigma^{\ell+1}(\mathcal{D}^+_{r,s+1})$};
\node (bottom) at (0,-3) [] {$\sigma^{\ell}(\mathcal{D}^+_{r,s})$};
\draw[->, thick] (top) -- (left);
\draw[->, thick] (top) -- (right);
\draw[->, thick] (left) -- (bottom);
\draw[->, thick] (right) -- (bottom);
\node (label) at (0,0) [circle, inner sep=2pt, color=white, fill=black!50!] {$\sigma^\ell(\mathcal{P}_{r,s})$};
\end{tikzpicture}
\end{center}
As a caveat, the reader should not compare this extension to the extension $E$ and the picture only accidentally agrees with the picture in the previous section.

In Section \ref{subsec:sl2monoidal} we review the computation of the tensor product in $\cD^\k$ via the Huang-Lepowsky-Zhang tensor product.

In Section \ref{subsec:sl2freefield} we review the free field realization, originally from \cite{A} 
\[   L_\k(\sl_2)\hookrightarrow \Vir_{c_{u,v}}\otimes \Pi(0)\]
The point is that we have chosen our Cartan category $\cC^\k$, the starting point for our quantum group, to coincide with the category of representations of this free field algebra.

 In Section \ref{subsec:sl2Amodules} the main work of this section is done: We consider the right-hand side  $A=\VirPhi_{1,1}\otimes \Pi_0(0)$ as commutative algebra over the left-hand side, that is  


 \begin{center}
\begin{tikzpicture}[scale=1]
\node at (-1.5,0)[] 
 {$A=$};
\node (top) at (0,1) [] {$\sigma(\mathcal{D}^+_{1,1})$};
\node (bottom) at (0,-1) [] {$\hphantom{=1}\mathcal{D}^+_{1,0}=\1$};
\draw[->, thick] (top) -- (bottom);
\end{tikzpicture}
\end{center}
 
 We compute and decompose the induced modules in $(\cD^\k)_A$. In particular, there is a distinguished extension of $A$-modules
 $$0\to \VirPhi_{1, 2} \otimes \Pi_{1}(-t/2)\to E\to  \VirPhi_{1, 1} \otimes \Pi_0(0)\to 0$$ 
 that matches the extension in Section~3 and we find the indecomposable projective $A$-modules, depict below, that match precisely the projective modules in Section~3:
  \begin{center}
\begin{tikzpicture}[scale=1]
\node (top) at (0,3) [] {$ \VirPhi_{r, s} \otimes \Pi_\ell(\lambda)$};
\node (left) at (-3,0) [] {$\VirPhi_{r, s - 1} \otimes \Pi_{\ell+1}(\lambda -t/2)$};
\node (right) at (3,0) [] {$\VirPhi_{r, s + 1} \otimes \Pi_{\ell+1}(\lambda -t/2)$};
\node (bottom) at (0,-3) [] {$\VirPhi_{r, s} \otimes \Pi_{\ell+2}(\lambda - t)$};
\draw[->, thick] (top) -- (left);
\draw[->, thick] (top) -- (right);
\draw[->, thick] (left) -- (bottom);
\draw[->, thick] (right) -- (bottom);
\end{tikzpicture}
\end{center}

We remark that from the perspective of the screening operator program, the screening operators are from $M=\VirPhi_{1, 2} \otimes \Pi_{1}(-t/2)$ \cite{A}, we have an embedding  $L_\k(\sl_2)\hookrightarrow \Vir_{c_{u,v}}\boxtimes \Pi(0)$ realized as the kernel of the screening operator.
The distinguished extension $E$ is a logarithmic deformation of $\1\oplus M$ and the decomposition of $A$ reflects the fact that $\sigma(\mathcal{D}^+_{1,1})$ is the socle of the restriction of $M$.

We also compute two particular induced $A$-modules: In Corollary \ref{cor:InductionLieDual} we recall that the induction of $\sigma(\mathcal{E}_{1,1}^+)$ is the projective cover of the the tensor unit in $\cD^\k_A$. On the quantum group side, this corresponds to the fact that inducing the Verma module of the opposite Borel produces the projective modules.  In Corollary \ref{cor:InductionA} we compute the induction of $A$ itself to be a direct sum $\1+K$ in $\cD^\k_A$, for a certain indecomposable module $K$. This result precisely coincides with our result in Example \ref{exm:ourNicholsAdjoint} for $\coLaugwitz(\1)$, which is the adjoint representation of the Nichols algebra.

\bigskip

The goal of the remaining sections is to prove the main result of this article

\begin{theorem}\label{thm:main}
We have an equivalence of braided tensor categories
\[\cU^\k\cong \cD^\k\]
\end{theorem}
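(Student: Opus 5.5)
The plan follows the reconstruction strategy of \cite{CLR23}, strengthened by monadic arguments. The input is the commutative algebra $A=\VirPhi_{1,1}\otimes\Pi_0(0)$ in $\cD^\k$ from Section \ref{subsec:sl2Amodules}. Its category of local modules $(\cD^\k)_A^{\loc}$ is the module category of the free field algebra $\Vir_{c_{u,v}}\otimes\Pi(0)$, and we identify it with the Cartan category $\cC^\k$ as a braided tensor category, using Sections \ref{subsec:CartanVir} and \ref{subsec:CartanLattice}.

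First we identify all of $(\cD^\k)_A$ with the Borel category $\cB^\k=(\cC^\k)_{\NicholsOf}$ as tensor categories. For this we view $\cD^\k_A$ as a tensor category containing $\cC^\k$ (the local modules) and look at the full monoidal functor $\cD^\k_A\to\cC^\k$ given by "taking the local part". We compare the comonad (or monad) on $\cC^\k$ obtained from induction/restriction with the one given by the Hopf algebra $\NicholsOf(M)$. The distinguished extension $E$ of $\1$ by $M=\VirPhi_{1,2}\otimes\Pi_1(-t/2)$ in $\cD^\k_A$ yields a nontrivial action of $M$ on local modules. The universal property of the Nichols algebra, together with the computation in Section \ref{subsec:NicholsAlgebra} that $\NicholsOf(M)=1\oplus M\oplus\integral$, then gives a Hopf algebra map. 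Matching the Loewy diagrams of the projective $A$-modules with the induced $\NicholsOf$-modules shows this is an isomorphism of monads, so Barr--Beck gives $\cD^\k_A\simeq\cB^\k$. We check that the parameters $(\ellNichols,\lambdaNichols)=(1,-t/2)$ satisfy \eqref{formula_NicholsParameterCondition}.

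Next we construct the Schauenburg functor $\cD^\k\to\cZ_{\cC^\k}(\cD^\k_A)\simeq\cZ_{\cC^\k}(\cB^\k)=\cU^\k$, sending $X$ to $A\otimes X$ with the half-braiding coming from the braiding of $\cD^\k$. This functor is braided monoidal. For full faithfulness we use \cite[Theorem 3.20]{CMSY24}. The hypotheses reduce to $A$ being simple as an algebra with $\Hom(\1,A)=\C$, rigidity of $\cD^\k$ \cite{Cr24}, and $\cD^\k$ having enough projectives; these follow from Section \ref{subsec:sl2abelian}.

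The main obstacle is essential surjectivity. We aim for a monadic argument. The relative center $\cU^\k$ is monadic over $\cB^\k$ via the adjunction with $\coLaugwitz$, the explicit right adjoint of \cite{LW21}. Likewise, $\cD^\k$ maps to $\cD^\k_A$ through induction, whose right adjoint is restriction. If the Schauenburg functor intertwines these two adjunctions, and the induced comonads on $\cB^\k\simeq\cD^\k_A$ agree, then the Schauenburg functor is an equivalence, because both sides are comonadic and fully faithful on free objects. The comonad agreement reduces to the value on the unit: the induction of $A$ is $\1\oplus K$ (Corollary \ref{cor:InductionA}), and this matches $\coLaugwitz(\1)$, the adjoint representation (Example \ref{exm:ourNicholsAdjoint}). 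Since both comonads are of the form "tensor with a Hopf-type object", agreement on $\1$ extends to all objects. We would confirm this with the projective comparison of Corollary \ref{cor:InductionLieDual}: the image of the induced modules hits the projective generators of $\cU^\k$, and we then close under cokernels using exactness. The degenerate cases $v=2,3$ are handled separately with the modified Nichols algebras, following the same steps.
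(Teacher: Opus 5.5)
\textbf{Gap in the essential surjectivity step.} Your overall architecture matches the paper's. The step that makes the final object comparison meaningful is missing, however. The Schauenburg functor is the restriction $\fpullback^*$ along one specific comonad morphism $\fpullback:\comonC\to\comonZ$, with components
\[
\fpullback_X:\;\Ind_A(\forget_A X)\to\forget_c(\coLaugwitz(X)).
\]
It is an equivalence iff every $\fpullback_X$ is an isomorphism. An abstract isomorphism $\Ind_A(A)\cong\1\oplus K\cong{}_{\mathrm{ad}}\NicholsOf(M)$ says nothing about whether $\fpullback_{\1}$ is injective or surjective. So your computation does not show that ``the comonads agree''.

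The paper supplies the missing idea with the monadic Nakayama/epireflectivity argument (Lemma \ref{lm_monadicNakayamaTrick}, in its comonad form). Full faithfulness of $\fpullback^*$ forces every $\fpullback_X$ to be a monomorphism. Only then does an abstract isomorphism of finite-length objects upgrade $\fpullback_X$ to an isomorphism. Your phrase ``fully faithful on free objects'' does not produce this injectivity.

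The passage from $\1$ to general $X$ also needs care. $\comonZ$ is not tensoring with an object of $\cB^\k$ (Example \ref{exm_centralcomonad}: the action on $\coLaugwitz(X)$ involves double braidings). The reduction works only on underlying objects, which again helps only once $\fpullback_X$ is known to be mono.

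Your projective fallback does not close the gap either. Corollary \ref{cor:InductionLieDual} identifies $\Ind_A(\sigma(\cE^+_{1,1}))$ as the projective cover of the unit in $\cD^\k_A\simeq\cB^\k$, not in $\cU^\k$. You would still need two further facts: that $\Schauenburg(\sigma(\cE^+_{1,1}))$ is projective in $\cU^\k$, and that such images exhaust the indecomposable projectives there. The cokernel-closing step itself is fine once that is known.

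\textbf{Gaps in the Borel step.} The functor $\cD^\k_A\to\cC^\k$ to which Barr--Beck applies must be exact and faithful. This is the monoidal splitting functor, which is not full. Its existence rests on every simple $A$-module being local (\cite{CMY1}, \cite{CLR23} Section 4). ``Taking the local part'' is neither exact nor faithful.

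Even with the right functor, two things are missing.
\begin{enumerate}
\item In this infinite setting you must first produce a realizing Hopf algebra $\Nichols$ with $\cD^\k_A\simeq(\cC^\k)_{\Nichols}$. This requires inner $\cC$-Homs, a projective $\cC$-generator, and relative reconstruction (Theorem \ref{thm:infiniteML}). You never construct the Hopf algebra on the $A$-module side to which a universal property could be applied.
\item The $\Ext^1$ data and Loewy diagrams determine only $\gr\Nichols$, via the coradical filtration. The universal property of $\NicholsOf(M)$ yields maps for graded Hopf algebras, not for a possibly non-graded lifting or one with extra generators. Ruling these out is the robustness statement (Lemma \ref{lm_robust}, Corollary \ref{cor_robust}), which your proposal omits. ``Matching Loewy diagrams'' again compares objects rather than proving that a given morphism is an isomorphism.
\end{enumerate}

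Finally, you misstate the hypotheses of \cite[Theorem 3.20]{CMSY24}. They are rigidity of $\cD^\k_A$, compatibility of induction with duality, and a mild left-exactness condition. Rigidity of $\cD^\k$ is an output of that chain (\cite[Theorem 3.21]{CMSY24}), not an input.
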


The proof consists of two major steps and compares the free field realization of $L_\k(\sl_2)$, viewed as a commutative algebra $A\in\cD^\k$, to the extension of $\cC^\k$ by the Nichols algebra in the other direction

\begin{equation} \label{eq_proofStrategy}
\def\shiftL{1mm}
\def\shiftR{.5mm}
\begin{tikzcd}[row sep=10ex, column sep=15ex]
\cC_k^{\mathrm{wt}}(\sl_2)=
&
[-2.1cm]
\cD^\k
\arrow[r,dashed, no head, "\text{Step II}"]
\arrow[d,shift left=-\shiftL, swap, "A\otimes(-)"]
&
\cU^\k
\arrow[d,shift right=1mm, swap, "\forget_c"]
&
[-1.9cm]
=\cZ_{\cC^\k}(\cB^\k)\;=\;\YD{\NicholsOf(M)}(\cC^\k)
\\
&
(\cD^\k)_A
\arrow[r,dashed, no head, "\text{Step I}"]
\arrow[u,shift left=-\shiftR, swap,"\forget_A"]
\arrow[d,shift right=0, swap, "\split"]
&
\cB^\k
\arrow[u,shift right=\shiftR, swap, "\coLaugwitz"]
\arrow[d,shift right=0, swap, "\forget_\NicholsOf"]
&
=(\cC^\k)_\NicholsOf
\hspace{3.0cm}
\\
&
(\cD^\k)^\loc_A
\arrow[u,shift right=0, swap, "\mathrm{embed}"]
\arrow[r,equal]
&
\cC^\k
\arrow[u,shift right=0, swap, "\triv"]
&
\end{tikzcd}
\end{equation}
(in this diagram, $\forget_A,\forget_c,\forget_\NicholsOf$ are forgetful functors,  arrows next to each other are adjunctions, while the two sided arrows are one-sided inverse, the  corresponding adjunctions are discussed in Section~\ref{subsec:QuantumBorel})

\bigskip

In Section \ref{sec_equivalenceBorel} we carry out Step I: We prove that the tensor category $(\catD^\k)_A$ of modules over the commutative algebra $A$ in $\catD^\k$ is equivalent to the Borel category $\cB^\k$ that consists of modules over the Nichols algebra in $\cC^\k$, which is by definition identical to the subcategory of local modules $(\catD^\k)_A^\loc$. This reconstruction result relies crucially on the fact that in our case all \emph{simple} $A$-modules are already local. As a consequence the inclusion $\cB^\k\hookleftarrow \cC^\k$ admits a monoidal splitting functor $\catB^\k\twoheadrightarrow \catC^\k$ sending a module to the direct sum of its composition factors, see \cite{CLR23} Section 4. The interesting new aspect in $\cB^\k$ are indecomposable extensions of these simple modules, while $\cC^\k$ is semisimple.

From the vertex algebra sides, the tensor category $\cB^\k$ should be interpreted as twisted vertex algebra modules of the free field vertex algebra represented by $A$, which are honest vertex algebra modules over the vertex subalgebra $L_\k(\sl_2)$. Note that the splitting behavior is in strong contrast to usual orbifold theory for finite groups, where new twisted simple modules arise, and should be interpreted as our screenings being actions of a nilpotent algebra, namely the Nichols algebra. 

In Section \ref{subsec:BorelModulecat} and \ref{subsec:Mombelli} we review and slightly generalize our reconstruction result of \cite{LM25} to an infinite setting: It shows that if $\catC\subset \catB$ is a central subcategory of a tensor category, which admits  a monoidal splitting functor, then $\catB$ is the category of modules over some Hopf algebra $\Nichols$ in $\catC$. This is a relative version of Tannaka-Krein reconstruction and specializes to the Radford projection theorem. Applied to our situation, we hence know there exists a Hopf algebra $\Nichols$ that describes how arbitrary $A$-modules are can be obtained as indecomposable extensions of local $A$-modules, in a sense reversing the information loss of the splitting functor. The main technical issue in the infinite case is that we have to show that $\catB$ as an abelian module category over $\catC$ can be realized as modules over some algebra, which is essentially an existence-of-adjoint result and for finite categories a standard result in \cite{DSPS19}. In Section~\ref{subsec:BorelModulecat} we review this proof and give a list of explicit local finiteness properties under which the result still holds, and which clearly hold in situations like ours.

In Section \ref{subsec:NicholsArguments}
want to determine the Hopf algebra $\Nichols$. For this, we make a detour to the Andruskiewitsch-Schneider program for classifying pointed Hopf algebras \cite{AS10}, see the more recent survey \cite{AG19}, note that we switch to the dual Hopf algebra $\Nichols^*$. An essential assertion in their situation is the following: 
Any Hopf algebra $\Nichols^*$ comes with a coradical filtration, the first two terms $\Nichols^*_0$ and $\Nichols^*_1$ correspond, categorically speaking, to the simple comodules and to indecomposable extensions between simple comodules. The associated graded coalgebra $\gr(\Nichols^*)$ contains by its universal property the Nichols algebra of $M^*=\Nichols^*_1/\Nichols^*_0$. Hence in our situation $\Nichols^*_0=\1$ and the knowledge of the indecomposable extension $\1\to E^*\to M^*$ fixes the Nichols algebra $\NicholsOf(M^*)$. Two questions remain: Is  $\gr(\Nichols^*)$ possibly larger then $\NicholsOf(M^*)$, in that it contains additional generators in coradical degree $>1$, and is $\Nichols^*$ possibly a non-graded deformation of  $\gr(\Nichols^*)$. A point in the Andruskiewitsch-Schneider program is that these questions can be answered purely on the Nichols algebra side. If both possibilities can be ruled out completely, then we call the Nichols algebra \emph{robust}. A short calculation shows that our new Nichols algebra $\NicholsOf(M)$ in Section \ref{subsec:NicholsAlgebra} in $\cC^\k$ is robust, as well as its dual $\NicholsOf(M)^*=\NicholsOf(M^*)$, which is of the same form for dual parameters $(-\ellNichols,-\lambdaNichols)$.

In Section \ref{subsec:BorelEquivalence} we combine the previous results in this section with the concrete computations in Section \ref{subsec:sl2Amodules} to achieve our first main result, called above Step I:\\

\begin{introTheorem}[\ref{thm:BorelEquivalence}]
The category $(\cD^\k)_A$ in Section \ref{subsec:sl2Amodules} is equivalent as tensor category to the category of modules over $\NicholsOf(M)$ in $\cC^\k$ in Section \ref{subsec:NicholsAlgebra} for $$M=\VirPhi_{1,2}\boxtimes \Pi_{1}(-t/2),\; v\geq 3,\qquad 
M=\VirPhi_{1,1}\boxtimes \Pi_{2}(-t),\;v=2.$$ 
Under the correspondence, the splitting functor is the functor forgetting the Nichols algebra action, and the embedding of local modules is the functor endowing an object with trivial Nichols algebra action.
\end{introTheorem}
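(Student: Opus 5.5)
The proof assembles the ingredients of this section in four steps.

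\emph{Step 1: a splitting functor.} I first establish that $(\cD^\k)_A$ contains $(\cD^\k)_A^\loc\cong\cC^\k$ as a central tensor subcategory and admits a monoidal splitting functor $\split:(\cD^\k)_A\to\cC^\k$. The key input is the computation in Section~\ref{subsec:sl2Amodules} that every simple $A$-module is local. Granting this, every object of $(\cD^\k)_A$ has a finite composition series whose factors lie in $\cC^\k$. Sending a module to the direct sum of its composition factors is then exact and monoidal, by the argument of \cite{CLR23} Section~4. Centrality holds because local modules carry a half-braiding with all $A$-modules, coming from the braiding of $\cD^\k$ restricted to the local subcategory.

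\emph{Step 2: relative reconstruction.} I apply the reconstruction result of Sections~\ref{subsec:BorelModulecat} and~\ref{subsec:Mombelli}, the infinite version of \cite{LM25}. This gives a Hopf algebra $\Nichols$ in $\cC^\k$ and a tensor equivalence $(\cD^\k)_A\cong(\cC^\k)_\Nichols$. Under this equivalence $\split$ becomes the forgetful functor, and the embedding of local modules becomes the trivial-action functor. Here I must verify the local finiteness hypotheses listed in Section~\ref{subsec:BorelModulecat}:
\begin{enumerate}[(i)]
\item finite length of objects;
\item finiteness of $\Hom$-spaces;
\item for each simple $X$, only finitely many simples $Y$ with $\Ext^1(X,Y)\neq0$ or appearing in a projective cover.
\end{enumerate}
These follow from the explicit projective $A$-modules computed in Section~\ref{subsec:sl2Amodules}, whose Loewy diagrams have four composition factors.

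\emph{Step 3: identifying $\Nichols$.} This is the step I expect to be the main obstacle. I pass to the dual $\Nichols^*$, whose coradical is $\Nichols^*_0=\1$ because simple modules correspond to objects of $\cC^\k$ with trivial action.

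\begin{itemize}
\item \emph{Degree one.} The term $\Nichols^*_1/\Nichols^*_0$ is read off from the extensions $\Ext^1(\1,X)$ in $(\cD^\k)_A$. The computation in Section~\ref{subsec:sl2Amodules} gives exactly one nonsplit extension, namely the distinguished $E$. Hence $\Nichols^*_1/\Nichols^*_0\cong M^*$ with $M=\VirPhi_{1,2}\boxtimes\Pi_1(-t/2)$ for $v\geq3$, and $M=\VirPhi_{1,1}\boxtimes\Pi_2(-t)$ for $v=2$.
\item \emph{Parameters.} I check that these parameters $(\ellNichols,\lambdaNichols)=(1,-t/2)$ satisfy the oddness condition \eqref{formula_NicholsParameterCondition}. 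This is needed for the Nichols algebra computation of Section~\ref{subsec:NicholsAlgebra} to apply.
\item \emph{Robustness.} By the universal property, $\gr\Nichols^*\supseteq\NicholsOf(M^*)$. The robustness statement of Section~\ref{subsec:NicholsArguments} for $\NicholsOf(M^*)$ rules out additional generators in higher coradical degree and rules out nontrivial liftings. Therefore $\Nichols^*\cong\NicholsOf(M^*)$, and so $\Nichols\cong\NicholsOf(M^*)^*\cong\NicholsOf(M)$.
\end{itemize}

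The delicate point is to be careful that the identification respects the braiding in $\cC^\k$ used to define the Nichols algebra. The braiding must be the one inherited from $(\cD^\k)_A^\loc$, with no twist by a braid-reversal.

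\emph{Step 4: consistency check.} As a final check, $\NicholsOf(M)\otimes X$ should have the Loewy diagram of the projective $A$-modules from Section~\ref{subsec:sl2Amodules}. Both have the same four composition factors with the same shifts $\Pi_{\ell+1}(\lambda-t/2)$ and $\Pi_{\ell+2}(\lambda-t)$. This confirms that the degree-two part $\integral$ is genuinely present.
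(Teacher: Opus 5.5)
Your proposal is correct and follows the paper's proof essentially step for step. The four steps match: the monoidal splitting functor obtained from the fact that all simple $A$-modules are local (via \cite{CLR23} Section~4); relative Tannaka--Krein reconstruction via Theorem~\ref{thm:infiniteML}, where the paper additionally invokes rigidity of $(\cD^\k)_A$ to get an antipode; identification of the degree-one part from $\Ext^1(\1,X)$; and robustness (Corollary~\ref{cor_robust}) to conclude $\Nichols\cong\NicholsOf(M)$.

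Your explicit check that $(\ellNichols,\lambdaNichols)=(1,-t/2)$ makes \eqref{formula_NicholsParameterCondition} equal to $-1$ is a detail the paper leaves implicit. However, your list (i)--(iii) in Step~2 is not quite the hypothesis set of Corollary~\ref{cor:ModuleCatFiniteFinal}. That corollary asks instead for finiteness of fusion in $\cC^\k$ (for inner $\cC^\k$-Homs), local finiteness, and existence of projective covers in $(\cD^\k)_A$, all of which hold here.
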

\medskip
\begin{proof}~
\begin{itemize}
\item By Section \ref{subsec:BorelEquivalence} all simple objects in $(\cD^\k)_A$ are in $(\cD^\k)_A^\loc$. Hence by \cite{CLR23} Section 4 we have a monoidal \emph{splitting functor} \[\Split:\;(\catD^\k)_A\to (\catD^\k)_A^\loc\]
which is left-inverse to the embedding. 
\item By Section \ref{subsec:BorelEquivalence} the category $(\cD^\k)_A$ fulfills the weaker finiteness assertions in Corollary \ref{cor:ModuleCatFiniteFinal}. Hence by Theorem \ref{thm:MombelliInfinite} there exists a Hopf algebra $\Nichols$ in $\cC^\k=(\catD^\k)_A^\loc$ realizing the tensor category in question \[(\cD^\k)_A\cong (\cC^\k)_\Nichols\]
\item By Section \ref{subsec:BorelEquivalence} $\Ext^1(1,X)$ is one-dimensional for $
X \cong \VirPhi_{1,2} \otimes \Pi_1(-t/2)$ if $v\geq 3$ resp. for $X \cong \VirPhi_{1,1} \otimes \Pi_2(-t)$ if $v=2$ and zero else. Hence the Nichols algebra associated to $\Nichols$ is the asserted Nichols algebra $\NicholsOf(M)$.
\item By Section \ref{subsec:NicholsArguments} this Nichols algebra is robust, hence $\Nichols\cong \NicholsOf(M)$.
\end{itemize}
\end{proof}

Note that in Section \ref{subsec:sl2freefield} we have computed much more of $(\cD^\k)_A$ then necessary and it is readily visible that the categories are indeed equivalent, as abelian categories as well as their fusion rule.

\bigskip

The remainder of the article is devoted to the proof of Step II. To this end, we prove an extended version of our reconstruction result in \cite{CLR23} Section 3: Let $\cD$ be a braided tensor category and $A$ a commutative algebra therein. By \cite{CLR23} Theorem 3.7 the induction functor lifts to a braided tensor functor we call Schauenburg functor
  \[\Schauenburg:\;
    \cD \longrightarrow \cZ_{\cD_A^\loc}(\cD_A)
    \]
Under the additional Assumptions \ref{assumption_Fullyfaithful}, which include rigidity and a nondegenerate braiding, it is proven in \cite{CLR23} Lemma 3.4 that this functor is exact and fully faithful. Our goal is to prove that this is in fact an equivalence of categories in our case. Therefore the goal of the remaining section is to prove and apply\\

\begin{introTheorem}[\ref{thm:Schauenburg}]
Suppose that the Schauenburg functor is exact and fully faithful, which is true for example under Assumption \ref{assumption_Fullyfaithful}. Suppose that $\cB$ is rigid and locally finite. Suppose that 
for all simple objects in $X$ in $\cB$ we have 
$$\Ind_A(X)\cong \forget_c(\coLaugwitz(X)$$
Then the Schauenburg functor is an equivalence of braided tensor categories. \\
\end{introTheorem}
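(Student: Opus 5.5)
Since the Schauenburg functor $\Schauenburg:\cD\to\cZ_{\cC}(\cB)$ (with $\cC=\cD_A^\loc$, $\cB=\cD_A$) is already exact, fully faithful and braided monoidal, it remains to prove essential surjectivity. The plan is to present every object of $\cZ_\cC(\cB)$ by objects in the image of $\Schauenburg$, using monadicity of $\forget_c$, and then use that a fully faithful exact functor is closed under such presentations.

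First, $\forget_c$ has the right adjoint $\coLaugwitz:\cB\to\cZ_\cC(\cB)$ from \cite{LW21} Theorem 3.13. Under rigidity and local finiteness of $\cB$, $\forget_c$ is exact and faithful, so it is comonadic by Beck's theorem. Concretely, each $Z\in\cZ_\cC(\cB)$ has a canonical copresentation
\[0\to Z\to \coLaugwitz\forget_c(Z)\rightrightarrows \coLaugwitz\forget_c\coLaugwitz\forget_c(Z),\]
which is an equalizer, or equivalently a left exact sequence $0\to Z\to \coLaugwitz(B_0)\to\coLaugwitz(B_1)$ with $B_i\in\cB$. So it suffices to show:
\begin{enumerate}
\item[(a)] $\coLaugwitz(B)$ lies in the essential image of $\Schauenburg$ for every $B\in\cB$;
\item[(b)] the essential image is closed under kernels.
\end{enumerate}
Claim (b) is formal: if $\Schauenburg(D_0)\to\Schauenburg(D_1)$ is a morphism, fullness lifts it to $g:D_0\to D_1$, and exactness gives $\Schauenburg(\ker g)\cong\ker\Schauenburg(g)$.

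For (a) I would start with simple $X$. The hypothesis only gives $\forget_c\Schauenburg(\Ind_A X)$, i.e. $A\otimes\Ind_A X$, compared with $\forget_c\coLaugwitz(X)$ in some form, so first I have to fix the correct preimage. The natural candidate is the underlying object $\forget_A(X)\in\cD$. By the adjunction $\forget_c\dashv\coLaugwitz$, the counit-type map $\Schauenburg(\forget_A X)\to\coLaugwitz(X)$ corresponds to the action map $A\otimes X\to X$ in $\cB$. The hypothesis $\Ind_A(X)\cong\forget_c\coLaugwitz(X)$ then says the two sides agree after forgetting. I would verify that the adjoint map itself becomes an isomorphism under $\forget_c$ by comparing lengths or composition factors, which are finite by local finiteness, together with injectivity coming from the unit. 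Since $\forget_c$ is faithful and exact, it reflects isomorphisms, so $\Schauenburg(\forget_A X)\cong\coLaugwitz(X)$.

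For general $B$, I would argue by induction on length. The functor $\coLaugwitz$ is a right adjoint, hence left exact. In fact it is exact here, because $\forget_c\coLaugwitz$ is given by an explicit tensoring formula in \cite{LW21}, which is exact in a rigid category. So a short exact sequence $0\to X\to B\to B'\to 0$ gives a short exact sequence of images. To obtain $\coLaugwitz(B)$ in the image from an extension of two objects in the image, I would use that the image of a fully faithful exact functor is closed under extensions provided $\Ext^1$ agrees. Alternatively, I would avoid extensions altogether: copresent $B$ via injective hulls, or embed $B$ into a finite direct sum of objects built from simples. I expect this extension-closure step, together with making the comparison map in the simple case precise, to be the main obstacle. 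The fallback is to present $Z$ directly by $\coLaugwitz(B)$ with $B$ semisimple, using that $\cB$ has enough injective-like objects of the form $\NicholsOf\otimes X$.

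Once (a) and (b) hold, every $Z$ is a kernel of a morphism between objects in the image, so $\Schauenburg$ is essentially surjective. Together with full faithfulness, it is therefore a braided tensor equivalence.
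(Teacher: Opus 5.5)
You are right that essential surjectivity follows once the specific comparison map $\epsilon_X\colon\Schauenburg(\forget_A X)\to\coLaugwitz(X)$ is an isomorphism; this map is the adjunct of the action $a\colon A\otimes X\to X$. Step (b) is also correct. The gap is in how you prove that $\epsilon_X$ is an isomorphism for simple $X$.

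The hypothesis gives only an \emph{abstract} isomorphism $A\otimes X\cong\forget_c\coLaugwitz(X)$. Counting composition factors upgrades the particular map $\forget_c(\epsilon_X)$ to an isomorphism only if you already know it is a monomorphism (or an epimorphism). Your ``injectivity coming from the unit'' does not supply this. The unit $\eta$ of $\forget_c\dashv\coLaugwitz$ is mono, but $\epsilon_X=\coLaugwitz(a)\circ\eta_{\Schauenburg(\forget_A X)}$, and $\coLaugwitz(a)$ is not a monomorphism in general: its kernel is $\coLaugwitz(\ker a)$.

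What full faithfulness of $\Schauenburg$ gives you directly is only that $\Hom(\Schauenburg(D),\ker\epsilon_X)=0$ for all $D\in\catD$. Passing from this to $\ker\epsilon_X=0$ is a genuine Nakayama-type step. Its ring-theoretic analogue fails without a finiteness input: $\Z\to\mathbb{Q}$ is an epimorphism of rings but is not surjective.

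Supplying this injectivity is the key step of the paper's proof. Both adjunctions are comonadic: $\catD\simeq\cB^{\comonC}$ by descent, and $\cZ_\catC(\cB)\simeq\cB^{\comonZ}$. So $\Schauenburg$ is the corestriction $h^*$ along a comonad morphism $h\colon\comonC\to\comonZ$, whose components are exactly $h_X=\forget_c(\epsilon_X)$ (Lemma \ref{lm_comonadRestriction} and Corollary \ref{cor:comonadComparisonPullback}). The comonad version of Lemma \ref{lm_monadicNakayamaTrick} then says that full faithfulness of $h^*$ forces every $h_X$ to be a monomorphism. After that, your length comparison finishes the simple case.

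The extension step you flag as the main obstacle is not actually one. The map $\epsilon_B$ is natural in $B$, and both $\forget_c\circ\Schauenburg\circ\forget_A=\Ind_A\circ\forget_A$ and $\forget_c\circ\coLaugwitz$ are exact. So the five lemma propagates ``$h_B$ is an isomorphism'' from simple objects to every finite-length $B$. No $\Ext^1$ comparison or injective copresentation is needed. Moreover, once $h$ is an isomorphism of comonads, $h^*$ is an equivalence outright, so your step (b) becomes unnecessary, though it is harmless.
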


As discussed more explicitly in Example \ref{exm:SchauenburgSplit}, if $\catD_A\cong (\catD_A^\loc)_\NicholsOf$ for some Hopf algebra $\NicholsOf$, the question is if 
$$\Ind_A(X)\cong \coLaugwitz(X)\cong {_{\text{ad}}}(\NicholsOf\otimes X)$$

For finite tensor categories and $\cC$ the category of local modules we have proven this result without the additional comparison assumption, but   using Frobenius Perron dimensions \cite[Corollary 3.8.]{CLR23}, and we would suspect that this is also true in (mildly) infinite cases. In our case, knowing the fusion rules, we can directly compute both sides at the level of objects and verify the additional assumption.

\bigskip

The idea for the proof of Theorem \ref{thm:Schauenburg} is a more systematic monadic version of more algebraic arguments in \cite{CLR23} Section 3.3. \\

In Section \ref{sec:adjunction} we recall basic facts about adjunctions and (co)monads, which are (co)monoids in the category of endofunctors of a category $\cC$. From the perspective of this article, (co)monads are important for the following reason:
\begin{itemize}
\item A monad $\monT$ has an associated category of representations $\cC_\monT$, each consisting of an object $X\in\cC$ together with a morphism $\monT(X)\to X$. Any algebra $A$ in the category gives rise to a monad $\monT(X)=A\otimes X$. Similarly for comonads and comodules.
\item A monad is attached to every adjunction. There are general criteria (e.g. Becks monadicity criterion) when the monad fully describes one category as category of modules of the monad in the other category. 
\item Adjunctions and respective (co)monads compatible with monoidal structure either resemble commutative algebras or Hopf algebras, depending on the details, see the table in Example \ref{exm_fourmonoidalmonads}. 
\item Now in view of applications, there are comonads of the second type representing $\catD$ over $\catD_A$ (so in the opposite direction of the commutative algebra~$A$) as well as the relative Drinfeld center, see Example \ref{exm_centralcomonad}.
\end{itemize}

In Section \ref{sec_troi} we recall and develop some basic tools (some of which we have not found in literature, but we would expect they are known to experts) in the general situation of two (monadic) adjunctions related by a functor $\funE$

\begin{equation*}
\xymatrix{
\catD 
\ar[rr]^{\funE}
\ar[rd]_{\funD}
&&
\catDD
\ar[ld]^{\funDD}
\\
&
\catB
&
\\
} 
\end{equation*}

In Section \ref{subsec:troi} we give the basic setup and in particular a comparison by the following natural transformation derived from the counit of the adjunction
\[\epsilon_{\funDD^\ra(X)}:\;\funE(\funD^\ra(X))=\funE(\funE^\ra(\funDD^\ra(X)))
\longrightarrow \funDD^\ra(X)\]

In Section \ref{subsec:monadRestriction} we analyze $\funE$ in the case where $\funD,\funDD$ are monadic adjunctions with associated monads $\monT,\monR$. We prove standard algebra assertions in this setup:
\begin{itemize}
    \item The functor $\funE:\catB_\monT\to \catB_\monR$ comes from a restriction-of-scalars with respect to a monad morphism $\fpullback:\monR\to \monT$.
    \item It has a right adjoint given explicitly by extension-of-scalars. In particular we can compute the effect of $\funE^\ra$ on induced modules $\funDD^\ra(X)$.
    \item The comparison morphism above becomes $\fpullback$ after applying $\funDD$.
    \item $\funE$ is an equivalence of categories iff $\fpullback_X$ is an isomorphism for all $X$. 
\end{itemize}

In Section \ref{subsec:Nakayama} we prove in our setup an interesting algebraic fact: Whenever a restriction-of-scalars functor is fully faithful and the image of the underlying first ring over the second ring is finitely generated, then $\fpullback$ is surjective. This appears in classical algebra topics like epimorphism of rings, and it is not true without finiteness assumptions, as the inclusion $\Z\hookrightarrow\mathbb{Q}$ shows as counterexample. In categorical language, this shows that in sufficiently good cases the fully faithful functor is epireflective resp. monocoreflective. 

\bigskip

In Section \ref{sec:QuantumGroupEquivalence} we prove after these preparations the category equivalence in Step~II.

In Section \ref{subsec:Schauenburg} we prove Theorem \ref{thm:Schauenburg} by collection the results above.

In Section \ref{subsec:QuantumGroupEquivalence} we collect all necessary knowledge about our concrete category $\cD^\k$ for applying this Theorem to our situation, such as rigidity, nondegenerateness, comparison functor evaluated on $A$. This concludes the proof of Theorem \ref{thm:main}.\\

\bigskip

\subsection{Variations}\label{sec:variations}

The construction has a few variations where we replace $\cC^\k, \cD^\k, \cB^\k$ and $\cU^\k$ by $\cC^\k_X, \cD^\k_X, \cB^\k_X$ and $\cU^\k_X$ with $X \in \{R, S, E, P, EP\}$ as follows. Firstly
$\cU^\k_X:=\cZ_{\cC^\k_X}(\cB^\k_X)$ throughout. 
\begin{enumerate}
    \item[(R)] $R$ stands for the \emph{right} category, that is for those modules where the left label is trivial.  We set $\cC^\k_R = \cC^R(u,v)\boxtimes \Pi(0)$. $\cD^\k_R$ is then the subcategory whose modules have as simple composition factors the  $\sigma^\ell(\mathcal{E}_{\lambda,\Delta_{1,s}})$ and $\sigma^\ell(\mathcal{D}^\pm_{1,s})$. This is a tensor subcategory of $\cD^\k$ \cite[Corollary 8.5]{Cr24}. $\cB^\k$ is an object in $\cC^\k_R$ and we define $\cB^\k_R$ as $\cB^\k$ viewed as an object in $\cC^\k_R$. In this case $\cU^\k_R$ is the partially semisimplified quantum group of $\mathfrak{sl}_{2|1}$.
    \item[(S)] $S$ stands for $N=2$ super conformal. The $N=2$ super Virasoro algebra $N_\k$ at central charge $\frac{3\k}{\k+2}$ is related to $L_\k(\sl_2)$ via the Kazama-Suzuki construction \cite{KS89} which for $L_\k(\sl_2)$ and $N_\k$ is discussed in great detail in \cites{CR3, NORW24}. Modernly this is treated via convolution operations and we explain this in detail in section \ref{sec:convolution}. The categories are then replaced by their images under the corresponding convolution operation. 

    The coset $\text{Com}(L_\ell(\sl_2), L_\ell(\sl_{2|1}))$ for $\ell$ related to $\k$ via the Feigin-Frenkel type relation $(\ell+1)(\k+1)=1$ can be described by a different, but similar, convolution operation.
    \item[(E)] $E$ stands for extension an alternative label is $U$ for uprolling. 
    In this case one considers the simple current extensions
    \[
    E_\k = \bigoplus_{\ell \in \mathbb Z} \sigma^{2\ell v}(L_\k(\sl_2))
    \]
    and 
    \[
    S_\k = \bigoplus_{\ell \in \mathbb Z} \Vir_{c_{u,v}}\otimes \Pi_{2\ell v}(0).
    \]
    Then $\cD^\k_E$ is the category of weight modules of $E_\k$ and $\cC^\k$ is the one of $S_\k$. Note that $S_\k$ is nothing but a lattice VOA times $\Vir_{c_{u,v}}$ and the categories $\cD^\k_E, \cC^\k_E$ can be described via the theory of simple current extensions, in particular there is an induction functor and the Nichols algebra $\cB^\k_E$ is correspondingly the induction of the Nichols algebra $\cB^\k$. The quantum group $\cU^\k_E$ could be called the uprolled version of $\cU^\k$ and this uprolling procedure and how it interplays with the relative center/Yetter-Drinfeld module construction is section 10 of \cite{CLR23}. The uprolling of quantum groups is \cite{CR4}.
    \item[(P)] $P$ stands for parafermionic coset. Let $\pi^h \subset L_\k(\sl_2)$ be the Heisenberg vertex subalgebra generated by the field $h(z)$ corresponding to the Cartan subalgebra of $\sl_2$. The parafermion coset is then $P_\k=\text{Com}(\pi^h, L_\k(\sl_2))$ and the coset $\text{Com}(\pi^h, \Vir_{c_{u,v}}\otimes \Pi(0)) \cong \Vir_{c_{u,v}}\otimes \pi$ is just the Virasoro minimal model $\Vir_{c_{u,v}}$ times a non-degenerate rank one Heisenberg VOA $\pi$. Accordingly $\cD^\k_P$ is the category of weight modules of $P_\k$ and $\cC^\k_P$ the one of $\Vir_{c_{u,v}}\otimes \pi$. The Nichols algebra $\cB^\k_P$ is the multiplicity space of $\pi^h$ in $\cB^\k$. This coset is described in all detail in \cite{ACR}. 
    \item[(EP)] Finally one can combine the two previous points, that is take the parafermionic coset of $E_\k$, that is $\text{Com}(\pi^h, E_\k)$. This corresponds to an uprolling procedure of the previous point. The corresponding VOAs are also discussed in detail in \cite{ACR}. They conjecturally (with few cases proven) give rise to $C_2$-cofinite VOAs and so $\cU^\k_{EP}$ should be a finite tensor category. 
\end{enumerate}
The same proof as the one of our main Theorem applies (respectively for uprolling it follows directly from section 10 of \cite{CLR23}) to give 
\[
\cU^\k_X \cong \cD^\k_X, \qquad \text{for} \ X \in \{R, S, E, P, EP\}
\]
as braided tensor categories. 

\bigskip

\subsection{Outlook for vertex algebras and quantum supergroups}

Let $\g$ be a simple Lie algebra and $\k$ a complex number, then there is an associated vertex algebra, $V^\k(\g)$, the affine vertex algebra of $\g$ at level $\k$. For $f \in \g$ a nilpotent element one can then construct another vertex algebra via quantum Hamiltonian reduction, $W^\k(\g, f)$, the $W$-algebra of $\g$ associated to $f$ at level $\k$ \cite{KacWak}. 
It only depends on the nilpotent orbit of $f$. 
By the Jacobson–Morozov theorem  $f$ belongs to an $\mathfrak{sl}_2$-triple $\{e, h, f\}$. Denote by $\g^\sharp$ the subalgebra of $\g$ that is annihilated by the action of this $\mathfrak{sl}_2$-triple, $\g^\sharp = \text{Ann}_{\mathfrak{sl}_2}(\g)$. Then there is a level $\k^\sharp$ depending on $\k, \g, f$, such that $V^{k^\sharp}(\g^\sharp)$ acts on $W^\k(\g, f)$. Hence every $W^\k(\g, f)$-module $M$ is bi-graded by $\g^\sharp$-weight and by conformal weight, i.e. let $\mathfrak{h}^\sharp$ be the Cartan subalgebra, then
\[
M = \bigoplus_{\substack{\lambda \in (\mathfrak{h}^{\sharp})^* \\ \Delta \in \mathbb C}} M_{\lambda, \Delta}
\]
with $M_{\lambda, \Delta}$ the generalized weight spaces of weight $(\lambda, \Delta)$. 
The category $W^\k(\g, f)$-$\text{wtmod}$ is the category of finitely generated $W^\k(\g, f)$, s.t. $\mathfrak{h}^\sharp$ acts semisimply and s.t. $\text{dim} M_{\lambda, \Delta} < \infty$ for all $\lambda, \Delta$. 
Its  subcategory $W^\k(\g, f)$-$\text{wtmod}^{\text{Ord}}$ of ordinary modules consists of those objects that satisfy that there exists a real number $N$, s.t. 
\[
M_\Delta = 0 \qquad \text{for} \ \Delta < N, \qquad \text{dim} M_\Delta < \infty, \qquad \text{with} \qquad M_\Delta = \bigoplus_{\lambda \in (\mathfrak{h}^{\sharp})^* } M_{\lambda, \Delta}.
\]
In the case of $f=0$ this category is called the Kazhdan-Lusztig category denoted by $V^\k(\g)\text{-}\mathrm{wtmod}^{\mathrm{KL}}$ and named after the celebrated result of Kazhdan and Lusztig
\begin{theorem} \cites{KL93a, KL93b, KL93c}
Let $\g$ be a simple Lie algebra with dual Coxeter number $h^\vee$. For $k \notin \mathbb Q_{>-h^\vee}$  as braided tensor categories
\[
V^\k(\g)\text{-}\mathrm{wtmod}^{\mathrm{KL}} \cong \Uq(\g)\text{-}\mathrm{mod}
\]
with $q = \mathrm{exp}\left({\frac{\pi i}{r^\vee(k+h^\vee)}}\right)$ and $r^\vee$ the lacing number of $\g$.
\end{theorem}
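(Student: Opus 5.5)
This result is quoted from \cites{KL93a, KL93b, KL93c}. If we wanted to reprove it in the spirit of the present article, I would run the same two-step reconstruction used for Theorem~\ref{thm:main}, in its generic-level form as in \cite{CLR21, CLR23}. The plan has four steps.

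\textbf{Step 0: tensor structure.} For $\k\notin\mathbb{Q}_{>-h^\vee}$, establish that $\cD:=V^\k(\g)\text{-}\mathrm{wtmod}^{\mathrm{KL}}$ carries a vertex tensor category structure in the sense of Huang--Lepowsky--Zhang. Then show that it is rigid and that its braiding is nondegenerate. For rigidity I would use the fact that at irrational or negative-rational level the Weyl modules are projective in a suitable truncation, so the category is a highest weight category. Duals of Weyl modules can then be produced from the Knizhnik--Zamolodchikov connection, essentially following Finkelberg's argument. Nondegeneracy follows from the fact that the Müger center is detected on Weyl modules, whose monodromies $q^{(\lambda,\mu)}$ are nontrivial at generic $q$.

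\textbf{Step I: the Borel side.} Choose a free field realization $V^\k(\g)\hookrightarrow \mathcal{W}$, where $\mathcal{W}$ is the Wakimoto algebra of $\beta\gamma$-systems tensored with a rank $\mathrm{rk}\,\g$ Heisenberg algebra. Cut it down to a commutative algebra $A$ in (an ind-completion of) $\cD$ whose local modules form the pointed braided category $\cC=\Vect_{\h^*}^Q$, with quadratic form $Q(\lambda)=(\lambda,\lambda+2\rho)/2(\k+h^\vee)$. The screening currents attached to the simple roots give objects $M=\bigoplus_i \C_{\alpha_i}$ in $\cC$ with diagonal braiding $q^{(\alpha_i,\alpha_j)}$. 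By the analogue of Theorem~\ref{thm:BorelEquivalence} I would then argue as follows:
\begin{itemize}
\item all simple $A$-modules are local;
\item by the reconstruction result of Theorem~\ref{thm:MombelliInfinite}, $\cD_A\cong\cC_\Nichols$ for some Hopf algebra $\Nichols$;
\item the space $\Ext^1(\1,-)$ is concentrated on the simple roots, so the associated Nichols algebra is $\NicholsOf(M)\cong\Uq(\mathfrak{n}^+)$ by Rosso and Lusztig;
\item at generic $q$ this diagonal Nichols algebra is robust, so $\Nichols\cong\NicholsOf(M)$.
\end{itemize}

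\textbf{Step II: the Schauenburg functor.} Apply Theorem~\ref{thm:Schauenburg}. The relative center $\YD{\NicholsOf(M)}(\cC)$ is precisely the category of weight modules over the Drinfeld double $\NicholsOf(M)\rtimes\C[\h^*]\bowtie\NicholsOf(M^*)$, that is, of $\Uq(\g)$, with the standard $R$-matrix. Step~0 supplies the hypotheses of Assumption~\ref{assumption_Fullyfaithful}, so the Schauenburg functor is exact and fully faithful. To conclude it is an equivalence, it remains to verify the comparison $\Ind_A(X)\cong\forget_c(\coLaugwitz(X))$ for simple $X=\C_\lambda$. Here $\coLaugwitz(\C_\lambda)$ is a dual Verma module of $\Uq(\g)$, with character $e^\lambda\prod_{\alpha>0}(1-e^{-\alpha})^{-1}$. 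On the vertex algebra side, $\Ind_A$ applied to the corresponding Weyl-type module gives a Wakimoto module, which has the same character. One then matches these as $\NicholsOf(M)$-modules using cosocle arguments.

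I expect the main obstacle to be Step~I, specifically making the free field algebra $A$ live in the ordinary category. The $\beta\gamma$-modules are not ordinary, so one has to work in an ind-completion or in a Harish-Chandra-type enlargement, and then check that the finiteness assumptions of Corollary~\ref{cor:ModuleCatFiniteFinal} survive. My first attempt would be to replace the Wakimoto realization by its $\h$-graded semi-infinite restriction, with $\beta\gamma$-weights bounded below, since then every graded piece is finite-dimensional. Rigidity in Step~0 is the other genuinely hard input; there I would simply import the KZ-associator computation.
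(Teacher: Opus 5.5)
The paper gives no proof of this statement. It is quoted from Kazhdan--Lusztig as background, so your sketch has to stand on its own, and as written it has genuine gaps.

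\textbf{The free field algebra is not in the category, and the target is the wrong category.} The whole framework (the triangle over $\cD_A$, $\forget_A$, $\Ind_A$, Theorem~\ref{thm:Schauenburg}) needs $A$ to be a commutative algebra in $\cD=V^\k(\g)\text{-}\mathrm{wtmod}^{\mathrm{KL}}$ or its ind-completion. The Wakimoto algebra is not. Its lowest conformal weight space is the polynomial algebra in the $\gamma$-zero modes, a contragredient Verma module on which $\g$ does not act locally finitely; for $\sl_2$, $f_0$ sends $\gamma_0^n\vac$ to a nonzero multiple of $\gamma_0^{n+1}\vac$ for $n\geq 1$. Local finiteness of the $\g$-action survives filtered colimits, so neither $A$ nor any Wakimoto module $\forget_A(\C_\lambda)$ lies in any ind-completion of $\cD$. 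Your proposed fix, bounding $\beta\gamma$-weights so graded pieces are finite-dimensional, treats a finiteness problem, but the obstruction is non-integrability.

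Even granting the setup, the target is wrong. By your own description, $\YD{\NicholsOf(M)}(\Vect^Q_{\h^*})$ contains the dual Verma modules $\coLaugwitz(\C_\lambda)$ for all $\lambda\in\h^*$. For example, $\coLaugwitz(\C_0)$ is indecomposable but not simple. In contrast, $\Uq(\g)$-mod and the KL category at irrational level are semisimple. So the Schauenburg functor out of KL can never be an equivalence onto this relative center.

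You also misread the comparison in Theorem~\ref{thm:Schauenburg}. It concerns $\Ind_A(\forget_A X)=A\otimes\forget_A(X)$ for simple $A$-modules $X=\C_\lambda$, i.e. $A$ tensored with a Wakimoto module, not the induction of a Weyl module.

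At best the method would give an equivalence between weight modules over an unrolled $\Uq(\g)$ and a larger category of $V^\k(\g)$-modules containing $A$ and the Wakimoto modules. This is the analogue of the paper working with all weight modules $\cD^\k$, which does contain $A$. You would then still have to cut out KL as the integrable part with weights in $P$, compatibly on both sides. Your plan has no such step.

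\textbf{The root-of-unity half of the statement is not covered.} For $\k+h^\vee\in\mathbb{Q}_{<0}$, $q$ is a root of unity. Then $\NicholsOf(\bigoplus_i\C_{\alpha_i})$ is the small $u_q(\mathfrak{n}^+)$, and its Yetter--Drinfeld category over $\Vect_{\h^*}$ is an unrolled small quantum group. The KL category at negative rational level instead matches Lusztig's divided-power form. Its positive part contains the $E_i^{(\ell)}$, which are not generated by the simple-root object. So even a working reconstruction would have to produce a Borel Hopf algebra that is not this Nichols algebra, and your robustness step, asserted only for generic $q$, gives nothing here.

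Moreover, at roots of unity the objects pulled back along quantum Frobenius (e.g.\ from representations of the adjoint group) are transparent. So the nondegeneracy claimed in Step~0, and with it Assumption~\ref{assumption_Fullyfaithful}, fails.

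As it stands, your route could at most yield a generic-level equivalence for a category strictly larger than KL, not the theorem as stated.
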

For $W$-algebras only very recently a related statement was established
\begin{theorem} \cites{CGN}
Let $\g$ be a simple and simply laced Lie algebra with dual Coxeter number $h^\vee$. For $k \notin \mathbb Q$ there is a fully faithful braided tensor functor 
\[
H_f: V^\k(\g)\text{-}\mathrm{wtmod}^{\mathrm{KL}} \rightarrow W^\k(\g, f)\text{-}\mathrm{wtmod}^{\mathrm{Ord}}
\]
such that $\mathrm{im} H_f$ is a thick subcategory of 
$W^\k(\g, f)\text{-}\mathrm{wtmod}^{\mathrm{Ord}}$.
\end{theorem}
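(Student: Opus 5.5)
Since $\k\notin\mathbb Q$, the Kazhdan--Lusztig category $V^\k(\g)\text{-}\mathrm{wtmod}^{\mathrm{KL}}$ is semisimple. Its simple objects are the Weyl modules $V^\k(\lambda)=\Ind(L(\lambda))$ for dominant integral $\lambda$, and their conformal weights $\frac{(\lambda,\lambda+2\rho)}{2(\k+h^\vee)}$ are pairwise incongruent modulo $\ZZ$ for generic $\k$. The plan is therefore to define $H_f$ as the quantum Hamiltonian reduction functor $H^0_{DS,f}(-)$, i.e.\ the BRST cohomology with respect to $f$, and to check the asserted properties on this simple basis. The first step is exactness together with vanishing of the higher cohomology. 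For Weyl modules this is the Kac--Wakimoto/Arakawa vanishing theorem, $H^{i}_{DS,f}(V^\k(\lambda))=0$ for $i\neq 0$, which extends to the whole KL category by semisimplicity. The same computation gives the character of $H_f(V^\k(\lambda))$ and shows that the result lies in $W^\k(\g,f)\text{-}\mathrm{wtmod}^{\mathrm{Ord}}$.

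The second step is to show that each $H_f(V^\k(\lambda))$ is simple and nonzero, and that the images are pairwise non-isomorphic. Non-isomorphism is immediate from the conformal weights of the top components, together with the $\g^\sharp$-weights coming from the action of $V^{\k^\sharp}(\g^\sharp)$. For simplicity I would use, at generic level, a free field realization of $W^\k(\g,f)$ inside a Heisenberg algebra (tensored with $\beta\gamma$/fermion systems for non-principal $f$) as the intersection of kernels of screenings. The character of $H_f(V^\k(\lambda))$ then matches that of the simple quotient of the corresponding $W$-Verma type module, since for generic $\k$ no singular vectors appear beyond those forced by the screenings. Once this holds, $H_f$ sends simples to simples and the $\Hom$-spaces agree on a semisimple source, so $H_f$ is fully faithful.

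The third step is thickness of the image. By genericity of $\k$, the pair (conformal weight mod $\ZZ$, $\mathfrak h^\sharp$-weight) of $H_f(V^\k(\lambda))$ differs from that of every other simple ordinary $W$-module that could extend it. Hence $\Ext^1$ in $W^\k(\g,f)\text{-}\mathrm{wtmod}^{\mathrm{Ord}}$ between objects of the image and simples outside the image vanishes, by a generalized-$L_0$-eigenvalue block decomposition. Since $\Ext^1$ between the image simples themselves also vanishes (they are projective in their blocks, by the same argument), the image is closed under subquotients and extensions, i.e.\ it is thick.

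The main obstacle is the fourth step: making $H_f$ a \emph{braided monoidal} functor. My first approach would be to show directly that BRST reduction carries an intertwining operator $\mathcal Y:M\otimes N\to P\{z\}$ of $V^\k(\g)$-modules to an intertwining operator of $W^\k(\g,f)$-modules among $H_f(M),H_f(N),H_f(P)$, using that the BRST differential acts as a derivation compatible with $\mathcal Y$ after twisting one argument by the ghost vacuum. This yields a natural map $H_f(M)\boxtimes H_f(N)\to H_f(M\boxtimes N)$. I would then prove that this map is an isomorphism by comparing fusion rules. On the $V^\k$ side these are the $\g$-tensor product multiplicities, by Kazhdan--Lusztig. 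On the $W$ side I would compute them via the free field realization, or via the coset realization $W^\ell\cong \mathrm{Com}(V^{\k+1}(\g),V^\k(\g)\otimes L_1(\g))$ for principal $f$, where the fusion is inherited from the ambient category. Compatibility with associativity and braiding would then follow because both are defined by analytic continuation of the same (reduced) intertwining operators. If the direct comparison proves too hard, the fallback is to realize both sides as local modules over a common commutative algebra in a semisimple Heisenberg-type category and conclude by the commutative algebra/induction formalism described in the surrounding sections.
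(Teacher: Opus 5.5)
The paper gives no proof of this statement; it quotes it from \cite{CGN} in the outlook. Your proposal therefore has to stand on its own, and it has genuine gaps.

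\textbf{The thickness step rests on false claims.} Take $\g=\sl_2$ and $f$ principal, so that $W^\k(\g,f)$ is the Virasoro algebra and $\g^\sharp=0$. Put $t=\k+2\notin\mathbb{Q}$ and $h_{r,s}=\frac{(r-st)^2-(1-t)^2}{4t}$. The reduction of the Weyl module with $r$-dimensional top is the irreducible module $\mathrm{L}(h_{r,1})$. The Verma module $\mathrm{M}(h_{r,1})$ is finitely generated, lower bounded and has finite-dimensional graded pieces, so it lies in $W^\k(\g,f)\text{-}\mathrm{wtmod}^{\mathrm{Ord}}$. It is a non-split extension $0\to\mathrm{L}(h_{-r,1})\to\mathrm{M}(h_{r,1})\to\mathrm{L}(h_{r,1})\to 0$, where $h_{-r,1}=h_{r,1}+r$ and $\mathrm{L}(h_{-r,1})$ is not in the image.

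Consequently:
\begin{itemize}
\item $\mathrm{L}(h_{r,1})$ is not projective in its block.
\item It has nonzero $\Ext^1$ with a simple outside the image that has the same conformal weight modulo $\ZZ$.
\end{itemize}
Both assertions of your third step fail. The block decomposition by ($L_0$-eigenvalue mod $\ZZ$, $\h^\sharp$-weight) does not isolate the image.

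Relatedly, your first step is wrong that conformal weights of Weyl modules are pairwise incongruent. The weights $\lambda$ and $\lambda^*$ have the same Casimir (e.g.\ $\omega_1,\omega_2$ for $\sl_3$), and for principal $f$ there is no $\g^\sharp$ to separate them. Non-isomorphism has to be read off from the full Zhu-algebra action on the top level.

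What thickness actually needs is only $\Ext^1(H_f(V^\k(\lambda)),H_f(V^\k(\mu)))=0$ within the image. This includes self-extensions on which $L_0$ or other zero modes act by Jordan blocks, which the category of generalized modules allows. Proving it requires a real argument. For example, one can separate $\lambda\neq\mu$ by the Zhu-algebra action on the top. One must also show that the singular vectors of $H_f(V^\k(\lambda))$ do not deform to first order in the highest weight. For Virasoro this holds because the Kac determinant has a simple zero at $h_{r,1}$, so the singular vector does not lift to the Verma module over $\CC[\epsilon]/\epsilon^2$. Nothing of this kind is in your proposal.

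\textbf{The braided monoidal step is not carried out either.} Reducing intertwining operators through BRST cohomology is a sensible way to obtain a lax structure $H_f(M)\boxtimes H_f(N)\to H_f(M\boxtimes N)$, since the differential is a zero mode and commutes with intertwining operators up to sign. But ``comparing fusion rules'' is the actual content of the theorem, and you do not supply it:
\begin{itemize}
\item You would need to control $\Hom(H_f(M)\boxtimes H_f(N),X)$ for all $X$ in the non-semisimple target category. This includes modules outside the image, such as the Verma modules above, not merely multiplicities of image simples.
\item You would need the reduction map on spaces of intertwining operators to be injective.
\item A free field realization does not by itself compute HLZ fusion products, and the coset realization you quote only covers principal $f$.
\end{itemize}

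Likewise, simplicity of $H_f(V^\k(\lambda))$ via ``no singular vectors beyond those forced by the screenings'' is an assertion, not an argument. It needs a genuine simplicity theorem for the reduction of highest weight modules, stated and justified for the nilpotents under consideration.
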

Recall Feigin-Frenkel duality for principal $W$-algebras \cite{FF91}, $W^\k(\g, f_{\text{prin}}) \cong W^\ell({}^L\g, f_{\text{prin}})$ if the levels satisfy the duality relation
$r^\vee(k+h^\vee)(\ell + {}^Lh^\vee) =1$.

In order to state the correspondence to quantum groups we need to twist the Deligne product in the following sense:
Let $L$ be an even integral lattice, that is there is a symmetric bilinear form $B:  L \times L \rightarrow 2\mathbb Z$, 
with dual lattice $L'$ and let $\Gamma = L'/L$. 
Consider the lattice $N = L \oplus L$ with bilinear form $B_N: N \times N \rightarrow \mathbb Z, ((\lambda, \mu),(\lambda', \mu')) \mapsto B(\lambda, \mu') + B(\mu, \lambda')$. Then $N' = L' \oplus L'$.
Let $Q_N$ be the corresponding quadratic form. 
Assume that categories $\cC, \cD$ are both graded by the abelian group $\Gamma$, then 
\[
\mathrm{Vect}_{N'/N}^{Q_N} \boxtimes (\cD \boxtimes \cC)
\]
has a diagonal action of $N'/N$ and we set the $L$-twisted Deligne product
\[
\cD \boxtimes_L \cC := \left(\mathrm{Vect}_{N'/N}^{Q_N} \boxtimes (\cD \boxtimes \cC)\right)^{N'/N}.
\]
With this notion we obtain
\begin{theorem} \cites{CGN}
Let $\g$ be a simple and simply laced Lie algebra with dual Coxeter number $h^\vee$. For $k \notin \mathbb Q$ and $\ell$ satisfying $(k+h^\vee)(\ell+h^\vee)=1$ and $Q$ the root lattice of $\g$ there is a fully faithful braided tensor functor 
\[
H_f: V^\k(\g)\text{-}\mathrm{wtmod}^{\mathrm{KL}} \boxtimes_Q V^\ell(\g)\text{-}\mathrm{wtmod}^{\mathrm{KL}} \rightarrow W^\k(\g, f_{\text{prin}})\text{-}\mathrm{wtmod}^{\mathrm{Ord}}
\]
such that $\mathrm{im} H_f$ is a thick subcategory of 
$W^\k(\g, f_{\text{prin}})\text{-}\mathrm{wtmod}^{\mathrm{Ord}}$.
\end{theorem}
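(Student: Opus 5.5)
This result is quoted from \cite{CGN}. A proof along the lines of the strategy of this article would go as follows. Since $\k\notin\mathbb Q$, both Kazhdan--Lusztig categories are semisimple. Their simple objects are the Weyl modules $V^\k_\lambda$ and $V^\ell_\mu$ for dominant $\lambda,\mu\in P^+$, and by the Kazhdan--Lusztig theorem above they are braided equivalent to $\Uq(\g)\text{-mod}$ and $\mathrm{U}_{q'}(\g)\text{-mod}$. The twisted Deligne product $\boxtimes_Q$ is then a semisimple braided category. Its simple objects are labelled by pairs $(\lambda,\mu)$, one for each $N'/N$-orbit, and its braiding is twisted by the quadratic form $Q_N$.

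\textbf{Step 1 (construct $H_f$).} I would build the functor from a coset (gluing) realization of the principal $W$-algebra. The input is the GKO/Arakawa--Creutzig--Linshaw theorem, which identifies $W^\ell(\g,f_{\text{prin}})$ with $\mathrm{Com}\bigl(V^{\k+1}(\g),V^\k(\g)\otimes L_1(\g)\bigr)$ at generic level, after matching levels via the Feigin--Frenkel relation $(\k+h^\vee)(\ell+h^\vee)=1$. As a module over the commuting pair, this realization decomposes as
\[
V^\k(\g)\otimes L_1(\g)\;\cong\;\bigoplus_{\lambda\in P^+\cap Q} V^{\k+1}_\lambda\otimes W_\lambda,
\]
and every multiplicity space $W_\lambda$ is simple. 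By the Creutzig--Kanade--McRae theory of vertex algebra extensions (mirror extension), this yields a braid-reversed equivalence $V^{\k+1}_\lambda\mapsto W_\lambda$ onto a subcategory of $W$-modules. Combining this equivalence with the identification $W^\k\cong W^\ell$ and running the analogous argument for the dual level, I would define $H_f$ on the simple objects by
\[
(V^\k_\lambda,V^\ell_\mu)\;\longmapsto\; W_{\lambda,\mu}\;=\;H^0_{\mathrm{DS}}\bigl(V^\k_{\lambda}\otimes V^{\k}_{-w_0\mu}\text{-twisted}\bigr).
\]
These images are the Arakawa--Frenkel modules $T_{\lambda,\mu}$. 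The functor would be given on the whole category by the Drinfeld--Sokolov reduction $H^0_{f,-}$ applied to $M\otimes N$. The $N'/N$-equivariantization in $\boxtimes_Q$ is exactly what accounts for the lattice vertex algebra $V_{\sqrt{\,\cdot\,}Q}$ that appears when $L_1(\g)$ is split off, so monoidality and compatibility with the braiding follow from the fact that induction to an extension is a braided tensor functor on local modules.

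\textbf{Step 2 (fully faithful and thick).} Because the source is semisimple, full faithfulness reduces to two checks. First, the $W_{\lambda,\mu}$ must be simple and pairwise non-isomorphic; I would read this off from the Arakawa--Frenkel computation of characters and conformal weights for generic $\k$. Second, $\Hom$ spaces must be preserved, which follows from simplicity together with the equivalence of Step 1. For thickness I would prove the vanishing $\Ext^1_{W\text{-wtmod}^{\mathrm{Ord}}}(W_{\lambda,\mu},W_{\lambda',\mu'})=0$. The argument is that at generic level any self-extension would be a generalized eigenspace of $L_0$ and of the Zhu algebra center, and the relevant central characters separate the labels. I would also need closure under subquotients, which holds because the modules in the image are simple.

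\textbf{Main obstacle.} The hardest step is Step 1: showing that $W^\k(\g,f_{\text{prin}})\text{-wtmod}^{\mathrm{Ord}}$, or a suitable subcategory containing the image, carries a vertex tensor category structure, and that $H_f$ is genuinely monoidal rather than only additive with matching fusion rules. My first attempt would be to avoid proving any general reduction-commutes-with-fusion statement. Instead I would work entirely inside the coset realization, where the tensor structure on the image is inherited from the known rigid braided categories of $V^{\k+1}(\g)$ and $L_1(\g)$ via the extension theory.
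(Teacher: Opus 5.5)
Your sketch does not prove the statement as written; the paper itself only quotes it. The main gap is that the functor $H_f$ is never actually constructed.

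\textbf{The global definition is ill-posed.} You propose to apply $H^0_{f,-}$ to $M\otimes N$, but this does not produce $W^\k(\g,f_{\text{prin}})$-modules. The diagonal $\widehat{\g}$ acts on $M\otimes N$ at level $\k+\ell$, and on $V^\k_\lambda\otimes V^\k_{-w_0\mu}$ at level $2\k$. So you land in modules over a $W$-algebra of the wrong level. Reducing only one factor instead leaves a $V^\ell(\g)$-module behind. The Arakawa--Frenkel module $T_{\lambda,\mu}$ is the $\mu$-twisted ``$-$''-reduction of the single Weyl module $V^\k_\lambda$, not of a tensor product.

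\textbf{The coset route is set up at the wrong level.} The Arakawa--Creutzig--Linshaw theorem reads $\mathrm{Com}(V^{k'+1}(\g),V^{k'}(\g)\otimes L_1(\g))\cong W^{\ell'}(\g,f_{\text{prin}})$ with $\frac{1}{\ell'+h^\vee}=1+\frac{1}{k'+h^\vee}$, which is not the Feigin--Frenkel relation. So realizing $W^\k$ forces $k'\neq\k$. The multiplicity spaces are the modules $T_{0,\lambda}$, $\lambda\in P^+\cap Q$, which have one trivial label. The mirror extension therefore gives at best a braid-reversed equivalence between the root-lattice part of $V^{k'+1}(\g)\text{-}\mathrm{wtmod}^{\mathrm{KL}}$ and the span of these $T_{0,\lambda}$. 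That is one tensor factor, only on $Q$, at a level whose parameter $e^{\pi i/(k'+1+h^\vee)}=-q_\ell^{-1}$, with $q_\ell=e^{\pi i/(\ell+h^\vee)}$, still has to be matched with that of $V^\ell(\g)$. This route also presupposes the braided tensor structure on $W^\k(\g,f_{\text{prin}})\text{-}\mathrm{wtmod}^{\mathrm{Ord}}$. The theorem needs that structure anyway, since the functor goes into this category and thickness refers to it. A structure transported to the image from the coset does not supply it.

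\textbf{The mechanism that forces the twist is missing.} A workable construction uses two braided tensor functors into $\mathcal{W}=W^\k(\g,f_{\text{prin}})\text{-}\mathrm{wtmod}^{\mathrm{Ord}}$:
$H\colon V^\k(\g)\text{-}\mathrm{wtmod}^{\mathrm{KL}}\to\mathcal{W}$, for instance the reduction functor of the preceding theorem, and
$\check H\colon V^\ell(\g)\text{-}\mathrm{wtmod}^{\mathrm{KL}}\to\mathcal{W}$, its counterpart transported through $W^\k\cong W^\ell$.
These satisfy $H(V^\k_\lambda)=T_{\lambda,0}$ and $\check H(V^\ell_\mu)=T_{0,\mu}$, and one sets $H_f(M\boxtimes N)=H(M)\boxtimes_W\check H(N)$.

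This requires two inputs you do not address.
\begin{enumerate}
\item The fusion rule $T_{\lambda,0}\boxtimes_W T_{0,\mu}\cong T_{\lambda,\mu}$.
\item The mutual monodromy of the two images. Given the fusion rule, balancing and $h_{\lambda,\mu}=h_{\lambda,0}+h_{0,\mu}-(\lambda,\mu)$ show that the double braiding of $T_{\lambda,0}$ and $T_{0,\mu}$ is the scalar $e^{-2\pi i(\lambda,\mu)}$. This depends only on $(\bar\lambda,\bar\mu)\in P/Q\times P/Q$ and is nontrivial in general; for $\sl_2$ it is the sign between the Virasoro modules labelled $(2,1)$ and $(1,2)$.
\end{enumerate}
So the two images do not centralize each other, and no braided functor out of the untwisted Deligne product exists. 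The twisted product $\boxtimes_Q$ is exactly what compensates this scalar. Your remark that equivariantization ``accounts for'' $V_Q=L_1(\g)$ does not replace this computation.

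\textbf{Full faithfulness is fine.} Reducing it to simplicity and pairwise distinctness of the $T_{\lambda,\mu}$ works once a functor exists.

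\textbf{The thickness argument fails in the decisive case.} Central characters cannot exclude self-extensions $\Ext^1(T_{\lambda,\mu},T_{\lambda,\mu})$. There Zhu's algebra $Z(\g)$ and $L_0$ can act on a two-dimensional top by a Jordan block. You need an actual argument, for example one of the following.
\begin{itemize}
\item Show that the singular-vector degeneracy of $T_{\lambda,\mu}$ does not deform in such a direction.
\item Use rigidity: $\Ext^1(X,X)\cong\Ext^1(W^\k,X\otimes X^*)$. Combine this with $\Ext^1(W^\k,W^\k)=0$ and linkage.
\end{itemize}
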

The Feigin-Frenkel duality generalizes to many non-principal $W$-algebras \cites{CL1, CL2, CKLSS} with two important differences: the dual algebra is usually a $W$-superalgebra and this is not an isomorphism of vertex superalgebras, but only an isomorphism of coset subalgebras. Moreover in the cases of so-called hook type $W$-superalgebras \cites{CL1, CL2} there is a convolution operation mapping one $W$-algebra to its dual $W$-superalgebra and there is a dual convolution operation mapping the $W$-superalgebra back to its dual $W$-algebra \cite{CLNS}. 
For this we consider a Lie (super)algebra $\g$ with a simple Lie subalgebra $\mathfrak a$. Then one considers the nilpotent element $f_{\mathfrak a}$ corresponding to the principal nilpotent element in the subalgebra $\mathfrak a$. The $W$-superalgebra $W^\k(\g, f_{\mathfrak a})$ then has an affine vertex subalgebra $V^{\k_{\mathfrak b}}(\mathfrak b)$ where $\mathfrak b$ is the Lie subalgebra of $\g$ that commutes with the $\mathfrak{sl}_2$-triple corresponding to $f_{\mathfrak a}$. The hook-type $W$-superalgebras are all of this type and the data is listed in Table 
\ref{tab:hook-type}.

\begin{table}[h]
\centering
\begin{tabular}{|c|c|c|c|}
\hline 
 $\mathfrak{g}$ & $\mathfrak{a}$ & $\mathfrak{b}$ & $k_{\mathfrak{b}}$ \\ \hline 
 $\mathfrak{sl}_{n+m}$            & $\mathfrak{sl}_n$      & $\mathfrak{gl}_m$        & $k+n-1$ \\ 
 $\mathfrak{so}_{2(n+m+1)}$        & $\mathfrak{so}_{2n+1}$ & $\mathfrak{so}_{2m+1}$   & $k+2n$ \\
 $\mathfrak{sp}_{2(n+m)}$          & $\mathfrak{sp}_{2n}$   & $\mathfrak{sp}_{2m}$     & $k+n-\tfrac{1}{2}$ \\
 $\mathfrak{so}_{2(n+m)+1}$        & $\mathfrak{so}_{2n+1}$ & $\mathfrak{so}_{2m}$     & $k+2n$ \\
 $\mathfrak{osp}_{1|2(n+m)}$       & $\mathfrak{sp}_{2n}$   & $\mathfrak{osp}_{1|2m}$  & $k+n-\tfrac{1}{2}$ \\
 $\mathfrak{sl}_{n+m|m}$           & $\mathfrak{sl}_{n+m}$          & $\mathfrak{gl}_m$      & $-(k+n+m)+1$ \\
 $\mathfrak{osp}_{2m+1|2(n+m)}$    & $\mathfrak{sp}_{2(n+m)}$       & $\mathfrak{so}_{2m+1}$ & $-2(k+n+m)+1$ \\
 $\mathfrak{osp}_{2(n+m)+1|2m}$    & $\mathfrak{so}_{2(n+m)+1}$     & $\mathfrak{sp}_{2m}$   & $-(\tfrac{1}{2}k+n+m)$ \\
 $\mathfrak{osp}_{2m|2(n+m)}$      & $\mathfrak{sp}_{2(n+m)}$       & $\mathfrak{so}_{2m}$   & $-2(k+n+m)+1$ \\
 $\mathfrak{osp}_{2(n+m)+2|2m}$    & $\mathfrak{so}_{2(n+m)+1}$     & $\mathfrak{osp}_{1|2m}$ & $-(\tfrac{1}{2}k+n+m)$ \\
\hline 
\end{tabular}
\caption{Hook-type $\mathcal{W}$-superalgebras}
\label{tab:hook-type}
\end{table}

The kernel VOAs are defined as follows \cites{CG, CGL}. They are of the form
\[
A^n[\g, k] = \bigoplus_{\lambda \in R_\g} V^\k_{\g}(\lambda) \otimes V^\ell_{ \tilde \g}(\lambda^*)
\]
with $V^\k_{\g}(\lambda)$ the Weyl module of highest-weight $\lambda$ of $\g$ at level $\k$, $\lambda^*$ the highest-weight of the dual representation and $R_\g$ the subset of dominant weights that lie in the lattice  $Q + \mathbb Z \omega_1 + \mathbb Z \omega_1^*$ with $\omega_1$ the first fundamental weight, i.e. the highest-weight of the standard representation of $\g$. $\tilde \g$ is a Lie superalgebra that admits a natural identification of $R_{\tilde g}$ with $R_\g$. The upper index indicates the relation of levels $k$ and $\ell$  which depend on $\g$ and $\tilde \g$ as listed in Table \ref{tab:kernel-levels}. 
\begin{table}[h]
\centering
\begin{tabular}{|c|c|c|c|c|} \hline 
$\mathfrak{g}$  & $\tilde{\mathfrak{g}}$  & Level relation \\ \hline 

$\mathfrak{gl}_m$ & $\mathfrak{gl}_m$
  & $\dfrac{1}{k+h^\vee_\g}+\dfrac{1}{\ell+h^\vee_{\tilde\g}}=n$
   \\[2ex]
$\mathfrak{so}_{2m+1}$ & $\mathfrak{osp}_{1|2m}$
  & $\dfrac{1}{k+h^\vee_\g}+\dfrac{1}{2(\ell+h^\vee_{\tilde\g})}=n$
   \\[2ex]
$\mathfrak{sp}_{2m}$ & $\mathfrak{sp}_{2m}$
  & $\dfrac{1}{k+h^\vee_\g}+\dfrac{1}{\ell+h^\vee_{\tilde\g}}=2n$
   \\[2ex]
$\mathfrak{so}_{2m}$ & $\mathfrak{so}_{2m}$
  & $\dfrac{1}{k+h^\vee_\g}+\dfrac{1}{\ell+h^\vee_{\tilde\g}}=n$
  \\[2ex]
$\mathfrak{osp}_{1|2m}$ & $\mathfrak{so}_{2m+1}$
  & $\dfrac{1}{2(k+h^\vee_\g)}+\dfrac{1}{\ell+h^\vee_{\tilde\g}}=n$
   \\
\hline 
\end{tabular}
\caption{Level relations for each kernel VOA $A^n[\mathfrak g,k]$.}
\label{tab:kernel-levels}
\end{table}
If $n \in \mathbb Z$ and $k \notin \mathbb Q$, then $A^n[\g, k]$ is  a simple vertex operator superalgebra in types $\mathfrak{gl}_m, \mathfrak{so}_{2m}, \mathfrak{sp}_{2n}$ \cite{Mor} and if $n$ is odd also in types $\mathfrak{so}_{2m+1}, \mathfrak{osp}_{1|2n}$ \cite{CMY-osp}.

\begin{theorem}{\textup{\cite{CLNS}}}
For $k \notin \mathbb Q$,  $\tilde k_{\mathfrak b_i} = - k_{\mathfrak b_i} -2h^\vee_{\mathfrak b_i}$, $i \in \{ 1, 2\}$,  $(\g_1, \mathfrak a_1, \mathfrak b_1, \g_2, \mathfrak a_2, \mathfrak b_2, r)$ as in Table \ref{tab:duality data} and $\ell$ satisfying the Feigin-Frenkel type relation
\[
r(k+ h^\vee_{\g_1})(\ell + h^\vee_{\g_2}) =1
\] there are the following isomorphisms of vertex superalgebras
\begin{align*}
     W^\ell(\g_2, f_{\mathfrak a_2}) \cong H^{\frac{\infty}{2}+\bullet}_{\mathrm{rel}}(\mathfrak b_1, W^\k(\mathfrak{g}_1, f_{\mathfrak a_1})\otimes\mathfrak A^1[\mathfrak b_1, \tilde k_{\mathfrak b_1}]) \\
      W^\k(\g_1, f_{\mathfrak a_1}) \cong H^{\frac{\infty}{2}+\bullet}_{\mathrm{rel}}(\mathfrak b_2, W^\ell(\mathfrak{g}_2, f_{\mathfrak a_2})\otimes\mathfrak A^{-1}[\mathfrak b_2, \tilde k_{\mathfrak b_2}])
\end{align*}
with the relative semi-infinite Lie algebra cohomology $ H^{\frac{\infty}{2}+\bullet}_{\mathrm{rel}}$.

\begin{table}[h]
\centering
\begin{tabular}{|c|c|c|c|c|c|c|}
\hline 
 $\mathfrak{g}_1$ & $\mathfrak{a}_1$ & $\mathfrak{b}_1$ & $\mathfrak{g}_2$ & $\mathfrak{a}_2$ & $\mathfrak{b}_2$ & r \\ \hline 
 $\mathfrak{sl}_{n+m}$            & $\mathfrak{sl}_n$      & $\mathfrak{gl}_m$        &  $\mathfrak{sl}_{n+m|m}$           & $\mathfrak{sl}_{n+m}$          & $\mathfrak{gl}_m$ & 1 \\ 
$\mathfrak{so}_{2(n+m+1)}$        & $\mathfrak{so}_{2n+1}$ & $\mathfrak{so}_{2m+1}$   & $\mathfrak{osp}_{2(n+m)+2|2m}$    & $\mathfrak{so}_{2(n+m)+1}$     & $\mathfrak{osp}_{1|2m}$  & 1 \\
$\mathfrak{sp}_{2(n+m)}$          & $\mathfrak{sp}_{2n}$   & $\mathfrak{sp}_{2m}$     & $\mathfrak{osp}_{2(n+m)+1|2m}$    & $\mathfrak{so}_{2(n+m)+1}$     & $\mathfrak{sp}_{2m}$  & 2 \\
 $\mathfrak{so}_{2(n+m)+1}$        & $\mathfrak{so}_{2n+1}$ & $\mathfrak{so}_{2m}$     & $\mathfrak{osp}_{2m|2(n+m)}$      & $\mathfrak{sp}_{2(n+m)}$       & $\mathfrak{so}_{2m}$  & 2\\
$\mathfrak{osp}_{1|2(n+m)}$       & $\mathfrak{sp}_{2n}$   & $\mathfrak{osp}_{1|2m}$  & $\mathfrak{osp}_{2m+1|2(n+m)}$    & $\mathfrak{sp}_{2(n+m)}$       & $\mathfrak{so}_{2m+1}$ & 4\\
\hline 
\end{tabular}
\caption{Duality data}
\label{tab:duality data}
\end{table}

\end{theorem}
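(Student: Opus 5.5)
The plan is to reduce both isomorphisms to three ingredients: the coset (Feigin--Frenkel type) dualities for hook-type $W$-superalgebras from \cites{CL1, CL2}, semisimplicity of Kazhdan--Lusztig categories at generic level, and a cohomological ``delta function'' property of relative semi-infinite cohomology. I treat only the first isomorphism; the second follows by the same argument with the roles of $(\g_1,\mathfrak a_1,\mathfrak b_1,\k)$ and $(\g_2,\mathfrak a_2,\mathfrak b_2,\ell)$ exchanged and $\mathfrak A^{-1}$ in place of $\mathfrak A^1$.

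\emph{Step 1: decompose $W^\k(\g_1,f_{\mathfrak a_1})$ over its affine subalgebra.} Since $\k\notin\mathbb Q$, the category $V^{\k_{\mathfrak b_1}}(\mathfrak b_1)\text{-}\mathrm{wtmod}^{\mathrm{KL}}$ is semisimple, with simple objects the Weyl modules $V^{\k_{\mathfrak b_1}}_{\mathfrak b_1}(\lambda)$. Set $C=\mathrm{Com}(V^{\k_{\mathfrak b_1}}(\mathfrak b_1),W^\k(\g_1,f_{\mathfrak a_1}))$. I would show
\[
W^\k(\g_1,f_{\mathfrak a_1})\cong\bigoplus_{\lambda\in R_{\mathfrak b_1}}V^{\k_{\mathfrak b_1}}_{\mathfrak b_1}(\lambda)\otimes C_\lambda ,
\]
where the $C_\lambda$ are simple, mutually inequivalent $C$-modules. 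By \cites{CL1, CL2}, $C$ is isomorphic to the analogous coset $\mathrm{Com}(V^{\ell_{\mathfrak b_2}}(\mathfrak b_2),W^\ell(\g_2,f_{\mathfrak a_2}))$ whenever $r(\k+h^\vee_{\g_1})(\ell+h^\vee_{\g_2})=1$. The same decomposition therefore applies on the $\g_2$ side, with the same $C_\lambda$ after the natural identification $R_{\mathfrak b_1}\cong R_{\mathfrak b_2}$ and passing to $\lambda^*$. To get this matching I would use the braid-reversed equivalence between the two affine categories and the $C$-module category, as in Creutzig--Kanade--McRae's theory of extensions of $V\otimes C$.

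\emph{Step 2: compute the cohomology.} The kernel algebra is
\[
\mathfrak A^1[\mathfrak b_1,\tilde\k_{\mathfrak b_1}]=\bigoplus_\mu V^{\tilde\k_{\mathfrak b_1}}_{\mathfrak b_1}(\mu)\otimes V^{\ell'}_{\tilde{\mathfrak b}_1}(\mu^*) .
\]
Because $\k_{\mathfrak b_1}+\tilde\k_{\mathfrak b_1}=-2h^\vee_{\mathfrak b_1}$, the total $\mathfrak b_1$-level is critical-complementary, and the Arkhipov--Gaitsgory / Frenkel--Zhu computation gives
\[
H^{\frac{\infty}{2}+\bullet}_{\mathrm{rel}}\bigl(\mathfrak b_1,V_{\mathfrak b_1}^{\k_{\mathfrak b_1}}(\lambda)\otimes V^{\tilde\k_{\mathfrak b_1}}_{\mathfrak b_1}(\mu)\bigr)=\delta_{\mu,\lambda^*}\,\mathbb C ,
\]
concentrated in degree $0$. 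Here genericity is used: Weyl modules at generic level are projective and injective in the relevant category, so higher cohomology vanishes. Summing over the decomposition from Step 1, the cohomology is $\bigoplus_\lambda C_\lambda\otimes V^{\ell'}_{\tilde{\mathfrak b}_1}(\lambda)$ as a $C\otimes V^{\ell'}(\tilde{\mathfrak b}_1)$-module. A direct level check from Tables \ref{tab:hook-type} and \ref{tab:kernel-levels} should confirm $\tilde{\mathfrak b}_1=\mathfrak b_2$ and $\ell'=\ell_{\mathfrak b_2}$. So, as a module, the cohomology agrees with $W^\ell(\g_2,f_{\mathfrak a_2})$.

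\emph{Step 3 (main obstacle): upgrade to an isomorphism of vertex superalgebras.} Relative semi-infinite cohomology of a vertex algebra is again a vertex superalgebra. It is an extension of $C\otimes V^{\ell_{\mathfrak b_2}}(\mathfrak b_2)$ with the decomposition above, and the same holds for $W^\ell(\g_2,f_{\mathfrak a_2})$. I would then invoke uniqueness of such extensions: an extension of $C\otimes V$ of the form $\bigoplus_\lambda C_\lambda\otimes V(\lambda)$ along a braid-reversed equivalence is determined up to isomorphism. The reason is that it corresponds to the canonical commutative algebra $\bigoplus X\boxtimes\tau(X)^*$ in the Deligne product, and the remaining freedom is a 2-cocycle twist on the fusion ring. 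For connected Dynkin-type fusion rules generated by the standard representation, this twist is trivial up to rescaling.

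The delicate points in this step are the superalgebra parity conventions and the verification that the cohomology is simple. For simplicity I would argue as follows: the cohomology is a genuine algebra extension with simple, inequivalent multiplicity pieces, and the vacuum module appears exactly once. I expect this identification of algebra structures, rather than the module-level computation, to require the most care.
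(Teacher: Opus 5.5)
The paper does not prove this statement; it quotes it from \cite{CLNS}. There is therefore no in-paper argument to compare against, and your outline has to stand on its own.

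Its architecture is reasonable, and Step~2 is fine: the total $\mathfrak b_1$-level is $-2h^\vee_{\mathfrak b_1}$, and at generic level $H^{\frac{\infty}{2}+\bullet}_{\mathrm{rel}}(\mathfrak b, V^{\k_{\mathfrak b}}(\lambda)\otimes V^{\tilde\k_{\mathfrak b}}(\mu))=\delta_{\mu,\lambda^*}\mathbb C$ in degree $0$.

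\textbf{Main gap: Step~3 is asserted, not proved.} This step carries all of the vertex-algebra content. The decomposition $\bigoplus_\lambda C_\lambda\otimes V(\lambda)$ does not by itself determine the vertex superalgebra structure.
In the Creutzig--Kanade--McRae gluing picture, a \emph{simple} extension of this shape corresponds to a braid-reversed equivalence $\tau$. This $\tau$ goes from the subcategory of $\mathrm{KL}_{\k_{\mathfrak b}}(\mathfrak b)$ generated by the $V(\lambda)$ to the category generated by the $C_\lambda$. The decomposition fixes $\tau$ only on objects. Two extensions with the same decomposition are isomorphic if (and, up to automorphisms of $C\otimes V(\mathfrak b)$, only if) their equivalences differ by a braided autoequivalence isomorphic to the identity.
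Along your route you therefore need two things:
\begin{enumerate}
\item[(i)] the $C_\lambda$ lie in a braided tensor category of modules for the non-rational coset $C$, so that the dictionary applies at all;
\item[(ii)] every braided autoequivalence of the relevant subcategory of $\mathrm{KL}_{\k_{\mathfrak b}}(\mathfrak b)$ that is the identity on objects is trivial.
\end{enumerate}
Statement (ii) is a rigidity theorem of Kazhdan--Wenzl or Neshveyev--Tuset type. It must be supplied for every $\mathfrak b$ in the duality table, including the Heisenberg direction of $\mathfrak{gl}_m$ and the super case $\mathfrak{osp}_{1|2m}$, with the parity of the odd summands tracked. Your phrase ``a 2-cocycle twist on the fusion ring, trivial for Dynkin-type fusion rules'' is a heuristic for (ii), not a proof of it.

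\textbf{Two inputs to Step~3 are also missing.}

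First, matching the multiplicity modules. In Step~1, the abstract isomorphism $\mathrm{Com}(V(\mathfrak b_1),W^\k(\g_1,f_{\mathfrak a_1}))\cong\mathrm{Com}(V(\mathfrak b_2),W^\ell(\g_2,f_{\mathfrak a_2}))$ does not tell you that it carries the multiplicity modules of one side to those of the other with matching labels. You must identify, say, $C_{\omega_1}$ and $C_{\omega_1^*}$ on both sides (for example by conformal weights and characters) and then propagate by fusion. This matching is exactly what makes the two braid-reversed equivalences agree on objects, so ``the same $C_\lambda$'' cannot be taken for granted.

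Second, simplicity. Your argument for it does not work. A vertex superalgebra $\bigoplus_\lambda C_\lambda\otimes V(\lambda)$ with simple, pairwise inequivalent summands and vacuum multiplicity one need not be simple; the square-zero extension with the same decomposition is a counterexample. An ideal is a sum of summands. To exclude a proper one, you must show that the product of the $\lambda$- and $\lambda^*$-summands of the cohomology reaches the vacuum summand. You could do this by computing it from the corresponding nondegenerate pairings in $W^\k(\g_1,f_{\mathfrak a_1})$ and $\mathfrak A^1$, or by using the inverse convolution with $\mathfrak A^{-1}$. Since simplicity is an input to the gluing dictionary in (i), Step~3 cannot be closed without it.
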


We expect that the Kazhdan-Lusztig correspondence admits a generalization to quantum supergroups of basic classical Lie superalgebras and that the above Feigin-Frenkel type correspondences gives rise to correspondences between $V^\ell(\g_2)\text{-}\mathrm{wtmod}^{\mathrm{KL}}$ and $W^\k(\g_1, f_{\mathfrak a_1})\text{-}\mathrm{wtmod}^{\mathfrak{a}_1[t]\text{-loc.fin.}}$. Here the index $\mathfrak{a}_1[t]\text{-loc.fin.}$  
indicates that $\mathfrak{a}_1[t]$ acts locally finitely.

The purpose of our program is to develop a theory that allows to prove such correspondences. The biggest problem is that in general not much is known about the representation theory of affine vertex superalgebras and $W$-superalgebras in general. For example, the only affine vertex superalgebra that is well understood is $V^\k(\mathfrak{gl}_{1|1})$ \cite{CMYgl11} and indeed we were able to establish the equivalence of  $\smash{V^\k(\mathfrak{gl}_{1|1})\text{-}\mathrm{wtmod}^{\mathrm{KL}}}$ with $\smash{\Uq^H(\mathfrak{gl}_{1|1})\text{-}\mathrm{mod}}$ \cite{CLR23} where the upper index $H$ indicates that it is an unrolled version of the quantum supergroup. Here $q = \text{exp}\left( \frac{\pi i}{k}\right)$. 

We plan to use our techniques to extend this result to any basic classical Lie superalgebra of type I. 

In the present article we consider the simple affine vertex algebra of $\sl_2$ at any admissible level. Admissible levels are in particular rational levels and hence corresponding quantum (super)groups are at roots of unity. The idea is that a similar correspondence as at generic levels holds, but one has to replace the quantum (super)groups by what we call partial semisimplifications.

\newcommand{\SimonRem}[1]{}

\subsection{Another proof strategy: Abelian modeling}

We also give an alternative proof strategy that is more {robust} in the sense that several abstract arguments are replaced by concrete calculations with representing algebras if the abelian categories $\cD^\k$ and $\cD^\k_A$ are known explicitly enough. Hence it can also be applied in related cases where some technical preliminaries may not hold.

\bigskip

Suppose we have algebras $D,B,C$ in the category of finite-dimensional vector spaces and $\Nichols$ an algebra in the category $\cC^\k$, such that 
\begin{itemize}
\item $\Rep(D)\cong \cD^\k$ as abelian categories.
\item $\Rep(B)\cong (\cD^\k)_A$ as abelian categories.
\item $\Rep(C)\cong (\cD^\k)_A^\loc$ as abelian categories.
\item $B\hookrightarrow U$ such that $A\otimes (-)$ coincides with the restriction functor.
\item $B\cong \Rep(\Nichols)(\cC^\k)$ such that the module category action of $(\cD^\k)_A$ on $(\cD^\k)_A^\loc$ (via embedding) coincides with tensoring a $\Nichols$-module with an object in $\cC^\k$ and acting on the first factor. And such that the splitting functor coincides with the functor forgetting the $\Nichols$-action.   
\end{itemize}
Then the following abstract steps are replaced by the realization:
\begin{itemize}
    \item The arguments in Section \ref{subsec:BorelModulecat} are not necessary, because with $\Nichols$ we already have an explicit realizing algebra for the module category.
    \item The Nichols algebra arguments in Section \ref{subsec:NicholsArguments} can be replaced by checking explicitly that the algebra $\Nichols$
    coincides with the Nichols algebra $\NicholsOf(M)$.
    \item The entire monadicity arguments in Section \ref{sec:adjunction} reduces to classical algebra for the ring extension $U\supset B$. The algebra arguments in Section \ref{sec_troi} reduce to the classical Nakayama trick if $U$ and all its quotients are finitely generated modules over $B$, as we have argued in \cite{CLR23} Section 3.3.
\end{itemize}

\subsection{Conventions}

All our categories are abelian and $\C$-linear. We use the word tensor category synonymously with monoidal category, without implying any rigidity of finiteness. We use the word tensor functor synonymously with  monoidal functor for strict monoidal functors, and we explicitly say lax monoidal $F(X)\otimes F(Y)\to F(X\otimes Y)$ or oplax monoidal $F(X\otimes Y)\to F(X)\otimes F(Y)$, which in other sources is called monoidal or comonoidal.   

We reserve $\boxtimes$ for the Deligne  product of categories. We denote the tensor product in a tensor category by $\otimes$, including the HLZ tensor product over a vertex algebra $\mathcal{V}$. If there is possibility of confusion we put as index the respective category, for example $\otimes_\cC$ or for vertex algebras $\otimes_{\mathcal{V}}$. The symbol $\otimes_A$ is denotes the tensor product over an algebra inside some category. 

Standard textbooks on vertex algebra are \cites{Kac97,FBZ04}, standard textbook on category theory with different focus are  \cites{EGNO15,Riehl16,Walton24}.

\section{Cartan category}\label{sec:Cartan}

\subsection{Categories of type \texorpdfstring{$A_n$}{An}}\label{subsec:CartanAn}

We recall from from \cites{FK93,CM}: 
Let $n \in \mathbb Z_{>0}$. A braided fusion category $\mathcal C$ is said to be of type $A_n$ if it has $n$ inequivalent simple objects $X_0, \dots, X_{n-1}$ with 
\begin{itemize}
    \item  $X_0$ the tensor identity
    \item $X_1 \otimes X_i \cong X_{i-1} \oplus X_{i+1}$ for $1 \leq  i  \leq n-2$
    \item $X_1 \otimes X_{n-1} \cong X_{n-2}$
\end{itemize}
Then there exist  $p, q$  co-prime positive integers with  $p= n+1$ and 
\[
s = e^{\pi \i \frac{p+q}{2p}},
\]
such that the modular data of $\mathcal C$ is as follows, with 
$[i] = (s^{2i} - s^{-2i})/(s^2 - s^{-2})$:
\[
S_{i, j} = (-1)^{i+j}[(i+1)(j+1)], \qquad T_i = (-1)^i s^{i(i+2)}
\]
and this is a complete invariant of the braided tensor category $\mathcal C$. Let us denote the corresponding category $\cA(p, p+q)$. 

For later purpose we also quote the braiding and the associator from \cite{CM} page 4 in a particular example:

\begin{example}\label{exm_tripleProduct}
We have for $p>3$ the tensor product
\begin{align*}
X_1\otimes X_1 &= X_0 \oplus X_2 \\
\end{align*}
and the braiding is given by 
$$\begin{pmatrix}
    R(1,1,0) & 0 \\
    0 & R(1,1,2)
\end{pmatrix}
=
\begin{pmatrix}
    -s^{-3} & 0 \\
    0 & s \\
\end{pmatrix}
$$
We have the triple tensor product
\begin{align*}
X_1\otimes (X_1 \otimes X_1)
&=X_1\otimes (X_0 \oplus X_2)\\
&=X_1 \oplus (X_1 \oplus X_3)  \\
(X_1\otimes X_1) \otimes X_1
&=(X_0 \oplus X_2) \otimes X_1\\
&=X_1 \oplus (X_1 \oplus X_3)  
\end{align*}
and with respect to these bases the associator is given as follows, where the column corresponds to the summand in the first decomposition and the row to the summand in the second decomposition. In the convention of \cite{CM} we have $\theta_{1,1,0}=-[2]$ and $\theta_{1,1,2}=[3]$, which enters the denominator as a square, and 
 $\theta_{1,3,2}=[4]$:

\begin{align*}
&\begin{pmatrix}
    \left\{
    \begin{matrix}
        1 & 1 & 0 \\
        1 & 1 & 0 \\
    \end{matrix} \right\} 
    &
    \left\{
    \begin{matrix}
        1 & 1 & 2 \\
        1 & 1 & 0 \\
    \end{matrix}\right\}
    & 
    0
    \\
    \left\{
    \begin{matrix}
        1 & 1 & 0 \\
        1 & 1 & 2 \\
    \end{matrix} \right\}
    &
    \left\{
    \begin{matrix}
        1 & 1 & 2 \\
        1 & 1 & 2 \\
    \end{matrix} \right\} 
    & 
    0  
    \\
    0
    &
    0
    &
    \left\{
    \begin{matrix}
        1 & 1 & 2 \\
        1 & 3 & 2 \\
    \end{matrix}  \right\} 
    %
\end{pmatrix}
\\
&=
\begin{pmatrix}
\frac{-1}{[2]^2}
(-1)[2]!
&
\frac{-[3]}{[2]![3]^2}
[3]!
& 
0
\\
\frac{-1}{[2]![2]^2}
[3]!
&
\frac{-[3]}{[2]!^2[3]^2}
[3]!
& 
0
\\
0
&
0
&
\frac{-[2]!^2[3]}{[3]![2]![2]!\,[4]\,[3]}
(-1)[4]!
\end{pmatrix}
\\
&=
\begin{pmatrix}
\frac{1}{[2]}
&
-1
& 
0
\\
-\frac{[3]}{[2]^2}
&
-\frac{1}{[2]}
& 
0
\\
0
&
0
&
1
\end{pmatrix}
\end{align*}
We discuss the exceptional cases: We have $[2]=0$ resp. $[3]=0$ if $s^4+1=0$ resp. $s^8+s^4+1=0$, meaning $s^4$ is a primitive root of unity of order $2$ resp. $3$, which  corresponds to the cases $p=2$ resp. $p=3$. In the first case $X_1$ is not defined. In the second case $X_2$ is not defined and the fusion rule is $X_1\otimes X_1=X_0$, the corresponding associator is just the left upper element $1/[2]$. We will later require the statement that the upper $2\times 2$ matrix has no zeroes for $p>3$. 

\end{example}

\subsection{Representations of the Virasoro algebra \texorpdfstring{$\Vir_{u,v}$}{}}\label{subsec:CartanVir}

We consider the simple Virasoro vertex algebra at central charge 
\[
c_{u,v}=1-6\frac{(\k+1)^2}{\k+2}
=13-6\frac{v}{u}-6\frac{u}{v},\qquad\quad 
\k=-2+\frac{u}{v}
\]
for $u, v$ co-prime integers both greater than one.


We consider the category of weight modules and remark that this coincides in this case with the subcategory of highest weight modules and ordinary modules. This category is semisimple, in fact it is a modular tensor category, and it has simple modules $\VirPhi_{r, s}$ with $1 \leq  r \leq  u-1$ and $1 \leq s \leq v-1$ and the only identifications are $\VirPhi_{r, s} \cong \VirPhi_{u-r, v-s}$. 
Let $\mathcal C(u, v)$ be this category and $\mathcal C^L(u, v)$ be the subcategory whose simple objects are the $\VirPhi_{r, 1}$. Similarly let $\cC^R(u, v)$ be the subcategory whose simple objects are the $\VirPhi_{1, s}$. 
Set $X_i^L = \VirPhi_{i+1, 1}$ and $X_j^R = \VirPhi_{1, j +1}$ for $0 \leq  i \leq u-2$ and $ 0 \leq  j \leq v-2$
they satisfy
\begin{itemize}
\item set $u = n+1$
    \item  $X_0^L$ the tensor identity of $\mathcal C^L(u, v)$,
    \item $X_1^L \otimes X_i^L \cong X_{i-1}^L \oplus X_{i+1}$ for $1 \leq  i  \leq n-2$
    \item $X_1^L \otimes X_{n-1}^L \cong X_{n-2}^L$
\end{itemize}
as well as
they satisfy
\begin{itemize}
\item set $v = n+1$
    \item  $X_0^R$ the tensor identity of $\mathcal C^R(u, v)$,
    \item $X_1^R \otimes X_i^R \cong X_{i-1}^R \oplus X_{i+1}^R$ for $1 \leq  i  \leq n-2$
    \item $X_1^R \otimes X_{n-1}^R \cong X_{n-2}^R$
\end{itemize}
so $\mathcal C^L(u, v)$ is of type $A_{u-1}$ and $\mathcal C^R(u, v)$ is of type $A_{v-1}$. 
The conformal weight of the top levels of $\VirPhi_{r, 1}$ and $\VirPhi_{1, s}$ are
\[
h_{r, 1} = \frac{r^2-1}{4}\frac{v}{u} - \frac{r-1}{2}, \qquad 
h_{1, s} = \frac{s^2-1}{4}\frac{u}{v} - \frac{s-1}{2}
\]
Let us set $a = e^{\pi i \frac{v}{u}}$ and $b = e^{\pi i \frac{u}{v}}$. Let $T^L_i, S^L_{i, j}$ be the $T$-matrices of the $X_i^L$ and $T^R_i, S^R_{i, j}$ be the $T$-matrices of the $X_i^R$. The modular data tells us that
\[
T_i^L = e^{2\pi i h_{i+1, 1}} = (-1)^i a^{i(i+2)}, \qquad T_\ell^R = e^{2\pi i h_{1, \ell + 1}} = (-1)^\ell b^{\ell(\ell+2)}
\]
and the $S$-matrices are if normalized such that $S_{0, 0} =1$
\[
S^L_{i, j} =  (-1)^{i+j} \frac{a^{2(i+1)(j+1)}- a^{-2(i+1)(j+1)}}{a^2-a^{-2}}
\]
and
\[
S^R_{\ell, k} =  (-1)^{\ell + k} \frac{b^{2(\ell+1)(k+1)} - b^{-2(\ell+1)(k+1)}}{b^2-b^{-2}}
\]
so that we see that
\[
\cC^L(u, v) \cong \cA(u, \tilde v), \qquad \cC^R(u, v) \cong \cA(v, \tilde u)
\]
with $\tilde v$ the smallest positive integer in $v + 4u \ZZ$ and $\tilde u $ the smallest positive integer in $u + 4v \ZZ$. Note that the corresponding values $s$ in Section \ref{subsec:CartanAn} are 
\[
s^L=e^{\pi\i\frac{v}{2u}},\qquad 
s^R=e^{\pi\i\frac{u}{2v}}
\]

\begin{remark}
The relevant fusion rule in our case is analog to the previous section 
\[\VirPhi_{1,2}
\otimes \VirPhi_{1,2}
=\VirPhi_{1,1}
\oplus \VirPhi_{1,3}
\]
The braiding can also be directly computed from the non-integer difference of the conformal weights
\[h_{1,1}=0,\qquad 
h_{1,2}=-\frac{1}{2}+\frac{3}{4}(u/v),\qquad 
h_{1,3}=-1+2(u/v)\]
Hence the braiding is given on the two direct summands by the scalar factors 
\[ e^{\pi\i(1-\frac{3}{2}(u/v))}=-s^{-3},\qquad
e^{\pi\i(\frac{1}{2}(u/v))}=s
\]
\end{remark}

\subsection{Representations  of a half lattice vertex algebra}
\label{subsec:CartanLattice}

We also consider the Euclidean space $\mathfrak{h}=\R^2$ with basis $c,d$ and inner product 
$$\begin{pmatrix} (c,c) & (c,d) \\ (d,c) & (d,d)\end{pmatrix}=\begin{pmatrix}0 & 2 \\2 & 0\end{pmatrix}$$
and the following even integral lattices of rank $1$ and $2$
\[L(0)=c\Z,\qquad L=c\Z\oplus d\Z.\]
We denote the corresponding Heisenberg vertex algebra by $H$ and its simple modules by  $\pi_{xc+yd}$. We denote the rank $1$ lattice algebra by $\Pi(0)$ and the full rank lattice vertex algebra by $\Pi$. One can consider for example the three vertex algebras 
\[
H,\qquad
 \Pi(0),\qquad
 \Pi.
\]
Their category of representations are $\Vect_\Gamma$ for the
following abelian group $\Gamma$ depending on the case
\[\h,\quad
\big(c\R\oplus\tfrac{1}{2}d\Z\big)/c\Z=\R/\Z\oplus \Z,\quad
\big(\tfrac{1}{2}c\Z\oplus\tfrac{1}{2}d\Z\big)/
(c\Z\oplus d\Z)=\Z_2\times \Z_2
\]
For the middle case we introduce for later use the parametrization 
$$\Pi_\ell(\lambda)=\bigoplus_{n\in\Z}\pi_{\ell(\tfrac{\k}{4}c+\tfrac{1}{2}d)+\lambda c+nc},\qquad \ell\in\Z,\;\lambda\in \R/\Z$$
The category of representations is $\Vect_\Gamma^Q$ with the quadratic form \[Q(xc+yd)=\exp\left(\pi\i \begin{pmatrix}
   x\\y  
\end{pmatrix}^T \begin{pmatrix}0 & 2 \\2 & 0\end{pmatrix}
\begin{pmatrix}
   x\\y  
\end{pmatrix} 
\right)
\]
Associated to any quadratic form $(\Gamma,Q)$ is a cohomology  class of abelian $3$-cocycle $(\sigma,\omega)$, that describe a concrete braiding and associator \cites{MacL52,JS93}. For vertex algebras, this corresponds to a choice of normalizations for the abelian intertwining operators. In our case we can choose the following slightly nonstandard braiding \[\sigma(xc+yd,x'c+y'd)=\exp\left(\pi\i \begin{pmatrix}
   x\\y  
\end{pmatrix}^T \begin{pmatrix}0 & 4 \\ 0 & 0\end{pmatrix}
\begin{pmatrix}
   x'\\y'  
\end{pmatrix} 
\right),
\]
satisfying the defining property $Q(xc+yd)=\sigma(xc+yd,xc+yd)$, which has the nice property that it is in both arguments again independent of the representative in the lattice coset $v+\tfrac{1}{2}\Z$. In particular this is well-defined without choices of representative, and thus bimultiplicative, and accordingly  we may choose trivial associator $\omega=1$.
For later convenience we also express the braiding in the alternative parametrization 
\[
\Pi_\ell(\lambda)\otimes\Pi_{\ell'}(\lambda')
\xrightarrow{\;\exp(2\pi\i\,(\ell\k/2+2\lambda)\ell')\;}
\Pi_{\ell'}(\lambda')\otimes\Pi_\ell(\lambda)
\]

\subsection{Cartan category \texorpdfstring{$\cC^\k$}{Ck}}\label{subsec:Cartan}

Consider as \emph{Cartan category} the semisimple braided tensor category
$$\cC^\k=\cC(u,v)\boxtimes \Vect_{L^*/L}$$ 
where  
$\Vect_{L^*/L}$ is the category of representations of the half-lattice algebra $\Pi(0)$ in Section \ref{subsec:CartanLattice} with simple modules $\Pi_\ell(\lambda)$ for $\ell\in\Z$, $\lambda\in \R/\Z$. The simple objects in $\cC^\k$ are denoted 
$$\VirPhi_{r,s}\boxtimes \Pi_\ell(\lambda)$$

\subsection{Twisting the Deligne product}

Consider two braided tensor categories $\cC, \cD$ and assume that both are graded by the same abelian group $\Gamma$. 
Consider the category $\Vect_\Gamma^Q$ for some quadratic form $Q$. Let $B$ be the associated bilinear form. Consider the group $\Gamma \times \Gamma$ with bilinear form $\widetilde B: \Gamma \times \Gamma \rightarrow \mathbb C^*, ((g,h),(g', h')) \mapsto B(g, h') B(g', h)$. Let $\widetilde Q$ be the associated quadratic form and consider the Deligne product
\[
\cC \boxtimes \cD \boxtimes \Vect_{\Gamma \times \Gamma}^{\widetilde Q}. 
\]
Denote the graded subcategories corresponding to a group element $g$ by a subindex $g$. Then we set the $(\Gamma, Q)$-twisted Deligne product to be
\[
\cC \boxtimes_{(\Gamma, Q)} \cD := \bigoplus_{g, h \in \Gamma} \cC_g \boxtimes \cD_h \boxtimes \left(\Vect_{\Gamma \times \Gamma}^{\widetilde Q}\right)_{g, h}.
\]
The effect of this procedure is that the categories $\cC$ and $\cD$ don't centralize each other anymore, but only projectively centralize each other, that is the monodromy is
\[
M_{X \boxtimes 1, 1 \boxtimes Y} =  B(g, h)\, \mathrm{Id}_{X \boxtimes Y}, \qquad \text{for} \ X \in \cC_g, \ Y \in \cD_h. 
\]
As an example let $\g$ be simple and simply laced, set $q = e^{\frac{\pi i}{k+h^\vee}}$ and $q' = e^{\pi(k+h^\vee)}$. Then consider $\cC = \Uq(\g)$-mod and $\cD = \mathrm{U}_{q'}(\g)$-mod. 
Then both categories are graded by $P/Q$ with $P$ the weight lattice of $\g$ and $Q$ the root lattice. 
For $B$ we take $e^{-2\pi i\kappa}$ with $\kappa$ the Killing form. Then 
it is shown in \cite{CGN} that for irrational $k$ the principal $W$-algebra of $\g$ has a vertex tensor category of modules that is equivalent to the twisted Deligne product
\[
\Uq(\g)\textup{-mod} \ \boxtimes_{(P/Q, B)} \ \mathrm{U}_{q'}(\g)\textup{-mod}.
\]
We restrict to $\g = \mathfrak{sl}_n$.
If we now take $\k = - n + \frac{u}{v}$ to be a non-degenerate admissible level, then neither 
$\Uq(\g)\textup{-mod}$ nor $\mathrm{U}_{q'}(\g)\textup{-mod}$ are semisimple and we replace both by their semisimplifications, that is we quotient by all negligible morphisms and consider the subcategory of this quotient that is generated by the images of the simple non-negligible objects.

Under semisimplification, the highest-weight modules $M_{(u-n)\omega_i}$ of highest-weight
$(u-n)\omega_i$ with $\omega_i$ the $i$-th fundamental weight of $\mathfrak{sl}_n$ become invertible objects in  $\Uq(\g)\textup{-mod}^{s.s.}$. Denote the images of $M_{(u-n)\omega_i} \boxtimes M_{(v-n)\omega_i}$ in $\Uq(\g)\textup{-mod}^{s.s.} \ \boxtimes_{(P/Q, B)} \ \mathrm{U}_{q'}(\g)\textup{-mod}^{s.s.}$ by $J_i$ (and the identity by $J_0$). Then it is a technical computation to verify that the $J_i$ are transparent objects and so one can modularize. The modularization of $X$ is isomorphic to the one of $Y$ if and only if $X \cong J_i \otimes Y$ for some $i = 0, \dots, n-1$. To be very explicit we take the example of $\mathfrak{sl}_2$. We write $M^\k_{r, s}$ for the image of 
$M_{(r-1))\omega_1} \boxtimes M_{(s-1)\omega_1}$ in $\Uq(\mathfrak{sl}_2)\textup{-mod}^{s.s.} \ \boxtimes_{(P/Q, B)} \ \mathrm{U}_{q'}(\mathfrak{sl}_2)\textup{-mod}^{s.s.}$. Note that $P/Q \cong \mathbb Z/2\mathbb Z$. The transparent invertible objects are $J_0 = M^\k_{1, 1}$ and $J_1 = M^\k_{u-1, v-1}$. The tensor products  are
\[
J_0 \otimes M^\k_{r, s} \cong M^\k_{r, s}, \qquad 
J_1 \otimes M^\k_{r, s} \cong M^\k_{u-r, v-s}.
\]
Let $L^\k_{r, s}$ the image of $M^\k_{r, s}$ in the modularization, then 
$L^\k_{r, s} \cong L^\k_{r', s'}$ if and only if $(r', s') \in  \{ (r, s), (u-r, v-s)\}$, i.e. one gets the well-known identification of the modules of $\Vir_{u,v}$.
Since modularization  (de-equivariantization, since the transparent objects correspond to the group $\mathbb Z/2\mathbb Z$) is inverse to equivariantization and since twisting the Deligne product is obviously an invertible operation the uniqueness of tensor categories of type $A_n$ tells us that 
\[
\cC(u,v) \cong  \left(\Uq(\mathfrak{sl}_2)\textup{-mod}^{s.s.} \ \boxtimes_{(P/Q, B)} \ \mathrm{U}_{q'}(\mathfrak{sl}_2)\textup{-mod}^{s.s.}\right)^{\textup{mod}}.
\]
with the upper index mod indicating that the category is modularized.

\section{Quantum group category}\label{sec:QuantumGroup}

\subsection{Nichols algebras in braided tensor categories}\label{subsec:introNichols}

Let $\cC$ be a braided tensor category. The Nichols algebra $\NicholsOf(M)$ of an object $M$ is a Hopf algebra in~$\cC$.  A standard textbook for Nichols algebras is \cite{HS20}, the reader is also referred to the lecture notes \cite{Len26}. 

More precisely, the Nichols algebra is a quotient of the tensor algebra $\mathfrak{T}(M)$ with the coproduct induced by the coproduct on $M$ familiar from Lie algebras 
$$M\xrightarrow{\Delta} \1\otimes M + M\otimes \1$$
extended to $\mathfrak{T}(M)$, which naturally involves the braiding. The Nichols algebra can be directly defined as the tensor algebra modulo the kernel of the quantum symmetrizer $\bigoplus_{n\geq 0}\sha_n$, see \cite{HS20} Section 1.9 or \cite{Len26} Section 2.1. Equivalently, the Nichols algebra is defined by several universal properties, that are the main reason for its significance. For example, it is the smallest Hopf algebra quotient of the tensor algebra that contains $M$, and hence it is the smallest Hopf algebra in $\cC$ generated by $M$ with the chosen coproduct. Note that the theory is usually developed for certain classes of braided tensor categories from Hopf algebra theory. The equivalence of the defining properties does not depend on this \cite{CLR23} Section 5.2, but we consider it an important problem to clarify the root system theory in the general setting of a braided tensor category. The following almost trivial Nichols algebras give a good intuition for the Nichols algebras constructed in this article.

\begin{example}\label{exm:NicholsTruncated}
Let $M$ be a $1$-dimensional vector space with basis $x$ and the following diagonal braiding
$$x\otimes x\xrightarrow{q} x\otimes x$$
for $q\in\C^\times$. Then the Nichols algebra $\NicholsOf(M)$ is the tensor algebra, which is the polynomial ring in one variable $\C[x]$ if $q$ is not a root of unity, and it is the truncated polynomial ring $\C[x]/x^n$ if $q$ is a primitive $n$-th root of unity.\\

Vaguely speaking  the reason for this behavior is that if $x$ is a derivation, then $x^n$ is usually not a derivation and cannot be set to zero without setting $x$ to zero. However if the base field has characteristic $n$, then $x^n$ is again a derivation, and a similar effect occurs for a braiding involving an $n$-th root of unity. 
\end{example}
\begin{example}
Let $M$ be an $n$-dimensional vector space with basis $x_1,\ldots,x_n$ and the trivial  braiding  
$$x_i\otimes x_j\mapsto x_j\otimes x_i,$$
then the Nichols algebra is the free commutative algebra or symmetric algebra $\mathrm{Sym}(X)$. Similarly, for the constant braiding
$$x_i\otimes x_j\mapsto -x_j\otimes x_i,$$
the Nichols algebra is the free anticommutative algebra or exterior algebra $\Lambda(X)$.
More generally, for any symmetric braiding the Nichols algebra is braided commutative, and this constitutes the trivial case of Nichols algebras. 
\end{example}

\begin{example}[Quantum group]
    Let $\g$ be a semisimple complex finite-dimensional Lie algebra with Cartan subalgebra $\h$ and Killing form $(-,-)$, and let $\Gamma=\h^*$, so $\CC=\Vect_\Gamma$ is the category of weight spaces, which we endow with the braiding $q^{(\lambda,\mu)}$ for some $q\in \C^ \times$. Let $\alpha_1,\ldots,\alpha_n$ be a choice of simple roots, take the object $X=\C_{\alpha_1}\oplus\cdots\oplus \C_{\alpha_n}$, which has a braiding matrix $q_{ij}=q^{(\alpha_i,\alpha_j)}$. Then the Nichols algebra $\NicholsOf(X)$ is the quantum Borel part of the (small) quantum group $u_q(\g)^+$.
\end{example}

\subsection{Our Nichols algebra}\label{subsec:NicholsAlgebra}

Recall from Section \ref{subsec:Cartan} the semisimple braided tensor category $\cC^\k$ with simple objects $\VirPhi_{r,s}\boxtimes \Pi_\ell(\lambda)$
for  $1 \leq r \leq u-1, 1\leq s\leq v-1$ and $\ell\in\Z$ and $\lambda\in \R/\Z$.
Suppose that $\Pi_\ellNichols(\lambdaNichols)$ is chosen in such a way  that the following expression is an odd integer
\begin{align}\label{formula_NicholsParameterCondition}
\frac{1}{2}(u/v)+4xy
=\frac{1}{2}(u/v)+(\ellNichols \k/2+2\lambdaNichols)\ellNichols 
=1+((\ellNichols+1)\k/2+2\lambdaNichols)\ellNichols. 
\end{align}
Consider in $\cC$ the simple object and the simple invertible object
\begin{align*}
M&=\VirPhi_{1,2}\boxtimes\Pi_\ellNichols(\lambdaNichols)
\qquad v> 2
\\
\integral&=\VirPhi_{1,1}\boxtimes\Pi_{2\ellNichols}(2\lambdaNichols)
\end{align*}
and in the degenerate case $v=2$ we define instead $M=\Lambda$. This choice is later justified by proving the main theorem, a more systematic justification can be found in the final paragraph of Section \ref{subsec:PartialSemisimplification}. Recall from Example \ref{exm_tripleProduct} the tensor products in $\catC^\k$ for $v>3$
\begin{align*}
M\otimes M &= 
\;\,\underbrace{\VirPhi_{1,1}\boxtimes\Pi_{2\ellNichols}(2\lambdaNichols)}_{\integral}
\;\oplus\; 
\VirPhi_{1,3}\boxtimes\Pi_{2\ellNichols}(2\lambdaNichols)\\
M\otimes M\otimes M
&=2\VirPhi_{1,2}\boxtimes\Pi_{3\ellNichols}(3\lambdaNichols)
\;\oplus\; 
\VirPhi_{1,4}\boxtimes\Pi_{3\ellNichols}(3\lambdaNichols)
\end{align*}
while for $v=3$ we have $M\otimes M=\Lambda$ without the second summand.\\

We now want to compute for $M$ the Nichols algebra in the sense of section \ref{subsec:introNichols}. Note that in our case the Nichols algebra is very elementary, but it may be the first example in a category beyond vector spaces with additional structure. 

\begin{lemma}\label{lm_Nichols}
The Nichols algebra of the object $M$ is nonzero in degrees $0,1,2$ except for $v=2$ and is as follows:
\begin{itemize}
\item For $v=2$ the Nichols algebra of the invertible object $M=\integral$ is
$$\NicholsOf(M)=1\oplus \integral$$
with the defining relation being the simple invertible object $M^{\otimes 2}$ set to zero.\\

\item For $v=3$ the Nichols algebra of the invertible object $M$ is
$$\NicholsOf(M)=1\oplus M\oplus \integral$$
with the defining relation being the simple invertible object $M^{\otimes 3}$ set to zero.\\

\item For $v>3$ the Nichols algebra of the simple object $M$ is 
$$\NicholsOf(M)=1\oplus M\oplus \integral$$
with the defining relation being the following summand
set to zero 
$$\VirPhi_{1,3}\boxtimes\Pi_{2\ellNichols}(2\lambdaNichols)\subset M^{\otimes 2}$$
This relation can also be written as a modified braided commutator 
\[\left(\id-e^{2\pi\i(u/v)}c_{M,M}\right)(M\otimes M)=0\]
and the Nichols algebra could be called an an exterior algebra. 
\end{itemize}
Note that in each case $\integral$ is the integral in the Nichols algebra, i.e. the invertible object in highest degree.
\end{lemma}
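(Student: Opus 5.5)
The plan is to compute the Nichols algebra degree by degree through the kernels of the quantum symmetrizers $\sha_n$. In a semisimple braided category, $\sha_2 = \id + c_{M,M}$, and it acts on each simple summand of $M\otimes M$ by a scalar once the braiding on $M\otimes M$ is known. In degree $3$ we use the factorization $\sha_3 = (\id\otimes\sha_2)(\id + c_{1}c_{2})$-type, i.e. $\sha_3=(\id + c_2 + c_2c_1)(\id\otimes \sha_2)$ up to conventions. This means it suffices to know the action of $c_1, c_2$ on $M^{\otimes 3}$, which the associator of Example~\ref{exm_tripleProduct} provides.

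The first step is the braiding on $M\otimes M$. The category $\cC^\k$ is a Deligne product, so $c_{M,M}$ is the product of the Virasoro braiding and the lattice braiding. By the Remark in Section~\ref{subsec:CartanVir}, the Virasoro factor acts by $-s^{-3}$ on $\VirPhi_{1,1}$ and by $s$ on $\VirPhi_{1,3}$, with $s=e^{\pi\i\frac{u}{2v}}$. By Section~\ref{subsec:CartanLattice}, the lattice factor is the scalar $\exp(2\pi\i(\ellNichols\k/2+2\lambdaNichols)\ellNichols)$. Condition \eqref{formula_NicholsParameterCondition} is exactly the statement that $s\cdot(\text{lattice scalar})=e^{\pi\i(\frac12\frac uv + (\ellNichols\k/2+2\lambdaNichols)\ellNichols)}=-1$. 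Hence $c_{M,M}=-\id$ on the summand $\VirPhi_{1,3}\boxtimes\Pi_{2\ellNichols}(2\lambdaNichols)$. On $\integral$ the scalar becomes $-s^{-3}\cdot(-s^{-1})=s^{-4}=e^{-2\pi\i u/v}\ne -1$, because $v\ge 3$ and $\gcd(u,v)=1$.

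With these scalars, $\ker\sha_2$ is precisely the $\VirPhi_{1,3}$-summand, and $\integral$ survives in degree $2$. This proves the stated relation. The braided-commutator form of the relation follows because the two eigenvalues of $c_{M,M}$ are $-1$ and $e^{-2\pi\i u/v}$: the operator $\id-e^{2\pi\i u/v}c_{M,M}$ kills $\integral$ and is invertible on the other summand, so its image is the relation summand.

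The $v=3$ case follows by the same computation, with $M\otimes M=\integral$ and $M$ invertible. There $M$ has a diagonal braiding scalar $q$, and Example~\ref{exm:NicholsTruncated} identifies the Nichols algebra as $\C[x]/x^n$ with $n$ the order of $q$. It therefore suffices to check that the self-braiding scalar of $M$ is a primitive cube root of unity. Using $\VirPhi_{1,2}$ at $v=3$, this scalar is $-s^{-3}$ times the lattice scalar, which equals $s^{-4}=e^{-2\pi\i u/3}$, and that is primitive of order $3$. For $v=2$, $M=\integral$ is invertible with self-braiding equal to $-1$; this should follow from the parameter condition by the same computation with $\ell=2\ellNichols$. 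Example~\ref{exm:NicholsTruncated} then gives $1\oplus\integral$.

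For $v>3$, we must show that degree $3$ vanishes, which is the main obstacle. The quotient of $M^{\otimes 3}$ by the degree-$3$ part of the ideal generated by the relation $R=\VirPhi_{1,3}\boxtimes\Pi_{2\ellNichols}(2\lambdaNichols)$ is $M^{\otimes 3}/(R\otimes M+M\otimes R)$. The plan is to show that $R\otimes M+M\otimes R=M^{\otimes3}$.

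First, $\VirPhi_{1,4}$ lies in both $R\otimes M$ and $M\otimes R$, and when $v=4$ it is absent. Second, the two copies of $\VirPhi_{1,2}$, which come from $\integral\otimes M\oplus R\otimes M$ and from $M\otimes\integral\oplus M\otimes R$, must be spanned together by the two lines $R\otimes M$ and $M\otimes R$. In the bases of Example~\ref{exm_tripleProduct}, this amounts to the off-diagonal associator entry $\left\{\begin{smallmatrix}1&1&2\\1&1&0\end{smallmatrix}\right\}=-1$ being nonzero, so that the $(X_2)$-lines in the two decompositions do not coincide. The paper notes exactly this: the upper $2\times2$ block has no zeroes for $p=v>3$. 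The lattice factor contributes only trivial associator and therefore does not affect this argument.

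Consequently the degree-$3$ part, and hence all higher degrees, vanish in the quotient algebra. Since the Nichols algebra is a further quotient, and is already nonzero in degrees $0,1,2$ by the above, we obtain $\NicholsOf(M)=1\oplus M\oplus\integral$.
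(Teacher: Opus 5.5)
Your proposal is correct and follows the paper's proof essentially step by step. The common steps are: the scalars $e^{-2\pi\i u/v}$ and $-1$ of $c_{M,M}$ on $\integral$ and on $\VirPhi_{1,3}\boxtimes\Pi_{2\ellNichols}(2\lambdaNichols)$; the truncated-polynomial argument for the invertible cases $v=2,3$; and, for $v>3$, the vanishing of degree $3$ via the nonzero off-diagonal associator entries of Example~\ref{exm_tripleProduct}. Your formulation $R\otimes M+M\otimes R=M^{\otimes 3}$ is a sharper phrasing of the paper's ``both summands in both bracketings vanish''.

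There is one slip to fix. The lattice self-braiding is $Q(xc+yd)=e^{\pi\i\,4xy}=e^{\pi\i(\ellNichols\k/2+2\lambdaNichols)\ellNichols}$, not the $2\pi\i$ version you quote from Section~\ref{subsec:CartanLattice}; you do use the $\pi\i$ version in your decisive equation. With this scalar, write the $v=2$ case out as $e^{\pi\i(4N-u)}=-1$, where $N$ is the odd integer in \eqref{formula_NicholsParameterCondition} and $u$ is odd, rather than asserting that it ``should follow''.
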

These Nichols algebras are roughly comparable to a truncated polynomial ring $\C[x]/x^2$, to a truncated polynomial ring $\C[x]/x^3$ and to an exterior algebra of a simple $2$-dimensional object, respectively, which were discussed in the previous section. 
\begin{proof}
The necessary braidings and associators were computed in Example \ref{exm_tripleProduct}.
The braiding $c_{X,X}$ acts on the tensor square
\[M\otimes M= 
\VirPhi_{1,1}\boxtimes\Pi_{2\ellNichols}(2\lambdaNichols)
\; \oplus \;
\VirPhi_{1,3}\boxtimes\Pi_{2\ellNichols}(2\lambdaNichols)
\]
by the scalars 
\[
\begin{pmatrix}
    -s^{-3}\cdot e^{\pi\i(4xy)}
    &
    0
    \\
    0 
    &
    s\cdot e^{\pi\i(4xy)}
\end{pmatrix},\qquad
\text{with }s=s^R=e^{\pi\i\,\frac{1}{2}(u/v)}
\]
For the choice $\frac{1}{2}(u/v)+4xy$ an odd integer these scalars are
\[
\begin{pmatrix}
    e^{-2\pi\i(u/v)}    
    &
    0
    \\
    0 
    &
    -1
\end{pmatrix},\qquad
\]
We have made our choice precisely to force the second entry to be $-1$. Hence the second summand is in the kernel of the second quantum symmetrizer $\sha_2=\id+c_{M,M}$ and it is by definition zero in the Nichols algebra, similarly as in Example \ref{exm:NicholsTruncated}.
From the given braiding, it is also clear that  we can write the second summand for $v\neq 2$ as a braided commutator 
\[\left(\id-e^{2\pi\i(u/v)}c_{M,M}\right)(M\otimes M)\]
We now continue depending on the cases
\begin{itemize}
\item 
For $v=3$ the second summand is not present anyway and $M$ is an invertible object. This means the tensor cube is
\[(M\otimes M)\otimes M= 
\VirPhi_{1,2} \boxtimes \Pi_{3\ellNichols}(3\lambdaNichols)
\]
and the braiding $c_{M,M}$ is given by the scalar $e^{-2\pi\i(u/v)}$. This is for $v=3$ a primitive third root of unity, so the tensor cube is  clearly in the kernel of the quantum symmetrizer $\sha_3$.
\item 
For $v>3$ we consider the tensor cube
\[(X\otimes X)\otimes X= 
2\VirPhi_{1,2} \boxtimes\Pi_{3\ellNichols}(3\lambdaNichols)
\; \oplus \;
\VirPhi_{1,4}\boxtimes\Pi_{3\ellNichols}(3\lambdaNichols)
\]
From the tensor square summand $\VirPhi_{1,3}\boxtimes\Pi_{2\ellNichols}(2\lambdaNichols)$ being zero in the Nichols algebra, it follows by the Nichols algebra being an algebra (or a direct factorization of the quantum symmetrizer $\sha_3$) that in the tensor cube the second summand of type $\VirPhi_{1,2}\boxtimes\Pi_{3\ellNichols}(3\lambdaNichols)$ vanishes. Similarly, in the right bracketing the second summand of this type vanishes. If the respective associator has nonzero entries, then both summands in both brackets are zero. But this we checked explicitly in Example \ref{exm_tripleProduct} for $\cC^R(u,v)$ and for $\Vect_{L^*/L}$ the associators are nonzero anyway. Hence the tensor cube vanishes altogether in the Nichols algebra. Note that this is the categorical version of the argument that in the exterior algebra with $xy+yx=0$ all triple products are zero. \SimonRem{Argument without associator? Counterexample?} 
\item 
For $v=2$ we have defined alternatively $M=\Lambda$. This is an invertible object and the braiding is $e^{\pi\i\cdot 4(4xy)}$. Since by assumption $4(4xy)=-2(u/v)+4(2\Z+1)$ which is for $v=2$ an odd integer, the braiding is $-1$ and the tensor square is in the kernel of the quantum symmetrizer $\sha_2$. 
\end{itemize}
\end{proof}

\subsection{Borel category}\label{subsec:QuantumBorel}

\begin{definition}
Let $\cB^\k$ be the tensor category of representations of the Nichols algebra $\NicholsOf(M)$ in Lemma \ref{lm_Nichols} inside the category $\cC^\k$. Here we take the tensor product using the bialgebra structure of $\NicholsOf(M)$, that is, the tensor product in $\cB^\k$ is the tensor product in $\cC^\k$ with an action of $\NicholsOf(M)$ via the coproduct of $\NicholsOf(M)$.    
\end{definition}

We list some important standard adjunctions, this will be discussed more systematically in Section \ref{sec:adjunction}: As for any algebra,  there is a functor forgetting the action of $\NicholsOf(M)$. This is an exact and faithful monoidal functor, whose left adjoint and right adjoint are the induction and coinduction functor; it is a special case of change of rings for the ring homomorphism $1\hookrightarrow \NicholsOf(M)$. As for any bialgebra, the functor forgetting the action is strict monoidal. 
\[\begin{tikzcd}[row sep=6ex, column sep=8ex]
{
\renewcommand{\arraystretch}{0.7}
\cC
\arrow[r, rightarrow, shift right=-8pt,"\mathrm{coind}"{pos=0.5, anchor=center, inner sep=0pt}] 
\arrow[r, leftarrow, shift right=0pt,"\forget"{pos=0.5, anchor=center, inner sep=0pt}]
\arrow[r,rightarrow, shift right=8pt,"\mathrm{ind}"{pos=0.5, anchor=center, inner sep=0pt}] 
}
& 
\cB
&
\begin{array}{c}
    \scriptstyle \textnormal{left exact}\\
    \scriptstyle \textnormal{exact} \\
    \scriptstyle \textnormal{right exact}
\end{array}
&
\begin{array}{c}
    \scriptstyle \textnormal{lax monoidal} \\
    \scriptstyle \textnormal{strict monoidal}\\
    \scriptstyle \textnormal{oplax monoidal}
\end{array}
\end{tikzcd}\]
On the other hand, by the unit $\NicholsAlgebraCounit$ any object $V\in \cC^\k$ can be turned into a $\NicholsOf(M)$-module $V_\NicholsAlgebraCounit$ with trivial action. This is an exact and faithful tensor functor in the other direction, whose right adjoint and left adjoint are the invariant and coinvariants functor; it is a special case of change of rings for $\NicholsAlgebraCounit:\NicholsOf(M)\to 1$.
\[\begin{tikzcd}[row sep=6ex, column sep=8ex]
{
\renewcommand{\arraystretch}{0.7}
\cC
\arrow[r, leftarrow, shift right=-8pt,"\mathrm{inv}"{pos=0.5, anchor=center, inner sep=0pt}] 
\arrow[r, rightarrow, shift right=0pt,"\mathrm{triv}"{pos=0.5, anchor=center, inner sep=0pt}]
\arrow[r,leftarrow, shift right=8pt,"\mathrm{coin}"{pos=0.5, anchor=center, inner sep=0pt}] 
}
& 
\cB
&
\begin{array}{c}
    \scriptstyle \textnormal{left exact}\\
    \scriptstyle \textnormal{exact} \\
    \scriptstyle \textnormal{right exact}
\end{array}
&
\begin{array}{c}
    \scriptstyle \textnormal{lax monoidal} \\
    \scriptstyle \textnormal{strict monoidal}\\
    \scriptstyle \textnormal{oplax monoidal}
\end{array}
\end{tikzcd}\]
These are one-sided inverse to each other, since $\forget\circ\triv=\id_\cC$

\bigskip

The Nichols algebra has a property familiar from local or basic algebras in classical algebra, see e.g. \cite{Zimm14} Section 1.6 and Section 4.5, : As a graded algebra with $\NicholsOf(M)_0=1$, the ideal of all elements in degree~$>0$ is the unique maximal ideal. Hence the simple objects in $\cB^\k$ are simple objects in $\cC^\k$ with trivial action. On the other hand the induced modules are the projective covers, as in the classical case:

\begin{lemma}
Assume $\cC$ is semisimple and $\NicholsOf$ is a 
$\mathbb{N}$-graded algebra with $\NicholsOf_0=\1$ and for some bound $N$ we have $\NicholsOf_n=0$ for $n>N$. Then the induced module $\NicholsOf\otimes_{\cC} X$ of any simple module $X\in\cC$ is the projective cover of $X_\NicholsAlgebraCounit$.
\end{lemma}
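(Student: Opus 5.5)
The plan has three steps: show that $P:=\NicholsOf\otimes_{\cC}X$ is projective, show that its top is $X_\NicholsAlgebraCounit$, and conclude that the surjection onto the top is essential.

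\emph{Projectivity.} Induction $\mathrm{ind}=\NicholsOf\otimes_\cC(-)$ is left adjoint to the exact forgetful functor $\forget:\cB\to\cC$, so
\[\Hom_{\cB}(\NicholsOf\otimes X,Y)\cong \Hom_{\cC}(X,\forget Y).\]
Since $\cC$ is semisimple, $X$ is projective in $\cC$. The right-hand side is therefore exact in $Y$, and $P$ is projective in $\cB$.

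\emph{Top of $P$.} Let $\NicholsOf_+=\bigoplus_{n>0}\NicholsOf_n$ be the augmentation ideal. It is a two-sided ideal because the grading is multiplicative, and it is nilpotent because $\NicholsOf_+^{N+1}\subset\bigoplus_{n>N}\NicholsOf_n=0$.

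First we show that every simple object of $\cB$ has the form $Y_\NicholsAlgebraCounit$ with $Y$ simple in $\cC$. For any module $V$, the image of $\NicholsOf_+\otimes V\to V$ is a submodule $\NicholsOf_+V$. If $V$ is simple and $\NicholsOf_+V=V$, iterating gives $V=\NicholsOf_+^{N+1}V=0$. Hence $\NicholsOf_+V=0$, the action factors through $\NicholsAlgebraCounit$, and $V=Y_\NicholsAlgebraCounit$ with $Y$ simple in $\cC$.

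Next we compute the maps from $P$ to simples using the adjunction and Schur's lemma in $\cC$:
\[\Hom_\cB(P,Y_\NicholsAlgebraCounit)\cong\Hom_\cC(X,Y).\]
This is $\C$ for $Y\cong X$ and zero otherwise, so $X_\NicholsAlgebraCounit$ is the unique simple quotient of $P$, with multiplicity one. Concretely, $P/\NicholsOf_+P=\NicholsOf_0\otimes X=X_\NicholsAlgebraCounit$, because $\NicholsOf_+P=\NicholsOf_+\otimes X$.

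\emph{Essential surjection.} It remains to show that the quotient map $P\to X_\NicholsAlgebraCounit$ has superfluous kernel. Its kernel is $\NicholsOf_+P$. Suppose $U\subseteq P$ is a submodule with $U+\NicholsOf_+P=P$. Applying $\NicholsOf_+$ gives
\[P=U+\NicholsOf_+P=U+\NicholsOf_+U+\NicholsOf_+^2P\subseteq U+\NicholsOf_+^2P,\]
and inductively $P=U+\NicholsOf_+^{N+1}P=U$. This is the Nakayama argument, and it is where boundedness of the grading is used. Alternatively, one can use the grading on $P$ inherited from $\NicholsOf$, which has finitely many degrees, and observe that any submodule surjecting onto degree $0$ contains $\NicholsOf_0\otimes X$ and hence generates $P$.

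So $P\to X_\NicholsAlgebraCounit$ is a projective cover.

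The step I expect to need the most care is making the ``submodule generated by'' and $\NicholsOf_+^kV$ constructions rigorous inside an abelian category $\cC$ rather than vector spaces. I would define $\NicholsOf_+^kV$ as the image of $\NicholsOf_+^{\otimes k}\otimes V\to V$ under iterated action. Exactness of $\otimes$ in the semisimple category $\cC$ guarantees that images are preserved, and the Nakayama step then follows from these image identities.
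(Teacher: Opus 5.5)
Your proof is correct and follows the paper's route. Projectivity comes from the adjunction with the exact forgetful functor and $X$ being projective in the semisimple $\cC$; then the counit epimorphism $\NicholsOf\otimes X\to X_\NicholsAlgebraCounit$ has superfluous kernel by the bounded grading, where your iteration $P=U+\NicholsOf_+^{k}P$ is a cleaner form of the paper's backward induction on degree. The only loose spot is your aside that a submodule surjecting onto degree $0$ ``contains $\NicholsOf_0\otimes X$'': for a non-graded submodule this is not immediate, but it is exactly what the Nakayama step proves, so rely on the main argument.
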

\SimonRem{I think would work for $\cC$ nonsemisimple if $X$ is a projective cover}
\begin{proof}
    The induced module is projective, because for $N\twoheadrightarrow M$ and $\NicholsOf\otimes_{\cC} X\to M$ we can lift the $\cC$-morphism $X\to M$ to $X\to N$, since $X$ is projective in $\cC$ semisimple, and then there is a unique extension to a $\NicholsOf$-module morphism from the induced module (differently said: induction is right exact and hence maps projectives to projectives).

    The counit $\NicholsAlgebraCounit$ gives rise to a $\NicholsOf$-module epimorphism
    $$\NicholsAlgebraCounit\otimes \id:\;
    \NicholsOf\otimes_{\cC} X\twoheadrightarrow X_\NicholsAlgebraCounit$$

    We finally show that this is a projective cover, which means showing that the previous epimorphism is coessential: Suppose $L\subset \NicholsOf\otimes_{\cC} X$ is a submodule that together with the kernel of $\NicholsAlgebraCounit\otimes \id$ generates the entire induced module. Then the image must be a submodule of $X$, and since $X$ is assumed simple it is all of $X$. But then due to the bounded degree, we can backwards inductively show that $L=\NicholsOf\otimes_{\cC} X$. 
\end{proof}

\begin{corollary}
For $v\geq 3$ the projective cover of the simple object $V=\VirPhi_{r,s}\boxtimes \Pi_\ellNichols(\lambdaNichols)$ with trivial action is the induced module $\NicholsOf(M)\otimes_{\cC^\k} X$. Its Loewy diagram is 
 \begin{center}
\begin{tikzpicture}[scale=1]
\node (top) at (0,3) [] {$\VirPhi_{r, s} \boxtimes \Pi_{\ell+2\ellNichols}(\lambda+2\lambdaNichols)$};
\node (left) at (-3,0) [] {$\VirPhi_{r, s - 1} \boxtimes \Pi_{\ell+\ellNichols}(\lambda+\lambdaNichols)$};
\node (right) at (3,0) [] {$\VirPhi_{r, s + 1} \boxtimes \Pi_{\ell+\ellNichols}(\lambda+\lambdaNichols)$};
\node (bottom) at (0,-3) [] {$\VirPhi_{r, s} \boxtimes \Pi_{\ell}(\lambda)$};
\draw[->, thick] (top) -- (left);
\draw[->, thick] (top) -- (right);
\draw[->, thick] (left) -- (bottom);
\draw[->, thick] (right) -- (bottom);
\end{tikzpicture}
\end{center}   
with the usual convention that $\VirPhi_{r, 0} = \VirPhi_{r, v} = 0$. 
\end{corollary}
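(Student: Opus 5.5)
The projectivity and the projective-cover property follow directly from the preceding Lemma: $\NicholsOf=\NicholsOf(M)$ is $\mathbb{N}$-graded with $\NicholsOf_0=\1$, it vanishes above degree $2$ by Lemma \ref{lm_Nichols}, and $\cC^\k$ is semisimple. So the real content is the Loewy diagram. I will read the statement with $X=V=\VirPhi_{r,s}\boxtimes\Pi_\ell(\lambda)$ for general $(\ell,\lambda)$.

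\textbf{Step 1: composition factors.} Use the grading of the induced module,
\[\NicholsOf\otimes X=(\1\otimes X)\oplus(M\otimes X)\oplus(\integral\otimes X).\]
Compute each graded piece in $\cC^\k$. The Virasoro part uses the $A_{v-1}$ fusion rule $\VirPhi_{1,2}\otimes\VirPhi_{r,s}\cong\VirPhi_{r,s-1}\oplus\VirPhi_{r,s+1}$ (with the boundary convention). The lattice part is additive in $(\ell,\lambda)$. This gives the four composition factors, with $\integral\otimes X=\VirPhi_{r,s}\boxtimes\Pi_{\ell+2\ellNichols}(\lambda+2\lambdaNichols)$. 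All of them are simple, and all are pairwise distinct because their $\Pi$-labels differ.

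\textbf{Step 2: radical filtration.} Since $\NicholsOf_{>0}$ is the unique maximal graded ideal and $\NicholsOf_{>0}^3=0$, the radical layers are $\NicholsOf_{\ge n}\cdot(\NicholsOf\otimes X)/\NicholsOf_{\ge n+1}\cdot(\NicholsOf\otimes X)$, which is just the degree-$n$ piece. Hence the Loewy layers are: degree $0$ (the head $X$), degree $1$ (the two middle factors), and degree $2$ (the socle $\integral\otimes X$). The picture in the statement should therefore be read with the generating degree-$0$ factor as head and the degree-$2$ factor as socle, matching the introduction's diagram. Which factor is drawn at the top depends on orientation convention; I would state this explicitly.

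\textbf{Step 3: arrows.} The arrows from the head to each middle factor are nonzero: the action $M\otimes(\1\otimes X)\to M\otimes X$ is the identity, so each summand of $M\otimes X$ is generated from the head. The arrows from the middle factors to the socle come from multiplication $\mu\colon M\otimes M\to\integral$, which is the projection onto the $\VirPhi_{1,1}$-summand. Restricted to a summand $Y=\VirPhi_{r,s\pm1}\boxtimes(\dots)$ of $M\otimes X$, the composite $M\otimes Y\to M\otimes M\otimes X\to\integral\otimes X$ must be shown nonzero; by simplicity of $Y$ and of $\integral\otimes X$ that is equivalent to the arrow being present.

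\textbf{Main obstacle.} The hard step is this last nonvanishing. I would prove it by rigidity. The map $(\mu\otimes\id)\circ(\id_M\otimes-)$, viewed via adjunction as $M\otimes X\to M^*\otimes\integral\otimes X$, is an isomorphism, because $\mu\colon M\otimes M\to\integral$ is a nondegenerate pairing ($\integral$ is invertible and $M\cong M^*\otimes\integral$). An isomorphism between semisimple objects with multiplicity-free, distinct summands cannot kill any summand. Therefore each middle factor maps nonzero to the socle, which yields the diamond. When $s=1$ or $s=v-1$, one middle factor vanishes by the convention and the diagram degenerates to a chain.
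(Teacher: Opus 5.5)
Your proposal is correct and follows the route the paper intends: the cover property comes from the preceding Lemma, and the Loewy layers are read off from the grading $\NicholsOf(M)\otimes X=X\oplus(M\otimes X)\oplus(\integral\otimes X)$ together with the fusion rule $\VirPhi_{1,2}\otimes\VirPhi_{r,s}\cong\VirPhi_{r,s-1}\oplus\VirPhi_{r,s+1}$. The paper gives no further argument, so two of your additions supply what it leaves implicit: the rigidity step (a nonzero $\mu\colon M\otimes M\to\integral$ gives $M\cong M^*\otimes\integral$, so every simple summand of $M\otimes X$ acts nontrivially onto the socle $\integral\otimes X$), and the observation that the head is the degree-$0$ factor $X$, which means the statement's picture is drawn upside down relative to the introduction.
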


\SimonRem{Recall we compared basic/connected for algebra and algebra in a category in Screening Theorem Paper Section 5.2}

\subsection{Quantum group category}\label{subsec:QuantumGroup}

Recall that for any tensor category $\cB$ there is a braided tensor category called \emph{Drinfeld center}.
Recall from \cite{CLR23}~Section~3.1 that for a tensor category $\cB$ with a braided tensor category $\cC$, which is a central subcategory of $\cB$, that is, the embedding has an upgrade to a braided embedding in the center $\cZ(\cB)$, then there is a braided tensor category called \emph{relative Drinfeld center} $\cZ_\cC(\cB)$ \cites{Maj95,Lau20}. Recall from \cite{CLR23} Section 6.1. that for a Hopf algebra $\Nichols$ in a braided tensor category $\cC$ there is a braided tensor category of \emph{Yetter-Drinfeld modules} $\YD{\Nichols}(\cC)$ \cite{Besp95}, which agrees with the relative Drinfeld center of $\Rep(\Nichols)(\cC)$ \cite{Lau20}.

\begin{definition}
For our choice of $\cC^\k$ and Nichols algebra $\NicholsOf(M)$ in $\cC^\k$ with category or representations $\cB^\k$, we define our \emph{generalized quantum group}
\[\cU^\k:=\cZ_{\cC^\k}(\cB^\k)=\YD{\NicholsOf(M)}(\cC^\k).\]
It  is a nondegenerately braided tensor category.
\end{definition}
\begin{remark}
We can define variants of this quantum group by replacing in the Deligne product $\cC^\k$  by $\cC^R(u,v)\boxtimes \Pi(0)$ the first factor by the right category of Virasoro modules and/or the second factor $\Pi(0)$ by a fully unrolled version $\Vect_\h$ or the fully rolled version $\Pi$ in Section \ref{subsec:CartanLattice} or just by a rank one Heisenberg VOA. They all contain a version of the same $M$  and correspond to the variations discussed in Section \ref{sec:variations}
\end{remark}

For any relative Drinfeld center $\cZ_\cC(\cB)$ there is an exact faithful tensor functor $\forget_c:\cZ_\cC(\cB)\to \cB$ forgetting the half-braiding $c_{X,(-)}$ and under favorable assumptions there is a right adjoint lax monoidal functor $(\forget_c)^\ra:\cZ_\cC(\cB)\leftarrow \cB$. We remark that this adjunction is a lax comonad in Section \ref{sec:adjunction}. In particular in our case where $\cB$ is the category of representations of a Hopf algebra $\Nichols\in \cC$,  there is an explicit right adjoint 
$$(\forget_c)^\ra=\coLaugwitz:\;V\mapsto\Nichols\otimes_\cC V$$ 
with the regular $H$-coaction on the first factor and an explicit somewhat involved $H$-action on the tensor product involving double braidings on $V$, which turns the result into a Yetter-Drinfeld module, see  \cite{LW21} Theorem 3.13 and following. 
In particular, the image of the tensor unit $\1_\NicholsAlgebraCounit$ is $\Nichols$ with regular coaction and adjoint action, and it is a commutative algebra in $\cU^\k$. 
 The image of an arbitrary trivial $\Nichols$-module $X_\NicholsAlgebraCounit$ is in the quantum group case precisely the coVerma module  
 $$\coLaugwitz(X_\NicholsAlgebraCounit)
 =(u_q\otimes_{u_q^{\geq0}} X_\NicholsAlgebraCounit\big)^*,$$
 which is as a $\cC$-object $(\NicholsOf(M)^*\otimes X)^*$ with the coregular action of the lower Borel part $\NicholsOf(M)^*$ and the adjoint action of the upper Borel part $\NicholsOf(M)^*$ involving the highest-weight condition encoded by $X$. We compute in our case:

 \begin{example}\label{exm:ourNicholsAdjoint}
 The Nichols algebra $\Nichols=\NicholsOf(M)$ from Lemma \ref{lm_Nichols} for $v\geq 3$ has the following adjoint representation 
$${_{\mathrm{ad}}}\NicholsOf(M)=\1\oplus K$$
 where $K$ is an indecomposable extension 
 $$0 \to \Lambda \to K\to M \to 0,$$
 which also appears (not splitting) as a submodule in the regular representation.
 For the proof, note first in general that the adjoint action increases the degree, and the unit is a direct summand. The adjoint action of the primitive subspace $M$ on $M$ is a braided commutator, and the proof of  Lemma \ref{lm_Nichols} shows that this is nonzero unless $e^{2\pi\i(u/v)}=-1$, which is $v=2$.
 \end{example}
 
\subsection{Relation to \texorpdfstring{$\Uq(\sl_{2|1})$}{Uq(sl(2|1))}: Partial semisimplification}\label{subsec:PartialSemisimplification}


In the current section we have constructed a nondegenerately braided nonsemisimple tensor category $\cU^\k$ as Yetter-Drinfeld modules of a Nichols algebra inside the semisimple braided tensor category $\cC^\k$, which is related to the category of representations of the Virasoro algebra. Both categories do not admit a fiber functor to vector spaces. We want to explain in which way this $\cU^\k$ is nevertheless closely related by an interesting new general construction scheme to the quantum group $\Uq(\sl_{2|1})$ at $q=e^{2\pi\i(u/v)}$, which we call \emph{partial semisimplification}.  The essential idea is to write the quantum group as an extension of $\Uq(\sl_2)$ by a parabolic Nichols algebra and then passing to the semisimplification of $\Uq(\sl_2)$. The second copy of semisimplified $\mathrm{U}_{q'}(\sl_2)$  for $q'=e^{2\pi\i(v/u)}$ appearing in $\cU^\k$ will not play a role in this section.

\bigskip

As explained in \cite{HS20} Section 13.2 or summarized  \cite{Len26} Section 5.2, we have the following appearance of Nichols algebras in parabolic contexts, which is for example a key insight for reflection theory: Let $X=\bigoplus_{i\in I} X_i$ be a direct sum of simple objects in a braided tensor category $\cH$ (in our case graded vector spaces), where $I$ are called simple roots, and $\NicholsOf(X)$ the Nichols algebra. Let $J\subset I$ be a subset of simple roots, then we have a canonical embedding $\NicholsOf(X_J)\to \NicholsOf(X)$ and a projection, and thus by the Radford projection theorem we have a smash product with the space of coinvariants 
$$\NicholsOf(X)=\NicholsOf(X_{I\backslash J})\rtimes \NicholsOf(X_J)$$
where $X_J=\bigoplus_{i\in J} X_i$ is an object in $\cH$ and $\smash{X_{I\backslash J}=\bigoplus_{i\in I\backslash J}\mathrm{ad}_{\NicholsOf(X_J)}(X_i)}$ is an object in $\smash{\YD{\Nichols(X_J)}(\cH)}$. Moreover, we have an equivalence of braided tensor categories
\[\YD{\NicholsOf(X)}
\cong\YD{\NicholsOf(X_{I\backslash J})}\left(\YD{\NicholsOf(X_J)}(\cH)\right)
\]
The Nichols algebra $\NicholsOf(X_{I\backslash J})$ is an interesting example of a Nichols algebra in the category of representations of  the lower rank quantum group associated to $X_J$. Its root system has been determined  in  \cite{CL17} to be a restriction arrangement and \cite{AA20} show that essentially all Nichols algebras over quantum groups arise from such constructions. 

\bigskip

Depending on the context we can now consider $\cC$, the semisimplification and modularizations of a usual small quantum group $\YD{\NicholsOf(X_J)}(\cH)$ or respectively a corresponding category of tilting modules of a corresponding infinite quantum group, to which the object $X$ can be lifted. In particular, we can consider the image of the Nichols algebra $\smash{\NicholsOf(X_{I\backslash J})}$ under the semisimplification functor $F^{s.s.}$.

\begin{definition}
The \emph{partial semisimplification} of a braided tensor category of generalized quantum group type, that is of the form $\smash{\YD{\NicholsOf(X)}(\cH)}$, with respect to the parabolic data $J\subset I$ is defined as the braided tensor category
\[\YD{F^{s.s.}(\NicholsOf(X_{I\backslash J}))}(\cC)
\]
\end{definition}
Before discussing the example related to our article, let us mention a few general questions for this new definition
\begin{problem}\label{prob_NicholsSemisimplify}
When is the result of the semisimplification of a Nichols algebra actually again a Nichols algebra
$$
F^{s.s.}(\NicholsOf(X_{I\backslash J}))
=\NicholsOf(M),\qquad 
M=F^{s.s.}(X_{I\backslash J})
$$
Note that there is an obvious morphism in one direction, but it does neither have to be surjective nor injective:
\begin{itemize}
    \item Since $F^{s.s.}$ is not faithful nor conservative, the image $F^{s.s.}(M)$ may not be a generator of $F^{s.s.}(\mathfrak{T}(M))$. In extreme cases, $F^{s.s.}(M)$ might be zero (and with it the image of the tensor algebra), but the image of the quotient $F^{s.s.}(\mathfrak{B}(M))$ may be nonzero and generated by higher degree elements that in the image are primitive. This happens in particular in our  case $v=2$ as we discuss after Example \ref{ex:ssParabolic}.
    \item Since $F^{s.s.}$ is not left exact, there may be additional relations in $\NicholsOf(F^{s.s.}(M))$, because elements may become primitive up to terms killed by $F^{s.s.}$ resp. the kernel of the quantum symmetrizer may become larger. We have not constructed an example for this, but intuitively it may happen in root strings become so long that they touch negligible objects (without $M$ doing so already). 
\end{itemize}
\end{problem}

\begin{problem}\label{prob:Ostrik}
Does our construction for parabolics of quantum groups in quantum groups produce the quotient category under the thick abelian tensor ideal associated in \cite{Ost97,CEO25} to the Richardson cell of the chosen parabolic?
Can one establish a corresponding classification for quantum super groups?
\end{problem}

A main class of examples of interest is as follows:

\begin{example}\label{ex:ssParabolic}
Let $\g$ be a contragredient Lie superalgebra and $\Uq(\g)$ be the corresponding quantum group. Let $J$ be the set of even simple roots. Assume that~$\g$ is a Lie superalgebra of type $1$, which means that $\NicholsOf(X_{I\backslash J})$ is an exterior algebra. As a remark, this also implies that the answer to Problem \ref{prob_NicholsSemisimplify} should in this case automatically be positive, so the image under semisimplification is $\NicholsOf(M)$ for $M=F^{s.s.}(X_{I\backslash J})$. Partial semisimplification in this case means, in a sense, that the $q$-deformed bosonic root vectors are semisimplified, while the fermionic root vectors remain non-semisimple. 
\end{example}
We now explicitly discuss qualitatively the example directly related to the category $\cU^\k$ in this article: Let $\Uq(\sl_{2|1})$ be the associated quantum super group, whose category of representations can be described as $\smash{\YD{\NicholsOf(X_1\oplus X_2)}(\cH)}$ for $\cH=\Vect_\Gamma$ the category of representations of the Cartan subalgebra and $X_1\oplus X_2$ the $2$-dimensional vector space with diagonal braiding

\[
(q_{ij})=\begin{pmatrix} q^2 & q^{-1} \\ q^{-1} & -1 \end{pmatrix},\qquad q=e^{2\pi\i(u/v)}
\]

\begin{remark}To be more precisely, in our construction the overall Cartan $\Gamma$ for our version of $\Uq(\sl_{2|1})\otimes \mathrm{U}_{q'}(\sl_2)$ has rank $4$, according to the  free field realization of $\Vir$ times the category or representations of $\Pi(0)$: There are discrete labels $r,s$ labeling $L_{r,s}$ and the weights of $\Uq(\sl_{2})\otimes \mathrm{U}_{q'}(\sl_2)$, and a discrete and a continuous label $\ell,\lambda$.   
\end{remark}

This Nichols algebra has root vectors $E_1,E_2$ and $E_{12}:=[E_1,E_2]_{q_{12}}$ , where $E_2,E_{12}$ are fermionic. We pick the parabolic $\sl_2$ with $J=\{1\}$. The quantum Serre relation is $[E_1,E_{12}]_{q_{11}q_{12}}=0$. Hence $X_{I\backslash J}=\mathrm{span}(E_2,E_{12})$ is the standard representation of the Hopf subalgebra $u_q(\sl_2)$ generated by $E_1,F_1,K_1$. The tensor square of $X_{I\backslash J}$ has relations 
\begin{align*}
E_2^2&=0 \\
E_{12}^2&=0 \\
E_2E_{12} &= E_2E_1E_2 = -q_{12}^{-1} E_1E_{12}
\end{align*}
The braiding over $u_q(\sl_2)\otimes \C[K_2]$ in this tensor square is 
$q_{22}=q_{12,12}=-1$ for the $3$-dimensional submodule generated by $E_2\otimes E_2$, and it is $q_{2,12}=-q^{-1}$ for the $1$-dimensional submodule generated by $E_2$. Now recall from \cites{AP95} and \cite{BK01} Theorem 3.3.20 that the semisimplification of the category of representations of $\Uq(\sl_2)$ roughly coincides with our category
$\cC^\k_R$ related to the Virasoro algebra. Then the Nichols algebra $F^{s.s.}(\NicholsOf(X_{I\backslash J} ))$ in this section coincides with our Nichols algebra $\NicholsOf(M)$ for $M=F^{s.s.}(X_{I\backslash J})$ in $\cC^\k_R\subset \cC^\k$ in Lemma \ref{lm_Nichols} and the partial semisimplification in this section coincides with the category $\cU^\k_R\subset \cU^\k$ constructed in the previous section. In a sense, the $q$-deformed bosonic root $\alpha_1$ is semisimplified, while the fermionic root $\alpha_2$ remains non-semisimple, and they lead to the characteristic diamond structure of the projective covers.

The Verma modules associated to this Nichols algebra i.e. image of the functor $\coLaugwitz$ are in this language the Kac modules of $\Uq(\sl_{2|1})$ induced from an irreducible representations of $\Uq(\sl_2)$, semisimplified. They will correspond to the restrictions of the Virasoro modules $L_{r,s}\boxtimes\Pi_\ell(\lambda)$ and hence to $\widehat{\sl}_2$-modules $\sigma^{\ell+1}\mathcal{E}_{2\lambda-\k,\Delta_{r,s}}$ according to formula \eqref{eqn:FF-affine-sl2-correspondence}. For special values of $\lambda$ these are indecomposable extensions of the $\widehat{\sl}_2$-modules $\sigma^{\ell+1}\mathcal{D}_{r,s}^\pm$ according to formula \eqref{eqn:exact-seq-E+E-}, which correspond to the simple modules of $\Uq(\sl_{2|1})\otimes \mathrm{U}_{q'}(\sl_2)$, partially semisimplified.

\bigskip

We finally consider the degenerate case $v=2$: Here, the module $X_{I\backslash J}$ is a projective simple module over $\Uq(\sl_2)$ and is sent to zero by semisimplification. The tensor square $(X_{I\backslash J})^{\otimes 2}$ is the projective cover of a $1$-dimensional object, which is also sent to zero. However, in the Nichols algebra  the only surviving term in degree $2$ is the $1$-dimensional quotient of the tensor square,  say $\integral$, which is not sent to zero, but becomes a new primitive generator of the image of the Nichols algebra 
$$F^{s.s.}(\NicholsOf(X_{I\backslash J}))=1\oplus 0 \oplus F^{s.s.}(\integral)=\NicholsOf(M),\qquad M=F^{s.s.}(\integral)$$
This matches the degenerate case $v=2$ in our main result and the choices in Lemma \ref{lm_Nichols}.

\begin{problem}
Different Weyl chambers gives different partial semisimplifications of the same category. What are their relation? In terms of vertex algebra, there is the Wakimoto realization of $\mathfrak{sl}_2$ inside a $\beta\gamma$-system, which is associated to $u_q(\sl_2)$ at $q^4=1$ and one (resp. two)  fermionic screenings, so this realization would lead to a similar result involving $U_q(\sl(2|1))$, but now in the other Weyl chamber with two fermionic simple roots and a braiding matrix 

\[
(q_{ij})=\begin{pmatrix} -1 & -q \\ -q & -1 \end{pmatrix},\qquad q=e^{2\pi\i(u/v)}
\]

\end{problem}

\begin{remark}\label{rem:Mukhin}
The category of representations of $U_q(\sl_{2|1})$ appears at least conjecturally for another vertex algebra, namely for the $W$-algebra $W^\ell(\sl_{2|1})$, and it is related to $L_\k(\mathfrak{sl}_2)$ via Kazama-Suzuki duality which we will explain in all detail in section \ref{sec:convolution}.
Let us for now discuss generic level and ignore additional free fields. 

One explicit mechanism for such an equivalence would be an invertible bimodule. While the regular bimodule for a vertex algebra is complicated to construct, we could consider the regular bimodule $U_q(\sl_{2|1})$ over itself and transport the left and right action to  $W^\ell(\sl_{2|1})$ and $L_\k(\mathfrak{sl}_2)$, respectively. It would be very interesting if this could be related to the bimodule constructed in \cite{FJM25} and more explicitly \cite{FJM26}~Example~2 in the context of deformed $W$-algebras, in the limit with zero deformation.
\end{remark}

\section{Weight modules for affine \texorpdfstring{$\sl_2$}{sl(2)} at admissible levels}\label{subsec:affine-sl2-wt-mods}


Let $\k$ be an admissible level for $\sl_2$, that is $\k = -2 + \frac{u}{v}$
where $u, v$ are coprime positive integers both greater than one. Set $t = \k+2$. The main interest of our article the following category
\begin{definition}
Let $\cD^\k$ be the category of  weight modules of the simple affine vertex algebra $L_\k(\sl_2)$ of $\sl_2$ at the admissible level $\k$.
\end{definition}

The category $\cD^\k$ is called $\cC_k^{\mathrm{wt}}(\sl_2)$ in \cite{ACK}, from which we quote extensively in this section. 
We recall what is known about this category:

\subsection{The abelian category}\label{subsec:sl2abelian}

We start with the abelian structure.

\begin{theorem}\textup{\cite{ACK}}
    For  $\k$ admissible, the category $\cD^\k$ of finitely-generated weight $L_\k(\sl_2)$-modules is a locally finite abelian category with enough projective objects.
\end{theorem}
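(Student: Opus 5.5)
The statement is quoted from \cite{ACK}, so the plan is to reassemble it from the known classification of weight modules rather than build new machinery. There are three things to prove: local finiteness (every object has finite length and Hom spaces are finite-dimensional), and existence of enough projectives.

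\emph{Step 1: simple objects and finite length.} First I would recall the classification of simple weight $L_\k(\sl_2)$-modules at admissible level, via Zhu's algebra and spectral flow: the highest-weight modules $\cD^\pm_{r,s}$, the relaxed modules $\cE_{\lambda,\Delta_{r,s}}$, and all their spectral flows $\sigma^\ell$. For each weight $(\mu,\Delta)$ (the $h_0$-eigenvalue and the generalized $L_0$-eigenvalue), only finitely many simples have a nonzero weight space there. Each such weight space is finite-dimensional. Now let $M$ be finitely generated. Its generators lie in finitely many weight spaces, and each of these is finite-dimensional. The conformal weights of $M$ are bounded below on each spectral flow sector, and simple subquotients can only appear in finitely many spectral flow sectors. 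A counting argument on the weight spaces touched by the generators then bounds the number of composition factors, giving finite length. Finite-dimensionality of Hom spaces follows: a morphism is determined by where it sends the generating weight vectors, and these images lie in finite-dimensional weight spaces.

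\emph{Step 2: projectives.} The main obstacle is constructing the projective cover $\sigma^\ell(\cP_{r,s})$ of each simple module. I would use a truncated induction. Take the functor $M\mapsto M_{\mu,\Delta}$, restricted to the full subcategory of modules whose conformal weights lie in a bounded window for the relevant spectral flow. This functor is exact. Choose a suitable relaxed highest-weight module and take the quotient of the universal induced module obtained by imposing the truncation; this quotient represents the functor, so it is projective in the truncated subcategory. Spectral flow then moves it to every sector.

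It remains to check that these truncated projectives are projective in all of $\cD^\k$. The point is that $\Ext^1$ between simples vanishes unless their conformal weights differ by an integer within the window. The Ext computations from \cite{ACK} show that the relevant extensions only connect $\sigma^{\ell}\cD^+_{r,s}$ with $\sigma^{\ell\pm1}\cD^+_{r,s\pm1}$ and with the relaxed modules. So a window of length one suffices to capture all of them. Finally, every finite-length object is a quotient of a finite direct sum of such projectives, which gives enough projectives.
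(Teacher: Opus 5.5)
The paper itself gives no argument for this statement. It is quoted from \cite{ACK}, and the text later attributes the logarithmic projective covers to \cite{A} and \cite{ACK}. Judged against what a proof needs, your Step~1 has a genuine gap.

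Counting dimensions of weight spaces at the generators' weights only bounds the composition factors that are nonzero at those weights, and composition factors need not be. For example, $\cE^+_{r,s}$ is generated by a top vector of $h_0$-weight $\lambda_{r,s}+2$ and conformal weight $\Delta_{r,s}$. Its composition factor $\cD^+_{r,s}$ has no nonzero vector of that weight.

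Your claim that composition factors occur in only finitely many spectral flow sectors, with conformal weights bounded below in each, is exactly what has to be proved, and you do not justify it. Even your preliminary claim that each weight supports only finitely many simples is false in general. Adjust $\lambda$ so that a fixed $h_0$-weight $\mu$ occurs in $\sigma^\ell(\cE_{\lambda,\Delta_{r,s}})$. The lowest conformal weight on that weight line is then quadratic in $\ell$ with leading term $-\ell^2\k/4$. So for $\k>0$ (i.e.\ $u>2v$) and rational $\mu$, infinitely many non-isomorphic simples are nonzero at a suitable fixed weight.

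Finite length is therefore a genuine theorem here, not a counting consequence of the classification of simples. Your Hom-finiteness argument is fine once weight spaces are finite-dimensional.

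Step~2 does not work either. From the exact sequence $0\to\sigma^{\ell+1}(\cE^-_{u-r,v-s-1})\to\sigma^\ell(\cP_{r,s})\to\sigma^\ell(\cE^-_{u-r,v-s})\to0$ (and its analogue for $s=v-1$), every projective cover contains relaxed modules twisted by two different spectral flows. But $\sigma^m$ of a relaxed module is bounded below in conformal weight only for $m=0$. So no twist of $\cP_{r,s}$ lies in a subcategory with conformal weights in a bounded window, and the object representing $M\mapsto M_{\mu,\Delta}$ there cannot be the projective cover.

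The globalization step is also unsound. An atypical block is an infinite chain $\dots,\sigma^{\ell}(\cD^+_{r,s}),\sigma^{\ell+1}(\cD^+_{r,s+1}),\dots$ with nonzero $\Ext^1$ between neighbours. Composition factors at the edge of any window therefore have extensions leaving it. Hence $\Ext^1(P,S)=0$ for $S$ outside the window cannot be deduced from $\Ext^1$ between simples. Appealing to the $\Ext$ computations of \cite{ACK} also presupposes the structure theory you are trying to establish.

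You never show that the representing object is finitely generated of finite length. You also never show that the typical relaxed simples $\sigma^\ell(\cE_{\lambda,\Delta_{r,s}})$ are projective.

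What is missing is structural input of three kinds:
an explicit construction of the $\sigma^\ell(\cP_{r,s})$, for instance from logarithmic modules attached to the realization $L_\k(\sl_2)\hookrightarrow\Vir_{c_\k}\otimes\Pi(0)$ as in \cite{A};
a direct proof that these and the typical $\sigma^\ell(\cE_{\lambda,\Delta_{r,s}})$ are projective;
an argument that every finitely generated weight module is a quotient of a finite direct sum of them.
Finite length then follows, because these projectives have finite length.
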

We first recall the indecomposable lower-bounded modules in $\cD^\k$. To describe them, define
\begin{equation}
\lambda_{r,s} = r-1-ts,\qquad\Delta_{r,s}=\frac{(r-ts)^2-1}{4t}
\end{equation}
for integers $1\leq r\leq u-1$ and $0\leq s\leq v-1$. These have the symmetries
\begin{equation}\label{eqn:lambda-Delta-symmetries}
    \lambda_{u-r,v-s} = -\lambda_{r,s}-2,\qquad\Delta_{u-r,v-s}=\Delta_{r,s}
\end{equation}
for $1\leq r\leq u-1$ and $1\leq s\leq v-1$.
\begin{enumerate}
\item 
For $r\in\ZZ_{\geq 1}$, let $\cL_{r,0}$ denote the simple $L_\k(\sl_2)$-module whose top level is the simple $r$-dimensional $\sl_2$-module $L_r$.
 It is a module for $L_\k(\sl_2)$ if and only if $1\leq r\leq u-1$. 
 \item For $\lambda\in\CC$, let $\cD_\lambda^+$ denote the simple $L_\k(\sl_2)$-module whose top level is the  simple highest-weight $\sl_2$-module $D_\lambda^+$ of highest-weight $\lambda$. 
It is a module for $L_\k(\sl_2)$ if and only if $\lambda=\lambda_{r,s}$ for some integers $1\leq r\leq u-1$ and $0\leq s\leq v-1$. In this case, we set $\cD_{r,s}^+:=\cD_{\lambda_{r,s}}^+$, so that in particular $\cD_{r,0}^+=\cL_{r,0}$. The module $\cD_{r,s}^+$ is the irreducible highest-weight $\slhat_2$-module whose highest-weight vector has $h_0$-weight $\lambda_{r,s}$ and conformal weight $\Delta_{r,s}$. 
\item 
 Let $\cD_{r,s}^-$ be the contragredient $L_\k(\sl_2)$-module of $\cD_{r,s}^+$. This is the irreducible lowest-weight $\slhat_2$-module whose lowest-weight vector has $h_0$-weight $-\lambda_{r,s}$ and conformal weight $\Delta_{r,s}$. Note that $\cD_{r,0}^- = \cL_{r,0}$.
 \item For $\lambda+2\ZZ \in \CC/2\ZZ$ and $\Delta\in\CC$ one can construct two $\sl_2$-modules $\cE^\pm_{\lambda,\Delta}$
 whose weight-support is $\lambda+2\ZZ$ with one-dimensional weight spaces and Casimir eigenvalue $\Delta$. 
 Let $\cE^\pm_{\lambda,\Delta}$ be the almost simple quotient of the Verma module induced from $\smash{E^\pm_{\lambda,2t\Delta}}$.
 It is a module for $L_\k(\sl_2)$ if and only if $\Delta=\Delta_{r,s}$ for some integers $1\leq r\leq u-1$ and $1\leq s\leq v-1$. These modules are simple and isomorphic if $\lambda +2\ZZ \neq \pm\lambda_{r,s} + 2\ZZ$. In this case we will just write 
 $\cE_{\lambda,\Delta_{r, s}}$ for $\cE^\pm_{\lambda,\Delta_{r, s}}$.

 If $\lambda\in\pm\lambda_{r,s}+2\ZZ$, then we set $\cE_{r,s}^+=\cE^+_{\lambda_{r,s},\Delta_{r,s}}$ and $\cE^-_{r,s}=\cE^-_{-\lambda_{r,s},\Delta_{r,s}}$. Note that $\cE^\pm_{\mp\lambda_{r,s},\Delta_{r,s}} =\cE^\pm_{u-r,v-s}$ by \eqref{eqn:lambda-Delta-symmetries}.
 The $\cE_{r,s}^\pm$ are indecomposable, and  there are non-split short exact sequences
\begin{align}\label{eqn:exact-seq-E+E-}
 &0 \longrightarrow \cD_{r,s}^+ \longrightarrow \cE_{r,s}^+ \longrightarrow \cD^-_{u-r,v-s} \longrightarrow 0, \\   &0 \longrightarrow \cD_{r,s}^- \longrightarrow \cE_{r,s}^- \longrightarrow \cD^+_{u-r,v-s} \longrightarrow 0.
\end{align}
for $1\leq r\leq u-1$ and $1\leq s\leq v-1$.
\end{enumerate}
The remaining simple objects of $\cD^\k$
are spectral flow twist of the lower-bounded ones, that is modules for $\ell \in \ZZ$ of the form 
\begin{equation*}
\sigma^\ell(\cL_{r,0}),\qquad\sigma^\ell(\cD_{r,s}^+),\qquad\sigma^\ell(\cD_{r,s}^-),\qquad\sigma^\ell(\cE_{\lambda,\Delta_{r,s}})
\end{equation*} 
There are some identifications
\begin{align}\label{eqn:HW-as-SF-of-LW}
\cD_{r,s}^+\cong\begin{cases}
    \sigma(\cD^-_{u-r,v-s-1}) & \text{if}\,\,\,1\leq s\leq v-2\\
    \sigma^2(\cD^-_{r,v-1}) & \text{if}\,\,\, s=v-1\\
\end{cases},\qquad \cL_{r,0}\cong\sigma(\cD_{u-r,v-1}^-)
\end{align}
for $1\leq r\leq u-1$. As a result, any simple object of $\cD^\k$ is isomorphic to exactly one of the following:
\begin{equation*}
    \sigma^\ell(\cD_{r,s}^+),\qquad\sigma^\ell(\cE_{\lambda,\Delta_{r,s}})
\end{equation*}
for integers $1\leq r\leq u-1$ and $1\leq s\leq v-1$, $\lambda\notin\pm\lambda_{r,s}+2\ZZ$, and $\ell\in\ZZ$.

It is also shown in \cite{ACK} that every simple object of $\cD^\k$ has a projective cover. The simple modules $\sigma^\ell(\cE_{\lambda,\Delta_{r,s}})$ are already projective in $\cD^\k$, while
logarithmic projective covers of the highest-weight modules were constructed \cites{A,ACK}. Let $\cP_{r,s}$ denote the projective cover of $\cD_{r,s}^+$ for $1\leq r\leq u-1$ and $1\leq s\leq v-1$, so that 
$\sigma^\ell(\cP_{r, s})$ is the projective cover (and also injective hull) of $\sigma^\ell(\cD^+_{r, s})$. Then there are non-split short exact sequences
\begin{align}
         0 \longrightarrow \sigma^{\ell+1}(\cE^-_{u-r, v-s-1}) \longrightarrow 
        &\, \sigma^{\ell}(\cP_{r, s}) \longrightarrow  \sigma^{\ell}(\cE^-_{u-r, v-s}) \longrightarrow 0, \ \  \ 1\leq s\leq v-2 \\
         0 \longrightarrow \sigma^{\ell+2}(\cE^-_{r, v-1}) \longrightarrow 
        &\,\sigma^{\ell}(\cP_{r, v-1}) \longrightarrow  \sigma^{\ell}(\cE^-_{u-r, 1}) \longrightarrow 0 
\end{align}
for $1\leq r\leq u-1$, $1\leq s\leq v-1$, and $\ell\in\ZZ$.

\subsection{The monoidal structure}\label{subsec:sl2monoidal}

We continue with the monoidal structure

\begin{theorem}
    For admissible levels $\k$, we have the following results
    \begin{enumerate}
        \item \cite{Cr24}: $\cD^\k$ admits the braided tensor category structure with ribbon twist of \cite{HLZ06} for categories of strongly $\CC$-graded generalized modules for a vertex operator algebra. 
        \item \cites{CMY1, NORW24}: $\cD^\k$ is a ribbon category. 
        \item \cite{Cr3}: The Verlinde conjecture of \cites{CR1, CR2} for $\cD^\k$ is true. 
    \end{enumerate}
\end{theorem}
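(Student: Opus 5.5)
This theorem collects three results from the literature, so the plan is to reduce each part to the cited work and check that its hypotheses hold for $\cD^\k$ at admissible $\k=-2+u/v$.

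\textbf{Part (1): existence of the braided tensor structure.} I will invoke the logarithmic tensor category theory of \cite{HLZ06}. It produces a braided tensor structure with twist $e^{2\pi\i L_0}$ on a category $\cC$ of strongly $\CC$-graded generalized $L_\k(\sl_2)$-modules, provided three conditions hold:
\begin{itemize}
\item $\cC$ is closed under the $P(z)$-tensor product;
\item $\cC$ is closed under contragredients;
\item the convergence and extension property for products and iterates of intertwining operators holds.
\end{itemize}
Closure under contragredients is immediate from the list in Section~\ref{subsec:sl2abelian}, since $\cD^\pm_{r,s}$ are exchanged and spectral flow commutes with duality up to sign.

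Strong grading comes from the $h_0$-weight grading together with finite-dimensionality of the generalized $L_0$-eigenspaces in fixed $h_0$-weight. Spectral flow is needed here, because the modules are not lower bounded. I expect the key input to be the locally finite abelian structure with enough projectives from \cite{ACK}. By right exactness of the tensor product, that structure reduces closure to tensor products of projectives $\sigma^\ell(\cP_{r,s})$ and $\sigma^\ell(\cE_{\lambda,\Delta_{r,s}})$.

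The convergence and extension property is the step I expect to be the main obstacle. For it I would follow \cite{Cr24}: realize $\cD^\k$ inside a larger category through the free field realization $L_\k(\sl_2)\hookrightarrow \Vir_{c_{u,v}}\otimes\Pi(0)$ from \cite{A}, or through the Kazama--Suzuki/$N=2$ relation. In that larger setting $C_1$-cofiniteness-type arguments for the $h_0$-graded pieces give differential equations with regular singularities. Those equations then yield convergence and extension.

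\textbf{Part (2): rigidity and the ribbon structure.} I will use \cites{CMY1,NORW24}. It suffices to show that the simple objects are rigid, since for a finite-length category with enough projectives, rigidity then propagates by the standard exactness arguments.

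The simples come in two kinds:
\begin{itemize}
\item The modules $\sigma^\ell(\cD^+_{r,s})$ are spectral flows of highest-weight modules. Their rigidity follows from fusion with $\cL_{r,0}$ and the rigid simple current $\sigma(\1)$, combined with an explicit computation of the rigidity composite for the generator $\cD^+_{1,1}$.
\item The modules $\sigma^\ell(\cE_{\lambda,\Delta_{r,s}})$ are obtained from the $\cD$'s via the fusion rules.
\end{itemize}
The twist is balanced by construction, and duality is compatible with the twist, so $\cD^\k$ is ribbon.

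\textbf{Part (3): the Verlinde formula.} I will cite \cite{Cr3}. The idea is to compare Grothendieck-ring characters with the modular $S$-transformation of characters, using the resolution of the projectives and the rigidity established in part (2).
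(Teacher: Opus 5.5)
The paper gives no argument for this theorem; it only quotes \cite{Cr24}, \cite{CMY1}, \cite{NORW24} and \cite{Cr3}. Reducing each part to the cited work, as you do, is therefore exactly what the paper does and is all that is needed.

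The sketches you attach, however, do not describe how these results are proved, and the one for part (2) contains a step that fails. You say the simples $\sigma^\ell(\cE_{\lambda,\Delta_{r,s}})$ are obtained from the $\cD$'s via the fusion rules. This cannot work, because $h_0$-weights are additive under fusion: for any intertwining operator, $[h_0,\mathcal{Y}(w,x)]=\mathcal{Y}(h_0w,x)$.
\begin{itemize}
\item Every $\sigma^\ell(\cD^\pm_{r,s})$, including $\sigma^\ell(\cL_{r,0})$, has $h_0$-weights in $\ZZ+t\ZZ+\ell\k=\frac{1}{v}\ZZ$.
\item The module $\sigma^\ell(\cE_{\lambda,\Delta_{r,s}})$ has weights in $\lambda+\ell\k+2\ZZ$, which is disjoint from $\frac{1}{v}\ZZ$ whenever $\lambda\notin\frac{1}{v}\ZZ$.
\end{itemize}
So the uncountably many generic typical modules never occur in fusion products of highest-weight-type modules, and your argument does not give their rigidity. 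Separately, passing from rigid simples to rigidity of all objects is not automatic. Before rigidity is known the tensor product is only right exact, so that step needs its own justification.

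The route in \cite{CMY1}, which the paper recalls in Section~\ref{subsec:sl2Amodules} and reuses later, treats all simples uniformly through the free field algebra $A=\Vir_{c_\k}\otimes\Pi(0)$ in $\cD^\k$:
\begin{itemize}
\item Every simple $A$-module is local, so $\cD^\k_A$ is rigid by \cite{CMSY24} Theorem 3.14.
\item Induction is compatible with contragredients and a mild left-exactness condition holds, so induction $\cD^\k\to\cZ(\cD^\k_A)$ is fully faithful by \cite{CMSY24} Theorem 3.20.
\item \cite{CMSY24} Theorem 3.21 then transfers rigidity back to $\cD^\k$.
\end{itemize}
For part (1), of the two options you mention, the one matching the known construction is the Kazama--Suzuki relation, i.e.\ duality with the $N=2$ algebra $W_\ell(\mathfrak{sl}_{2|1})$. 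Its weight modules are lower bounded with finite-dimensional conformal weight spaces, unlike those of $L_\k(\mathfrak{sl}_2)$. If you keep sketches, replace the part (2) sketch by the argument above; otherwise the bare citations suffice.
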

In particular all fusion rules are known, mainly from \cites{Cr3, NORW24}.
The tensor product respects spectral flow, that is for any objects $M_1$, $M_2$ of $\cD^\k$ and $\ell_1,\ell_2\in\ZZ$, 
   \[
   \sigma^{\ell_1}(M_1)\boxtimes\sigma^{\ell_2}(M_2)\cong\sigma^{\ell_1+\ell_2}(M_1\boxtimes M_2).
   \]
If the conformal weight of a module $M$ lies in the coset $\Delta + \ZZ$ and the weight support is in $\lambda  + 2\ZZ$, then the conformal weight of $\sigma^\ell(M)$ lies in the coset $\Delta + \frac{\ell \lambda}{2}+ \frac{\ell^2 k }{4}$.

Theorem 4.7 of \cite{Cr24} are the following fusion rules
\begin{align}\nonumber
\mathcal{L}_{r}^{\k} \boxtimes \sigma^{\ell}(\mathcal{L}_{r'}^{\k}) &\cong \bigoplus_{r''=1}^{u-1} N_{r,r'}^{u,r''} \sigma^{\ell}(\mathcal{L}_{r''}^{\k}) \\ \nonumber
\mathcal{L}_{r}^{\k} \boxtimes \sigma^{\ell}(\mathcal{D}_{r',s}^{\pm}) &\cong \bigoplus_{r''=1}^{u-1} N_{r,r'}^{u,r''} \sigma^{\ell}(\mathcal{D}_{r'',s}^{\pm}) \\ \nonumber
\mathcal{L}_{r}^{\k} \boxtimes \sigma^{\ell}(\mathcal{E}_{\lambda;\Delta_{r',s}}) &\cong \bigoplus_{r''=1}^{u-1} N_{r,r'}^{u,r''} \sigma^{\ell}(\mathcal{E}_{r-1+\lambda;\Delta_{r'',s}}) \\
\mathcal{L}_{r}^{\k} \boxtimes \sigma^{\ell}(\mathcal{P}_{r',s}^{\k}) &\cong \bigoplus_{r''=1}^{u-1} N_{r,r'}^{u,r''} \sigma^{\ell}(\mathcal{P}_{r'',s}^{\k}) \nonumber
\end{align}
for all $1 \le r, r' \le u - 1$, $1 \le s \le v - 1$ and $\lambda \in \mathbb{C}$ with
\begin{equation}\nonumber
N_{t,t'}^{u,t''} =
\begin{cases}
1 & \text{if } |t-t'|+1 \le t'' \le \min\{t+t'-1,\, 2u-t-t'-1\} \text{ and } t+t'+t'' \text{ odd}, \\
0 & \text{else}.
\end{cases}
\end{equation}
The conformal weight of the spectrally flown modules is given in (2.11) of \cite{CR1}. This together with the fusion rules above allows us to compute that the modules $\sigma^{\ell}(\mathcal{L}_{r}^{\k}), \sigma^{\ell}(\mathcal{D}_{r,s}^{\pm}),  \sigma^{\ell}(\mathcal{E}_{\lambda;\Delta_{r,s}}), \sigma^{\ell}(\mathcal{P}_{r,s}^{\k})$ centralize all $\mathcal{L}_{r'}^{\k}$ for odd $r'$ if and only if $r \in \{ 1, u-1 \}$, in particular the variation $\cD^\k_R$ of $\cD^\k$ can be characterized as the centralizer of all $\mathcal{L}_{r'}^{\k}$ for odd $r'$.

\subsection{The free field realization}\label{subsec:sl2freefield}

This realization is due to Adamovic \cite{A}.
We recall the presentation of \cite{CMY1}.  

\bigskip

Let $\Vir_{c_\k}$ be the simple Virasoro vertex operator algebra of central charge $c_\k =1 - \frac{6(k+1)^2}{k+2}$ 
as in Section \ref{subsec:CartanVir}. It is strongly rational \cite{W}, its simple modules are the $\VirPhi_{r,s}$ of lowest conformal weight $h_{r,s}=\frac{1}{4uv}((su-rv)^2-(u-v)^2)$ for $1\leq r \leq u-1$ and $1\leq s \leq v-1$ and identifications $\VirPhi_{r,s} = \VirPhi_{u-r,v-s}$. Recall that the fusion rules are
\begin{equation}\label{eqn:Vir-fus-rules}
    \VirPhi_{r,s} \otimes \VirPhi_{r',s'} \cong  \bigoplus_{\stackrel{r'' = \vert r-r'\vert+1}{r+r'+r''\,\text{odd}}}^{\min(r+r'-1,2u-r-r'-1)}\bigoplus_{\stackrel{s'' = \vert s-s'\vert+1}{s+s'+s''\,\text{odd}}}^{\min(s+s'-1,2v-s-s'-1)} \VirPhi_{r'',s''}.
\end{equation}

\bigskip
As in Section \ref{subsec:CartanLattice}, 
let $L = \ZZ c + \ZZ d$ be the lattice with bilinear form
$$\begin{pmatrix} (c,c) & (c,d) \\ (d,c) & (d,d)\end{pmatrix}=\begin{pmatrix}0 & 2 \\2 & 0\end{pmatrix}$$
and let $V_L=\pi^{c,d}\otimes \CC[L]$ be the corresponding lattice vertex algebra, where $\pi^{c,d}$ is the rank $2$ Heisenberg vertex operator algebra generated by $c$ and $d$ and $\CC[L]$ is the group algebra of the lattice $L$. Define the vertex subalgebra $\Pi(0) = \pi^{c,d} \otimes \CC[\ZZ c] \subseteq V_L$.
\begin{theorem}[\cite{A} Theorem 5.5]\label{thm: conformal embedding}
    If $\k$ is a non-integral admissible level of $\sl_2$, then there is an injective vertex algebra homomorphism $L_\k(\sl_2) \hookrightarrow \Vir_{c_\k} \otimes \Pi(0)$ sending $h_{-1}\vac$ to $2\mu_{-1}\vac$, where $\mu=\frac{\k}{4}c+\frac{1}{2}d$. This embedding is conformal if the conformal vector of $\Pi(0)$ is taken to be $\frac{1}{2}(c_{-1}d_{-1}-d_{-2}+\frac{\k}{2}c_{-2})\vac$.
\end{theorem}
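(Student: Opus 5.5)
The plan is to realize $L_\k(\sl_2)$ via Adamović's inverse quantum Hamiltonian reduction. We first construct explicit fields in $\Vir_{c_\k}\otimes\Pi(0)$ that satisfy the $\slhat_2$ relations at level $\k$. Inside $\Pi(0)$ we set $\mu=\frac{\k}{4}c+\frac12 d$ and $\nu=\frac{\k}{4}c-\frac12 d$ (or a similar combination), and check the pairings $(\mu,\mu)=\k/2$ and $(c,\mu)=1$ from the Gram matrix. We then define
\[
e(z)=e^{c}(z),\qquad h(z)=2\mu(z),\qquad f(z)=\big((\k+2)^2 T_{\Vir}(z)-(\text{quadratic and derivative terms in }c,d)\big)e^{-c}(z).
\]
The coefficients in $f$ are fixed so that the OPEs close. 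The vertex operator $e^{c}$ lies in $\Pi(0)$ because $c\in L(0)=c\Z$, and $e^{-c}$ does too, so all three fields live in $\Vir_{c_\k}\otimes\Pi(0)$. The OPE checks are routine Wick computations:
\begin{itemize}
\item $h(z)h(w)\sim 2\k/(z-w)^2$ follows from $(\mu,\mu)=\k/2$;
\item $h(z)e(w)\sim 2e(w)/(z-w)$ follows from $(2\mu,c)=2$;
\item $e(z)e(w)$ is regular because $(c,c)=0$;
\item $e(z)f(w)\sim \k/(z-w)^2+h(w)/(z-w)$ uses the fact that $T_{\Vir}$ commutes with $e^{c}$, together with $(c,\cdot)$ applied to the free-field part;
\item $f(z)f(w)$ regular and $h(z)f(w)\sim -2f(w)/(z-w)$ fix the remaining coefficients, with the central charge $c_\k$ entering exactly at this point.
\end{itemize}
This gives a homomorphism $V^\k(\sl_2)\to \Vir_{c_\k}\otimes\Pi(0)$ with $h_{-1}\vac\mapsto 2\mu_{-1}\vac$.

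Conformality is then a direct comparison. We compute the Sugawara vector $\frac{1}{2(\k+2)}\big(\tfrac12 h_{-1}^2+e_{-1}f_{-1}+f_{-1}e_{-1}\big)\vac$ in terms of the free fields. We check that it equals $\omega_{\Vir}+\frac12(c_{-1}d_{-1}-d_{-2}+\frac{\k}{2}c_{-2})\vac$. As a consistency check, the central charges should add up: $c_\k+\big(2+12\cdot(\text{background charge})^2\big)=3\k/(\k+2)$.

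The main obstacle is showing that the map factors through the simple quotient $L_\k(\sl_2)$ and is then injective. Injectivity is automatic once the map factors, because $L_\k(\sl_2)$ is simple and the map is nonzero. For factorization, the target $\Vir_{c_\k}\otimes\Pi(0)$ is not simple as a module over the image, so simplicity of the image is not free. We would first try the following argument. The maximal ideal of $V^\k(\sl_2)$ at admissible level is generated by a single singular vector of $h_0$-weight $2(u-1)$ and conformal weight $(u-1)v$, namely $e_{-1}^{\,\cdot}$-type. We show that this vector maps to zero by relating it to the Virasoro singular vector of $\VirPhi_{1,1}$ at level $(u-1)(v-1)$. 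Since $\Vir_{c_\k}$ is the simple quotient, that Virasoro vector vanishes. Concretely, its image has the form $(\text{Virasoro expression})\cdot e^{(u-1)c}$ for weight reasons, namely $h$-weight $2(u-1)$ forces the lattice part $e^{(u-1)c}$, and the coefficient must be a Virasoro singular vector of the correct degree. Making this identification precise, or alternatively arguing via the action of the Zhu algebra on top spaces, is the delicate step. Non-integrality of $\k$ ($v\ge2$) is used here, since for $v=1$ the lattice part degenerates.
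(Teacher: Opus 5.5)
The paper gives no proof of its own here; it quotes Adamovi\'c. Your route is the right one: write down explicit inverse-reduction fields, then show that the generating singular vector $w$ of the maximal ideal of $V^\k(\sl_2)$ maps to zero. Here $w$ has $h_0$-weight $2(u-1)$ and conformal weight $(u-1)v$, and I write $\Phi$ for your homomorphism. However, the step that carries the theorem is left open, and the reason you sketch for it does not work as stated. The $h_0$-weight only places $\Phi(w)$ in the sector $\Vir_{c_\k}\otimes\pi_{(u-1)c}$, which still contains arbitrary Heisenberg oscillators. So ``weight reasons'' do not give the form $a\otimes e^{(u-1)c}$, and nothing in the proposal shows that $a$ is a Virasoro singular vector. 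Relatedly, $v\geq 2$ is not needed because of any degeneration of the lattice part.

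Both gaps close once you use that $w$ is singular.
\begin{itemize}
\item From $h_n=2\mu_n$ you get $\mu_n\Phi(w)=0$ for $n\geq 1$.
\item Write $Y(e^c,z)=E^-(z)\,e^{c}\,E^+(z)\,z^{c_0}$ with $E^+(z)=\exp(-\sum_{n>0}c_nz^{-n}/n)$. On this sector $z^{c_0}=1$ because $(c,c)=0$, and $E^-(z)$ is an invertible power series. Hence $e_n\Phi(w)=0$ for all $n\geq 0$ forces $E^+(z)\Phi(w)=\Phi(w)$, i.e.\ $c_n\Phi(w)=0$ for $n\geq 1$ (induct on $n$; the $c_n$ commute).
\item Since $\mu,c$ span $\C c\oplus\C d$, this gives $\Phi(w)=a\otimes e^{(u-1)c}$. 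Moreover $L_0a=(u-1)(v-1)a$, because $e^{(u-1)c}$ has weight $u-1$ for the conformal vector of $\Pi(0)$ in the statement.
\item Now $f=(\k+2)\,\omega_{\Vir}\otimes e^{-c}+\vac\otimes Pe^{-c}$ with $Pe^{-c}\in\pi_{-c}$ of weight one. The series $Y(Pe^{-c},z)e^{(u-1)c}$ has at most double poles, since $e^{(u-2)c}$ has lowest weight in $\pi_{(u-2)c}$. On the other hand, $(\k+2)T(z)a\otimes Y(e^{-c},z)e^{(u-1)c}=(\k+2)T(z)a\otimes(e^{(u-2)c}+O(z))$.
\item If $m\geq 1$ is maximal with $L_ma\neq 0$, the coefficient of $z^{-m-2}$ in $f(z)\Phi(w)$ is $(\k+2)L_ma\otimes e^{(u-2)c}\neq 0$. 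This contradicts $f_{m+1}\Phi(w)=0$.
\item So $a$ is a Virasoro singular vector of degree $(u-1)(v-1)>0$ in the simple $\Vir_{c_\k}$, hence $a=0$.
\end{itemize}
This positivity of the degree is exactly where $v\geq 2$ is used: for $v=1$ one has $w=e_{-1}^{u-1}\vac\mapsto e^{(u-1)c}\neq 0$.

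Finally, the coefficient of $T_{\Vir}$ in $f$ must be $\k+2$, not $(\k+2)^2$. With coefficient $\alpha$, the Sugawara vector maps to $\frac{\alpha}{\k+2}\omega_{\Vir}$ plus a vector of $\Pi(0)$. Comparing $(1)$-products, this is a Virasoro vector only if $\alpha=\k+2$. With $\alpha=\k+2$, the $\Pi(0)$-part is exactly $\frac12(c_{-1}d_{-1}-d_{-2}+\frac{\k}{2}c_{-2})\vac$.
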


Recall that $\cC^\k$ is defined as the category of generalized vertex algebra modules over $\Vir_{c_\k} \otimes \Pi(0)$. Then the embedding in the previous theorem gives a exact faithful lax monoidal restriction functor 
\[
\Res: \cC^\k \rightarrow  \cD^\k
\]
We parametrize as anticipated in Section \ref{subsec:CartanLattice} the simple modules for $\Pi(0)$ as $\Pi_\ell(\lambda)$
for $\ell \in \ZZ$ and $\lambda \in \CC$, where the label $\lambda$ indicates the $\mu_0$-eigenvalues and by construction $\Pi_\ell(\lambda) \cong \Pi_\ell(\lambda+1)$.
Thus $\cC^\k=$ has simple modules $\VirPhi_{r,s} \otimes \Pi_\ell(\lambda)$
for $1\leq r \leq u-1$, $1\leq s \leq v-1$, $\ell \in \ZZ$, and $\lambda \in \mathbb C/\mathbb Z$. Then the restriction functor satisfies (see \cite{A} Section 7 as well as \cite{CMY1}):
\begin{equation}\label{eqn:FF-affine-sl2-correspondence}
    \Res(\VirPhi_{r,s} \otimes \Pi_{\ell -1}(\lambda)) \cong  \begin{cases}
    \sigma^\ell(\cE^-_{u-r, v-s}) & \text{if}\,\, \lambda = \nu_{r, s} \ \text{(atypical)} \\
    \sigma^\ell(\cE^-_{r, s}) & \text{if}\,\,\lambda = \nu_{u-r, v-s} \ \text{(atypical)}\\
    \sigma^\ell(\cE_{2\lambda-\k, \Delta_{r, s}}) & \text{otherwise \ (typical)} \\
\end{cases} , 
\end{equation}
for 
\[
\nu_{r, s} = \frac{1}{2}(r-1-t(s-1))).
\]
The modules $\sigma^\ell(\cD^+_{r, s})$ for $1\leq r\leq u-1$ and $1\leq v\leq s-1$ appear as both submodules and quotients of simple $\Vir_{c_\k} \otimes \Pi(0)$-modules. Namely, 
\begin{equation*}
       \Res(\VirPhi_{r,s} \otimes \Pi_{\ell -1}(\nu_{r, s})) \cong \sigma^\ell(\cE^-_{u-r, v-s}) \twoheadrightarrow \sigma^\ell(\cD^+_{r, s}),
\end{equation*} 
and using \eqref{eqn:HW-as-SF-of-LW},
\begin{equation*}
\sigma^\ell(\cD^+_{r, s}) \hookrightarrow \begin{cases}
    \sigma^{\ell+1}(\cE^-_{u-r, v-s -1}) \cong \Res(\VirPhi_{r,s+1} \otimes \Pi_{\ell}(\nu_{r, s+1})) & \text{if}\,\,\,1\leq s \leq v-2 \\
    \sigma^{\ell+2}(\cE^-_{r, v-1}) \cong \Res(\VirPhi_{u-r,1} \otimes \Pi_{\ell+1}(\nu_{u-r, 1})) & \text{if}\,\,\,s = v-1 \\
\end{cases} .
\end{equation*}
The case
$\ell=r=s=1$  of \eqref{eqn:FF-affine-sl2-correspondence} is
\begin{align*}
    \Res(\Vir_{c_\k}\otimes\Pi(0)) = \sigma(\cE_{u-1,v-1}^-),
\end{align*}
and thus by \eqref{eqn:exact-seq-E+E-} and \eqref{eqn:HW-as-SF-of-LW}, there is a non-split short exact sequence
\begin{equation}\label{eqn:A-as-Lk(sl2)-mod}
    0\longrightarrow L_\k(\sl_2) \longrightarrow \Res(\Vir_{c_\k}\otimes\Pi(0))\longrightarrow \sigma(\cD_{1,1}^+)\longrightarrow 0.
\end{equation}
The vertex algebra $\Vir_{c_\k}\otimes\Pi(0)$ itself can be identified with a commutative algebra in $\cD^\k$ \cite{HKL}  and we denote this algebra by $A$. Its category of local modules is braided tensor equivalent to $\cC^\k$ \cite{CKM}.
We now turn to its category of modules.

\subsection{The category of \texorpdfstring{$A$-modules}{A-modules}}
\label{subsec:sl2Amodules}

We want to study the category of $A$-modules in $\cD^\k$, denoted by $\cD^\k_A$. 
This section is based again on \cite{CMY1} which in turn used the main results of \cite{CMSY24}. The main results are as follows
\begin{enumerate}
    \item Every irreducible object in $\cD^\k_A$ is already local, that is in $\cC^\k$ \cite{CMY1} Theorem 4.12.
    \item Thus the assumptions of Theorem 3.14 of \cite{CMSY24} are satisfied and hence $\cD^\k_A$ is rigid. The dual of an object $X$ will be denoted by $X^*$.
    \item A conformal vertex algebra has a duality structure given by the contragredient dual \cite{ALSW21}. We denote the contragredient dual of an object $M$ by $M'$. Theorems 4.15 and 4.16 of \cite{CMY1} say that for every simple object $M$ in $\cD^\k$ one has $\Ind(M') \cong \Ind(M)^*$ with $\Ind: \cD^\k \rightarrow \cD^\k_A$ the usual induction functor (the left adjoint of $\Res$).
\item Theorem 4.14 of \cite{CMY1} is the description of the projective objects in $\cD^\k_A$ for $v\geq 3$: For any $1\leq r\leq u-1$, $1\leq s\leq v-1$, $\lambda+\ZZ \in \CC/\ZZ$, and $\ell \in \ZZ$,
    there is a projective and injective module $R_{r,s, \lambda, \ell}$ in $\cD^\k_A$ with the following Loewy diagram, 
    \begin{center}
\begin{tikzpicture}[scale=1]
\node (top) at (0,3) [] {$ \VirPhi_{r, s} \otimes \Pi_\ell(\lambda)$};
\node (left) at (-3,0) [] {$\VirPhi_{r, s - 1} \otimes \Pi_{\ell+1}(\lambda -t/2)$};
\node (right) at (3,0) [] {$\VirPhi_{r, s + 1} \otimes \Pi_{\ell+1}(\lambda -t/2)$};
\node (bottom) at (0,-3) [] {$\VirPhi_{r, s} \otimes \Pi_{\ell+2}(\lambda - t)$};
\draw[->, thick] (top) -- (left);
\draw[->, thick] (top) -- (right);
\draw[->, thick] (left) -- (bottom);
\draw[->, thick] (right) -- (bottom);
\node (label) at (0,0) [circle, inner sep=2pt, color=white, fill=black!50!] {$R_{r, s, \lambda, \ell}$};
\end{tikzpicture}
\end{center}
with the convention that $\VirPhi_{r, 0} = \VirPhi_{r, v} = 0$ and $t = k+2 = \frac{u}{v}$.

The case $v=2$ is similar but much simpler, see section 4.2 of  \cite{CMY1}, i.e. the Loewy diagram of the projectives  is 
\begin{center}
\begin{tikzpicture}[scale=1]
\node (top) at (0,3) [] {$ \VirPhi_{r, 1} \otimes \Pi_\ell(\lambda)$};
\node (bottom) at (0, 0) [] {$\VirPhi_{r, 1} \otimes \Pi_{\ell+2}(\lambda - t)$};
\draw[->, thick] (top) -- (bottom);
\node (label) at (-2,1.5) [circle, inner sep=2pt, color=white, fill=black!50!] {$R_{r, 1, \lambda, \ell}$};
\end{tikzpicture}
\end{center}
\item Corollary 3.10 together with Theorem 3.4 of  \cite{CMY1} verify a certain mild left exactness condition, the third condition of Theorem 3.20 of \cite{CMSY24}. The other conditions of that Theorem are that the induction functor commutes with duality and that $\cD^\k_A$ is rigid. Thus the conclusion of that Theorem applies and this conclusion is that the Schauenburg functor
\[
\cS: \cD^\k \rightarrow \cZ(\cD^\k_A)
\]
is fully faithful. For us this is most important.
\item In \cite{CMY1} the Schauenburg functor being fully faithful was key as it allowed to apply Theorem 3.21 of \cite{CMSY24} saying that $\cD^\k$ is rigid. 
\item Let $N = R_{1, 1, 0, 0}$ be the projective cover of $A$. It satisfies 
\[
N \otimes (\VirPhi_{r, s} \otimes \Pi_\ell(\lambda))  \cong R_{r, s, \lambda, \ell}
\]
that is the tensor product of a simple object $X$ with $N$ is the projective cover of $P_X$ of the simple object $X$.
\item From the Loewy diagram of $N$ we immediately see that 
\[
\Ext^1(A, X) \cong \begin{cases} \mathbb C & X \cong \VirPhi_{1,2} \otimes \Pi_1(-t/2) \ \text{and} \ v \geq 3 \\
\mathbb C & X \cong \VirPhi_{1,1} \otimes \Pi_2(-t) \ \text{and} \ v = 2 \\ 
0 & \ \text{otherwise} 
\end{cases}
\]
and similarly and since the projectives are also injectives
\[
\Ext^1(X, A) \cong \begin{cases} \mathbb C & X \cong \VirPhi_{1,2} \otimes \Pi_{-1}(t/2) \ \text{and} \ v \geq 3 \\
\mathbb C & X \cong \VirPhi_{1,1} \otimes \Pi_{-2}(t) \ \text{and} \ v = 2 \\ 
0 & \ \text{otherwise} 
\end{cases}
\]
\item From Theorem 4.16 of \cite{CMY1} one observes that $R_{1, 1, 0, 0}=N$ and $\Ind(\sigma(\cE_{1,1}^+))$ have the same simple composition factors and by Frobenius reciprocity one sees that the socle of $N$ is the only simple object in $\cD^\k_A$ 
that admits a non-zero homomorphism from $\sigma(\cE_{1,1}^+)$. Hence 
\begin{corollary}\label{cor:InductionLieDual}
 The induction of $\cE_{1,1}^+$ is the projective cover of the tensor unit in $\cD^\k_A$
 $$\Ind_A(\sigma(\cE_{1,1}^+))\cong R_{1, 1, 0, 0}$$
\end{corollary}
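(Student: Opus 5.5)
The plan is to produce a nonzero morphism $\Ind_A(\sigma(\cE_{1,1}^+))\to R_{1,1,0,0}$ and show it is an isomorphism by comparing lengths. Write $N=R_{1,1,0,0}$ for the projective cover of the tensor unit $A=\VirPhi_{1,1}\otimes\Pi_0(0)$ in $\cD^\k_A$.

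First I pin down the composition factors of both sides. By Theorem 4.16 of \cite{CMY1}, quoted in the last item above, $\Ind_A(\sigma(\cE_{1,1}^+))$ and $N$ have the same simple composition factors with multiplicities. From the Loewy diagram these are $A$ in the head, $\VirPhi_{1,2}\otimes\Pi_1(-t/2)$ in the middle, and $\VirPhi_{1,1}\otimes\Pi_2(-t)$ in the socle. For $v=2$ there are only two factors.

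Next I identify the head of the induced module. All simple $A$-modules are local, so Frobenius reciprocity gives $\Hom_{\cD^\k_A}(\Ind_A(\sigma(\cE_{1,1}^+)),S)\cong\Hom_{\cD^\k}(\sigma(\cE_{1,1}^+),\Res S)$ for every simple $S$. The restrictions $\Res S$ are computed by \eqref{eqn:FF-affine-sl2-correspondence}. These restrictions are the modules $\sigma^\ell(\cE^\pm)$ or typical $\sigma^\ell(\cE)$.

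I will check which $\Res S$ admit a nonzero map from $\sigma(\cE_{1,1}^+)$. The head of $\sigma(\cE_{1,1}^+)$ is $\sigma(\cD^-_{u-1,v-1})\cong\cD^+_{1,0}=\1$ by \eqref{eqn:exact-seq-E+E-} and \eqref{eqn:HW-as-SF-of-LW}. So a nonzero map either factors through $\1$ or is injective. It factors through $\1$ only if $\1$ is in the socle of $\Res S$, and this forces $S=A$ via \eqref{eqn:A-as-Lk(sl2)-mod}. An injective map forces $\Res S\cong\sigma(\cE_{1,1}^+)$ by comparing the characters of these atypical modules; I check from the listed labels that this does not occur. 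This recovers the claim in the text that the only simple with a nonzero map is the one listed, namely the head $A$ of $N$. The argument also gives $\dim\Hom(\Ind_A(\sigma(\cE^+_{1,1})),A)=\dim\Hom(\sigma(\cE^+_{1,1}),\sigma(\cE^-_{u-1,v-1}))=1$.

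The induced module therefore has simple head $A$. Since $N\twoheadrightarrow A$ is a projective cover, the surjection $\Ind_A(\sigma(\cE_{1,1}^+))\twoheadrightarrow A$ lifts along it to a morphism $\phi:N\to\Ind_A(\sigma(\cE_{1,1}^+))$. The composite of $\phi$ with the map to the head $A$ is nonzero. Because the head of the target is simple, the image of $\phi$ is not contained in the radical, so $\phi$ is surjective. Both sides have the same finite length, so $\phi$ is an isomorphism.

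I expect the main obstacle to be the head computation: ruling out additional simple quotients of $\Ind_A(\sigma(\cE_{1,1}^+))$, for instance the middle factor $\VirPhi_{1,2}\otimes\Pi_1(-t/2)$. This amounts to showing that $\Hom(\sigma(\cE_{1,1}^+),\sigma^{2}(\cE^-_{u-1,v-2}))=0$, which in turn requires careful bookkeeping of the socles and heads of spectral flows of the atypical modules $\cE^\pm$ under the identifications \eqref{eqn:HW-as-SF-of-LW}.
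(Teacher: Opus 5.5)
Your argument is the paper's: equality of composition factors from \cite{CMY1} Theorem 4.16, together with Frobenius reciprocity, shows that $\Ind_A(\sigma(\cE_{1,1}^+))$ has simple head, so it is a quotient of the projective cover $N=R_{1,1,0,0}$ of the same length and hence isomorphic to it. You correctly identify that head as $A=\VirPhi_{1,1}\otimes\Pi_0(0)$, the top of $N$ (the paper's sketch says ``socle of $N$'', which appears to be a slip), and you spell out the lifting-and-length step the paper leaves implicit; the one point to tighten is that $S=A$ is the only simple in the factor-through-$\1$ case because $\Hom_{\cD^\k}(\1,\Res S)\cong\Hom_{\cD^\k_A}(A,S)$, whereas \eqref{eqn:A-as-Lk(sl2)-mod} only shows that $\1$ does embed into $\Res A$.
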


\item The kernel of the projection $N \rightarrow A$ is an indecomposable object $K$ satisfying the non-split exact sequence
\[
0 \rightarrow \VirPhi_{1, 1} \otimes \Pi_2(-t) \rightarrow  K \rightarrow \VirPhi_{1, 2} \otimes \Pi_1(- t/2) \rightarrow 0
\]
with the convention as above, that $\VirPhi_{1, 2}=0$ if $v =2$.
By Lemma 4.11 of \cite{CMY1} this implies:
\begin{corollary}\label{cor:InductionA}
 The induction of the object $A$ itself is
 $$\Ind_A(A)\cong A\oplus K$$
\end{corollary}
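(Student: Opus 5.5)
We want $\Ind_A(A)=A\otimes A\cong A\oplus K$ in $\cD^\k_A$. The plan is to split off $A$ via the multiplication, compute the complement by a composition-factor count, and then pin down the extension using Lemma 4.11 of \cite{CMY1}.

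\textbf{Splitting off $A$.} As an $A$-module, $\Ind_A(A)=A\otimes A$ carries the action on the left factor. The multiplication $m:A\otimes A\to A$ is an $A$-module map, using commutativity and associativity of $A$. The map $\iota=\eta\otimes\id_A:A\to A\otimes A$, or rather its version acting on the left factor, is also an $A$-module map. Unitality gives $m\circ\iota=\id_A$, so $A$ is a direct summand:
\[
\Ind_A(A)\cong A\oplus \ker m .
\]

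\textbf{Composition factors of $\ker m$.} Induction is exact on the relevant objects. Applying it to the non-split sequence \eqref{eqn:A-as-Lk(sl2)-mod}, $0\to\1\to A\to\sigma(\cD^+_{1,1})\to0$, gives
\[
0\to A\to \Ind_A(A)\to \Ind_A(\sigma(\cD^+_{1,1}))\to 0 .
\]
The first map is $\iota$, so $\ker m\cong\Ind_A(\sigma(\cD^+_{1,1}))$. Next, determine this induced module via Frobenius reciprocity,
\[
\Hom_A(\Ind_A\sigma(\cD^+_{1,1}),Y)=\Hom_{\cD^\k}(\sigma(\cD^+_{1,1}),Y),
\]
for simple (hence local) $Y=\VirPhi_{r,s}\otimes\Pi_\ell(\lambda)$. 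By \eqref{eqn:FF-affine-sl2-correspondence}, $\Res(Y)$ has socle $\sigma(\cD^+_{1,1})$ exactly when $Y=\VirPhi_{1,2}\otimes\Pi_1(-t/2)$ for $v\ge3$, or $\VirPhi_{1,1}\otimes\Pi_2(-t)$ for $v=2$. This is the $s=1\to s+1$ embedding case with the spectral-flow and $\nu$ bookkeeping, $\nu_{1,2}=-t/2$. Hence the top of $\ker m$ is this simple $M$.

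Comparing graded characters (equivalently, composition factors in $\cD^\k$), $\Res\Ind_A(A)=\Res(A)\otimes \Res(A)$ restricted appropriately, with $\Res(A)=\sigma(\cE^-_{u-1,v-1})$. The fusion rules of Section~\ref{subsec:sl2monoidal} then show that $\ker m$ has exactly two composition factors, $M$ and $\VirPhi_{1,1}\otimes\Pi_2(-t)$. For $v=2$ there is only the single factor $\VirPhi_{1,1}\otimes\Pi_2(-t)$, consistent with the convention $\VirPhi_{1,2}=0$.

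\textbf{Identifying the extension.} It remains to show that $\ker m$ is the non-split extension $K$. By the $\Ext^1$ computation above, $\Ext^1(M,\VirPhi_{1,1}\otimes\Pi_2(-t))$ is one-dimensional, since it is read off from the Loewy diagram of $R_{1,1,0,0}$ shifted. So there is a unique non-split extension, and it suffices to exclude splitting. If $\ker m$ split, then $\Hom_A(\Ind_A\sigma(\cD^+_{1,1}),\VirPhi_{1,1}\otimes\Pi_2(-t))$ would be nonzero. But this equals $\Hom(\sigma(\cD^+_{1,1}),\Res(\VirPhi_{1,1}\otimes\Pi_2(-t)))=0$ by \eqref{eqn:FF-affine-sl2-correspondence}, because $\Res(\VirPhi_{1,1}\otimes\Pi_2(-t))$ has a different socle. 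Hence $\ker m\cong K$. This is where Lemma 4.11 of \cite{CMY1} enters: it packages the identification of $\Ind$ of such a length-two module with the corresponding kernel of $N\to A$.

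The main obstacle is the composition-factor count. It requires tracking spectral flow and the parameter $\lambda$ through \eqref{eqn:FF-affine-sl2-correspondence} and the fusion rules, and I would rely on the computations of \cite{CMY1} for it.
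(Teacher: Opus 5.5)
Your reduction is correct and, as far as one can tell, matches the paper's route. The paper describes $K$ as $\ker(N\to A)$ and then simply cites Lemma 4.11 of \cite{CMY1}. Your steps are:
\begin{itemize}
\item The section $\Ind_A(\eta)=\id_A\otimes\eta\colon A\to A\otimes A$ splits off $A$. It should be $\id_A\otimes\eta$, not $\eta\otimes\id_A$, and associativity alone makes $m$ a module map.
\item Exactness of induction applied to \eqref{eqn:A-as-Lk(sl2)-mod} identifies $\ker m$ with $\Ind_A(\sigma(\cD^+_{1,1}))$.
\end{itemize}

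The gap is exactly where you flag it: the composition-factor count is not supported by what you cite. The fusion rules of Section~\ref{subsec:sl2monoidal} only describe $\cL_r\boxtimes(-)$. They say nothing about $\Res(A)\otimes\Res(A)$ or $\sigma(\cD^+_{1,1})\otimes\sigma(\cD^+_{1,1})$, and ``restricted appropriately'' is not a computation. Moreover, in the atypical case simple $A$-modules restrict to length-two objects. So converting composition factors in $\cD^\k$ into composition factors of $A$-modules would need a further argument.

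You can bypass this using what the paper already provides.
\begin{itemize}
\item From \eqref{eqn:exact-seq-E+E-} with $r=s=1$, together with $\sigma(\cD^-_{u-1,v-1})\cong\cL_{1,0}=\1$ from \eqref{eqn:HW-as-SF-of-LW}, there is an exact sequence $0\to\sigma(\cD^+_{1,1})\to\sigma(\cE^+_{1,1})\to\1\to 0$.
\item Inducing it (induction is exact) and using Corollary~\ref{cor:InductionLieDual} gives $0\to\Ind_A(\sigma(\cD^+_{1,1}))\to R_{1,1,0,0}\to A\to 0$.
\item $A$ is simple and $N=R_{1,1,0,0}$ is its projective cover, so $\Hom_A(N,A)$ is one-dimensional. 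Hence the kernel of any surjection $N\to A$ is $K$.
\end{itemize}
Therefore $\Ind_A(\sigma(\cD^+_{1,1}))\cong K$, and $\Ind_A(A)\cong A\oplus K$ follows from your splitting step. This also makes your Frobenius-reciprocity computation of the top, the composition-factor count and the $\Ext^1$/non-splitting argument unnecessary; those steps are correct as far as they go.
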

\end{enumerate}

\SimonRem{Thomas says:
$A^*=\sigma^{-1} \mathcal{E}_{1,1}^-$}

\section{Convolution equivalence}\label{sec:convolution}

This section is essentially a summary of some results of \cite{CGNS} Section 6, which works out convolutions of vertex algebras using the Lie algebra $\mathfrak{gl}_1$. The generalization to simple Lie algebras is also possible \cite{CLNS}.
Let $V$ be a vertex algebra that contains a Heisenberg vertex algebra $\pi^J$. Let $J$ be the corresponding Heisenberg field which we assume to be non-degenerate. 
We also assume that $\pi$ acts semisimply on $V$, so that $V$ is graded by Heisenberg weight. We normalize $J$ in such a way that this is an integer grading, 
\[
V = \bigoplus_{n \in \mathbb Z} V_n, \qquad V_n = \{ v \in V | J_0v =nv \}.
\]
Each component must be of the form $C_n  \otimes \pi^J_n$ for some module for the coset $C_0 = \text{Com}(\pi^J, V)$, so that
\[
V = \bigoplus_{n \in \mathbb Z} C_n  \otimes \pi^J_n.
\]
Let $\ell$ be the level of $J$, that is 
\[
J(z)J(w) = \frac{\ell}{(z-w)^2}.
\]
Let $\pi^K$ be another Heisenberg vertex algebra with Heisenberg field $K$. Assume that $K$ is normalized such that its level is $-\ell$. 
Consider the semi-infinite cohomology $H^{\mathrm{rel},p}(V)$ of $\mathfrak{gl}_1((z))$ with coefficients in $V$, taken relative to $\mathfrak{gl}_1$. 
It satisfies 
\begin{lemma} \cites{FGZ, CFL} \label{lem:vanish}
For any $p \in \mathbb{Z}$ and $\lambda, \mu \in \mathfrak h^*$, we have
\[
H^{\mathrm{rel},p}(\pi^{J}_\lambda \otimes \pi^{K}_\mu)
   = \delta_{p,0} \, \delta_{\lambda+\mu,0} \, 
     \mathbb{C}.
\]
\end{lemma}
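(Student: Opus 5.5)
The lemma computes the relative semi-infinite $\mathfrak{gl}_1((z))$-cohomology of a tensor product of two Fock modules. The plan is to reduce it to a graded Koszul-type complex. First fix the setup. The diagonal field $J(z)+K(z)$ acting on $\pi^J_\lambda\otimes\pi^K_\mu$ has level $\ell+(-\ell)=0$. This is exactly the condition under which the semi-infinite differential squares to zero (the anomaly of the $bc$-ghosts for the abelian algebra $\mathfrak{gl}_1$ is zero). We then form the complex $C=\pi^J_\lambda\otimes\pi^K_\mu\otimes\bigwedge^{\infty/2}$, with ghosts $b_n,c_n$ and differential
\[
d=\sum_{n\in\ZZ}(J_n+K_n)c_{-n},
\]
which has no cubic term since $\mathfrak{gl}_1$ is abelian. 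We pass to the relative subcomplex: the kernel of $b_0$ and of the total zero mode $J_0+K_0+(\text{ghost zero-mode charge})$, where the last contribution vanishes in the abelian case.

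Next, change variables to the Heisenberg generators $X_n=J_n+K_n$ and $Y_n=J_n-K_n$. Up to normalization their commutators are
\[
[X_m,X_n]=0,\qquad [Y_m,Y_n]=0,\qquad [X_m,Y_n]=2\ell m\,\delta_{m+n,0}.
\]
So for each $n\neq0$ the modes $X_n$ and $Y_{-n}$ form a Weyl pair, and the $X_n$ are mutually commuting. Choose a PBW basis in which the Fock module $\pi^J_\lambda\otimes\pi^K_\mu$ is freely generated from the highest weight vector by $X_{-n}$ and $Y_{-n}$ for $n>0$.

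The zero-mode part of the relative condition forces $X_0=\lambda+\mu$ to act by zero on cohomology. If $\lambda+\mu\neq0$, the relative subcomplex is zero in the standard convention, because we restrict to the $X_0=0$ eigenspace. In the non-relative version one instead uses the homotopy $b_0/(\lambda+\mu)$, since $[d,b_0]=X_0$. Either way the cohomology vanishes, which yields the factor $\delta_{\lambda+\mu,0}$.

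Now assume $\lambda+\mu=0$. The relative complex factorizes as a restricted tensor product over $n>0$ of two kinds of pieces:
\[
\Bigl(\CC[X_{-n}]\otimes\bigwedge(c_{-n}),\;X_{-n}\cdot c_{n}\Bigr)
\quad\text{and}\quad
\Bigl(\CC[Y_{-n}]\otimes\bigwedge(b_{-n}),\;\ldots\Bigr).
\]
More precisely, $d$ decomposes as a sum $d=\sum_{n>0}\bigl(X_{n}c_{-n}+X_{-n}c_{n}\bigr)$. Here $X_n$ with $n>0$ acts as a derivation in the variable $Y_{-n}$, while $X_{-n}$ acts by multiplication. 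This is the key step. The creation pair $(X_{-n},c_{-n})$ forms a Koszul complex of a polynomial ring in one variable against one odd generator. It is acyclic except in degree $0$, giving $\CC$. The pair $(Y_{-n}$ paired via $X_n$, $b_{-n})$ is likewise a Koszul resolution of $\CC$.

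To make this rigorous, filter by total mode number $L_0$, where each graded piece is finite-dimensional, and take the associated graded for the filtration by the number of $X$-type operators. The associated graded differential is the tensor product of these one-variable Koszul complexes. By Künneth its cohomology is $\CC$ in ghost degree $0$. A spectral sequence argument, which converges because each $L_0$-eigenspace is finite-dimensional, then transfers the result to $C$.

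The surviving class is the vacuum $|\lambda\rangle\otimes|\mu\rangle\otimes|0\rangle_{gh}$, which sits in degree $p=0$. This gives $\delta_{p,0}\,\CC$.

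The main obstacle is the bookkeeping of the semi-infinite degree and normal ordering. Concretely, one must check that the chosen splitting of $d$ really separates into independent Koszul pieces. The worry is that the term $X_nc_{-n}$, where $X_n$ acts as $2\ell n\,\partial_{Y_{-n}}$, and the term $X_{-n}c_n$ do not interfere. They do not, because $X$-modes commute among themselves. The remaining check is that the vacuum ghost degree is normalized to $0$. Alternatively one can simply cite the standard computation of \cite{FGZ} for the BRST cohomology of $\mathfrak{gl}_1$, which is the route the statement attributes to \cites{FGZ, CFL}.
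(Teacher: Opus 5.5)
The paper gives no argument of its own for this lemma; it only cites the literature. Your overall plan is the standard computation behind those references: restrict to $X_0=\lambda+\mu=0$, switch to $X_n=J_n+K_n$ and $Y_n=J_n-K_n$, split the relative complex into one-mode pieces, and apply K\"unneth. However, the key step as you wrote it pairs the wrong modes with the ghosts, so your stated factorization is not a factorization of complexes.

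For $n>0$ the ghost $c_n$ is an annihilator with $\{c_n,b_{-n}\}=1$. So on the ghost Fock space $c_n=\partial/\partial b_{-n}$, and therefore
\[
X_{-n}c_n=X_{-n}\,\partial_{b_{-n}},\qquad X_nc_{-n}=2\ell n\,c_{-n}\,\partial_{Y_{-n}} .
\]
This breaks both of your factors:
\begin{itemize}
\item On your first factor $\mathbb{C}[X_{-n}]\otimes\bigwedge(c_{-n})$, the operator $X_{-n}c_n$ is identically zero, because $c_n$ anticommutes with $c_{-n}$ and kills the vacuum. That factor would then contribute all of $\mathbb{C}[X_{-n}]\otimes\bigwedge(c_{-n})$, not $\mathbb{C}$.
\item Your second factor $\mathbb{C}[Y_{-n}]\otimes\bigwedge(b_{-n})$ is not preserved by $d$, since $X_nc_{-n}$ creates a $c_{-n}$.
\end{itemize}
So the claimed cohomology $\mathbb{C}$ does not follow from the pieces you wrote down.

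The fix is to pair $X_{-n}$ with $b_{-n}$ and $Y_{-n}$ with $c_{-n}$. For $\lambda+\mu=0$ the relative complex is then the restricted tensor product over $n>0$ of two kinds of pieces:
\begin{itemize}
\item the Koszul complexes $\bigl(\mathbb{C}[X_{-n}]\otimes\bigwedge(b_{-n}),\,X_{-n}\partial_{b_{-n}}\bigr)$, in degrees $-1,0$, with cohomology $\mathbb{C}[X_{-n}]/(X_{-n})\cong\mathbb{C}$ in degree $0$;
\item the polynomial de Rham complexes $\bigl(\mathbb{C}[Y_{-n}]\otimes\bigwedge(c_{-n}),\,2\ell n\,c_{-n}\partial_{Y_{-n}}\bigr)$, in degrees $0,1$, with cohomology $\mathbb{C}$ in degree $0$. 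This uses $\ell\neq 0$, i.e.\ nondegeneracy of $J$.
\end{itemize}
These pieces act on disjoint tensor factors and anticommute, because the $X$-modes commute and the $c$-modes anticommute. Hence $d$ is exactly their sum. The mode-number grading has finite-dimensional pieces, each involving only finitely many $n$, so K\"unneth applies directly. It gives $\mathbb{C}$ in degree $0$, spanned by the vacuum.

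No filtration or spectral sequence is needed. Your proposed filtration by the number of $X$'s would not produce the tensor-product differential as its associated graded anyway, since only $X_{-n}c_n$ changes the $X$-count.

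The rest of your argument is fine:
\begin{itemize}
\item $d^2=0$ follows from total level zero.
\item The relative complex vanishes for $\lambda+\mu\neq0$.
\item The vacuum is the surviving class.
\end{itemize}
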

Let $m$ be an integer and $V_{\sqrt{m}\mathbb Z}$ the lattice VOA of the lattice $\sqrt{m}\mathbb Z$. It decomposes as
\[
V_{\sqrt{m}\mathbb Z} = \bigoplus_{n \in \mathbb Z} \pi^L_n 
\]
with $\pi^L$ a Heisenberg vertex algebra whose Heisenberg field $L$ has level $\frac{1}{m}$.
Consider $V \otimes V_{\sqrt{m}\mathbb Z}$. It has a Heisenberg subalgebra of rank two generated by $J, L$. Let $A = J - L$ and $B =\frac{1}{m}J + \ell L$, so that $A$ and $B$ are orthogonal to each other. Then 
\[
V \otimes V_{\sqrt{m}\mathbb Z} = \bigoplus_{n_1, n_2 \in \mathbb Z} C_{n_1} \otimes \pi^A_{n_1 - n_2} \otimes \pi^B_{\frac{n_1}{m} + \ell n_2}. 
\]
The field $A$ has level $\ell + \frac{1}{m}$ and so if we add another Heisenberg $\pi^K$ of level $-\ell - \frac{1}{m}$ and take cohomology we obtain
\[
H^{\mathrm{rel},0}(V \otimes V_{\sqrt{m}\mathbb Z} \otimes \pi^{K}) =
\bigoplus_{n_1, n_2 \in \mathbb Z} C_{n_1} \otimes H^{\mathrm{rel},0}(\pi^A_{n_1 - n_2} \otimes \pi^{K}) \otimes \pi^B_{\frac{n_1}{m} + \ell n_2} = 
\bigoplus_{n\in \mathbb Z} C_{n} \otimes \pi^B_{\frac{n}{m} + \ell n}
\]
If we replace $B$ by $D = \frac{B}{\frac{1}{m}+\ell}= \frac{mB}{1+m\ell}$, then this becomes
\[
H^{\mathrm{rel},0}(V \otimes V_{\sqrt{m}\mathbb Z} \otimes \pi^{K}) =
\bigoplus_{n\in \mathbb Z} C_{n} \otimes \pi^D_{n}
\]
where the level of $D$ is $\ell' = \frac{\ell}{1+m\ell}$, i.e.
\[
 \frac{1}{\ell'} = \frac{1}{\ell} + m.
\]
In other words, the cohomology has the only effect of changing the level and hence conformal weight of Fock modules. 
This operation is called ($\mathfrak{gl}_1$-)convolution operation, that is
\[
*_m(V):= H^{\mathrm{rel},0}(V \otimes V_{\sqrt{m}\mathbb Z} \otimes \pi^{K}).
\]
It defines a $\mathbb Z$-groupoid on the category of vertex algebras that have non-degenerate Heisenberg subalgebras, since $*_m \circ *_n \cong  *_{n+m}$ and since $*_0 \cong \text{Id}$.
We call two vertex algebras $\mathfrak{gl}_1$-convolution equivalent if they are in the same convolution orbit. 
The simplest example of a convolution operation are just lattice VOAs, $*_m (V_{\sqrt{n}\mathbb Z}) \cong V_{\sqrt{n+m}\mathbb Z}$. However the case $n+m =0$ is degenerate. This doesn't appear if one adds an additional Heisenberg. In that case one has 
\[
*_m (V_{\sqrt{n}\mathbb Z} \otimes \pi) \cong  \begin{cases} V_{\sqrt{n+m}\mathbb Z} \otimes \pi & n+m \neq 0 \\
\Pi(0) & n+m =0
\end{cases}
\]
In the case of $V = V^\k(\mathfrak{sl}_2)$ one has 
\[
*_1(V^\k(\mathfrak{sl}_2)) \cong W^\ell(\mathfrak{sl}_{2|1}), \qquad
*_1(V^\k(\mathfrak{sl}_2)) \cong \text{Com}(V^m(\mathfrak{sl}_2), V^m(\mathfrak{sl}_{2|1})
\]
where the levels satisfy the duality relations $(k+2)(\ell+1)=1$ and $(k+1)(m+1)=1$.
The first correspondence firmates under the name Kazama-Suzuki duality and so one sometimes calls these operations also generalizations of the Kazama-Suzuki duality.
In \cite{CGNS} it was shown that these operation provide block-wise equivalences of representation categories. It is also a lax tensor functor and it usually does not preserve the braiding. However if one restricts to modules that are integer graded by Heisenberg weight then it also preserves the braiding. 

This relation between representation categories can also be seen via coset constructions. The point is that also \cite{CGNS}
\[
*_m(V) \cong \text{Com}(\pi^A, V \otimes V_{\sqrt{m}\mathbb Z})
\]
This means $V \otimes V_{\sqrt{m}\mathbb Z}$ is a conformal extension of $*_m(V) \otimes \pi^A$ and conversely $*_m(V) \otimes V_{\sqrt{-m}\mathbb Z}$ is a conformal extension of $V \otimes \pi^{\widetilde A}$ for another Heisenberg VOA $\pi^{\widetilde A}$.
Precise formulas are obtained by introducing the $\pi^\k_r$-twisted convolution for $r \in \mathbb Z$:
\begin{align*} \nonumber 
     *_{m, r}(V) :&=  H^{\mathrm{rel},0}(V \otimes V_{\sqrt{m}\mathbb Z} \otimes \pi^{K}_r)\\
     &= \bigoplus_{n_1, n_2 \in \mathbb Z} C_{n_1} \otimes H^{\mathrm{rel},0}(\pi^A_{n_1 - n_2} \otimes \pi^{K}_r) \otimes \pi^B_{\frac{n_1}{m} + \ell n_2}\\
     &= \bigoplus_{n\in \mathbb Z} C_{n} \otimes \pi^B_{\frac{n}{m} + \ell (n+r)} \nonumber 
 = \bigoplus_{n\in \mathbb Z} C_{n} \otimes \pi^D_{n + \frac{\ell m  r}{1+m\ell}}  
\end{align*}
From this one gets the decomposition
\begin{align*}
    V \otimes V_{\sqrt{m}\mathbb Z} &= \bigoplus_{n_1, n_2 \in \mathbb Z} C_{n_1} \otimes \pi^A_{n_1-n_2} \otimes \pi^B_{\frac{n_1}{m} +\ell n_2} \\
    &= \bigoplus_{n_1, n_2 \in \mathbb Z} C_{n_1} \otimes \pi^A_{n_1-n_2} \otimes \pi^B_{\frac{n_1}{m} \ell n_1 -\ell (n_1 -n_2} \\ 
    &= \bigoplus_{n, r \in \mathbb Z} C_{n} \otimes \pi^B_{\frac{n}{m} \ell n + \ell r}  \otimes \pi^A_{-r} \\
    &= \bigoplus_{n, r \in \mathbb Z} *_{m, r}(V)  \otimes  \pi^A_{-r}.
\end{align*}
Now, if $*_m(V)$ admits a vertex tensor category $\cD^m$ containing the $*_{m, r}(V)$ for $r \in \mathbb Z$, then by the theory of vertex superalgebra extensions $V \otimes V_{\sqrt{m}\mathbb Z}$ can be identified with a commutative superalgebra in the direct limit completion of $\cD^m$ \cites{HKL, CKL, CMYdirect}. As a consequence the category of local modules for this commutative superalgebra is isomorphic to a corresponding vertex tensor category of modules for $V \otimes V_{\sqrt{m}\mathbb Z}$ \cite{CKM24}. 
The category of modules for $_{\sqrt{m}\mathbb Z}$ is just the category of $\mathbb Z/m\mathbb Z$-graded vector spaces with quadratic form coming from the natural bilinear form on $\frac{1}{\sqrt{m}}\mathbb Z$. Thus the category of modules of $V \otimes V_{\sqrt{m}\mathbb Z}$ is the Deligne product of a vertex tensor category $\cD$ of $V$-modules with this category of graded vector spaces. 
The $*_{m, r}(V)$ are simple currents \cite{CKLR} and hence $\cD^m$ is graded by their monodromy. Monodromy provides a block decomposition. $\cD$ has a similar block decomposition and the block-wise equivalence is precisely between those blocks, see \cite{CGNS} Theorems 5.1 and 6.3. The block of trivial monodromy of $\cD$ contains all objects $M$, that are integer graded by $J$-Heisenberg weight, that is objects $M$ of the form
\[
M = \bigoplus_{n \in \mathbb Z} M_n \otimes \pi^J_n.
\]
The same argument as for $V$ gives 
\begin{align*} \nonumber 
     *_{m, r}(M) :&=  H^{\mathrm{rel},0}(M \otimes V_{\sqrt{m}\mathbb Z} \otimes \pi^{K}_r)\\
     &= \bigoplus_{n\in \mathbb Z} M_{n} \otimes \pi^B_{\frac{n}{m} + \ell (n+r)} = \bigoplus_{n\in \mathbb Z} M_{n} \otimes \pi^D_{n + \frac{\ell m  r}{1+m\ell}}  
\end{align*}
and 
\begin{align*}
    M \otimes V_{\sqrt{m}\mathbb Z} &= \bigoplus_{n, r \in \mathbb Z} *_{m, r}(M)  \otimes  \pi^A_{-r}.
\end{align*}
Let $\cD_{\mathbb Z}$ and $\cD^m_{\mathbb Z}$ be the subcategories of $\cD$ and $\cD^m$ that are integer graded by the Heisenberg fields $J$ respectively $D$. As any intertwiner for $V, *_m(V)$ is necessarily a Heisenberg intertwiner and those preserve the lattice $\mathbb Z$ the categories  $\cD_{\mathbb Z}$ and $\cD^m_{\mathbb Z}$ are braided tensor subcategories of 
$\cD$ and $\cD^m$. Note that of $M$ is an object in $\cD_{\mathbb Z}$ then $*_m(M)$ is in $\cD^m_{\mathbb Z}$. 
The functor
\[
\cD^m_{\mathbb Z} \rightarrow \cD^m_{\mathbb Z} \boxtimes \Vect_{\mathbb Z/m\mathbb Z}, \qquad  X \mapsto \Ind(X \otimes \pi^D) 
\]
has the property that $\Ind(*_m(M)\otimes \pi^D)\cong M \otimes V_{\sqrt{m}\mathbb Z}$. This follows from the theory of simple current extensions, that has been studied in many works, see in particular \cites{CMYsc1, CMYsc2}. From this one sees that the block-wise equivalence restricted to $\cD_{\mathbb Z}$ and $\cD^m_{\mathbb Z}$\footnote{The grading by coset $\mathbb Z+\lambda$ corresponding to Heisenberg weights is another (and obvious) block decomposition.} is an equivalence by combining $\Ind( \bullet \otimes \pi^D)$ with the embedding of $\cD$ in $\cD \boxtimes \Vect_{\mathbb Z/m\mathbb Z}$.
Assume now that there is another commutative algebra $F$ ($F$ for free field) in $\cD_{\mathbb Z}$. Then via the equivalence there is a corresponding commutative algebra $*_m(F)$ in  $\cD^m_{\mathbb Z}$. Let $\cD_F$ and $\cD^m_{*_m(F)}$ be the corresponding categories of algebra modules. The categories of local modules will just be denoted by $\cC$ and $\cC^m$. Then the subcategories that are integer graded must be tensor equivalent $\cD_{F, \mathbb Z} \cong \cD^m_{*_m(F), \mathbb Z}$ and braided tensor equivalent $\cC_{\mathbb Z} \cong \cC^m_{\mathbb Z}$. In particular any Nichols algebra $N$ in $\cC_{\mathbb Z}$ can equally well be regarded as a Nichols algebra $*_m(N) \cong N$ in $\cC^m_{\mathbb Z}$.
 In particular if we can prove that $\cD \cong \YD{N}(\cC)$, then automatically $\cD^m \cong \YD{N}(\cC^m)$. 
 We thus say, that two categories $\cD, \cE$ are $\mathfrak{gl}_1$-convolution equivalent if 
 \[
 \cD \cong \YD{N}(\cC) \qquad \text{and}
 \qquad 
 \cE \cong \YD{N}(\cC^m)
 \]
 for some $m \in \mathbb Z$.

In our case we have the following picture for the universal vertex algebras, maybe best phrased for $k \in \mathbb C\setminus \mathbb Q$ and $m, \ell$ related to $k$ via
\[
\frac{1}{m+2}+ \frac{1}{k+2} = 1, \qquad (\ell+1)(k+2) =1.  
\]
Let $C^\k = \text{Com}(\pi, V^\k(\mathfrak{sl}_2))$. Then the relations are visualized by the following diagram where the red dashed arrows indicate conformal embeddings. 
\[
\begin{tikzcd}[row sep=6ex, column sep=12ex]
& \Vir_{k} \otimes \Pi(0) \arrow[dl, shift left, "*_1"]  \arrow[dr, shift left, "*_2"]  \\
\Vir_{k} \otimes V_{\mathbb Z} \otimes \pi \arrow[ur, shift left, "*_{-1}"] \arrow[rr, shift left, "*_1"] &&
\Vir_{k} \otimes V_{\sqrt{2}\mathbb Z} \otimes \pi \arrow[ul, shift left, "*_{-2}"]\arrow[ll, shift left, "*_{-1}"] \\
& V^{\k}(\mathfrak{sl}_2)  \arrow[uu, hook, red, dashed] \arrow[dl, shift left, "*_1"]  \arrow[dr, shift left, "*_2"] \\ 
W^{\ell}(\mathfrak{sl}_{2|1}) \arrow[uu, hook, red, dashed] \arrow[ur, shift left, "*_{-1}"] \arrow[rr, shift left, "*_1"] &&
\text{Com}(V^m(\mathfrak{sl}_2), V^m(\mathfrak{sl}_{2|1})) \arrow[uu, hook, red, dashed] \arrow[ul, shift left, "*_{-2}"]\arrow[ll, shift left, "*_{-1}"] \\
& C^{\k} \otimes \pi  \arrow[uu, hook, red, dashed] \arrow[dl]  \arrow[dr, "\cong"] \\ 
C^{\k} \otimes \pi \arrow[uu, hook, red, dashed] \arrow[ur, "\cong"] \arrow[rr, "\cong"] &&
C^{\k} \otimes \pi \arrow[uu, hook, red, dashed] \arrow[ul]\arrow[ll] \\
\end{tikzcd}
\]
If $k = -2 +\frac{u}{v}$ is an admissible level for $\mathfrak{sl}_2$, then the same diagram still works, but replacing $V^m(\mathfrak{sl}_{2|1}), W^{\ell}(\mathfrak{sl}_{2|1}), V^{\k}(\mathfrak{sl}_2), C^\k$ by their simple quotients. 

\section{The equivalence \texorpdfstring{$(\catD^\k)_A\cong \cB^\k$}{DA with B}}\label{sec_equivalenceBorel}

\newcommand{\catM}{\mathcal{M}}

\subsection{Algebra realizations of module categories}\label{subsec:BorelModulecat}

We now assume $\catM$ is a left module category over a tensor category $\cC$. We assume always that the action is exact in both arguments. In this section we discuss two technical representability conditions in the infinite cases, which we require later:

\begin{definition}
A module category $\catM$ over $\catC$ is said to admit inner $\catC$-Homs, if for any $M,M'\in \catM$ there exists a  object
$$\underline{\Hom}(\catM,\catM')\in \catC$$ 
with the following representing property
$$\Hom_\catM(V\otimes M,M')
=\Hom_\catC(V,\underline{\Hom}(M,M'))$$
\end{definition}

The problem here is essentially the existence of a right adjoint functor for a right exact functor. For finite $\catM,\catC$ finite, such inner $\catC$-Homs hence exist, see  \cite{DSPS19} Proposition~2.15.
 We want to make it very explicit in the case we require.
\begin{lemma}\label{lemma_finite}
Assume $\catC$ is semisimple and Hom-spaces in $\catM$ are finite-dimensional. Then the $\catC$-Hom exists and is explicitly given by 
$$\underline{\Hom}(M,M')
=\bigoplus_{V\in\catC\;\text{simple}}
\Hom_\catM(V\otimes M,M')V
$$
iff for given $M,M'\in\catM$ only  finitely many simples $V$ contribute to the direct sum. 
\end{lemma}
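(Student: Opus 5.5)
The plan is to reduce the representing property to simple objects of $\catC$ using semisimplicity, and then read off both directions of the ``iff''. Write $\mathrm{Irr}(\catC)$ for a set of representatives of the simple objects, and fix $M,M'\in\catM$. Since $\catC$ is semisimple, every $V\in\catC$ is a (finite) direct sum of simples. Both sides of the representing property are additive in $V$: the action $V\otimes M$ is exact in $V$, and $\Hom_\catC(-,X)$ is additive. So it suffices to check the representing property for simple $V$, naturally in $V$. For simple $V$ and any $W\in\catC$ we have $\Hom_\catC(V,W)=$ multiplicity space of $V$ in $W$. I will use the convention that each simple has endomorphisms $\C$, which holds in our $\C$-linear semisimple setting.

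\emph{Sufficiency.} Suppose only finitely many simples $V$ have $\Hom_\catM(V\otimes M,M')\neq 0$. Then
\[\underline{\Hom}(M,M')=\bigoplus_{V}\Hom_\catM(V\otimes M,M')\,V\]
is a finite direct sum of simples with finite-dimensional multiplicity spaces, by the Hom-finiteness of $\catM$. Hence it is a genuine object of $\catC$. For a simple $W$, Schur's lemma gives
\[\Hom_\catC\big(W,\underline{\Hom}(M,M')\big)=\Hom_\catM(W\otimes M,M').\]
This isomorphism is natural in $W$: morphisms between simples are scalars, so naturality is automatic for the isotypic decomposition. Extending additively to all $V$ gives the representing property.

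\emph{Necessity.} Suppose some object $I\in\catC$ represents $\Hom_\catM(-\otimes M,M')$. Evaluating on a simple $V$ shows that the multiplicity of $V$ in $I$ equals $\dim\Hom_\catM(V\otimes M,M')$. Any object of $\catC$ has only finitely many simple summands, being a finite direct sum by semisimplicity (finite length). Hence only finitely many $V$ contribute, and $I$ is isomorphic to the displayed sum, since an object of a semisimple category is determined by its multiplicities.

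The only subtle point is what ``semisimple'' allows in the infinite setting: objects should be finite direct sums of simples, and no infinite direct sums should exist in $\catC$. That is exactly why the finiteness condition is needed, and I would state this convention explicitly. Beyond that, the argument is routine, and I expect no real obstacle.
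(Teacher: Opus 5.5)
Your proof is correct. The paper states this lemma without any proof, and your argument (Schur's lemma on simples, extension by additivity, and the convention that objects of $\mathcal{C}$ are finite direct sums of simples, which is why the paper adds that otherwise the inner Hom should only exist in the Ind-completion) is the routine verification it leaves implicit.

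Two small refinements are worth making:
\begin{itemize}
\item Naturality is cleanest via the map $\bigoplus_U \Hom_{\mathcal{M}}(U\otimes M,M')\otimes\Hom_{\mathcal{C}}(V,U)\to\Hom_{\mathcal{M}}(V\otimes M,M')$, $h\otimes f\mapsto h\circ(f\otimes\id_M)$. This map is visibly natural in $V$ and is an isomorphism on simples.
\item Your convention $\mathrm{End}_{\mathcal{C}}(V)=\mathbb{C}$ need not be assumed separately for the simples that matter. If $V\otimes M\neq 0$, then $f\mapsto f\otimes\id_M$ embeds the division algebra $\mathrm{End}_{\mathcal{C}}(V)$ into the finite-dimensional algebra $\mathrm{End}_{\mathcal{M}}(V\otimes M)$, so $\mathrm{End}_{\mathcal{C}}(V)=\mathbb{C}$. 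This covers every contributing $V$.
\end{itemize}
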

We remark that in other cases the $\catC$-Hom should still exist in the Ind-completion.

\begin{example}\label{exm_semisimplefinite}
Assume that there is a faithful exact $\catC$-module functor $F:\catM\to \catC$. Then the conditions in the previous lemma are met if they are met in $\catC$, that is, for given simples $W,W'$ there are only finitely many simples $V$ with $\Hom_\catC(V\otimes W,W'')\neq 0$.
\end{example}

The second representability condition is

\begin{definition}
A module category $\catM$ over $\catC$ is called \emph{representable} if there exists an algebra $R\in \catC$ such that as module categories we have an equivalence $\catM\cong \catC_R$ to the category of right modules over $R$ with the obvious left action of $\catC$.
\end{definition}

We clarify how these two conditions are connected: The following assertion is surely not surprising or new:

\begin{lemma}
Representability $\catM\cong \catC_R$  implies for closed $\catC$ the existence of inner $\catC$-Homs.
\end{lemma}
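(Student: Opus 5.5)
We build the inner Hom directly out of the algebra $R$, using the closed structure of $\catC$, and then transport it along the equivalence $\catM\cong\catC_R$. Since equivalences of module categories preserve both sides of the defining adjunction, it is enough to treat $\catM=\catC_R$. The candidate, for right $R$-modules $M,M'$, is the internal Hom over $R$, namely the equalizer
\[
\underline{\Hom}(M,M')\;:=\;\mathrm{eq}\Big(\underline{\Hom}_\catC(M,M')\rightrightarrows \underline{\Hom}_\catC(M\otimes R,M')\Big).
\]
Here $\underline{\Hom}_\catC$ is the internal Hom of the closed category $\catC$, right adjoint to $(-)\otimes M$. The two parallel maps are:
\begin{itemize}
\item precomposition with the action $M\otimes R\to M$;
\item the composite $\underline{\Hom}_\catC(M,M')\to\underline{\Hom}_\catC(M\otimes R,M'\otimes R)\to\underline{\Hom}_\catC(M\otimes R,M')$, which tensors with $R$ and then postcomposes with the action of $M'$.
\end{itemize}
The equalizer exists because $\catC$ is abelian.

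The proof then has three steps.

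\begin{enumerate}
\item \emph{Unwind the adjunction of the closed structure.} For $V\in\catC$, a morphism $V\to\underline{\Hom}_\catC(M,M')$ corresponds to a morphism $f:V\otimes M\to M'$. The two composites to $\underline{\Hom}_\catC(M\otimes R,M')$ correspond to the two maps $V\otimes M\otimes R\to M'$:
\begin{itemize}
\item $f\circ(\id_V\otimes \rho_M)$;
\item $\rho_{M'}\circ(f\otimes \id_R)$.
\end{itemize}
Checking this needs only naturality of the adjunction and its compatibility with the associator. It is routine bookkeeping.
\item \emph{Match the equalizer with module maps.} By the universal property of the equalizer, $\Hom_\catC(V,\underline{\Hom}(M,M'))$ is the subspace of those $f$ for which the two maps in Step 1 agree. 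That condition says exactly that $f$ is a morphism of right $R$-modules. Here $V\otimes M$ carries the obvious right action $\id_V\otimes\rho_M$, which is the $\catC$-action on $\catC_R$ used in the definition of representability. So this subspace is $\Hom_{\catC_R}(V\otimes M,M')$, naturally in $V$, which is the representing property.
\item \emph{Transport along the equivalence.} Let $F:\catM\to\catC_R$ be the module equivalence, with structure isomorphisms $F(V\otimes M)\cong V\otimes F(M)$. Then
\[
\Hom_\catM(V\otimes M,M')\cong\Hom_{\catC_R}(V\otimes FM,FM')\cong\Hom_\catC\big(V,\underline{\Hom}(FM,FM')\big).
\]
Hence $\underline{\Hom}(FM,FM')$ serves as the inner Hom in $\catM$.
\end{enumerate}

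The main obstacle is in Step 1: matching the second parallel arrow with $\rho_{M'}\circ(f\otimes\id_R)$. Since $R$ acts on the right, this requires tensoring on the right inside the internal Hom, which uses the lax structure of $\underline{\Hom}_\catC$ and a coherence check with the associator. There is a cleaner alternative I would try first. Write the right $R$-module $M\otimes R$ as $M$ induced, i.e.\ use the coequalizer presentation $M\otimes R\otimes R\rightrightarrows M\otimes R\to M$ of $M$ by free modules. Then
\[
\Hom_{\catC_R}(V\otimes M,M')=\mathrm{eq}\Big(\Hom_\catC(V\otimes M,M')\rightrightarrows\Hom_\catC(V\otimes M\otimes R,M')\Big).
\]
Since $\Hom_\catC(V,-)$ preserves equalizers, the statement reduces to the closedness isomorphism applied termwise. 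In this form no associativity subtleties remain beyond naturality.
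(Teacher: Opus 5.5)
Your proposal is correct and follows the paper's proof. Both construct the inner Hom as the equalizer of the two maps $\underline{\Hom}_\catC(M,M')\rightrightarrows\underline{\Hom}_\catC(M\otimes R,M')$: one precomposes with the action of $M$, the other tensors with $R$ and postcomposes with the action of $M'$. Your Yoneda-style check, that $\Hom_\catC(V,-)$ of this equalizer is exactly the space of $R$-linear maps $V\otimes M\to M'$, together with the explicit transport along $\catM\cong\catC_R$, fills in what the paper dismisses as clear. Also, the step you flag as the main obstacle is easier than you fear: the map to $\underline{\Hom}_\catC(M\otimes R,M'\otimes R)$ is simply adjunct to $\mathrm{ev}\otimes\id_R$ and needs no lax structure or braiding.
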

\begin{proof}
    A closed tensor category $\catC$ has by definition an inner $\underline{\Hom}(W,W')\in \catC$  
$$\Hom_\catC(V\otimes W,W')
=\Hom_\catC(V,\underline{\Hom}(W,W'))$$
Assume that $W,W'\in \catC$ carry each a right action of $R$, then we can define 
$$\underline{\Hom}_R(W,W')\subset \underline{\Hom}(W,W')$$ 
as an equalizer as follows: We first collect standard arguments: Any element in $\Hom_\catC(X,Y)$ gives by the defining property rise to a homomorphism $1\to \underline{\Hom}(X,Y)$. Hence the actions give rise to morphisms from $1$ to $\underline{\Hom}(W\otimes R ,W)$ resp. $\underline{\Hom}(W'\otimes R ,W')$. Moreover the identity on $W'\otimes R$ gives rise to a morphism from $1$ to $\underline{\Hom}(W'\otimes R,W'\otimes R)\cong \underline{\Hom}(W',\underline{\Hom}(R,W'\otimes R))$ and hence to a morphism 
$\underline{\Hom}(W,W')\to \underline{\Hom}(W\otimes R,W'\otimes R)$. Then using composition of inner $\cC$-Homs gives two morphism

\begin{center}
\begin{tikzcd}[column sep=-1.5em]
  & 
  \underline{\Hom}(W\otimes R ,W) \otimes \underline{\Hom}(W,W') 
  \ar[dr]
  & 
  \\
\underline{\Hom}(W,W') \ar[ur] \ar[dr] 
& 
& 
\underline{\Hom}(W\otimes R ,W') \\
  & 
  \underline{\Hom}(W\otimes R,W'\otimes R) \otimes \underline{\Hom}(W'\otimes R ,W')
  \ar[ur] 
  &
\end{tikzcd}
\end{center}
\bigskip

We define $\underline{\Hom}_R(W,W')$ as the equalizer of these two morphisms. It is clear that the defining property of $\underline{\Hom}(W,W')$ descends to the property defining $\underline{\Hom}_R(W,W')$ as the inner $\catC$-Hom in the module category of $R$-modules
$$\Hom_\catM(V\otimes W,W')
=\Hom_\catC(V,\underline{\Hom}_R(W,W'))$$
\end{proof}

Conversely, \cite{DSPS19} Section 2 establishes the representability from the existence of inner $\catC$-Homs, in the finite case, and we simply spell out explicitly the conditions that are not automatic in the infinite case:

We choose now an object $P\in\catM$ and consider the endomorphism algebra $R:=\underline{\Hom}(P,P)$ in $\catC$. We get  natural functors
$$G:\catC_R
\xleftarrow{\underline{\Hom}_\catC(P,-)}\catM$$
$$F:\catC_R\xrightarrow{(-)\otimes_R P}
\catM$$
The following proof requires no finiteness:

\begin{theorem}[\cite{DSPS19} Proposition 2.24]\label{thm:DSPS}
Suppose that $\catM$ is a $\catC$ module category with action exact in both arguments, which admits inner $\catC$-homs. Let $P\in\cM$ any chosen object and $R,G,F$ as in the preceding paragraph.
\begin{enumerate}[(a)]
\item $F,G$ are adjoint.
\item Suppose $G$ can be enhanced to a $\catC$-module functor. Then $F,G$ are an equivalence of categories if $P$ is a $\catC$-projective $\catC$-generator, that is:
\begin{enumerate}
    \item[(b1)] $\underline{\Hom}(P,-)$ is also right exact.
\item[(b2)] $\underline{\Hom}(P,-)$ is faithful. Equivalently by \cite{DSPS19} Lemma 2.22: for every $M\in\catM$ there exists a $V\in \catC$ and a surjection $V\otimes P\to M$.

\end{enumerate}
\end{enumerate}
\end{theorem}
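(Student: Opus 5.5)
The plan is to follow the classical proof that a projective generator yields a Morita equivalence, carried out $\catC$-linearly. Throughout, only the defining property of inner $\catC$-Homs, $\Hom_\catM(V\otimes M,M')=\Hom_\catC(V,\underline{\Hom}(M,M'))$, will be used; no finiteness is needed.

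\emph{Part (a).} First I equip $G(M)=\underline{\Hom}(P,M)$ with its right $R$-action, where $R=\underline{\Hom}(P,P)$. The action is the composition morphism $\underline{\Hom}(P,M)\otimes\underline{\Hom}(P,P)\to\underline{\Hom}(P,M)$. This morphism is adjoint to the double evaluation $\underline{\Hom}(P,M)\otimes\underline{\Hom}(P,P)\otimes P\to \underline{\Hom}(P,M)\otimes P\to M$. Associativity and unitality follow formally from the representing property.

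Next, $F(W)=W\otimes_R P$ is defined as the coequalizer in $\catM$ of the two maps $W\otimes R\otimes P\rightrightarrows W\otimes P$: one is the action of $R$ on $W$, the other is evaluation $R\otimes P\to P$. This coequalizer exists because $\catM$ is abelian. For the adjunction I would compute
\[\Hom_\catM(W\otimes_R P,M)=\mathrm{Eq}\big(\Hom_\catM(W\otimes P,M)\rightrightarrows\Hom_\catM(W\otimes R\otimes P,M)\big).\]
Applying the representing property twice rewrites this as the equalizer of $\Hom_\catC(W,\underline{\Hom}(P,M))\rightrightarrows\Hom_\catC(W\otimes R,\underline{\Hom}(P,M))$. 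That equalizer is exactly $\Hom_{\catC_R}(W,G(M))$, and naturality in $W$ and $M$ is routine.

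\emph{Part (b).} Given the $\catC$-module structure on $G$, the unit and counit are $\catC$-module natural transformations. It suffices to prove two things: the counit $\varepsilon_M:G(M)\otimes_R P\to M$ is an isomorphism for all $M$, and the unit $\eta_W:W\to G(W\otimes_R P)$ is an isomorphism for all $W$.

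For the unit, I first check free modules. For $W=V\otimes R$ we have $F(V\otimes R)=V\otimes P$. The module structure gives $G(V\otimes P)\cong V\otimes\underline{\Hom}(P,P)=V\otimes R$, and this identifies $\eta_{V\otimes R}$ with the identity. Every $R$-module has a presentation $V_1\otimes R\to V_0\otimes R\to W\to 0$, for instance $V_0=W$ with the action map. The functor $F$ is right exact as a left adjoint, and $G$ is right exact by (b1). So $GF$ is right exact, and the five lemma gives that $\eta_W$ is an isomorphism.

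For the counit, I use the triangle identity $G(\varepsilon_M)\circ\eta_{G(M)}=\id$. Since $\eta$ is now known to be invertible, $G(\varepsilon_M)$ is an isomorphism. The functor $G$ is exact (left exact as a right adjoint, right exact by (b1)) and faithful by (b2). An exact faithful functor reflects isomorphisms, because it detects vanishing of kernels and cokernels. Hence $\varepsilon_M$ is an isomorphism.

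The equivalence of the two formulations in (b2) is \cite{DSPS19} Lemma 2.22, which I would only cite. In one direction, a surjection $V\otimes P\to M$ shows that $\underline{\Hom}(P,-)$ detects nonzero morphisms out of $M$.

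\emph{Main obstacle.} I expect the main difficulty to be the identification $G(V\otimes P)\cong V\otimes G(P)$, compatibly with the $R$-actions, together with checking that this makes $\eta_{V\otimes R}$ the identity. This is exactly where the $\catC$-module enhancement of $G$ is needed, and in the infinite setting it is not automatic. I would try to verify it by Yoneda: compare $\Hom_\catC(U,\underline{\Hom}(P,V\otimes P))$ with $\Hom_\catC(U,V\otimes R)$, using the hypothesis on $G$ rather than rigidity of $\catC$.
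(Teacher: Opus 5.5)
Your argument is correct. It is essentially the standard Morita/Barr--Beck proof of Proposition 2.24 in DSPS19, which the paper only cites (adding that no finiteness is used), and your version makes that remark explicit. The step you flag as the main obstacle is actually immediate: read the hypothesis as saying that the canonical lax structure $V\otimes\underline{\Hom}(P,M)\to\underline{\Hom}(P,V\otimes M)$ (adjoint to $\id_V\otimes\mathrm{ev}$) is invertible; then $\eta_{V\otimes R}$ is literally this map at $M=P$, and it is automatically $R$-linear because it is the unit of $F\dashv G$.
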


If $\catC$ is rigid then the enhancement to a module functor is guaranteed by \cite{DSPS19}  Corollary 2.13. Regarding the conditions (b1) and (b2), we follow \cite{DSPS19} Lemma 2.25: The proof of (b1) does not depend on the assumption of finiteness.
\begin{lemma}\label{lm:DSPSprojective}
If $\catC$ is rigid and and $\cM$ is a module category with right exact action and inner $\cC$-Homs, then the following holds: If $P\in \catM$ is projective, then it is also $\catC$-projective. 
\end{lemma}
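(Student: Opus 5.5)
The plan is to show that $\underline{\Hom}(P,-)\colon\catM\to\catC$ is right exact, by identifying it, via the rigidity of $\catC$, with a functor built from $\Hom_\catM(P,-)$. The point is that exactness of a functor into $\catC$ can be tested after applying $\Hom_\catC(V,-)$ for all objects $V$. For a sequence in $\catC$ this is exact on the left automatically, and on the right at least when $V$ is projective. So it suffices to control
$$\Hom_\catC\bigl(V,\underline{\Hom}(P,M)\bigr)\cong\Hom_\catM(V\otimes P,M).$$

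First, I use rigidity of $\catC$ to move $V$ to the other side. The action functor $V\otimes(-)$ on $\catM$ has the right adjoint $V^*\otimes(-)$, with unit and counit induced by the evaluation and coevaluation of $V$ together with the module associativity constraint. This gives a natural isomorphism
$$\Hom_\catC\bigl(V,\underline{\Hom}(P,M)\bigr)\cong\Hom_\catM(P,V^*\otimes M).$$
Second, let $M\twoheadrightarrow M''$ be an epimorphism in $\catM$. Since the action is right exact, $V^*\otimes M\to V^*\otimes M''$ is again an epimorphism. Because $P$ is projective, applying $\Hom_\catM(P,-)$ gives a surjection. Hence $\Hom_\catC(V,\underline{\Hom}(P,M))\to\Hom_\catC(V,\underline{\Hom}(P,M''))$ is surjective for every $V$. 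Taking $V=\underline{\Hom}(P,M'')$ and lifting the identity, the morphism $\underline{\Hom}(P,M)\to\underline{\Hom}(P,M'')$ is split epi, in particular an epimorphism.

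Third, to get right exactness of a short exact sequence $M'\to M\to M''\to0$, note that $\underline{\Hom}(P,-)$ is a right adjoint of $-\otimes P$, hence left exact. Combined with the preservation of epimorphisms from the second step, it is exact, in particular right exact. This is exactly the definition of $\catC$-projectivity used in (b1).

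The main obstacle is the first step. One has to check that the adjunction $V\otimes-\dashv V^*\otimes-$ on $\catM$ really holds and is natural in $M$, using only the module structure and the exactness assumptions, with no finiteness. The zigzag identities follow from those of $V$ in $\catC$ via the pentagon/module coherence, so I expect this to be routine. Right exactness of the action is only needed in the second step, and it in fact follows from $V^*\otimes-$ being a left adjoint, namely of $V\otimes-$.
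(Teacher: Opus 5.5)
Your proof is correct and is essentially the argument of \cite{DSPS19} Lemma 2.25, which the paper simply cites. Rigidity turns $\Hom_\catC(V,\underline{\Hom}(P,-))\cong\Hom_\catM(V\otimes P,-)$ into $\Hom_\catM(P,{}^*V\otimes -)$, and this preserves epimorphisms because $P$ is projective and the action is exact. Your step of lifting $\id_{\underline{\Hom}(P,M'')}$ makes explicit that no projective objects of $\catC$, and hence no finiteness, are needed, which is exactly the point the paper asserts.

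One small slip is in your last remark. If $V\otimes(-)\dashv V^*\otimes(-)$, then $V^*\otimes(-)$ is left adjoint to tensoring with the other-sided dual of $V^*$, not to $V\otimes(-)$; these coincide only when $V^{**}\cong V$. Since rigidity provides both duals, your conclusion that right exactness of the action is redundant still holds.
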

\begin{proof}
    This is the main content of the proof of  \cite{DSPS19} Lemma 2.25 starting in line 4 and this part does not require any finiteness, only existence of inner $\cC$-Homs.
\end{proof}
The final assertion of proof of \cite{DSPS19} Lemma 2.25 cannot be directly copied, because it uses the existence of a global projective generator, which requires finiteness. However, a $\catC$-generator can be in a sense much smaller, and hence (b2) can be achieved in the following situation:

\begin{lemma}\label{lm_finiteB}
Suppose $\cM$ is a module category with exact action and inner $\cC$-Homs and suppose  $\cM$  is Artinian as abelian category (for example locally finite).
Suppose that there is a finite set of simple objects $S_1,\ldots,S_n$ in $\catM$ such that any simple module $S$ arises as a quotient of $V\otimes S_i$ for some $i$ and some $V\in \catC$. Suppose the projective covers $P_i$ of the $S_i$ exist in $\cM$. Then the direct sum of the projective covers $P_i$ is a projective $\catC$-generator $P$.   

\end{lemma}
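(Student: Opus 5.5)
We must show that $P=\bigoplus_i P_i$ is projective and that it is a $\catC$-generator in the sense of condition (b2) of Theorem \ref{thm:DSPS}: every $M\in\catM$ admits a surjection $V\otimes P\twoheadrightarrow M$ for some $V\in\catC$.

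\emph{Projectivity.} A finite direct sum of projective covers is projective. Lemma \ref{lm:DSPSprojective} then makes $P$ $\catC$-projective, which gives (b1). If rigidity of $\catC$ is not assumed in the present generality, we only claim projectivity of $P$ here and note that (b1) follows once Lemma \ref{lm:DSPSprojective} applies.

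\emph{Simple objects.} Let $S$ be simple. By hypothesis there are $i$, $V\in\catC$ and a surjection $V\otimes S_i\twoheadrightarrow S$. Composing with $\id_V\otimes(P_i\twoheadrightarrow S_i)$ gives a surjection $V\otimes P_i\twoheadrightarrow S$, because the action is exact in both arguments. Since $P_i$ is a direct summand of $P$, this yields a surjection $V\otimes P\twoheadrightarrow S$.

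\emph{Arbitrary objects.} For general $M$ we first try to lift the maps from the simple case. The natural plan is an induction on length, which is available in the locally finite case where every object has finite length. Let $0\to M'\to M\to S\to 0$ with $S$ simple, and assume by induction that $W\otimes P\twoheadrightarrow M'$ exists. We need a map $V\otimes P\to M$ whose composite with $M\to S$ is surjective. The worry is that $V\otimes P$ need not be projective in $\catM$, so the surjection onto $S$ need not lift to $M$.

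This is handled by working with $\underline{\Hom}$. A map $V\otimes P\to X$ is the same as a map $V\to\underline{\Hom}(P,X)$. By (b1), $\underline{\Hom}(P,-)$ is exact, so $\underline{\Hom}(P,M)\to\underline{\Hom}(P,S)$ is an epimorphism in $\catC$. Take $V=\underline{\Hom}(P,S)$ and the canonical counit $V\otimes P\to S$; this is surjective, since the surjection $V_0\otimes P\twoheadrightarrow S$ constructed above factors through it. This step needs $\underline{\Hom}(P,S)$ to exist as an honest object, which is the content of the standing assumption of inner $\catC$-Homs.

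Because $\catC$ is semisimple in our applications, and in general whenever the required projectivity of $V$ holds, the map $V\to\underline{\Hom}(P,S)$ lifts along the epimorphism to $V\to\underline{\Hom}(P,M)$. This gives a map $V\otimes P\to M$ whose composite with $M\to S$ is surjective. Adding the induction map $W\otimes P\to M'\to M$ gives
\[
(V\oplus W)\otimes P\twoheadrightarrow M,
\]
which is surjective by the five lemma and completes the induction.

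\emph{Main obstacle.} The lifting step is the delicate point. It needs either semisimplicity of $\catC$, or the cleaner alternative of simply taking $V=\underline{\Hom}(P,M)$ and proving that the counit $\underline{\Hom}(P,M)\otimes P\to M$ is surjective. For that alternative, argue by contradiction. If the cokernel $C$ of the counit were nonzero, it would have a simple quotient $S$, since $\catM$ is Artinian and $C$ is locally finite. By exactness of $\underline{\Hom}(P,-)$, the nonzero map $\underline{\Hom}(P,S)\otimes P\to S$ constructed above then factors through $\underline{\Hom}(P,M)$. This contradicts the fact that the composite $\underline{\Hom}(P,M)\otimes P\to M\to C\to S$ is zero. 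We would present this counit version, since it avoids induction on length entirely.
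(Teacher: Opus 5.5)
Your reduction is sound. Surjectivity of the evaluation $\underline{\Hom}(P,M)\otimes P\to M$ for all $M$ is equivalent to faithfulness of $\underline{\Hom}(P,-)$. Your naturality argument is also correct once you have a nonzero map from $M$ to a simple object that vanishes on the image of the evaluation.

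The gap is the existence of that simple object. You take a simple \emph{quotient} $S$ of the cokernel $C$, justified by ``$\cM$ is Artinian and $C$ is locally finite''. The lemma only assumes $\cM$ Artinian, and the descending chain condition gives simple \emph{subobjects}, not simple quotients. For example, in the Artinian category of Artinian $\mathbb{C}[x]$-modules, consider $\mathbb{C}[x^{\pm 1}]/\mathbb{C}[x]$. Its only proper submodules are the finite-dimensional spans of $x^{-1},\dots,x^{-n}$, and the quotient by each is again isomorphic to the whole module. So it has no simple quotient. Your induction on length has the same limitation. As written, you prove the lemma only when every object has finite length. That covers the locally finite case used in Corollary~\ref{cor:ModuleCatFiniteFinal}, but not the stated generality.

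The repair is the device the paper uses: work with a simple subobject instead.
\begin{itemize}
\item Let $q\colon M\twoheadrightarrow C$ be the cokernel of $\mathrm{ev}_M$. Choose a simple $S\subseteq C$ and set $\tilde S=q^{-1}(S)\subseteq M$, so $q$ restricts to an epimorphism $\tilde S\twoheadrightarrow S$.
\item By exactness of $\underline{\Hom}(P,-)$ and of the action, the composite $\underline{\Hom}(P,\tilde S)\otimes P\to\underline{\Hom}(P,S)\otimes P\to S$ is nonzero.
\item By naturality this composite equals $\mathrm{ev}_{\tilde S}$ followed by $\tilde S\to S$.
\item After composing with the monomorphism $S\hookrightarrow C$, it factors through $q\circ\mathrm{ev}_M=0$. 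This is a contradiction.
\end{itemize}

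The paper phrases the same idea directly as faithfulness. For nonzero $f\colon X\to Y$ it takes a simple $S\subseteq\mathrm{im}(f)$ with preimage $\tilde S\subseteq X$. It then lifts $V\otimes P_i\twoheadrightarrow S$ to $\tilde S$ using projectivity of $V\otimes P_i$, where you use exactness of $\underline{\Hom}(P,-)$. Both are versions of the same fact. Both need $V\otimes-$ to have an exact right adjoint, for instance $\cC$ rigid as in Lemma~\ref{lm:DSPSprojective}; exactness of the action alone is not enough. So your reliance on rigidity costs nothing compared with the paper's argument.
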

\begin{proof}
This is similar to \cite{DSPS19} Lemma 2.22, but we spell out the details for convenience:
Being a $\cC$-generator $P$ means by definition that $\underline{\Hom}(P,-)$ is faithful. So we have to prove that for any any nonzero morphism $f:X\to Y$ in $\cM$ the corresponding morphism $f':\underline{\Hom}(P,X)\to \underline{\Hom}(P,Y)$ is nonzero, and this can be proven by providing $V\in \catC$ and $t:V\to \underline{\Hom}(P,X)$ such that $f\circ t:V\to \underline{\Hom}(P,Y)$ is nonzero. By the defining adjunction property of inner $\cC$-Homs this means to provide for any nonzero morphism $f:X\to Y$ a morphism $t:V\otimes P\to X$ such that $f\circ t:V\to \underline{\Hom}(P,Y)$ is nonzero. We will now provide such a morphism under the given assumption:

\bigskip

Let $\iota:S\hookrightarrow Y$ be a simple object in the image of $f$ and $\tilde{\iota}:\tilde{S}\hookrightarrow Y$ be its preimage under $f$. Note that image and preimages (as an example of a pullback) exist for abelian categories, and a simple $S$ exists in the image by assuming Artinian. A projective cover $\pi:P_S\twoheadrightarrow S$ by definition admits a lift $\tilde{\pi}: P_S\to \tilde{S}$. More generally, in our situation 
assume we have $V\otimes S_i\twoheadrightarrow S$ for $V\in \cC$ as asserted, then $\pi:V\otimes P_i\twoheadrightarrow S$ admits a lift $\tilde{\pi}:V\otimes P_i\to \tilde{S}$, because $V\otimes P_i$ is still projective by assumed exactness of the action in the second argument. The concatenation of $\tilde{\pi}$ with the embedding $\tilde{S}\hookrightarrow X$ gives a morphism $V\otimes P_i\to X$ whose concatenation with $f$ is a nonzero morphism $V\otimes P_i \to Y$ factoring over $S\hookrightarrow Y$. \qedhere

\end{proof}

\begin{example}\label{ex_finiteB}
In particular this is the case if $\catM=\catB$ is a tensor category and $\catC$ is a subcategory containing all simple modules, then taking $M_1$ the tensor unit suffices and its projective cover ${P}_\1$ is a projective generator.    
\end{example}

To summarize:
\begin{itemize}
    \item The first key finiteness condition is the existence of inner $\cC$-Homs. This is true e.g. under the finiteness conditions in Lemma \ref{lemma_finite} for $\cC$ semisimple.
    \item The second key finiteness condition is the existence of a projective $\cC$-generator. This is true under the finiteness condition in Lemma \ref{lm_finiteB}, and in particular it holds in our situation of $\cC\subset \cM$ a monoidal subcategory containing all simple modules.   
    \item In general  Lemma \ref{lm:DSPSprojective} proves that projective implies $\cC$-projective, under the assumption that $\cC$ is rigid.
    \item Finally the existence of a $\cC$-projective $\cC$-generator asserts by Theorem \ref{thm:DSPS} that the algebra $R$ realizes the module category $\cM$.
\end{itemize}

Combining these assertions in our case yields:

\begin{corollary}\label{cor:ModuleCatFiniteFinal}
    Assume $\catC$ is a rigid tensor category and $\catM$ a tensor category with an exact $\catC$ module functor $\catM\to \catC$. Assume $\catC$ is semisimple with finite-dimensional Hom spaces, and assume for given simples $W,W'$ there are only finitely many simples $V$ with $\Hom_\catC(V\otimes W,W'')\neq 0$. Assume that $\catM$ is locally finite and all simple modules are contained in $\catC$ and all projective covers in $\cM$ exist. Then there exists an algebra $R$ in $\catC$ with $\catM\cong\catC_R$.
\end{corollary}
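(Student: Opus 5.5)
The plan is to assemble Corollary \ref{cor:ModuleCatFiniteFinal} from the lemmas of this subsection, in the order of the summary list preceding it.

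\emph{Inner $\catC$-Homs.} The given exact $\catC$-module functor $\catM\to\catC$ should be faithful: it will be a splitting/forgetful functor, and it is exact and nonzero on simples since all simples of $\catM$ lie in $\catC$. Hom spaces in $\catM$ are then finite-dimensional, because they embed into Hom spaces of $\catC$. Example \ref{exm_semisimplefinite} transports the finiteness hypothesis on simples $V$ with $\Hom_\catC(V\otimes W,W')\neq 0$ from $\catC$ to $\catM$. Lemma \ref{lemma_finite} then gives inner Homs $\underline{\Hom}(M,M')=\bigoplus_V \Hom_\catM(V\otimes M,M')\,V$, where the sum is finite. To apply Example \ref{exm_semisimplefinite} to arbitrary $M,M'$ rather than simples, I would reduce via finite composition series. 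This uses local finiteness of $\catM$, together with the fact that $\Hom_\catM(V\otimes M,M')$ is bounded by the Homs between composition factors.

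\emph{Projective generator.} Take $S_1=\1$. Every simple $S$ of $\catM$ lies in $\catC$, so $S\cong S\otimes\1$ is a quotient of $V\otimes S_1$ with $V=S$. Since $\catM$ is locally finite (hence Artinian) and projective covers exist by assumption, Lemma \ref{lm_finiteB} (compare Example \ref{ex_finiteB}) shows that $P=P_\1$ is a projective $\catC$-generator, which is condition (b2). Exactness of the action, required by that lemma, holds because both $\catC$ and $\catM$ are tensor categories and $\catC$ is rigid, so tensoring is exact.

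\emph{$\catC$-projectivity and conclusion.} Since $\catC$ is rigid, Lemma \ref{lm:DSPSprojective} upgrades projectivity of $P$ to $\catC$-projectivity, giving (b1). By \cite{DSPS19} Corollary 2.13, again using rigidity, $G=\underline{\Hom}(P,-)$ is a $\catC$-module functor. Theorem \ref{thm:DSPS} then yields $\catM\cong\catC_R$ with $R=\underline{\Hom}(P,P)$.

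I expect the first step to be the main obstacle. One must check that finite-dimensionality and the finiteness of contributing simples $V$ really hold for all pairs $M,M'$ and not only for simples. One must also check that faithfulness of the module functor is justified, or else treat it as implicit in the hypotheses.
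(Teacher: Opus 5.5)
Your proposal is correct and follows exactly the paper's route, which is the summary list preceding the corollary: inner $\catC$-Homs from Lemma \ref{lemma_finite} and Example \ref{exm_semisimplefinite}, then $P_\1$ as a projective $\catC$-generator from Lemma \ref{lm_finiteB} and Example \ref{ex_finiteB}, then $\catC$-projectivity from Lemma \ref{lm:DSPSprojective}, and finally Theorem \ref{thm:DSPS} with the module-functor enhancement coming from rigidity. Your worry about faithfulness can be dropped: local finiteness of $\catM$ already gives finite-dimensional Hom spaces, and your composition-series bound shows that $\Hom_\catM(V\otimes M,M')\neq 0$ forces $\Hom_\catC(V\otimes S,S')\neq 0$ for some composition factors $S$ of $M$ and $S'$ of $M'$; these lie in $\catC$, so the hypothesis on simples applies directly without passing through the functor $\catM\to\catC$.
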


\subsection{Relative Tannaka Krein Reconstruction}\label{subsec:Mombelli}

\newcommand{\Hdual}{C}

In \cites{LM25} Theorems 6.7 and~6.11 we prove the following theorem, where $C=\Nichols^*$ is the dual to the main Hopf algebra in this article.  

\begin{theorem}\label{thm:MombelliInfinite}
 Let $\catC$ be a finite and rigid tensor category and $\catB$ a right $\catC$- module category (action being right exact in both variables). Let $F:\catB\to \catC$ be an exact faithful functor. Then the relative coend developed in \cite{BM21} produces a coalgebra $\Hdual\in \catC$ such that as module categories 
 \begin{align}\label{formula_MombelliModuleCat}
     \catB\cong \catC^\Hdual
 \end{align}
 Suppose in addition that $\catC$ is braided and $F$ is a tensor functor. Then $\Hdual$ has the structure of a bialgebra and the equivalence \eqref{formula_MombelliModuleCat} is an equivalence of tensor categories.  Suppose in addition that $\cB$ is a rigid monoidal $\cC$-module category and $F$ preserves  this structure, then $\Hdual$ is a Hopf algebra.
\end{theorem}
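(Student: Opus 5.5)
The plan is a relative Tannaka--Krein reconstruction: build a coalgebra from the relative coend of $F$, identify $\catB$ with its comodules, and then lift the tensor and rigid structures along $F$.

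\textbf{Step 1: the coalgebra $\Hdual$.} Using the $\catC$-module structure, I would form the relative coend
\[
\Hdual=\oint^{X\in\catB} F(X)^*\otimes F(X)
\]
in the sense of \cite{BM21}. Concretely, this is the coequalizer in $\catC$ of the two dinatural actions. Existence is guaranteed by finiteness of $\catC$: the coend is a cokernel of a map between coproducts, which can be taken finite after restricting to a projective generator.

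\textbf{Step 2: the comparison functor and the comparison theorem.} The coevaluation maps $F(X)\to F(X)\otimes \Hdual$ lift $F$ to a $\catC$-module functor $\tilde F:\catB\to\catC^\Hdual$, and dinaturality makes $\Hdual$ a coalgebra.

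To show $\tilde F$ is an equivalence, I would apply a comonadic Beck argument. Since $F$ is exact and faithful, it is comonadic provided it has a right adjoint $F^\ra$. This adjoint exists because $\catB$ inherits finiteness from being a module category over finite $\catC$ with an exact faithful functor, so one can use the inner $\catC$-Homs of Section~\ref{subsec:BorelModulecat}.

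The comonad $FF^\ra$ is a $\catC$-module comonad, so by the $\catC$-linearity argument (rigidity of $\catC$ is used here, as in \cite{DSPS19} Corollary~2.13) it has the form $(-)\otimes \Hdual'$ for $\Hdual'=FF^\ra(\1)$. Finally I would identify $\Hdual'\cong\Hdual$ via the universal property of the coend. This identification is the main technical step, and the key point is that the $\catC$-module structure is exact in both variables.

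\textbf{Step 3: tensor and Hopf structure.} If $F$ is tensor and $\catC$ is braided, the coend inherits a multiplication from the maps
\[
F(X)^*\otimes F(X)\otimes F(Y)^*\otimes F(Y)\to F(X\otimes Y)^*\otimes F(X\otimes Y),
\]
where the braiding of $\catC$ is used to shuffle the factors. This gives a bialgebra in $\catC$, and $\tilde F$ becomes monoidal by construction.

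For rigidity, the antipode comes from the maps $F(X)^*\otimes F(X)\to F(X^*)^*\otimes F(X^*)$, which are available because $F$ preserves duals. Equivalently, a bialgebra whose comodule category is rigid, compatibly with the forgetful functor, has invertible Galois map $\Hdual\otimes\Hdual\to\Hdual\otimes\Hdual$, and hence is Hopf.

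\textbf{Main obstacle.} I expect the hardest part to be Step 2, namely the identification of the comonad with tensoring by the coend. This requires both existence of the adjoint in the $\catC$-linear setting and careful bookkeeping with the braiding in the multiplicativity checks of Step 3. I would follow \cite{LM25} Theorems 6.7 and 6.11 closely for these verifications.
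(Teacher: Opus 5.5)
Modulo one wrong justification (second paragraph), your argument is correct, and it takes a genuinely different route from the paper.

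\textbf{Comparison of the two routes.} The paper follows \cite{LM25}, as recapitulated in the proof of Theorem~\ref{thm:infiniteML}.
\begin{enumerate}
\item It realizes $\mathcal{B}$ as $R$-modules in $\mathcal{C}$, with $R=\underline{\Hom}(P,P)$, using inner $\mathcal{C}$-Homs and a $\mathcal{C}$-projective $\mathcal{C}$-generator (Theorem~\ref{thm:DSPS}).
\item It represents $F$ by an $R$-module $\hat F$ through an Eilenberg--Watts argument.
\item It computes the relative coend explicitly as $\underline{\Hom}_R(\hat F,\hat F)^*$.
\item The coalgebra, bialgebra and Hopf structures then come from the universal property of the coend.
\end{enumerate}
You instead apply Beck comonadicity to the exact faithful $F$. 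You then use that a $\mathcal{C}$-module comonad on $\mathcal{C}$ is tensoring with the coalgebra $FF^{\mathrm{ra}}(\mathbf{1})$, and identify this coalgebra with the coend. That identification is easier than you expect. By the mate correspondence
\[
\Hom_{\mathcal{C}}(FF^{\mathrm{ra}}(\mathbf{1}),W)\cong \mathrm{Nat}_{\mathcal{C}}(FF^{\mathrm{ra}},W\otimes -)\cong \mathrm{Nat}_{\mathcal{C}}(F,W\otimes F),
\]
and via (co)evaluation the last set is exactly the set of balanced dinatural transformations $F(X)\otimes {}^*F(X)\to W$ that the relative coend represents. Your route is shorter and shows that the only finiteness input is the existence of $F^{\mathrm{ra}}$. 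The paper's route splits that input into the existence of inner Homs and of a projective $\mathcal{C}$-generator. That is the form it can verify by hand in the infinite setting (Lemma~\ref{lemma_finite}, Lemma~\ref{lm_finiteB}, Corollary~\ref{cor:ModuleCatFiniteFinal}), and it also yields an explicit formula for the coalgebra.

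\textbf{The wrong justification.} $\mathcal{B}$ does not inherit finiteness from admitting an exact faithful $\mathcal{C}$-module functor to a finite $\mathcal{C}$. Take $\mathcal{C}=\Vect$, let $\mathcal{B}$ be the finite-dimensional $\Z$-graded vector spaces, and let $F$ be the forgetful functor. All stated hypotheses hold, including the braided, tensor and rigid ones. Yet $F$ has no right adjoint inside $\mathcal{B}$, and the coend, which would have to be the infinite-dimensional coalgebra $\C[\Z]$, does not exist in $\mathcal{C}$.

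So finiteness of $\mathcal{B}$ must be an implicit hypothesis, as in \cite{LM25}. Granting it, $F^{\mathrm{ra}}$ exists because right exact functors between finite abelian categories have right adjoints. The inner $\mathcal{C}$-Homs of Section~\ref{subsec:BorelModulecat} do not supply this: they are right adjoints of the action functors $V\mapsto X\triangleleft V$, not of $F$. To get $F^{\mathrm{ra}}$ from them you would first have to pass through representability, i.e.\ through the paper's route. Your Step~1, which restricts to a projective generator of $\mathcal{B}$, needs the same hypothesis. It is redundant anyway once Step~2 exhibits the coend as $FF^{\mathrm{ra}}(\mathbf{1})$.

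\textbf{A convention slip.} For a right module category, $F(X\triangleleft V)\cong F(X)\otimes V$ gives $FF^{\mathrm{ra}}\cong FF^{\mathrm{ra}}(\mathbf{1})\otimes(-)$. Hence you get left comodules and the coend $\oint^X F(X)\otimes{}^*F(X)$ used in the paper. Your $F(X)^*\otimes F(X)$ and $(-)\otimes C'$ are the mirror version for left module categories.
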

In addition, 
    Theorem 6.17  concludes from rigidity of $\catC,\catB$ that $\Hdual$ is a Hopf algebra.
This result can be seen as a relative version of Tannaka Krein reconstruction \cite{Schau91} for $\cC$ the category of vector spaces, and a categorical version of the Radford biproduct for $\cC$ the category of representations of a Hopf algebra in vector spaces \cites{Maj95,Rad85}.

We will now generalize this result beyond tensor categories, dropping finiteness and spelling out what is actually needed. Then our main result in this section is this infinite version of \cite{LM25}:

\begin{theorem}[Relative Tannaka-Krein reconstruction]\label{thm:infiniteML}
\SimonRem{GV is enough}
 Let $\catC$ be a rigid tensor category  and $\catB$ a right $\catC$-module category, the action being right exact. Let $F:\catB\to \catC$ be an exact faithful functor. Suppose that the finiteness assertions in Corollary \ref{cor:ModuleCatFiniteFinal} hold, so there exists a realizing coalgebra $\Hdual\in \catC$, such that as module categories 
 \begin{align}\label{formula_newMombelliModuleCat}
     \catB\cong \catC^\Hdual
 \end{align}
 and $F$ is the forgetful functor. Then the relative coend produces such a coalgebra. Suppose in addition that $\catC$ is braided, $\cB$ is a monoidal $\cC$-module category and  $F$ is a tensor functor. Then $\Hdual$ has the structure of a bialgebra and the equivalence \eqref{formula_newMombelliModuleCat} is an equivalence of tensor categories. Suppose in addition that $\cB$ is a rigid monoidal $\cC$-module category and $F$ preserves this structure, then $\Hdual$ is a Hopf algebra. 
\end{theorem}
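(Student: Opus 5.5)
The plan is to run the argument of \cite{LM25} Theorems 6.7, 6.11 and 6.17 again, but to replace each use of finiteness by the representability that is now assumed. We never form a global projective generator. Instead we start from the realizing coalgebra $\Hdual$ with $\catB\cong\catC^\Hdual$, which exists by the hypotheses of Corollary~\ref{cor:ModuleCatFiniteFinal}: apply it to the $\catC$-module category $\catB$, and dualize using rigidity of $\catC$, so that an algebra $R$ with $\catB\cong\catC_R$ gives a coalgebra $\Hdual=R^*$ whose comodules are the $R$-modules.

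\textbf{Step 1: identify $\Hdual$ with the relative coend.} We show that the relative coend $\int^{X\in\catB}F(X)^*\otimes F(X)$ of \cite{BM21} exists in $\catC$ and is isomorphic to $\Hdual$. It suffices to verify its universal property. Any $\catC$-balanced dinatural family $F(X)^*\otimes F(X)\to W$ corresponds, by adjunction and rigidity, to a $\catC$-natural transformation $F\Rightarrow W\otimes F$ of $\catC$-module functors $\catC^\Hdual\to\catC$. By the comodule-theoretic reconstruction lemma (a relative Eilenberg--Watts or Yoneda statement), such transformations are exactly the morphisms $\Hdual\to W$: one evaluates the transformation on the cofree comodule $\Hdual$ and uses its coaction. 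The only input this needs is that every comodule is a kernel of a map between cofree comodules $V\otimes\Hdual$, and that the functor $F$ is exact, which is assumed. So the coend exists without finiteness, and the canonical map from the coend to $\Hdual$ is an isomorphism.

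\textbf{Step 2: monoidal structure.} Assume now that $\catC$ is braided, $\catB$ is a monoidal $\catC$-module category and $F$ is a tensor functor. We build the multiplication on $\Hdual$ as in \cite{LM25}. The family
\[
F(X)^*\otimes F(X)\otimes F(Y)^*\otimes F(Y)\longrightarrow F(X\otimes Y)^*\otimes F(X\otimes Y)\longrightarrow \Hdual
\]
is formed using the braiding of $\catC$ to pass $F(X)$ past $F(Y)^*$, together with the monoidal structure of $F$. This family is dinatural in both $X$ and $Y$. Hence it factors through $\Hdual\otimes\Hdual$, provided $\Hdual\otimes\Hdual$ is itself the coend over $\catB\times\catB$. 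That identification holds because $\otimes$ in $\catC$ is exact and rigid $\catC$ makes $-\otimes-$ preserve the relevant colimits. The unit is induced by $X=\1$.

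The bialgebra axioms then follow from the universal property exactly as in the finite case, since all of those arguments are formal. With this structure, the equivalence $\catB\cong\catC^\Hdual$ becomes monoidal, because by construction the coaction on $X\otimes Y$ is the product of the coactions.

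\textbf{Step 3: antipode.} Finally, if $\catB$ is rigid and $F$ preserves duals, we define the antipode by dinaturality applied to $X\mapsto X^*$. We then check the antipode identities via the universal property, as in \cite{LM25} Theorem 6.17.

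The main obstacle is Step 1. We must show that the coend exists and that $\Hdual\otimes\Hdual$ really represents the two-variable coend in the infinite setting. This is the place where finiteness was used before. I would handle it by writing everything in terms of $R$-modules and inner $\catC$-Homs, via Lemma~\ref{lemma_finite} and Theorem~\ref{thm:DSPS}, and using that $\Hdual$-comodules are determined by cofree presentations.
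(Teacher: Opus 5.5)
\textbf{Gap: the given $F$ is never shown to be the forgetful functor.} Your Step~1 is a valid Yoneda argument, but it works only for a coalgebra $C$ under which the \emph{given} functor $F$ becomes the forgetful functor $\mathcal{C}^C\to\mathcal{C}$. The bijection between balanced dinatural families $F(X)^*\otimes F(X)\to W$ and morphisms $C\to W$ is obtained by evaluating on the cofree comodule and using the coaction on $F(X)$. The coalgebra you produce does not have this property.

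Corollary~\ref{cor:ModuleCatFiniteFinal} rests on Theorem~\ref{thm:DSPS}, with $R=\underline{\Hom}(P,P)$ for the chosen projective $\mathcal{C}$-generator $P$ and the equivalence $M\mapsto\underline{\Hom}(P,M)$. So under $\mathcal{B}\simeq\mathcal{C}_R\simeq\mathcal{C}^{R^*}$, the forgetful functor corresponds to $\underline{\Hom}(P,-)$. Nothing in the Corollary identifies this functor with $F$.

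A small example shows the two can differ. Take $\mathcal{C}=\mathrm{Vect}$, $\mathcal{B}=\mathrm{Rep}(A)$, $F$ the forgetful functor and $P=A\oplus A$. Then $\underline{\Hom}(P,-)\cong F^{\oplus 2}$, and $R^*$ is four times as large as the coend $A^*$ of $F$. So, as written, your Step~1 computes the coend of $\underline{\Hom}(P,-)$, not of $F$. Steps~2--3 inherit the problem, since the statement that the coaction on $X\otimes Y$ is the product of the coactions presupposes that $F$ is the forgetful functor.

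\textbf{How to repair it.} You have two options.
\begin{itemize}
\item Use the existence of a $C$ with $F$ forgetful directly as the hypothesis in the statement, and drop your derivation via $R^*$.
\item Construct such a $C$. By representability, $F$ is tensoring over $R$ with a representing module, so it has a right adjoint $F^{\mathrm{ra}}$ (tensoring with a dual of that module, using rigidity of $\mathcal{C}$). Since $F$ is exact and faithful, it is comonadic by the Beck-type lemma of Section~\ref{sec:adjunction}. Finally, $FF^{\mathrm{ra}}$ is a $\mathcal{C}$-module endofunctor of $\mathcal{C}$, hence tensoring with the coalgebra $C:=FF^{\mathrm{ra}}(\mathbf{1})$.
\end{itemize}
With that $C$ your Step~1 is fine, and it needs less than cofree presentations. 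Naturality along the split monomorphism $\delta_X$ of $X$ into the cofree comodule, together with module naturality, already forces $\theta_X=(g\otimes\mathrm{id})\circ\delta_X$ with $g=(\mathrm{id}_W\otimes\varepsilon)\circ\theta_C$.

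\textbf{Comparison with the paper.} The paper avoids the issue differently. It keeps an arbitrary realizing algebra $R$ and represents $F$ by the $R$-module $\hat F=\Phi^{-1}(F)$ via the Eilenberg--Watts equivalence. It shows that $\hat F$ is the $\mathcal{B}$-valued relative coend $\oint^{M}M.{}^*F(M)$, then applies $F$ to get $C=F(\hat F)=\underline{\Hom}_R(\hat F,\hat F)^*$. This yields the coend of the actual $F$, and all mixed coends $E(\hat F)$, without choosing an adapted realization. Your corrected route instead gives a more explicit verification of the universal property, which the paper only asserts.

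For Steps~2--3, both you and the paper defer to the formal arguments of \cite{LM25}. One correction: $C\otimes C$ is the two-variable coend because $V\otimes(-)$ and $(-)\otimes V$ have right adjoints in rigid $\mathcal{C}$, combined with a Fubini argument, not because $\otimes$ is exact.
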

\begin{proof}
The proof of \cite{LM25} essentially still holds, but we have to carefully review to see where we use finiteness: The proof relies on  the existence of inner $\catC$-Homs and representability $\catM={_R}\catC$, which we have now as a separate assumption; note that the cited article uses right module categories and left $R$-modules. The crucial point is the existence of certain relative coends to reconstruct the coalgebra. For this we now proceed as   \cite{LM25} Section 4: The standard algebra consequence of representability is that the  assignment of a left $R$-module ${_R}\hat{F}$ to the corresponding functor   
$$\Phi:\catM^{op}\to \mathrm{Rex}_\catC(\catM,\catC)$$
$$\hat{F}_R \mapsto F=(\hat{F}^*\otimes_R -)
=\underline{\Hom}_R(-,\hat{F})^*$$
is an equivalence, 
because any right exact functor $F$ can be evaluated on the regular representation ${_R}R$ to define $\hat{F}$ representing $F$.

Consider now the functor 
$$\catB^{op}\boxtimes \catB \to \catB$$
$$M\otimes N\mapsto N.{^*}F(M)$$
\SimonRem{typo in LM25}
which also comes with a natural  $\catC$-prebalancing \cite{LM25} (3.9).
We claim that $\hat{F}=\Phi^{-1}(F)$ is a (and hence the) relative coend of this functor as defined in \cite{LM25} Definition 3.2
$$\oint^{M\in\catB} M.{^*}F(M) = {_R}\hat{F} $$
Indeed, if we spell out this functor involving $F$ in terms of $\hat{F}$
\SimonRem{We dont assume pivotality, but left-right-dual ok. Recall: apply rightdual  coevaluation for leftdual object and then leftdual evaluation constructs the iso}
$$N.{^*}F(M)=N\otimes \underline{\Hom}_R(M,\hat{F})$$
then evaluation gives a dinatural transformation 
$$M.{^*}F(M)=M\otimes \underline{\Hom}_R(M,\hat{F}) \to \hat{F}$$
and the necessary pentagon \cite{LM25} (3.5) is clearly fulfilled, and universality is also clearly true.

\bigskip

As as second step, for two functors $F,E:\catB\to \catC$ we can apply $E$ to the result above and find the relative coend 
$$\oint^{M\in\catB} E(M).{^*}F(M) = 
E(\hat{F})=  \underline{\Hom}_R(\hat{F},\hat{E})^*$$
The remainder of \cite{LM25} establishes that the relative coend 
$$C:=\oint^{M\in\catB} F(M).{^*}F(M) = 
 \underline{\Hom}_R(\hat{F},\hat{F})^*$$
has the explicitly the following additional structures:
\begin{itemize}
\item a coalgebra structure by \cite{LM25} Proposition 6.2.
\item a bialgebra structure by \cite{LM25} Theorem 6.11 if $\cC$ is braided, $\cB$ is a monoidal $\cC$-module category and $F$ is strictly monoidal
\item a Hopf algebra by \cite{LM25} Corollary 6.19 if in addition to the assumptions $\cB$ is a rigid monoidal $\cC$-module category  and $F$ preserves this structure.
\end{itemize}
These assertions uses the universal property and the coherence data of the coend, if it exists, and do not depend on any finiteness assumptions. 

\end{proof}

\SimonRem{Think again. I it like we are constructing a Hopf monad and then give a realizing Hopf algebra? Would it not be much easier to directly have a right adjoint (then lax) to the faithful exact tensor functor? Can we not use SAFT for this? }

\begin{example}[compare \cite{LM25} Lemma 6.3]
Suppose that $\catC$ is rigid and $\catB$ is a module category which is known to be of the form 
$$ \catB\cong \catC^\Hdual$$
for some coalgebra $\Hdual$ and $F$ is the forgetful functor. 
Taking $R={^*}\Hdual$ as the realizing algebra, the forgetful functor $F:\catC_R\to \catC$ is realized by the regular representation $\hat{F}={_R}R$. Then the relative coend recovers as intended

$$\oint^{M\in\catB} F(M).{^*}F(M) =\underline{\Hom}_R(R,R)^*=R^*=\Hdual$$

Suppose in addition that $\catC$ is braided, and assume $\Hdual$ is a bialgebra in this braided tensor category. Then $\catC^\Hdual$ is a tensor category and the forgetful functor is a tensor functor. Suppose in addition $\Hdual$ is a Hopf algebra, then $\catC^\Hdual$ is a rigid tensor category and the forgetful functor is compatible with the dualities. 

As a remark, we could moreover endow this relative coend with the structure of an object in $\catB$, then in our example we would recover the adjoint representation, as expected from Hopf algebra theory.
\end{example}

\subsection{Nichols algebra arguments}\label{subsec:NicholsArguments}

Let $\catC$ be a braided tensor category. Let $M=M_1\oplus\cdots\oplus M_n$ be a semisimple object in $\catC$ with simple summands $M_i$. Let $\NicholsOf(M)$ be the corresponding Nichols algebra. 

\begin{definition}\label{def:robust}
We call the Nichols algebra $\NicholsOf(M)$ \emph{robust}, if any Hopf algebra $\Nichols$ with the following properties is already isomorphic to the Nichols algebra:
\begin{itemize}
    \item Every simple $\Nichols$-comodule $X$ is trivial $X=X_\NicholsAlgebraCounit$.
    \item $\Ext^1(1,X_\NicholsAlgebraCounit)$ in the category of comodules has for any simple object $X$ the same finite $\KK$-dimension as the multiplicity of $X$ in $M$.  
\end{itemize}
\end{definition}

As the definition wants to imply, this property can be deduced from an intimate knowledge of the Nichols algebra in question by checking the deformability of the relations. 
For $\catC$ the category of $\Gamma$-graded vector spaces this is in the core of the Andruskiewitsch-Schneider program for the classification of pointed Hopf algebras, see 
\cites{AS10} and the more recent survey \cite{AG19}. It is a main result of Angiono et. al. in \cite{AG11} and \cite{AKM15} that in this case all finite-dimensional Nichols algebras are robust. This is proven by deriving general criteria for relations to be robust and then going trough the classification of finite-dimensional Nichols algebras by Heckenberger \cite{Heck09} and their defining relations \cite{An13}.

Our Nichols algebra in Lemma \ref{lm_Nichols} is in a different category, and many tools are not available yet; on the other hand it is a very small example. Note that as a set of defining relations we understand a subobject of the tensor algebra generating the kernel of the projection to the Nichols algebra.

\begin{lemma}\label{lm_robust}
Let $\NicholsOf(M)$ be a Nichols algebra in a semisimple rigid braided tensor category. 
\begin{itemize}
    \item Suppose that a set of defining relations of the Nichols algebra as direct sum of simple objects in $\cC$ does not contain the unit object in $\cC$. Then any filtered Hopf algebra $\Nichols$ with $\gr(\Nichols)=\NicholsOf(M)$ is already the Nichols algebra (no deformation).
    \item Suppose that the previous statement holds also for the gradewise dual $\NicholsOf(M)^*\cong\NicholsOf(M^*)$. Then $\Nichols(M)$ is generated in degree $1$.
\end{itemize}  
Both statements together mean that $\Nichols(M)$ is robust.
\end{lemma}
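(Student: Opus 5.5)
The plan follows the standard lifting method of Andruskiewitsch--Schneider, adapted to a semisimple rigid braided $\cC$. Both bullets reduce to a statement about primitive elements in a graded Hopf algebra in $\cC$.

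\emph{First bullet (no deformation).} Let $\Nichols$ be filtered with $\gr(\Nichols)=\NicholsOf(M)$. Since $\cC$ is semisimple, choose a $\cC$-splitting $\Nichols\cong\gr(\Nichols)$ of the filtration. Then $M=\gr_1\Nichols$ lifts to a subobject $M\subset\Nichols$ of elements that are primitive modulo the degree-$0$ part $\1$, i.e.
\[
\Delta(m)=m\otimes 1+1\otimes m+(\text{terms in } \1\otimes\1).
\]
Normalising by subtracting a multiple of the unit makes $M$ honestly primitive, using $\Ext^1$-type considerations and the counit. This yields an algebra map $\mathfrak{T}(M)\to\Nichols$, which is surjective by a filtration argument. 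Let $R\subset\mathfrak{T}(M)$ be the given set of defining relations, $R=\bigoplus R_j$ with $R_j$ simple and none isomorphic to $\1$. Its image in $\Nichols$ lies in filtration degree $<\deg R_j$. A standard argument shows that the image of each relation is primitive in $\Nichols$ modulo lower terms. Inductively on degree, and using that the relations are primitive in $\mathfrak{T}(M)/(\text{lower relations})$, the image of $R_j$ lands in the lower-filtered primitive part. In $\gr$ the primitives are only $M$ in degree $1$, plus the unit in degree $0$. So each $R_j$ maps into $\1\oplus M$. Degree considerations, namely that $R_j$ lives in degree $\ge2$ and a deformation term in $M$ would have to be compatible with the coproduct, kill the $M$-component. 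Hence $R_j$ maps into $\1$, and Schur's lemma with $R_j\not\cong\1$ forces $R_j\mapsto 0$. Therefore $\NicholsOf(M)=\mathfrak{T}(M)/(R)\twoheadrightarrow\Nichols$, and this is an isomorphism by comparing $\gr$.

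\emph{Second bullet (generation in degree one).} Let $\Nichols$ satisfy the hypotheses of Definition~\ref{def:robust}. Take the coradical filtration; it starts with $\Nichols_0=\1$ since all simple comodules are trivial. The graded object $\gr\Nichols=\bigoplus \gr_n$ is a graded connected coradically graded Hopf algebra, whose degree-one part is $M$ by the $\Ext^1$ condition. Hence $\NicholsOf(M)\subset\gr\Nichols$ as the subalgebra generated in degree $1$. Dualise gradewise to get $S:=(\gr\Nichols)^*$, which is generated in degree $1$ (dual of coradically graded) and surjects onto $\NicholsOf(M)^*\cong\NicholsOf(M^*)$. The task is to show that $S$ is not larger. $S$ is a graded Hopf algebra generated by $M^*$ with $\gr$-quotient $\NicholsOf(M^*)$. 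The kernel of $\mathfrak{T}(M^*)\to S$ is contained in the defining ideal of $\NicholsOf(M^*)$. Every minimal-degree element of the difference is primitive in $S$. Applying the first-bullet hypothesis for $\NicholsOf(M^*)$, the relations contain no copy of $\1$, and the same primitivity argument shows that the missing relations must hold in $S$. Hence $S=\NicholsOf(M^*)$, so $\gr\Nichols=\NicholsOf(M)$. Then apply the first bullet to conclude $\Nichols\cong\NicholsOf(M)$.

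The main obstacle is the claim that the image of a defining relation is primitive modulo lower filtration. In vector spaces this follows from the quantum symmetrizer and an inductive choice of a minimal set of relations. In a general braided category one must argue by subobjects and use that $\cC$ is semisimple, so that complements and isotypic components make sense. I would first try a minimal generating set of relations, ordered by degree, and compute $\Delta$ on $\mathfrak{T}(M)/(\text{relations of smaller degree})$. There the kernel of the next quantum symmetrizer consists of primitives, by the standard characterisation of Nichols algebras via \cite{CLR23} Section 5.2.
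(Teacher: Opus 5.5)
\textbf{First bullet.} The final step of your argument is backwards. Once $M$ is lifted to $P(\Nichols)$ and the lower relations are known to hold, the image of a relation $R_j$ is primitive in $\Nichols$. Since $\gr(\Nichols)$ has no primitives in degree $\ge 2$, every primitive lies in $\Nichols_1=\1\oplus P(\Nichols)$, and $P(\Nichols)\cong M$.

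The $\1$-component of a primitive element vanishes automatically (apply the counit), so your hypothesis $R_j\not\cong\1$ does no work. The $M$-component, on the other hand, is exactly where a deformation can live. A map $R_j\to P(\Nichols)$ is compatible with the coproduct because both sides are primitive, so ``degree considerations'' cannot kill it.

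In fact the statement as literally phrased is false. Take $\cC=\Rep(A_5)$ with the flip braiding and $M=\C^3$, the irreducible representation coming from $A_5\subset SO(3)$. Then $\NicholsOf(M)=\mathrm{Sym}(M)$, with defining relations $\Lambda^2M\cong M$, which has no unit summand. Yet $U(\mathfrak{so}_3)$, with the equivariant cross-product bracket, is a filtered Hopf algebra in $\cC$ whose associated graded is $\mathrm{Sym}(M)$, and it is not commutative.

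The hypothesis that makes the argument work is $\Hom_\cC(R_j,M)=0$. This is what the paper's proof actually uses, via the cited rigidity criterion of Angiono--Kochetov--Mastnak, and what Corollary~\ref{cor_robust} verifies (``not isomorphic to $M$''). With that hypothesis your induction goes through once you exchange the roles of $\1$ and $M$ in the last step.

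\textbf{Second bullet.} This part is circular. In $S=(\gr\Nichols)^*$, take a lowest-degree relation of $\NicholsOf(M^*)$ that fails in $S$. Its image is a \emph{homogeneous} primitive of degree $n\ge 2$. It does not drop to lower filtration degree, so Schur's lemma against $\1$ or against $M^*$ does not apply to it.

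What made the first bullet work was that primitives of a filtered Hopf algebra whose associated graded is a Nichols algebra sit in degree one. For $S$ this is precisely the claim to be proven: by the Andruskiewitsch--Schneider duality lemma, it is equivalent to $\gr\Nichols$ being generated in degree one.

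The paper does not argue by primitivity inside $S$. It uses that duality lemma to turn an extra primitive of the dual, via the coradical filtration, into a nontrivial deformation, and then applies the first bullet to $\NicholsOf(M^*)$. You would need such a translation, or an independent argument excluding primitives of degree $\ge 2$ in $S$, to close this part.
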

\begin{proof}
We essentially follow \cite{AKM15} for the first statement and \cite{AS00} for the second statement:\\

By \cite{MW09} Lemma 4.2.2 and \cite{AKM15} Theorem 5.3, which are both in a general categorical context, the algebra had no nontrivial deformations if $\Hom_\catC(R,M)=\{0\}$ for all defining relations $R$. The semisimplicity is a particular case in which the splitting condition for $\NicholsAlgebraCounit$ in \cite{AKM15} Lemma 4.1 is fulfilled, see \cite{AKM15} Remark ~4.2.\\

By \cite{AS00} Lemma 5.5, which is not formulated in general categorical context, but continues to hold obviously, a graded Hopf $\Nichols$ is generated in degree $1$ iff the dual Hopf algebra $\Nichols^*$ has all its primitive elements in degree $1$. But this implies that in the coradical filtration (for general categories see \cite{EGNO15} Section 1.13) that $\Nichols^*$ is a nontrivial deformation. 
\end{proof}

\SimonRem{Could use more elaboration, but surely true}

\begin{corollary}\label{cor_robust}    
The Nichols algebra $\NicholsOf(M)$ in $\cC^\k$ in Lemma \ref{lm_Nichols} is robust. This remains true over any uprolling of $\cC^\k$. 
\end{corollary}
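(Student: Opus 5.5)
The plan is to verify the two hypotheses of Lemma \ref{lm_robust}, first for $\NicholsOf(M)$ and then for its gradewise dual $\NicholsOf(M)^*\cong\NicholsOf(M^*)$. The first step is to read off a set of defining relations from Lemma \ref{lm_Nichols} and decompose it into simple objects of $\cC^\k$, case by case in $v$.
\begin{itemize}
\item For $v>3$ the relations are generated by the summand $\VirPhi_{1,3}\boxtimes\Pi_{2\ellNichols}(2\lambdaNichols)\subset M^{\otimes2}$. The proof of Lemma \ref{lm_Nichols} showed that all of $M^{\otimes3}$ lies in the ideal this summand generates, so the degree $3$ part gives no new relation.
\item For $v=3$ the relation is the invertible object $M^{\otimes3}\cong\VirPhi_{1,2}\boxtimes\Pi_{3\ellNichols}(3\lambdaNichols)$.
\item For $v=2$ the relation is $M^{\otimes 2}\cong\VirPhi_{1,1}\boxtimes\Pi_{4\ellNichols}(4\lambdaNichols)$.
\end{itemize}
In each case the relation object has $\Pi$-label $\ell=j\ellNichols$ with $j\in\{2,3,4\}$.

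The second step is to check that no relation is isomorphic to the unit $\VirPhi_{1,1}\boxtimes\Pi_0(0)$. For $v\ge3$ the Virasoro label $\VirPhi_{1,3}$ resp. $\VirPhi_{1,2}$ is already nontrivial, since the only identification is $\VirPhi_{r,s}\cong\VirPhi_{u-r,v-s}$. For $v=2$ we need $4\ellNichols\neq0$, i.e. $\ellNichols\neq 0$. Condition \eqref{formula_NicholsParameterCondition} gives this: for $\ellNichols=0$ the expression equals $\tfrac12(u/v)$, which is not an odd integer because $u,v$ are coprime and $v>1$.

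Actually Lemma \ref{lm_robust} asks for more, namely $\Hom_\cC(R,M)=0$ rather than only $R\not\cong\1$, as the cited deformation criterion of \cite{AKM15} reads. So I would also check that the relations admit no maps to lower-degree terms. Every object of degree $j$ carries $\Pi$-label $j\ellNichols$, so two objects of different degree can only be isomorphic if their labels agree, which requires $\ellNichols=0$. This is excluded above, so the relations have no maps to $\1$, $M$ or $\integral$ in any degree. The continuous label $\lambda$ is not even needed for this.

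For the dual we use $M^*=\VirPhi_{1,2}\boxtimes\Pi_{-\ellNichols}(-\lambdaNichols)$, since $\VirPhi_{1,2}$ is self-dual. The parameters $(-\ellNichols,-\lambdaNichols)$ satisfy the analogue of \eqref{formula_NicholsParameterCondition} because the expression is quadratic in $(\ellNichols,\lambdaNichols)$. The same degree-grading argument, now with labels $-j\ellNichols$ and again $\ellNichols\neq0$, gives both hypotheses of Lemma \ref{lm_robust} for $\NicholsOf(M^*)$, and robustness follows.

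For an uprolling only the lattice labels change: $\ell$ is taken in a finer or coarser group. The labels $j\ellNichols$ for $j\in\{0,\dots,4\}$ stay distinct as long as the uprolled grading still detects the $\ell$-grading modulo a large enough lattice. I expect this to be the main obstacle. If the uprolling quotients $\ell$ by $2v\Z$, one must check that $j\ellNichols\not\equiv j'\ellNichols$ for the relevant degrees; this holds for $\ellNichols=1$ because $v\ge2$ forces $2v>4$. I would also double-check that the relation list for $v>3$ is complete, i.e. that the argument in Lemma \ref{lm_Nichols} really kills all of degree $3$ so that no extra degree-$3$ relation appears.
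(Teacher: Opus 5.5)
Within $\cC^\k$ your argument is correct. Grading by the discrete label $\ell$, using that \eqref{formula_NicholsParameterCondition} forces $\ellNichols\neq 0$, is a cleaner and uniform substitute for the paper's case-by-case comparison of the relation $R$ with $M$. The only slip there is harmless: for $(u,v)=(2,3)$ one has $\VirPhi_{1,2}\cong\VirPhi_{u-1,v-2}=\VirPhi_{1,1}$, but the $\ell$-label still separates $R$ from $\1$.

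The gap is in the uprolling claim. Your count modulo $2v$ is fine for $v\geq 3$. However, ``$v\geq 2$ forces $2v>4$'' is false at $v=2$, and there the failure is real. With $\ellNichols=1$ the relation is $R=\integral^{\otimes 2}=\VirPhi_{1,1}\boxtimes\Pi_4(-2t)=\Pi_4(-u)\cong\Pi_4(0)$. This is a summand of the extending algebra $\bigoplus_{\ell}\Vir_{c_{u,v}}\otimes\Pi_{4\ell}(0)$ of the uprolling (E) in Section~\ref{sec:variations}. So after uprolling the relation \emph{is} the unit object, and the condition ``no defining relation contains $\1$'' that you set out to verify genuinely fails; your argument does not establish the statement for $v=2$. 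Separately, comparing labels modulo one fixed lattice does not address an arbitrary uprolling.

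What rescues the statement, and what the paper's proof actually checks, is that only $\Hom_{\cC}(R,M)=0$ is needed; this is the criterion used in the proof of Lemma~\ref{lm_robust}.
\begin{itemize}
\item A filtered lift can never add a multiple of $\1$ to a relation. The kernel of $\mathfrak{T}(M)\to\Nichols$ lies in the kernel of the counit, which vanishes in positive degrees.
\item Since the relations are primitive, a lift can only add a primitive term, i.e.\ one in $M$.
\end{itemize}
So you only need $R\not\cong M$ after uprolling. Concretely, $M^{\otimes 2}$ (for $v=3$) resp.\ $\integral$ (for $v=2$) must never be absorbed into the extending commutative algebra.

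The paper settles this for all uprollings at once via the quadratic form $Q(j(xc+yd))=e^{j^2\pi\i(N-\frac12(u/v))}$, where $N$ is the odd integer in \eqref{formula_NicholsParameterCondition}. An invertible object can only become trivial in a simple current extension if its twist is $1$. But $M^{\otimes 2}$ at $v=3$ has twist $e^{-2\pi\i u/3}\neq 1$. And $\integral$ at $v=2$ has self-braiding $-1$, the very fact that gives $\NicholsOf(\integral)=\1\oplus\integral$.

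Replacing your label count by this twist argument, and dropping the unit condition, closes the gap.
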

\begin{proof}
 By the discussion in Lemma \ref{lm_Nichols} the defining relation for $v>3$ is  $R=\VirPhi_{1,3}\boxtimes\Pi_{2\ellNichols}(2\lambdaNichols)$, which is a simple object not isomorphic to $M$, hence the previous Lemma \ref{lm_robust} applies. For $v=3$ with $M$ simple and invertible the defining relation is $R=M^{\otimes 3}=\VirPhi_{1,2}\boxtimes\Pi_{3\ellNichols}(3\lambdaNichols)$, which is again not isomorphic to $M$ unless $\Pi_{2\ellNichols}(2\lambdaNichols)=1$. In the case $v=2$ the relation is $\Pi_{2\ellNichols}(2\lambdaNichols)^{\otimes 2}$, which is not isomorphic to $M$ unless $\Pi_{3\ellNichols}(3\lambdaNichols)=1$ 

We now check if there can exist uprollings where a relation is $\1$ in the two degenerate cases $v=2,3$. We do so by computing the quadratic form, using $\frac12(u/v)+4xy=N$ odd: 
\[Q(j(xc+yd))
=e^{j^2\pi\i \,4xy }
=e^{j^2\pi\i\,(N-\frac12(u/v))}\]
By considering the denominators, this can never be $1$ for $v=3,j=2$ or $v=2,j=3$. Hence there is no such uprolling.\\

Both statements are true for $\NicholsOf(M^*)$ as well, which is just the same Nichols algebra for the parameters $(-\ellNichols,-\lambdaNichols)$.
\end{proof}

\subsection{Main Statement}\label{subsec:BorelEquivalence}

\begin{theorem}\label{thm:BorelEquivalence}
The category $(\cD^\k)_A$ in Section \ref{subsec:sl2Amodules} is equivalent as tensor category to the category of modules over $\NicholsOf(M)$ in $\cC^\k$ in Section \ref{subsec:NicholsAlgebra} for $$M=\VirPhi_{1,2}\boxtimes \Pi_{1}(-t/2),\; v\geq 3,\qquad 
M=\VirPhi_{1,1}\boxtimes \Pi_{2}(-t),\;v=2.$$ 
Under the correspondence, the splitting functor is the functor forgetting the Nichols algebra action, and the embedding of local modules is the functor endowing an object with trivial Nichols algebra action.
\end{theorem}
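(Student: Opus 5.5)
The plan is to follow the four-step outline sketched in the introduction, now supplying the concrete inputs from Section \ref{subsec:sl2Amodules}. \emph{Step one: splitting.} By item (1) of Section \ref{subsec:sl2Amodules} (\cite{CMY1} Theorem 4.12), every simple object of $(\cD^\k)_A$ is local, i.e.\ lies in $(\cD^\k)_A^\loc\cong\cC^\k$. Hence \cite{CLR23} Section 4 applies. It provides a monoidal splitting functor $\Split:(\cD^\k)_A\to\cC^\k$, sending a module to the direct sum of its composition factors. This functor is exact and faithful and is left inverse to the embedding of local modules. This makes $(\cD^\k)_A$ a monoidal $\cC^\k$-module category equipped with an exact faithful tensor functor to $\cC^\k$.

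\emph{Step two: realization by a Hopf algebra.} I will verify the hypotheses of Corollary \ref{cor:ModuleCatFiniteFinal} and then apply Theorem \ref{thm:infiniteML}.
\begin{enumerate}
\item $\cC^\k$ is semisimple and rigid with finite-dimensional Hom spaces.
\item Fusion in $\cC^\k$ is finite. For fixed simples $W,W'$, only finitely many simple $V$ satisfy $\Hom(V\otimes W,W')\neq0$: in the Virasoro factor there are finitely many labels, and in the half-lattice factor $\Vect_\Gamma$ the object $V$ is forced to be $W'\otimes W^{-1}$.
\item $(\cD^\k)_A$ is locally finite.
\item All simples are in $\cC^\k$, by step one.
\item Projective covers exist. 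These are the modules $R_{r,s,\lambda,\ell}$ of item (4), respectively their $v=2$ analogues.
\end{enumerate}
Rigidity of $(\cD^\k)_A$ is item (2). By Example \ref{ex_finiteB}, the single object $N=R_{1,1,0,0}$ is then a projective $\cC^\k$-generator. Theorem \ref{thm:infiniteML} then yields a Hopf algebra $\Nichols$ in $\cC^\k$ with $(\cD^\k)_A\cong(\cC^\k)_\Nichols$ as tensor categories, under which $\Split$ becomes the forgetful functor. Dually, $\Nichols^*$ is the coalgebra $C$ of the theorem.

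\emph{Step three: identification with $\NicholsOf(M)$.} Because all simples are trivial, the coradical of $\Nichols^*$ is $\1$. The first term of the coradical filtration is governed by $\Ext^1(\1,X)$, which by item (8) is one-dimensional exactly for $X=M$ as in the statement and zero otherwise. Hence the primitive part of $\gr(\Nichols^*)$ is $M^*$, and the Nichols algebra associated to $\Nichols$ is $\NicholsOf(M)$. The conditions of Definition \ref{def:robust} are therefore met. Corollary \ref{cor_robust} gives robustness, so $\Nichols\cong\NicholsOf(M)$. It remains to check that the parameters match. I need $(\ellNichols,\lambdaNichols)=(1,-t/2)$ for $v\geq3$, respectively $M=\Lambda$ with $(2,-t)$ for $v=2$, and this choice must satisfy the oddness condition \eqref{formula_NicholsParameterCondition}. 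Substituting $\ellNichols=1$ and $\lambdaNichols=-t/2$ gives $1+(\k-t)=1-2=-1$, which is odd. A similar substitution handles the $v=2$ case.

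\emph{Step four: compatibility of the functors.} Under the equivalence, $\Split$ is the forgetful functor by construction, and local modules correspond to trivial $\Nichols$-modules. The second claim holds because local modules are exactly the objects on which the reconstructed coaction is trivial, i.e.\ the image of $\cC^\k$.

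I expect the main obstacle to be step two. Theorem \ref{thm:infiniteML} requires $\cB=(\cD^\k)_A$ to be a \emph{monoidal $\cC^\k$-module category} with $\Split$ strictly monoidal and preserving duals. Checking this compatibility, rather than merely citing \cite{CLR23}, is the delicate point. A secondary obstacle is the rigor of the robustness step, since Lemma \ref{lm_robust} is only sketched. As a backup, I would compare composition factors directly: the Loewy diagram of $N$ matches that of $\NicholsOf(M)\otimes X$, which pins down $\Nichols$ as an object; the relations then follow from the braiding computation in Lemma \ref{lm_Nichols}.
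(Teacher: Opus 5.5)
Your proposal is correct and follows the paper's proof step for step. The four steps are the same: the monoidal splitting functor from \cite{CLR23} Section 4; realization by a Hopf algebra via Corollary \ref{cor:ModuleCatFiniteFinal} and Theorem \ref{thm:infiniteML}; identification of the associated Nichols algebra from the $\Ext^1$ computation in Section \ref{subsec:sl2Amodules}; and $\Nichols\cong\NicholsOf(M)$ by robustness (Corollary \ref{cor_robust}). Two details you add are left implicit in the paper: the check that $(\ellNichols,\lambdaNichols)=(1,-t/2)$ satisfies the parity condition \eqref{formula_NicholsParameterCondition}, and the argument that local modules correspond to trivial $\NicholsOf(M)$-modules (both are exactly the semisimple objects).
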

\begin{proof}
The proof combines results above 
\begin{itemize}
\item By Section \ref{subsec:sl2Amodules} all simple objects in $(\cD^\k)_A$ are in $(\cD^\k)_A^\loc$. Hence by \cite{CLR23} Section 4 we have a monoidal \emph{splitting functor} \[(\catD^\k)_A\to (\catD^\k)_A^\loc\]
which is left-inverse to the embedding. This functor is by construction faithful and exact. 
\item By Section \ref{subsec:sl2Amodules} the category $(\cD^\k)_A$ fulfills the weaker finiteness assertions in Corollary \ref{cor:ModuleCatFiniteFinal} and is rigid. Hence by Theorem \ref{thm:infiniteML} there exists a Hopf algebra $\Nichols$ with an equivalence of tensor categories
$$\catD_A\cong \catC_\Nichols,\qquad
\catC:=\catD_A^\loc.$$
Note that the tensor product on the left side is $\otimes_A$ and on the right side the underlying tensor product $\otimes$ in $\catC$ and action of $\Nichols$ via its coalgebra structure.
\item By Section \ref{subsec:sl2Amodules} $\Ext^1(1,X)$ is one-dimensional for $
X \cong \VirPhi_{1,2} \otimes \Pi_1(-t/2)$ if $v\geq 3$ resp. for $X \cong \VirPhi_{1,1} \otimes \Pi_2(-t)$ if $v=2$ and zero else. In this case we say that the Nichols algebra associated to $\Nichols$ is the asserted Nichols algebra $\NicholsOf(M)$.
\item By Section \ref{subsec:NicholsArguments} this Nichols algebra $\Nichols(M)$ is robust, hence the properties in the previous bullet are by Definition \ref{def:robust} sufficient an isomorphism of Hopf algebras $\Nichols\cong \NicholsOf(M)$.
\end{itemize}
\end{proof}

\section{Adjunctions, Monadicity and representing algebras}\label{sec:adjunction}

The basic material in this section can largely be found in \cite{Riehl16} and \cite{BV07}. 

\subsection{Adjunctions}

In this section we consider  a functor between linear categories $F:\catD \to \catC$ admitting a right adjoint functor $\Fra=F^{\mathrm{ra}}$, or in formulas $F\dashv G$:
\[\begin{tikzcd}[row sep=5ex, column sep=6ex]
\catD\arrow[r,swap, shift right=2pt,"F"] & \arrow[l,swap, shift right=2pt,"\Fra"] \catC
\end{tikzcd}\]
\begin{align*}
\Hom_\catD(M,\Fra(N)) 
&\cong\Hom_\catC(F(M),N) 
\end{align*}

Note that in this article we always draw the right adjoint functor above and the left adjoint below. We collect some well-known properties on functors: 
Recall that in particular right exact functors preserve surjective morphisms and left exact functors preserve injective morphisms.
In abelian categories an exact functor $F$ is faithful iff $X\neq 0$ implies $F(X)\neq 0$. Exact faithful functors in abelian categories also reflect exact sequences. For example, in tensor categories in the sense of \cite{EGNO15} the tensor product is faithful, exact and reflects exact sequences. \SimonRem{Last two SY24 Rem 2.2}\\

We collect some well-known properties on adjunctions:
An adjoint functor is unique, if it exists \cite{Riehl16} Proposition 4.4.1.
An adjunction implies that the left adjoint $F$ is right exact and the right adjoint  $\Fra$ is left exact see \cite{Riehl16} Corollary 4.5.11.
Conversely, if $\catD$ is finite and $F$ is right exact, then the right adjoint $\Fra$ exists, similarly if  $\catD$ is finite and $\Fra$ is left exact, then the left adjoint $F$ exists. Theorems ensuring the existence of adjoints are derived from the general adjoint functor theorem and are discussed in \cite{Riehl16} Section 4.5, the specific claimed statement about finite categories can be found in \cite{MacL52} Chapter V Section 8 Corollary.\SimonRem{Riehl 4.6.17: Locally presentable means: cocontinuous iff admits right adjoint and continuous and accessible off admits left adjoint} 

Recall that for an adjunction $F\dashv G$, there are natural transformations called \emph{unit} and \emph{counit}  
\begin{align*}
\eta:\; &\id \to \Fra\circ F \\
\epsilon:\;  &F\circ \Fra \to \id 
\end{align*}
fulfilling zigzag identities that are obtained as images  of
$F(\id_{M})$ for $N=F(M)$ resp. of $\Fra(\id_{N})$ for $M=\Fra(N)$ under the adjoint equivalence of Homs. Conversely, the full adjoint equivalence can be be recovered from given $\eta,\epsilon$ fulfilling the zigzag identities by
\begin{align*}
\Hom_\catD(M,\Fra(N)) 
&\cong\Hom_\catC(F(M),N) \\
f&\mapsto \epsilon\circ F(f) \\
\Fra(g)\circ \eta & \mapsfrom  g 
\end{align*}
The following properties can be directly read off the definition

\begin{lemma}[\cite{Riehl16} Lemma 4.5.13]\label{lm_AdjUnitInjective}
The left adjoint $F$ is faithful (injective on Hom-sets) resp. full (surjective on Hom-sets), iff the unit of the adjunction $\eta:\id\to \Fra\circ F$ is injective resp. surjective.  Similarly, the right adjoint $\Fra$ is faithful resp. full, iff the counit of the adjunction $\epsilon:F\circ\Fra\to \id$ is surjective resp. injective. 
\end{lemma}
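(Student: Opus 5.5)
The plan is to argue directly from the Hom-set bijection
\[\Phi:\ \Hom_\catD(M,\Fra(N))\to\Hom_\catC(F(M),N),\qquad f\mapsto \epsilon_N\circ F(f),\]
with inverse $g\mapsto \Fra(g)\circ\eta_M$. The one structural fact needed is that, under $\Phi$, the action of $F$ on morphisms is identified with precomposition by the unit:
\[\Phi^{-1}(F(h))=\Fra(F(h))\circ\eta_M=\eta_{M'}\circ h\qquad\text{for } h\in\Hom_\catD(M,M'),\]
which is naturality of $\eta$. Hence the map $\Hom_\catD(M,M')\to\Hom_\catC(F(M),F(M'))$, $h\mapsto F(h)$, coincides up to the bijection $\Phi$ with $(\eta_{M'})_*:\Hom_\catD(M,M')\to \Hom_\catD(M,\Fra F(M'))$.

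Both claims for $F$ then follow by a Yoneda argument, applied for each fixed $M'$ and all $M$. First, $(\eta_{M'})_*$ is injective for all $M$ iff $\eta_{M'}$ is a monomorphism, so $F$ is faithful iff every component of $\eta$ is injective. Second, surjectivity: if $\eta_{M'}$ is split epi, or in the abelian setting epi with $M=\Fra F(M')$ projective, then $(\eta_{M'})_*$ is onto. Conversely, if $F$ is full, take $M=\Fra F(M')$ and lift $\id$ to obtain a right inverse of $\eta_{M'}$, so $\eta_{M'}$ is split epi and in particular surjective. The counit statements for $\Fra$ are the formal duals: using $\Phi(\Fra(k))=k\circ\epsilon_N$, the action of $\Fra$ on morphisms is identified with $(\epsilon_N)^*$. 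Faithfulness of $\Fra$ is then equivalent to $\epsilon$ being epi, and fullness to $\epsilon$ being split mono.

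The main obstacle is the precise reading of ``surjective'' for fullness. The clean categorical statement is ``split epi'' for $\eta$ (resp. ``split mono'' for $\epsilon$), which is the form in \cite{Riehl16}; I would state the lemma with that interpretation. In the abelian categories used here this gives at least the asserted surjectivity of $\eta$ and injectivity of $\epsilon$ in the direction needed later. I would also remark that in the fully faithful case $\eta$ is an isomorphism, again via the choice $M=\Fra F(M')$.
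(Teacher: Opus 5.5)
Your argument is correct and is exactly the ``read off from the definition'' argument the paper has in mind; the paper gives no proof beyond citing \cite{Riehl16}. Under the adjunction bijection, $F$ on Hom-sets becomes postcomposition with $\eta$ and $\Fra$ becomes precomposition with $\epsilon$, and the rest is Yoneda.

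Your insistence on split epi/split mono for fullness is not just cleaner but necessary. For the reflection $M\mapsto M/pM$ of abelian groups onto $\mathbb{F}_p$-vector spaces, every $\eta_M$ is surjective, yet $\Hom(\ZZ/p,\ZZ/p^2)\to\Hom(F(\ZZ/p),F(\ZZ/p^2))\cong\Hom(\ZZ/p,\ZZ/p)$ is zero, so $F$ is not full. The paper only ever uses the fully faithful case, where the (co)unit is an isomorphism, and your version covers that.
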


Now assume $F$ is a tensor functor with tensor structure $F^{\otimes}:F\otimes F\to F$. A standard argument, appearing maybe first in \cite{Dav10} Section 5, shows that the right adjoint $\Fra$ of an oplax (in particular of a strict) monoidal functor is lax monoidal functor via

$$\begin{tikzcd}[row sep=5ex, column sep=6ex]
\Fra(X)\otimes \Fra(Y) 
\arrow[r,dashed]
\arrow[d,"\eta"]
&
\Fra(X\otimes Y)
\\
\Fra F(\Fra(X)\otimes \Fra(Y))
\arrow[r,"F^\otimes"]
&
\Fra(F\Fra(X)\otimes F\Fra(Y))
\arrow[u,"\epsilon\otimes \epsilon"]
\end{tikzcd}$$

\subsection{Monads}

\SimonRem{If comonadicity does not hold, we can always see it directly}

A \emph{monad} is a monoid $\monT$ in the category of endofunctors $\mathrm{End}(\catD)$. That is, there are natural transformations $$\mu^\monT_X:\;\monT(\monT(X)))\to X,\qquad
\eta^\monT_X:\;X\to \monT(X)$$
fulfilling unitality and associativity. A \emph{module} over a monad $\monT$ is an object $X\in \catD$ together with a morphism $\rho:\monT(X)\to X$ compatible with multiplication and unit of the monad. This yields a category $\catD_\monT$ of modules $(X,\rho)$ over a monad $\monT$. Forgetting the module is an exact faithful functor $\catD_\monT\to \catD$. On the other hand, for every object $X\in\catD$ the image $\monT(X)$ has a structure of a $\monT$-module by $\mu^\monT_{X}$. We call this functor $\funT$ and the image an \emph{induced module}. This gives an adjunction 
\[\begin{tikzcd}[row sep=6ex, column sep=6ex]
\renewcommand{\arraystretch}{0.7}
\catD
\arrow[r, leftarrow, shift right=-2pt,"\forget_\monT"]
\arrow[r, rightarrow, swap,shift right=2pt,"\funT"]
& 
\catD_\monT
&
\begin{array}{c}
    \scriptstyle \textnormal{faithful exact}\\
    \scriptstyle \textnormal{right exact} \\
\end{array}
\end{tikzcd}\]
with $\monT=\forget_\monT\circ \funT$ and with unit and counit
\begin{align*}
    \eta_X:\;
    (\forget_\monT\circ \funT)(X)=\monT(X)
    &\stackrel{\eta_X^\monT}{\longleftarrow} X \\
    \epsilon_X:\;
    (\funT\circ\forget_\monT)(X,\rho)=\funT(X)
    &\stackrel{\rho}{\longrightarrow}
    (X,\rho)
\end{align*}

\SimonRem{
Explicit structure
\begin{align*}
\Hom_{\catD}(X, (Y,\rho))
&\;\cong\;
\Hom_{\catD_\monT}(\monT(X), Y)
\\
f&\mapsto \rho\circ\monT(f)
\\
g\circ \eta_X^\monT&\mapsfrom g
\end{align*}
}

Dually, a \emph{comonad} $\comonC$ is a comonoid in the category of endofunctors, and there is a category of $\comonC$-comodules $\catC^\comonC$ and an adjunction with the forgetful functor, now as the left adjoint

\[\begin{tikzcd}[row sep=6ex, column sep=6ex]
\renewcommand{\arraystretch}{0.7}
\catC^\comonC
\arrow[r, rightarrow, swap, shift right=2pt,"\forget_\comonC"]
\arrow[r, leftarrow,shift right=-2pt,"\funC"]
& 
\catC
&
\begin{array}{c}
    \scriptstyle \textnormal{left exact}\\
    \scriptstyle \textnormal{faithful exact} \\
\end{array}
\end{tikzcd}\]

\begin{example}
An algebra $T$ in a tensor category $\catD$ gives rise to a monad $T\otimes(-)$, and we recover the category of modules of $T$. Similarly,  a coalgebra $C$ in  a tensor category $\catD$, gives rise to a comonad $C\otimes(-)$, and we recover the category of modules of $T$. The induction functor sends any object to the corresponding free module resp. cofree comodule.
\end{example}
\begin{example}\label{exm_centralcomonad}
The forgetful functor from the Drinfeld center $\cZ(\catD)$ to $\catD$ is faithful and exact. However, the associated comonad does typically not come from right tensoring with a (bi)algebra object in $\catD$. 

For example if $\cat=\Rep(H)$ are representations of a Hopf algebra, then  $\cZ(\cat)$ are objects in $\cat$ with an additional coaction of $H$ defining the half braiding, but the compatibility condition of action and coaction (Yetter-Drinfeld condition) cannot be expressed by turning $H$ into a suitable object in $\cat$. Accordingly, the comonad cannot be expressed by $\monT(X)=H\otimes X$ on the level of objects. Rather, there is a slightly more complicated action of $H$ on the vector space $H\otimes X$ involving the double braiding around $X$, see Section \ref{subsec:QuantumGroup} and \cite{LW21} Theorem 3.13. 

For finite rigid semisimple $\cat$, this comonad can indeed be expressed as $ \monT(X)=\bigoplus_Y Y\otimes X\otimes Y^*$, summing over all simple objects $Y$, and in more general cases there is a similar formula using coends, see \cite{Shim19} Section 2.7.
\end{example}

 An arbitrary adjunction $F\dashv \Fra$ with unit $\eta$ and counit $\epsilon$
 produces a monad structure on the functor $\monT=\Fra\circ F$ with the following multiplication and unit:
\begin{align*}
(\Fra\circ F)\circ(\Fra\circ F) 
=\Fra\circ (F\circ \Fra)\circ F
&\stackrel{\eta}{\longrightarrow} \Fra\circ F 
&%
\id
&\stackrel{\epsilon}{\longrightarrow}
\Fra\circ F 
\end{align*}
There is a \emph{comparison functor} $\kappa^\monT$ between the given adjunction and the adjunction of the monad $\monT=\Fra\circ F$
 \[\begin{tikzcd}[row sep=5ex, column sep=6ex]
\catD
\arrow[r,swap, shift right=2pt,"F"]
\arrow[d,equal]& 
\arrow[l,swap, shift right=2pt,"\Fra"] 
\catC
\arrow[d,"\kappa^\monT"]
\\
\catD
\arrow[r, leftarrow, shift right=-2pt,"\forget_\monT"]
\arrow[r, rightarrow, swap,shift right=2pt,"\funT"]
& 
\catD_\monT
\end{tikzcd}\]
Namely, for any $X$ the image $\Fra(X)$ comes with  the $\monT$-module structure 
$$(\Fra\circ F)(\Fra(X))\stackrel{F(\epsilon)}{\longrightarrow} \Fra(X).$$
Similarly, an arbitrary adjunction $F\dashv G$ produces a comonad on the functor $\comonC=F\circ G$ and a comparison functor

 \[\begin{tikzcd}[row sep=5ex, column sep=6ex]
\catD
\arrow[r,swap, shift right=2pt,"F"]
\arrow[d,"\kappa^\comonC"]& 
\arrow[l,swap, shift right=2pt,"\Fra"] 
\catC
\arrow[d,equal]
\\
\catC^\comonC
\arrow[r, rightarrow, swap, shift right=2pt,"\forget_\comonC"]
\arrow[r, leftarrow,shift right=-2pt,"\comonC"]
& 
\catC
\end{tikzcd}\]

The adjunction $F\dashv\Fra$ is \emph{(co)monadic} if $\kappa$ is an equivalence of categories. Becks monadicity theorem \cite{MacL52} Section VI.7 asserts \SimonRem{(my source: BrugieresNatale Sec1.7: Monadicity and Comonoidal structure)}

\begin{lemma}
    Given any adjunction of additive functors
    \[\begin{tikzcd}[row sep=5ex, column sep=6ex]
    \catD\arrow[r,swap, shift right=2pt,"F"] & \arrow[l,swap, shift right=2pt,"\Fra"] \catC
    \end{tikzcd}\]
    Then if the right adjoint $G$ is faithful and exact , then monadicity holds:
    $$\kappa^\monT: \catC \cong \catD_\monT$$
    Similarly, if the left adjoint $F$ is 
    faithful and exact then comonadicity holds
    $$\kappa^\comonC: \catD \cong \catC^\comonC$$
\end{lemma}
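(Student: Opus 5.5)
The plan is to prove the monadic statement directly with the standard ``crude monadicity'' argument, adapted to the abelian setting, and then obtain the comonadic statement by duality. Throughout, $F:\catD\to\catC$, $\Fra:\catC\to\catD$, $\monT=\Fra\circ F$, and $\kappa^\monT:\catC\to\catD_\monT$ sends $Y$ to $(\Fra(Y),\Fra(\epsilon_Y))$. Since the categories are abelian and the functors additive, the coequalizer of a parallel pair $f,g$ is simply $\coker(f-g)$. Because $\Fra$ is exact, it therefore preserves all coequalizers.

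\emph{Step 1: $\Fra$ is conservative.} Let $f$ be a morphism in $\catC$ with $\Fra(f)$ an isomorphism. Exactness gives $\Fra(\ker f)=\ker\Fra(f)=0$ and $\Fra(\coker f)=0$. By the criterion recalled above (an exact functor is faithful iff it kills no nonzero object), $\ker f=\coker f=0$, so $f$ is an isomorphism.

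\emph{Step 2: a left adjoint to the comparison functor.} For a $\monT$-module $(X,\rho)$, define
\[L(X,\rho):=\coker\big(\epsilon_{F(X)}-F(\rho)\colon F\Fra F(X)\to F(X)\big)\in\catC .\]
This is the usual coequalizer of free objects, and the universal property of the cokernel makes $L$ a functor. Using the adjunction $F\dashv\Fra$, a morphism $L(X,\rho)\to Y$ is the same as a morphism $X\to\Fra(Y)$ in $\catD$ that is compatible with the actions. This gives $L\dashv\kappa^\monT$, with unit $u:\id\to\kappa^\monT L$ and counit $c:L\kappa^\monT\to\id$.

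\emph{Step 3: the unit is an isomorphism.} Apply $\Fra$ to the defining cokernel. By Step 1's exactness, $\Fra L(X,\rho)$ is the cokernel of $\Fra(\epsilon_{FX})-\monT(\rho)$ on $\monT\monT X\to\monT X$. This fork $\monT\monT X\rightrightarrows\monT X\xrightarrow{\rho}X$ is split by $\eta_X$ and $\eta_{\monT X}$, and split forks are absolute coequalizers. Hence $\rho$ induces an isomorphism $\Fra L(X,\rho)\cong X$, and one checks that this is precisely $u_{(X,\rho)}$ after forgetting the module structure. Since the forgetful functor $\catD_\monT\to\catD$ reflects isomorphisms, $u$ is an isomorphism.

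\emph{Step 4: the counit is an isomorphism.} For $Y\in\catC$, the map $c_Y\colon L(\Fra Y,\Fra\epsilon_Y)\to Y$ is induced by $\epsilon_Y$. Applying $\Fra$ gives the canonical map from the cokernel of $\Fra(\epsilon_{F\Fra Y})-\Fra F\Fra(\epsilon_Y)$ to $\Fra Y$. This is again a split coequalizer, split by $\eta_{\Fra Y}$ and $\eta_{\Fra F\Fra Y}$, so $\Fra(c_Y)$ is an isomorphism. By Step 1, $c_Y$ is an isomorphism. Thus $L$ and $\kappa^\monT$ are inverse equivalences.

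I expect Step 4 to be the main obstacle. It is where both hypotheses are used together: exactness to commute $\Fra$ with the cokernel, and faithfulness through conservativity to lift the isomorphism back to $\catC$. One must also check carefully that the splittings satisfy the split-fork identities for the difference morphisms and not only for the two individual arrows; I would verify this using the zigzag identities and the monad axioms.

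The comonadic statement follows by applying the monadic one to the opposite categories. There $F^{op}$ becomes a right adjoint which is still faithful and exact, and comodules over $\comonC=F\circ\Fra$ become modules over the opposite monad.
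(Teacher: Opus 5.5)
Your proof is correct and is essentially the paper's approach made explicit: the paper only invokes Beck's monadicity theorem, and your Steps 1--4 are the standard proof of its crude form in the abelian setting. Along the way you supply the two hypotheses the paper leaves implicit (faithful $+$ exact $\Rightarrow$ conservative, and exact $\Rightarrow$ preserves coequalizers, which are $\coker(f-g)$), and you get the comonadic case by passing to opposite categories. The concern you raise after Step 4 is unnecessary: in an additive category $h\circ f=h\circ g$ iff $h\circ(f-g)=0$, so a split fork for the pair $(f,g)$ already exhibits its third arrow as the cokernel of $f-g$.
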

Note that this condition is clearly necessary, because the forgetful functor of a monad and comonad is exact and faithful.

\newcommand{\cotimes}{%
  \begin{tikzpicture}[baseline=(X.base)]
    \node[draw=black, fill=black, circle, inner sep=0.5ex] (X) {};
    \node[text=white, scale=0.8] at (X) {$\times$};
  \end{tikzpicture}%
}

 \subsection{Monads in monoidal setting}\label{subsec:MonoidalMonad}

We now assume $\catD$ resp. $\catC$ is a tensor category. Then a monad $\monT$ resp. comonad $\comonC$ can be a monoid resp. comonoid in the category of strict monoidal endofunctors. Weaker, we could ask for lax monoidal endofunctors, meaning $\monT(X)\otimes \monT(Y)\to \monT(X\otimes Y)$, or for oplax monoidal (or: comonoidal) endofunctors. So we have four combinations. 
\begin{example}\label{exm_fourmonoidalmonads}
We give fundamental examples for all four combinations:
\begin{enumerate}[(a)]
\item For $T$ a commutative algebra in $\cZ(\catD)$ we have a lax (or: monoidal) monad
\[\begin{tikzcd}[row sep=6ex, column sep=6ex]
\renewcommand{\arraystretch}{0.7}
(\catD,\otimes)
\arrow[r, leftarrow, shift right=-2pt,"\forget_T"]
\arrow[r, rightarrow, swap,shift right=2pt,"T\otimes(-)"]
& 
(\catD_T,\otimes_T)
&
\begin{array}{c}
    \scriptstyle \textnormal{faithful exact}\\
    \scriptstyle \textnormal{right exact} \\
\end{array}
&
\begin{array}{c}
    \scriptstyle \textnormal{lax monoidal}\\
    \scriptstyle \textnormal{strict monoidal} \\
\end{array}
\end{tikzcd}\]

\item For $C$ a cocommutative coalgebra in $\cZ(\cC)$ we have an oplax (or: comonoidal) comonad 

\[\begin{tikzcd}[row sep=6ex, column sep=6ex]
\renewcommand{\arraystretch}{0.7}
(\catC^C,\otimes^C)
\arrow[r, rightarrow, swap, shift right=2pt,"\forget_C"]
\arrow[r, leftarrow,shift right=-2pt,"C\otimes(-)"]
& 
(\catC,\otimes)
&
\begin{array}{c}
    \scriptstyle \textnormal{left exact}\\
    \scriptstyle \textnormal{faithful exact} \\
\end{array}
&
\begin{array}{c}
    \scriptstyle \textnormal{strict monoidal}\\
    \scriptstyle \textnormal{oplax monoidal} \\
\end{array}
\end{tikzcd}\]
with $\otimes^C$ the cotensor product, which is an equalizer rather then a coequalizer.

\item For $T$ a  bialgebra in $\cZ(\catD)$ we have an oplax monad (or \emph{bimonad})

\[\begin{tikzcd}[row sep=6ex, column sep=6ex]
\renewcommand{\arraystretch}{0.7}
(\catD,\otimes)
\arrow[r, leftarrow, shift right=-2pt,"\forget_T"]
\arrow[r, rightarrow, swap,shift right=2pt,"T\otimes(-)"]
& 
(\catD_T,\otimes)
&
\begin{array}{c}
    \scriptstyle \textnormal{faithful exact}\\
    \scriptstyle \textnormal{right exact} \\
\end{array}
&
\begin{array}{c}
    \scriptstyle \textnormal{strict monoidal}\\
    \scriptstyle \textnormal{oplax monoidal} \\
\end{array}
\end{tikzcd}\]
where the coproduct of $T$ gives an oplax tensor structure on $T\otimes(-)$ \newline
\[T\otimes (X\otimes Y)\to (T\otimes X)\otimes(T\otimes Y)\]

\item For $C$ a bialgebra in $\cZ(\catC)$ we also have a lax comonad  (or \emph{bicomonad})
\[\begin{tikzcd}[row sep=6ex, column sep=6ex]
\renewcommand{\arraystretch}{0.7}
(\catC^C,\otimes)
\arrow[r, rightarrow, swap, shift right=2pt,"\forget_C"]
\arrow[r, leftarrow,shift right=-2pt,"C\otimes(-)"]
& 
(\catC,\otimes)
&
\begin{array}{c}
    \scriptstyle \textnormal{left exact}\\
    \scriptstyle \textnormal{faithful exact} \\
\end{array}
&
\begin{array}{c}
    \scriptstyle \textnormal{lax monoidal}\\
    \scriptstyle \textnormal{strict monoidal} \\
\end{array}
\end{tikzcd}\]
\end{enumerate}
\end{example}

For any adjunction of lax resp. oplax monoidal functors we have a lax resp. oplax monad and comonad.

\begin{remark}
Note that the right adjoint of a lax monoidal functor is oplax, so in an adjunction of lax monoidal functors, the right adjoint is even strictly monoidal. Similarly, in an adjunction of oplax monoidal functors, the left adjoint is even strictly monoidal. So in general we have precisely the combinations in the previous examples.
\end{remark}

\subsection{Corestriction}

Consider again an adjunction, which we assume to be monadic, so we can write it in the form. We keep in mind the example of an algebra $T$ and the induction functor.

\[\begin{tikzcd}[row sep=6ex, column sep=6ex]
\renewcommand{\arraystretch}{0.7}
\catD
\arrow[r, leftarrow, shift right=-2pt,"\forget_\monT"]
\arrow[r, rightarrow, swap,shift right=2pt,"\funT"]
& 
\catD_\monT
&
\begin{array}{c}
    \scriptstyle \textnormal{faithful exact}\\
    \scriptstyle \textnormal{right exact} \\
\end{array}
\end{tikzcd}\]

As an exact functor, the forget functor (or: restriction) has in many situations  a further right adjoint called \emph{coinduction} $\mathrm{Coind}_\monT:\catD\leftarrow \catD_A$.  In the example of an algebra, this can be constructed using inner Homs. 

In {unusually} favorable situations  
the induction functor $T$ itself may be exact. In the example of an algebra, this  is for example the case in the presence of duality. In such cases, we could call the left adjoint to the induction functor \emph{corestriction} $\mathrm{Cores}_\monT:\catD\rightarrow \catD_A$. 

\SimonRem{Then may itself be faithful and exact and thus serve itself as monadic functor.}

\begin{example}
We compare these notions to the more general standard notions for an algebra map $\fpullback:R\to T$ between algebras $R,T$ in vector spaces. Such a map induces a restriction functor $\fpullback^*$ between the categories of representations by precomposing a $T$-action with $\fpullback$ to an $R$-action. This functor is clearly exact, it has a left adjoint $\fpullback_!$ given by induction of an $R$-module $N$ to $T_{\fpullback}\otimes_R N$ and a right adjoint $\fpullback_*$ given by coinduction to $\Hom_R({_{\fpullback}}T,N)$.

If $R,T$ are commutative algebras and the categories of representation become monoidal via $\otimes_T,\otimes_R$, then restriction is  lax monoidal via and induction is strict monoidal. 

Under favorable circumstances, the induction functor itself can be exact and admit yet another left adjoint $\fpullback^!$, but in literature this appears more frequently in the derived setting. We summarize all these notions in the following picture, where again a functor set above another is right adjoint and a functor set below another is left adjoint.

\[\begin{tikzcd}[row sep=6ex, column sep=6ex]
{
\renewcommand{\arraystretch}{0.7}
\begin{array}{c}
    \scriptstyle \fpullback_*\\[.5mm]
    \scriptstyle \fpullback^* \\[.5mm]
    \scriptstyle \fpullback_!\\[.5mm]
    \scriptstyle \fpullback^!
\end{array}
}
&
\catD_R
\arrow[r, rightarrow, shift right=-15pt,"\mathrm{Coind}"{pos=0.5, anchor=center, inner sep=0pt}] 
\arrow[r, leftarrow, shift right=-5pt,"\mathrm{Res}"{pos=0.5, anchor=center, inner sep=0pt}]
\arrow[r,rightarrow, shift right=5pt,"\mathrm{Ind}"{pos=0.5, anchor=center, inner sep=0pt}] 
\arrow[r, leftarrow, shift right=15pt,"\mathrm{Cores}"{pos=0.5, anchor=center, inner sep=0pt}]
& 
\catD_T
&
\begin{array}{c}
    \scriptstyle \textnormal{left exact}\\ [-.2em]
    \scriptstyle \textnormal{exact} \\[-.2em]
    \scriptstyle \textnormal{exact}\\[-.2em]
    \scriptstyle \textnormal{right exact}
\end{array}
&
\begin{array}{c}
    \scriptstyle \textnormal{}\\[-.2em]
    \scriptstyle \textnormal{lax monoidal} \\[-.2em]
    \scriptstyle \textnormal{strict monoidal}\\[-.2em]
    \scriptstyle \textnormal{oplax monoidal}
\end{array}
\end{tikzcd}\]

\end{example}

We make the corestriction explicit in the presence of duality.  These calculations are surely not new, we spell it out for completeness and convenience. They should apply in more generality in closed categories. 
\begin{lemma} 
Let $\catD$ be a rigid tensor category and $A$ an   algebra. Then the dual object $A^*$ in $\catD$ has the structure of a right $A$-module and the following functor is left adjoint to the induction functor $A\otimes(-)$.
$$\mathrm{Cores}:\; \catD\leftarrow \catD_A$$
$$A^*\otimes_A M \mapsfrom M$$
\end{lemma}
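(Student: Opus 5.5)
I will first construct the right $A$-module structure on $A^*$, and then write down the unit and counit of the claimed adjunction and check the zigzag identities. Throughout I use the convention $\Hom(X\otimes Y^{*}, Z)\cong\Hom(X,Z\otimes Y)$ for the right rigidity of $\catD$. For the module structure I take the transpose of the left multiplication $\mu:A\otimes A\to A$. Concretely, the action $A^*\otimes A\to A^*$ is the composite
\[A^*\otimes A\xrightarrow{\id\otimes\id\otimes \mathrm{coev}} A^*\otimes A\otimes A\otimes A^*\xrightarrow{\id\otimes\mu\otimes\id}A^*\otimes A\otimes A^*\xrightarrow{\mathrm{ev}\otimes\id}A^*.\]
Associativity and unitality of this action follow from those of $\mu$ by the snake identities. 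This is a routine graphical check. Since $A^*$ is a right module and $M$ a left module, $A^*\otimes_A M$ is defined as the coequalizer of the two maps $A^*\otimes A\otimes M\rightrightarrows A^*\otimes M$. This requires $\catD$ to have coequalizers, which holds because $\catD$ is abelian.

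To identify the adjunction, I compute for $M\in\catD_A$ and $X\in\catD$:
\[\Hom_\catD(A^*\otimes_A M,X)\subset\Hom_\catD(A^*\otimes M,X)\cong\Hom_\catD(M,A\otimes X).\]
The first inclusion is given by the universal property of the coequalizer. It consists of the maps $A^*\otimes M\to X$ that equalize the two actions. The isomorphism comes from duality, using that $A^*$ is a left dual of $A$ with $\mathrm{ev}:A^*\otimes A\to\1$. The main step is to check that under this duality isomorphism the equalizing condition becomes exactly $A$-linearity of $M\to A\otimes X$ with respect to the free module structure. To do so I transport both parallel maps through the isomorphism. The map involving the action on $M$ transports to $f\circ\rho_M$. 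The map involving the action on $A^*$ transports to $(\mu\otimes\id_X)\circ(\id_A\otimes f)$, because the right action on $A^*$ was defined as the transpose of $\mu$. Hence the two conditions agree, and I obtain a natural isomorphism
\[\Hom_\catD(A^*\otimes_A M,X)\cong\Hom_{\catD_A}(M,A\otimes X).\]
Naturality in $M$ and $X$ holds because every identification used is natural.

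I expect the only real obstacle to be bookkeeping of left versus right duals. One must choose $A^*$ such that $\mathrm{ev}$ pairs $A^*$ on the left with $A$. Then the transposed multiplication gives a right module, and the hom-tensor duality moves $A^*$ to the correct side, turning it into $A\otimes X$ rather than $X\otimes A$. I would fix these conventions at the start and verify the transport of the two equalizer maps by string diagrams.

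No commutativity of $A$ or braiding is needed. The statement and the argument are purely about $A$ as an algebra in a rigid tensor category.
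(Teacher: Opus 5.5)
Your proof is correct, and it reaches the adjunction by a slightly different route from the paper. You use the same right $A$-module structure on $A^*$ (the transpose of $\mu$).

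\begin{itemize}
\item \textbf{Paper's route.} It works with unit and counit. The counit $A^*\otimes_A(A\otimes X)\to X$ is induced by $\mathrm{ev}_A$, and the unit $M\to A\otimes(A^*\otimes_A M)$ is induced by $\mathrm{coev}_A$. To make this work it checks separately that $\mathrm{ev}_A$ factors through $A^*\otimes_A A$, that $\mathrm{coev}_A$ lands in the cotensor product $A\Box_A A^*$ (so the unit is $A$-linear), and that the zigzag identities hold.
\item \textbf{Your route.} You work on Hom-sets. You restrict the duality bijection $\Hom_\catD(A^*\otimes M,X)\cong\Hom_\catD(M,A\otimes X)$ to two subsets: the maps satisfying the coequalizer condition, and the $A$-linear maps. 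A single identity, $\mathrm{ev}_A\circ(\rho_{A^*}\otimes\id_A)=\mathrm{ev}_A\circ(\id_{A^*}\otimes\mu)$, shows these subsets correspond.
\item \textbf{What each buys.} Your version needs one verification instead of three, and it makes transparent why exactly $A$-linearity with respect to the free module structure $\mu\otimes\id_X$ appears. The unit and counit come out as the images of identities, namely $(\id_A\otimes\pi)\circ(\mathrm{coev}_A\otimes\id_M)$ and the descended $\mathrm{ev}_A\otimes\id_X$, which are exactly the paper's formulas. The paper's version simply states the adjunction data up front rather than deriving it.
\end{itemize}

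One correction: the convention you announce at the start, $\Hom(X\otimes Y^*,Z)\cong\Hom(X,Z\otimes Y)$, is the one for a right dual, and it is not what you use. What you actually use is $\Hom(Y^*\otimes X,Z)\cong\Hom(X,Y\otimes Z)$ for the left dual with $\mathrm{ev}:Y^*\otimes Y\to\1$ and $\mathrm{coev}:\1\to Y\otimes Y^*$. That convention is consistent with your module structure, so you should delete or correct that opening sentence.
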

(note that restriction can be rewritten as  $A\otimes_A M$)
\begin{proof}

The dual object $A^*$ in $\catD$ has a right $A$-module structure given by 
$$\rho_{A^*}:\;A^*\otimes A
\xrightarrow{\id\otimes \id \otimes \mathrm{coeval}_A}
A^*\otimes A \otimes A\otimes A^*
\xrightarrow{\id\otimes \mu_A \otimes \id}
A^*\otimes A\otimes A^*
\xrightarrow{\mathrm{eval}_A\otimes \id}
A^*
$$
It follows from the zigzag identities and associativity in $A$ that this indeed a module structure. Similarly, we can check easily that evaluation as a morphism in $\catD$ factors through the tensor product over $A$
$$A^*\otimes_A A\xrightarrow{\mathrm{eval}_A} \1 $$
and that coevaluation as a morphism in $\catD$  
lands in the cotensor product
$$1 \xrightarrow{\mathrm{coeval}_A} A\Box_A A^* $$
by which we mean here that the following two maps coincide
$$A \xrightarrow{\id\otimes \mathrm{coeval}_A} A\otimes (A\otimes A^*) 
\xrightarrow{\mu_A\otimes \id} A\otimes A^*$$
$$A \xrightarrow{\mathrm{coeval}_A\otimes \id}  (A\otimes A^*) \otimes A
\xrightarrow{\id\otimes \rho_{A^*}} A\otimes A^*$$

\bigskip

We now claim that $A^*\otimes_A(-)$ is left adjoint to $A\otimes(-)$. The unit and count are given in terms of evaluation and coevaluation
\begin{align*}
A^*\otimes_A (A \otimes X)
&\xrightarrow{\mathrm{eval}_A\otimes \id} X \\
M
&\xrightarrow{\mathrm{coeval}_A} A\otimes (A^*\otimes_A M) 
\end{align*}
The fact that the first morphism factors over the tensor product over $A$ and that the second morphism is a morphism of $A$-modules follow from the properties of evaluation and coevaluation established above.  
The compatibility of unit and counit of the adjunctions are again the zig-zag identities of evaluation and coevaluation. 

\end{proof}

\subsection{Pas de deux}\label{sec_deux}

Consider again a monadic adjunction. We continue to keep in mind the example of an algebra $T$ and the induction functor.

\[\begin{tikzcd}[row sep=6ex, column sep=6ex]
\renewcommand{\arraystretch}{0.7}
\catD
\arrow[r, leftarrow, shift right=-2pt,"\forget_\monT"]
\arrow[r, rightarrow, swap,shift right=2pt,"\funT"]
& 
\catD_\monT
&
\begin{array}{c}
    \scriptstyle \textnormal{faithful exact}\\
    \scriptstyle \textnormal{right exact} \\
\end{array}
\end{tikzcd}\]

Suppose as in the previous section that the induction functor is exact and faithful. Then it can serve itself as a monadic functor, more precisely: The lax monoidal induction functor $T$ and the strict monoidal forgetful functor $\fpullback_\monT$ then also define a lax comonad $\comonC=T\circ\forget_\monT$. Then $\catD$ can be recovered from $\catD_\monT$ as the category of $\comonC$-comodules 
$$\kappa_\comonC:\;\catD\stackrel{\sim}{\longrightarrow} (\catD_\monT)^\comonC$$

The comparison functor is given by sending an object $X\in \catD$ to the induced $\monT$-module $T(X)$, which comes in addition with a coaction  
$T(X)\to T(T(X))$ by inserting a unit. An explicit inverse of $\kappa_\comonC$ is given by sending a $\comonC$-comodule to the coinvariant part, see \cite{BLV11} Theorem 6.10.

This reconstruction expresses a general principle called \emph{descent}, the coaction being the descent data. In the monadic setting this has been established in \cite{BLV11} Section 6.4 and 6.5.

\SimonRem{Hopf modules are somehow the dual thing, right.}

\begin{remark}
Differently viewed, the induction functor and the corestriction functor define an oplax monad. 
\end{remark}

\begin{example}
Suppose that $T\in\catD$ is a commutative algebra such that the induction functor $T\otimes (-)$ is faithful and exact. This is true for example if $\catD$ is rigid.  \SimonRem{(duality implies exact) ($1\to A$ injective (unitality) sendet $Hom(X,Y)\to Hom(X,A\otimes Y)=Hom(X,ResInd Y)\cong(IndX,IndY)$ proves faithful}
Then the lax comonad is the following endofunctor of $\catD_T$:
$$ \comonC:\; {_T}X\mapsto {_T}T\otimes X$$
The comparison functor is given by sending an object $X$ to the induced module ${_T}T\otimes X$, which comes in addition with a $\comonC$-coaction
$$({_T}T\otimes X)
\to {_T}T\otimes (T\otimes X)$$
by inserting a unit. 
\end{example}

\section{Ménage à trois}\label{sec_troi}

\subsection{Basics}\label{subsec:troi}

In this section we are interested in the situation of a commuting triangle of categories and functors,
\begin{equation}\label{formula_TriangleSituation}
\xymatrix{
\catD 
\ar[rr]^{\funE}
\ar[rd]_{\funD}
&&
\catDD
\ar[ld]^{\funDD}
\\
&
\catB
&
\\
} 
\end{equation}

Supposing that the respective adjoints exist, then by uniqueness of the adjoint we have natural isomorphisms

$$\funE^\ra(\funDD^\ra(X))\cong \funD^\ra(X)$$
$$\funE^\la(\funDD^\la(X))\cong \funD^\la(X)$$

In the other direction, unit and counit of $E$ give a comparison

\begin{align}\label{formula_EcomparisonRa}
\funE(\funD^\ra(X))
\cong
&\funE(\funE^\ra(\funDD^\ra(X)))
\stackrel{\epsilon_{\funDD^\ra(X)}}{\relbar\joinrel\relbar\joinrel\relbar\joinrel\longrightarrow}
\funDD^\ra(X)\\ \label{formula_EcomparisonLa}
\funE(\funD^\la(X))
\cong
&\funE(\funE^\la(\funDD^\la(X)))
\stackrel{\eta_{\funDD^\la(X)}}{\longleftarrow\joinrel\relbar\joinrel\relbar\joinrel\relbar}
\funDD^\la(X)
\end{align}

\subsection{Restriction and extensions of scalars for monads}\label{subsec:monadRestriction}

We assume that the two functors to $\catB$ are monadic, then we are in the following general situation

\begin{equation}\label{formula_monadTriangleSituation}
\xymatrix@C=4em@R=4em{
\cat_{\monT} 
\ar@<3pt>[rr]^{\funE}
\ar[rd]^{\forget_\monT}
&&
\cat_{\monR}
\ar[ld]^{\forget_\monR}
\ar[ll]^{\funE^\la}
\\
&
\catB
\ar@<3pt>[lu]^{\funT}
\ar@<3pt>[ru]^{\funR}
&
\\
} 
\end{equation}

In this situation the functor $\funE$ and a left adjoint $\funE^\la$ can be expressed for monads exactly as in the classical algebraic version, which we first recall. This material is probably not new, but we did not find it in literature. Note that \cite{SZ24} discuss a more general program of Eilenberg-Watts theory for monads.

\begin{example}\label{exm_rings}
 Take algebras $T,R$ and an algebra morphism $\fpullback:T\leftarrow R$. These define monads over the category of vector spaces with $\monT(V)=T\otimes V$ and $\monR(V)=R\otimes V$. 
 
 \bigskip
 
 The restriction functor  (or pull back functor) is  
    \begin{align*}
        \funE=\fpullback^*:\Rep(T)&\longrightarrow \Rep(R)\\
        X&\mapsto {_\fpullback}X 
    \end{align*}
    The left adjoint is given by extension of scalars 
    \begin{align*}
    \funE^\la=\fpullback^*:\Rep(T)&\longleftarrow \Rep(R)\\
    T_\fpullback\otimes_R X&\mapsfrom X
    \end{align*}
   The unit and counit of the adjunction are given by using $1_T$ resp. by taking the tensor product further 
   \begin{align*}
       \eta:\;&X \to {_\fpullback}T_\fpullback\otimes_R X\\
    \epsilon:\; &T_\fpullback\otimes_R {_\fpullback}X \to T\otimes_T X\cong X
   \end{align*}
\end{example}

\begin{definition}
Let $\cat$ be a category and $\monT,\monR$ monads. Suppose there is a morphism of monads $\fpullback:\monT\leftarrow \monR$, that is, a natural transformation of endofunctors compatible with unit and multiplication. Then we have a restriction functor  
$$\fpullback^*: \cat_\monT\longrightarrow \cat_\monR$$
$$(X,\rho)\mapsto (X,\rho')$$
$$\rho':\monR(X)
\stackrel{\fpullback_X}{\longrightarrow}\monT(X)
\stackrel{\rho}{\longrightarrow} X$$ 
\end{definition}
\begin{proof}
We check that this defines a module, using naturality of $\fpullback$ and compatiblility with monad structure
\begin{align*}
&\monR(\monR(X))
\stackrel{R(\rho')}{\longrightarrow}
\monR(X)
\stackrel{\rho'}{\longrightarrow} 
X
\\
=\quad
&\monR(\monR(X))
\stackrel{R(\fpullback_X)}{\longrightarrow}
\monR(\monT(X))
\stackrel{\monR(\rho)}{\longrightarrow} \monR(X)
\stackrel{\fpullback_X}{\longrightarrow}
\monT(X)
\stackrel{\rho}{\longrightarrow} 
X
\\
=\quad
&\monR(\monR(X))
\stackrel{R(\fpullback_X)}{\longrightarrow}
\monR(\monT(X))
\stackrel{\fpullback_{T(X)}}{\longrightarrow}
\monT(\monT(X))
\stackrel{\monT(\rho)}{\longrightarrow} \monT(X)
\stackrel{\rho}{\longrightarrow} 
X
\\
=\quad
&\monR(\monR(X))
\stackrel{R(\fpullback_X)}{\longrightarrow}
\monR(\monT(X))
\stackrel{\fpullback_{T(X)}}{\longrightarrow}
\monT(\monT(X))
\stackrel{\mu^\monT_X}{\longrightarrow} \monT(X)
\stackrel{\rho}{\longrightarrow} 
X
\\
=\quad
&\monR(\monR(X))
\stackrel{\mu^\monR_X}{\longrightarrow} \monR(X)
\stackrel{\fpullback_X}{\longrightarrow}
\monT(X)
\stackrel{\rho}{\longrightarrow} 
X
\\
=\quad
&\monR(\monR(X))
\stackrel{\mu^\monR_X}{\longrightarrow} \monR(X)
\stackrel{\rho'}{\longrightarrow} 
X
\\
\end{align*}
and unitality similarly.
\end{proof}

The following result essentially says that any functor between categories of modules over monads that is compatible with the forgetful functors to $\cat$ comes from extension of scalars (or pullback) of monads. This is analog to the respective  statement for modules over rings and allows us to argue algebraically. 

\begin{lemma}\label{lm_monadRestriction}
Let $\cat$ be a category and $\monT,\monR$ monads. Suppose we have a functor
$$\funE: \cat_\monT\to \cat_\monR$$
compatible with the forgetful functors to $\cat$. Then there is a morphism of monads $\fpullback:\monT\leftarrow \monR$ such that $\funE=\fpullback^*$.
\end{lemma}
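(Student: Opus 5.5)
We construct $\fpullback$ directly from $\funE$ applied to free $\monT$-modules, in the same way a ring map is recovered from a functor between module categories by evaluating on the regular module. "Compatible with the forgetful functors" we read as $\forget_\monR\circ\funE=\forget_\monT$. Thus $\funE$ leaves underlying objects and morphisms unchanged and only replaces a $\monT$-action $\rho:\monT(X)\to X$ by some $\monR$-action $\funE(\rho):\monR(X)\to X$.

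\textbf{Step 1: definition of $\fpullback$.} For $X\in\cat$, apply $\funE$ to the induced module $\funT(X)=(\monT(X),\mu^\monT_X)$. The result is an $\monR$-action $\rho^\funE_X:\monR(\monT(X))\to\monT(X)$. Set
\[\fpullback_X:=\rho^\funE_X\circ \monR(\eta^\monT_X):\;\monR(X)\to\monR(\monT(X))\to\monT(X).\]
Naturality of $\fpullback$ follows from two facts. First, for $f:X\to Y$ the map $\monT(f)$ is a morphism $\funT(X)\to\funT(Y)$ of $\monT$-modules. Second, $\funE$ sends it to the same underlying map, which is then a morphism of $\monR$-modules. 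This intertwines $\rho^\funE_X$ and $\rho^\funE_Y$, and the claim follows together with naturality of $\eta^\monT$.

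\textbf{Step 2: $\funE=\fpullback^*$ on all objects.} Let $(X,\rho)$ be a $\monT$-module. The action $\rho:\funT(X)\to(X,\rho)$ is a morphism of $\monT$-modules, so its underlying map $\rho$ is a morphism of $\monR$-modules $\funE\funT(X)\to\funE(X,\rho)$. Writing $\rho'$ for the $\monR$-action of $\funE(X,\rho)$, this gives
\[\rho'\circ\monR(\rho)=\rho\circ\rho^\funE_X.\]
Precompose with $\monR(\eta^\monT_X)$ and use the unit axiom $\rho\circ\eta^\monT_X=\id$. The left side becomes $\rho'$ and the right side becomes $\rho\circ\fpullback_X$. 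Hence $\funE(X,\rho)=\fpullback^*(X,\rho)$, and $\funE=\fpullback^*$ on morphisms because both functors are the identity on underlying maps.

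\textbf{Step 3: $\fpullback$ is a morphism of monads.} This is the step we expect to be the main obstacle, because the necessary identities are hidden in the module axioms of the $\rho^\funE_X$.

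Unitality, $\fpullback_X\circ\eta^\monR_X=\eta^\monT_X$, comes from naturality of $\eta^\monR$ together with the unit axiom of the $\monR$-module $\funE\funT(X)$:
\[\rho^\funE_X\circ\monR(\eta^\monT_X)\circ\eta^\monR_X=\rho^\funE_X\circ\eta^\monR_{\monT X}\circ\eta^\monT_X=\eta^\monT_X.\]

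For multiplicativity, $\fpullback_X\circ\mu^\monR_X=\mu^\monT_X\circ\fpullback_{\monT X}\circ\monR(\fpullback_X)$, we proceed in three moves.
\begin{enumerate}
\item By Step 2 applied to the free module $\funT(X)$, its $\funE$-action satisfies $\rho^\funE_X=\mu^\monT_X\circ\fpullback_{\monT X}$.
\item The associativity axiom of the $\monR$-module $\funE\funT(X)$ reads $\rho^\funE_X\circ\mu^\monR_{\monT X}=\rho^\funE_X\circ\monR(\rho^\funE_X)$.
\item Precompose this axiom with $\monR\monR(\eta^\monT_X)$ and use naturality of $\mu^\monR$. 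The left side becomes $\fpullback_X\circ\mu^\monR_X$, and the right side becomes $\rho^\funE_X\circ\monR(\fpullback_X)$. Substituting the expression for $\rho^\funE_X$ from the first move gives exactly the required identity.
\end{enumerate}

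In summary, the whole argument is a careful diagram chase of the classical fact that $T\cong\mathrm{End}_T(T)$ recovers a ring map from a restriction functor. The only real input is that $\funE$ is defined on the free modules $\funT(X)$ and on the module maps $\monT(f)$ and $\rho$.
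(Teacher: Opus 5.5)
Your proposal is correct and follows the paper's proof. You define $\fpullback_X=\rho^{\funE}_X\circ\monR(\eta^{\monT}_X)$ by applying $\funE$ to the induced modules $\funT(X)$, and you recover $\funE=\fpullback^*$ by precomposing the $\monR$-module-morphism square for $\rho$ with $\monR(\eta^{\monT}_X)$, exactly as the paper does; your naturality check in Step 1 and your unit and multiplication checks in Step 3 (via the module axioms of $\funE(\funT(X))$ and $\rho^{\funE}_X=\mu^{\monT}_X\circ\fpullback_{\monT(X)}$) are also correct, and they supply details the paper only asserts or omits.
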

The proof proceeds as the corresponding algebra proof where a functor $\Rep(T)\to \Rep(R)$ is first presented as tensoring with a $R$-$T$-bimodule (Eilenberg–Watts theorem), then commuting with the forgetful functor means the bimodule is $T$ with right action the regular action, then applying the left action on the unit recovers $\fpullback:T\leftarrow R$. In this case, the functors $\monT,\monR$ are tensoring with $T,R$. \SimonRem{note that right exact, even exact faithful follows from the triangle setup}
\begin{proof}
To a functor assigns to a $\monT$-module $(X,\rho)$ a $\monR$-module $(\funE(X),\varrho^\funE_X(\rho))$. Because by assumption it commutes with the forgetful functor, we have $\funE(X)=X$. We now apply this in particular to induced modules $(X,\rho)=(\monT(Y),\mu^\monT_Y)$, then the functor must endow $\monT(Y)$ with $\monR$-module structure 
$$\varrho^\funE_{\monT(Y)}(\mu^\monT_Y):\; \monR(\monT(Y))\to \monT(Y)$$
Precomposing with the unit $\eta^\monT_Y:Y\to \monT(Y)$ produces a natural transformation
$$\fpullback_Y:\quad
\monR(Y)
\stackrel{\monR(\eta^\monT_Y)}{\longrightarrow}
\monR(\monT(Y))
\stackrel{\varrho^\funE_{\monT(Y)}(\mu^\monT_Y)}{\longrightarrow}
\monT(Y)$$

\bigskip

We now check that $\fpullback^*$ indeed recovers the functor $\funE$: Let $(X,\rho)$ be again an arbitrary module in $\cat^T$, then the functor $\funE$ maps it to $(X,\varrho_X^\funE(\rho))$.
On the other hand 
$\fpullback^*$ defined above sends is to $(X,\rho')$ with 
\begin{align*}
\rho'\quad
&\monR(X)
\stackrel{\monR(\eta_X^\monT)}{\longrightarrow}
\monR(\monT(X))
\stackrel{\varrho^\funE_{\monT(X)}(\mu_X^\monT)}{\longrightarrow}
\monT(X)
\stackrel{\rho}{\longrightarrow} 
X
\end{align*}
We have to show these two are equal. Recall that the action $\rho$ becomes a morphism between $\monT$-modules as follows (this is the counit of the adjunction $\cat^T\leftrightarrows \cat$)
$$\rho:(\monT(X),\mu_X^\monT)\longrightarrow (X,\rho)$$
Functoriality of $\funE$ then produces a morphism 
$$\funE(\rho):\;
(\monT(X),\varrho_{\monT(X)}^\funE(\mu_X^\monT))
\to 
(X,\varrho_{X}^\funE(\rho)
$$
Note that since by assumption the functor $\funE$ commutes with the forgetful functor also on the level of morphisms, we have $\funE(\rho)=\rho$ as morphisms in $\cat$. The fact that this is a morphism means the following square diagram commutes. Precomposing with $\monR(\eta_X^\monT)$ then proves $\rho'=\varrho_{X}^\funE(\rho)$:
\[\begin{tikzcd}[row sep=6ex, column sep=8ex]
\monR(X)
\arrow[rd, "\monR(\eta_X^\monT)"]
\\
&
\monR(\monT(X))
\arrow[r, "\monR(\rho)"]
\arrow[d, swap,"\varrho_{\monT(X)}^\funE(\mu_X^\monT)"]
&
\monR(X)
\arrow[d, "\varrho_{X}^\funE(\rho)"]
\\
&
\monT(X)
\arrow[r, "\rho"]
&
X
\end{tikzcd}\]
\end{proof}

We also have a left adjoint that can be explained precisely, just as in the case of rings

\begin{lemma}[Extension of scalars]\label{lm_monadExtensionOfScalars}
    Let $\fpullback:\monT\leftarrow \monR$ be a morphism of monads. Then then functor $\fpullback^*$ above has a left adjoint $\fpullback_*$. Explicitly, $\fpullback_*(X,\rho) $ given by the coequalizer of
\[\begin{tikzcd}[row sep=6ex, column sep=8ex]
\monT(\monR(X))
\arrow[r,  shift left=4pt,"\monT(\rho)"] 
\arrow[r, swap,shift right=4pt,"\mu^T_X\circ \monT(\fpullback_X)"]
& 
\monT(X)
\end{tikzcd}\]
The unit of this adjunction is given by the unit of $\monT$, the counit is given by the action of $\monT$. 
\end{lemma}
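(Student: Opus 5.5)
We construct $\fpullback_*$ by the classical recipe $T\otimes_R X$, with the monad $\monT$ playing the role of the free module $T\otimes(-)$ and the coequalizer playing the role of $\otimes_R$. Given an $\monR$-module $(X,\rho)$, the two parallel arrows $\monT(\rho)$ and $\mu^\monT_X\circ\monT(\fpullback_X)$ from $\monT(\monR(X))$ to $\monT(X)$ are both morphisms of induced $\monT$-modules. For the first this is naturality of $\mu^\monT$. For the second it is naturality of $\mu^\monT$ together with its associativity. We therefore want the coequalizer to exist in $\cat_\monT$ and to be computed as in $\cat$. We assume the standard setting where $\cat$ is abelian and $\monT$ is right exact, which holds in all our applications (e.g.\ $\monT$ a left adjoint). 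Then $\monT$ preserves the coequalizer, so its $\monT$-action descends. Alternatively, the pair is reflexive, split by $\monT(\eta^\monR_X)$, and we could instead argue that $\forget_\monT$ creates coequalizers of reflexive pairs preserved by $\monT$. Either way we obtain an object $\fpullback_*(X,\rho)$ in $\cat_\monT$ with a quotient map $q:\monT(X)\to\fpullback_*(X)$ of $\monT$-modules. Functoriality in $(X,\rho)$ is immediate from the universal property.

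Next we verify the adjunction $\Hom_{\cat_\monT}(\fpullback_*(X,\rho),(Y,\sigma))\cong\Hom_{\cat_\monR}((X,\rho),\fpullback^*(Y,\sigma))$. A $\monT$-morphism out of the coequalizer is the same as a $\monT$-morphism $g:\monT(X)\to Y$ that coequalizes the pair. By the free-module adjunction, such a $g$ corresponds to a morphism $f=g\circ\eta^\monT_X:X\to Y$ in $\cat$, with $g=\sigma\circ\monT(f)$. We then compute that the coequalizing condition on $g$ is equivalent to $f\circ\rho=\sigma\circ\fpullback_Y\circ\monR(f)$, i.e.\ to $f$ being an $\monR$-morphism into $\fpullback^*Y$. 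For this we expand $\sigma\circ\monT(f)\circ\monT(\rho)$ and $\sigma\circ\monT(f)\circ\mu^\monT_X\circ\monT(\fpullback_X)$. We use the module axiom $\sigma\circ\mu^\monT=\sigma\circ\monT(\sigma)$ and naturality of $\fpullback$ to rewrite the second expression as $\sigma\circ\monT(\sigma\circ\fpullback_Y\circ\monR(f))$. Hence the two expressions agree iff $\sigma\circ\monT(f\circ\rho)=\sigma\circ\monT(\sigma\fpullback_Y\monR(f))$. Precomposing with $\eta^\monT$ gives the equivalence with the $\monR$-morphism condition. The forward direction is restriction along $\eta^\monT$, and the converse holds because $\sigma\circ\monT(-)$ recovers $\monT$-morphisms from their restrictions.

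The unit of the adjunction is $X\xrightarrow{\eta^\monT_X}\monT(X)\xrightarrow{q}\fpullback_*X$, and it is an $\monR$-morphism by the computation above applied to $g=q$. The counit $\fpullback_*\fpullback^*(Y,\sigma)\to(Y,\sigma)$ is induced by $\sigma:\monT(Y)\to Y$, which coequalizes the pair since $\sigma\circ\mu^\monT=\sigma\circ\monT(\sigma)$. This gives exactly the description claimed in the statement. The zigzag identities then follow from the unit law of $\monT$, using that $q$ is an epimorphism.

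The main obstacle is the existence of the coequalizer inside $\cat_\monT$, i.e.\ that $\monT$ preserves it. This is where a right exactness hypothesis, or the reflexive-coequalizer argument, must be invoked. Everything else is routine diagram chasing with naturality and the monad axioms.
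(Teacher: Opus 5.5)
Your proof is correct and follows the route the paper intends: the paper gives no separate argument, only the remark that the lemma holds ``just as in the case of rings'' (the construction $T_\fpullback\otimes_R X$ of Example~\ref{exm_rings}), and you carry this out in monadic language, with the free-module adjunction doing the work in the hom-set bijection. Your added hypothesis that $\monT$ preserve the relevant reflexive coequalizer (e.g.\ $\cat$ abelian and $\monT$ right exact) is genuinely needed for the coequalizer to carry a $\monT$-action; the paper leaves it tacit, and it is supplied by the paper's standing setup $\monT=\forget_\monT\circ\funT$ with $\forget_\monT$ exact and $\funT$ right exact (and dually by left exactness in the comonad version used in Section~\ref{subsec:Schauenburg}).
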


As already mention, by the uniqueness of adjoints the image of an induced module under the left adjoint is 
$$\fpullback_*:\; \funR(V)\mapsto \funT(V)$$
This can also be checked directly. As a consequence, we can compute the comparison morphism $\eta_{\funDD^\la(V)}$ from formula \eqref{formula_EcomparisonLa}, which is deduced from the adjunction's unit.
\begin{corollary}\label{cor:monadComparisonPullback}
After applying the forgetful functor, the comparison morphism  
$$
\forget_\monR(\fpullback^*(\funT(V)))
\cong
\forget_\monR(\fpullback^*(\fpullback_*(\funR(V))))
\xleftarrow{\forget_\monR(\eta_{\funR(V)})}
\forget_\monR(\funR(V))
$$
coincides with the the underlying morphism of monads
$$
\monT(V)
\stackrel{\,\fpullback_V}{\longleftarrow}
\monR(V)
$$
\end{corollary}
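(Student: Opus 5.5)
The plan is a direct unwinding of the definitions, using the explicit descriptions already established. First I make the comparison morphism concrete. By Lemma~\ref{lm_monadExtensionOfScalars}, the left adjoint $\fpullback_*$ sends the induced module $\funR(V)=(\monR(V),\mu^\monR_V)$ to the coequalizer of the two maps $\monT(\monR(\monR(V)))\rightrightarrows\monT(\monR(V))$. I will show this coequalizer is $\monT(V)$, with coequalizing map
\[
\mu^\monT_V\circ\monT(\fpullback_V):\;\monT(\monR(V))\to\monT(V).
\]
This is the standard split-coequalizer argument. The splitting maps are $\monT(\eta^\monR_V)$ and $\monT(\eta^\monR_{\monR(V)})$, or more simply $\eta^\monT$ composed appropriately. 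The needed identities follow from naturality of $\fpullback$, the compatibility $\fpullback\circ\eta^\monR=\eta^\monT$, and $\fpullback\circ\mu^\monR=\mu^\monT\circ\monT(\fpullback)\circ\fpullback_{\monR}$. This gives the isomorphism $\fpullback_*(\funR(V))\cong\funT(V)$ used in the statement, matching the abstract uniqueness-of-adjoints identification.

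Second, I compute the unit $\eta_{\funR(V)}$ of the adjunction $\fpullback_*\dashv\fpullback^*$. By Lemma~\ref{lm_monadExtensionOfScalars}, for a $\monR$-module $(X,\rho)$ the unit is the composite
\[
X\xrightarrow{\eta^\monT_X}\monT(X)\twoheadrightarrow\fpullback_*(X,\rho).
\]
For $X=\monR(V)$ this becomes $\mu^\monT_V\circ\monT(\fpullback_V)\circ\eta^\monT_{\monR(V)}$. By naturality of $\eta^\monT$ this equals $\mu^\monT_V\circ\eta^\monT_{\monT(V)}\circ\fpullback_V$. The unit law of $\monT$ then reduces it to $\fpullback_V$. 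Applying $\forget_\monR$ only discards the module structure, so the underlying morphism $\monR(V)\to\monT(V)$ is exactly $\fpullback_V$.

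The main obstacle is a compatibility check rather than the computation itself. One must make sure the isomorphism $\fpullback^*\fpullback_*\funR\cong\fpullback^*\funT$ coming from uniqueness of adjoints (via $\funE^\la\funDD^\la\cong\funD^\la$ in the triangle \eqref{formula_monadTriangleSituation}) coincides with the explicit coequalizer identification above. Otherwise the comparison morphism \eqref{formula_EcomparisonLa} could differ from $\fpullback_V$ by an automorphism of $\monT(V)$.

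I would settle this by noting that both isomorphisms are the canonical ones. Each is characterized as the unique map of $\monT$-modules $\funT(V)\to\fpullback_*\funR(V)$ restricting to the composite of units on $V$. The explicit coequalizer map has this property by the computation in the second step, which pins it down.
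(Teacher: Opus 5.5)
Your proposal is correct and follows the route the paper only sketches: identify $\fpullback_*(\funR(V))$ with $\funT(V)$, then read off the unit from Lemma~\ref{lm_monadExtensionOfScalars} as $\eta^\monT$ followed by the projection. You also supply the check, which the paper omits, that the uniqueness-of-adjoints isomorphism is the identity on $\monT(V)$.

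One small fix is needed. The second splitting map must be $\monT(\monR(\eta^\monR_V))$, not $\monT(\eta^\monR_{\monR(V)})$. With the latter, precomposing gives the identity on both parallel arrows, so the split-fork identity (one arrow composed with the splitting equals $s\circ q$) fails.
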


Quite useful is also the following formula for the left adjoint $\fpullback_*$ of quotients of induced modules, which can be directly computed. It is a monadic version of the algebraic statement that $T_\fpullback\otimes_R R/X\cong T/f(X)T$, where again $T,R$ are replaced by $T\otimes V,R\otimes V$ read as functors. 

\begin{lemma}\label{lm_monadExtensionOnRdurchX}
For any $\monT$-submodule $X$ of any induced module $\monR(V)$ the image of the left adjoint is 
$$\fpullback_*:\; \coker\big(X\stackrel{\iota}{\hookrightarrow}\funR(V)\big)
\longmapsto \coker\big(X\stackrel{\iota}{\hookrightarrow}\funR(V)\stackrel{\fpullback_V}{\rightarrow}\funT(V)\big)
$$
\end{lemma}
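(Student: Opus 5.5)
We first pin down the setup. $\fpullback:\monT\leftarrow\monR$ is a morphism of monads on $\cat$, and $\fpullback_*$ is the left adjoint of Lemma~\ref{lm_monadExtensionOfScalars}. Let $X\stackrel{\iota}{\hookrightarrow}\funR(V)$ be a subobject in $\cat_\monR$. The statement says ``$\monT$-submodule'', but for the cokernel in $\cat_\monR$ to make sense it must be an $\monR$-submodule, and we read it that way. The argument is the categorical version of $T\otimes_R(R/X)\cong T/TX$: use right exactness of the left adjoint, then identify the map.

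The key steps, in order:
\begin{enumerate}
\item Since $\fpullback_*$ is a left adjoint, it preserves all colimits that exist, in particular cokernels. This needs only that cokernels exist in $\cat_\monR$ and $\cat_\monT$, which holds in the abelian setting of the paper.
\item Applying $\fpullback_*$ to the right exact sequence $X\to\funR(V)\to\coker(\iota)\to 0$ therefore gives
\[
\fpullback_*\coker(\iota)\;\cong\;\coker\bigl(\fpullback_*(X)\xrightarrow{\fpullback_*(\iota)}\fpullback_*\funR(V)\bigr).
\]
\item By uniqueness of adjoints (the triangle $\forget_\monT\circ\funT$, $\forget_\monR\circ\funR$, $\forget_\monR\circ\fpullback^*=\forget_\monT$), we have $\fpullback_*\funR\cong\funT$. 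Hence $\fpullback_*\funR(V)\cong\funT(V)$.
\item By adjunction, the cokernel in step~2 equals the cokernel of the adjoint map
\[
\funT(\forget X)\;\longrightarrow\;\funT(V).
\]
It also equals the cokernel, in $\cat_\monT$, of the composite
\[
X\xrightarrow{\ \eta_X\ }\fpullback^*\fpullback_*X\longrightarrow\fpullback^*\funT(V).
\]
This holds because a $\monT$-module quotient of $\funT(V)$ kills the image of $\fpullback_*X$ exactly when it kills the image of $X$ under the adjunct. That image is the $\monT$-submodule generated by the image of $X$.
\item By Corollary~\ref{cor:monadComparisonPullback}, the unit $\eta_{\funR(V)}$, after forgetting, is $\fpullback_V:\monR(V)\to\monT(V)$. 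By naturality of $\eta$, the composite in step~4 is therefore $\fpullback_V\circ\iota$.
\end{enumerate}

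The resulting formula is
\[
\fpullback_*\coker(\iota)\;\cong\;\coker\bigl(\fpullback_V\circ\iota\bigr),
\]
where the cokernel on the right is taken in $\cat_\monT$. Equivalently, it is the quotient of $\funT(V)$ by the $\monT$-submodule generated by the image of $\fpullback_V\circ\iota$, just as $T/f(X)T$ in the ring case.

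The main obstacle is step~4, namely interpreting ``$\coker(\fpullback_V\circ\iota)$'' correctly. The map $\fpullback_V\circ\iota$ is only an $\monR$-module map into $\fpullback^*\funT(V)$, so its cokernel must be understood in $\cat_\monT$, i.e.\ via the generated $\monT$-submodule. We would make this precise as follows. A $\monT$-module map $\funT(V)\to Y$ factors through $\fpullback_*\coker(\iota)$ if and only if its adjunct $\funR(V)\to\fpullback^*Y$ vanishes on $X$. That adjunct is the $\monT$-map precomposed with $\fpullback_V$. So the two universal properties coincide, and the conclusion follows by Yoneda.
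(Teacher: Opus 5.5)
Your proposal is correct and, in its decisive final paragraph, is the same argument as the paper. The paper proves the lemma by exhibiting the natural bijection $\Hom_{\cat_\monR}(\coker\iota,\fpullback^*Y)\cong\Hom_{\cat_\monT}(\coker(\fpullback_V\circ\iota),Y)$, using that a $\monT$-map $g:\funT(V)\to Y$ kills $\fpullback_V\circ\iota$ iff $g\circ\fpullback_V$ kills $\iota$, and then concluding by Yoneda. Your additional steps make explicit what the paper leaves implicit: right exactness of $\fpullback_*$, the isomorphism $\fpullback_*\funR\cong\funT$, and the identification of the adjunct via naturality of the unit and Corollary~\ref{cor:monadComparisonPullback}. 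Your reading of $X$ as an $\monR$-submodule, and of the right-hand cokernel as taken in $\cat_\monT$, is also the correct one.
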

\begin{proof}
The adjunction follows from the following bijection of Hom sets
$$\Hom_{\catD_\monR}\left(\coker\big(X\stackrel{\iota}{\hookrightarrow}\monR(V)\big),\fpullback^*(Y)\right)
=
\Hom_{\catD_\monT}\left(\coker\big(X\stackrel{\iota}{\hookrightarrow}\monR(V)\stackrel{\fpullback_V}{\rightarrow}\monT(V)\big),Y\right)
$$
Namely, any morphism $g:\funT(V)\to Y$ killed by precomposing with $\fpullback_V\circ \iota$ can be used to define a morphism $h:\funR(V)\to Y$ killed by precomposing with $\iota$ by precomposing with $\fpullback$. Conversely any morphism $h:\funR(V)\to {_\fpullback}Y$ killed by precomposing with $\iota$ can be used to define a morphism $g:\funT(V)\to Y$ killed by precomposing with $\fpullback_V\circ \iota$.\SimonRem{More details}
\end{proof}

By going to the opposite category, all results in this section of course hold for comonads as well, which is the version we will need later on. Note that we keep the position of the stars in $f^*,f_*$ for simplicity and hope this does confuse experts.

\begin{equation}\label{formula_comonadTriangleSituation}
\xymatrix@C=4em@R=4em{
\cat^{\monT} 
\ar[rr]_{\funE}
\ar@<-3pt>[rd]_{\forget_\monT}
&&
\cat^{\monR}
\ar@<-3pt>[ll]_{\funE^\ra}
\ar@<-3pt>[ld]_{\forget_\monR}
\\
&
\catB
\ar[lu]_{\funT}
\ar[ru]_{\funR}
&
\\
} 
\end{equation}

\begin{definition}\label{def_comonadRestriction}
Let $\cat$ be a category and $\monT,\monR$ comonads. Suppose there is a morphism of comonads $\fpullback:\monT\rightarrow \monR$, that is, a natural transformation of endofunctors compatible with counit and comultiplication. Then we have a restriction functor by precomposition  
$$\fpullback^*: \cat^\monT\longrightarrow \cat^\monR$$
\end{definition}
\begin{lemma}\label{lm_comonadRestriction}
Let $\cat$ be a category and $\monT,\monR$ comonads. Suppose we have a functor
$$\funE: \cat^\monT\to \cat^\monR$$
compatible with the forgetful functors to $\cat$. Then there is a morphism of monads $\fpullback:\monT\rightarrow \monR$ such that $\funE=\fpullback^*$.
\end{lemma}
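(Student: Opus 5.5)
The plan is to dualize the proof of Lemma~\ref{lm_monadRestriction}, reading the category $\cat^{\monT}$ as the category of modules over the monad $\monT$ on $\cat^{op}$. I will, however, write out the comonadic construction directly, since the formulas are short.

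\textbf{Step 1: construct the transformation.} A $\monT$-comodule is a pair $(X,\delta)$ with $\delta:X\to\monT(X)$. Since $\funE$ commutes with the forgetful functors, it sends $(X,\delta)$ to $(X,\varrho^\funE_X(\delta))$ with $\varrho^\funE_X(\delta):X\to\monR(X)$. Apply this to the cofree comodule $(\monT(Y),\Delta^\monT_Y)$, where $\Delta^\monT_Y:\monT(Y)\to\monT\monT(Y)$. This gives a coaction $\varrho^\funE_{\monT(Y)}(\Delta^\monT_Y):\monT(Y)\to\monR(\monT(Y))$. Postcompose it with $\monR(\varepsilon^\monT_Y):\monR\monT(Y)\to\monR(Y)$, where $\varepsilon^\monT$ is the counit, and define
\[\fpullback_Y:=\monR(\varepsilon^\monT_Y)\circ\varrho^\funE_{\monT(Y)}(\Delta^\monT_Y):\ \monT(Y)\to\monR(Y).\]
Naturality in $Y$ follows by applying $\funE$ to the comodule morphisms $\monT(g):\monT(Y)\to\monT(Y')$ and using that $\funE$ is the identity on underlying morphisms.

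\textbf{Step 2: recover $\funE$ from $\fpullback$.} Let $(X,\delta)$ be an arbitrary comodule. The coaction $\delta:(X,\delta)\to(\monT(X),\Delta^\monT_X)$ is a comodule morphism; it is the unit of the adjunction between $\forget_\monT$ and $\funT$. Applying $\funE$ shows that $\delta$ intertwines $\varrho^\funE_X(\delta)$ with $\varrho^\funE_{\monT(X)}(\Delta^\monT_X)$. This gives the commuting square
\[\monR(\delta)\circ\varrho^\funE_X(\delta)=\varrho^\funE_{\monT(X)}(\Delta^\monT_X)\circ\delta .\]
Postcompose the square with $\monR(\varepsilon^\monT_X)$. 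On the left, the counit axiom $\varepsilon^\monT_X\circ\delta=\id$ leaves $\varrho^\funE_X(\delta)$. On the right, we get $\fpullback_X\circ\delta$. Hence $\varrho^\funE_X(\delta)=\fpullback_X\circ\delta$, which says $\funE=\fpullback^*$ on objects. On morphisms the two functors agree trivially, because both are the identity on underlying morphisms.

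\textbf{Step 3: $\fpullback$ is a morphism of comonads.} This is the main bookkeeping obstacle, since the statement only asserts a natural transformation with $\funE=\fpullback^*$. Counit compatibility $\varepsilon^\monR_Y\circ\fpullback_Y=\varepsilon^\monT_Y$ follows from the counit axiom of the $\monR$-coaction $\varrho^\funE_{\monT(Y)}(\Delta^\monT_Y)$, combined with naturality of $\varepsilon^\monR$. For comultiplicativity, $\Delta^\monR_Y\circ\fpullback_Y=\monR(\fpullback_Y)\circ\fpullback_{\monT(Y)}\circ\Delta^\monT_Y$, I will proceed in three moves:
\begin{enumerate}
\item By Step~2, the coaction $\varrho^\funE_{\monT(Y)}(\Delta^\monT_Y)$ equals $\fpullback_{\monT(Y)}\circ\Delta^\monT_Y$.
\item I will use its coassociativity as an $\monR$-comodule and then push $\monR\monR(\varepsilon^\monT_Y)$ through, using naturality of $\Delta^\monR$.
\item I will use that $\monT(\varepsilon^\monT_Y)$, $\varepsilon^\monT_Y$ and $\fpullback$ are natural.
\end{enumerate}
This diagram chase is the exact dual of the unitality and associativity checks implicit in Lemma~\ref{lm_monadRestriction}, so I expect it to go through, though it is the step requiring the most care.
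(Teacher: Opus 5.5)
Your proposal is correct and is essentially the paper's argument: the paper proves this lemma simply by passing to the opposite category in Lemma~\ref{lm_monadRestriction}, whose proof builds $\fpullback$ from the $\funE$-induced structure on the free module composed with the unit and recovers $\funE$ from the commuting square obtained by applying $\funE$ to the (co)action map, which is exactly the dual of your Steps~1 and~2. Your naturality check and Step~3 fill in what the paper leaves implicit, and Step~3 already closes after your first two moves: postcomposing the $\monR$-coassociativity of $\varrho^\funE_{\monT(Y)}(\Delta^\monT_Y)$ with $\monR\monR(\varepsilon^\monT_Y)$ and substituting $\varrho^\funE_{\monT(Y)}(\Delta^\monT_Y)=\fpullback_{\monT(Y)}\circ\Delta^\monT_Y$ gives comultiplicativity directly, so the third move is not needed.
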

\begin{lemma}[Extension of scalars]\label{lm_comonadExtensionOfScalars}
    Let $\fpullback:\monT\rightarrow \monR$ be a morphism of comonads. Then then functor $\fpullback^*$ above has a right adjoint $\fpullback_*$. Explicitly, $\fpullback_*(X,\delta) $ given by the equalizer of

\[\begin{tikzcd}[row sep=6ex, column sep=8ex]
\monT(\monR(X))
\arrow[r, leftarrow, shift left=4pt,"\monT(\delta)"] 
\arrow[r, leftarrow, swap,shift right=4pt,"\monT(\fpullback_X)\circ \Delta^T_X"]
& 
\monT(X)
\end{tikzcd}\]
\end{lemma}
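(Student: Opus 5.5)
The plan is to dualize Lemma \ref{lm_monadExtensionOfScalars}, or to prove it directly in the comonad setting by mirroring the classical cotensor construction $\fpullback_*(X)=T\,\Box_R X$. For a $\monR$-comodule $(X,\delta)$, $\delta:X\to\monR(X)$, consider the two parallel morphisms $\monT(X)\rightrightarrows\monT(\monR(X))$, namely $\monT(\delta)$ and $\monT(\fpullback_X)\circ\Delta^\monT_X$, and let $\fpullback_*(X)\hookrightarrow\monT(X)$ denote their equalizer. I will assume that equalizers exist in $\cat$ and are preserved by $\monT$ (or at least by $\monT\circ\monT$). In our applications $\cat$ is abelian and the relevant comonads are left exact, so this holds. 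Note also that the order of the arrows in the displayed diagram is written as $\monT(\monR(X))\leftarrow\monT(X)$, which matches this convention.

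The first step is to equip $\fpullback_*(X)$ with a $\monT$-coaction. The cofree comodule $(\monT(X),\Delta^\monT_X)$ is a $\monT$-comodule. I will check that both parallel maps are $\monT$-comodule morphisms from $(\monT(X),\Delta^\monT_X)$ to $(\monT\monR(X),\Delta^\monT_{\monR X})$. For $\monT(\delta)$ this is naturality of $\Delta^\monT$. For $\monT(\fpullback_X)\circ\Delta^\monT_X$ it follows from naturality together with coassociativity of $\Delta^\monT$. Since $\monT$ preserves the equalizer, the equalizer inherits a unique coaction making the inclusion a comodule map. This is the standard fact that forgetful functors create limits preserved by the comonad.

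The second step is to build the adjunction $\fpullback^*\dashv\fpullback_*$ directly on Hom sets. Fix a $\monT$-comodule $(Y,\gamma)$. A $\monR$-comodule map $g:\fpullback^*Y\to X$ means $\delta g=\monR(g)\fpullback_Y\gamma$. I will send $g$ to $\monT(g)\circ\gamma:Y\to\monT(X)$, which is a $\monT$-comodule map into the cofree comodule. I then verify that it equalizes the two arrows:
\begin{itemize}
\item $\monT(\delta)\monT(g)\gamma=\monT(\monR(g)\fpullback_Y)\monT(\gamma)\gamma$;
\item $\monT(\fpullback_X)\Delta^\monT_X\monT(g)\gamma=\monT(\fpullback_X)\monT\monT(g)\Delta^\monT_Y\gamma=\monT(\fpullback_X\monT(g))\monT(\gamma)\gamma$.
\end{itemize}
These agree by naturality of $\fpullback$. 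So $\monT(g)\gamma$ factors through $\fpullback_*(X)$. Conversely, a comodule map $k:Y\to\fpullback_*(X)$ is sent to $\varepsilon^\monT_X\circ\iota\circ k$. I will check that this is a $\monR$-comodule map using the equalizer relation and compatibility of $\fpullback$ with counits, $\varepsilon^\monR\fpullback=\varepsilon^\monT$. That the two assignments are mutually inverse follows from the counit axiom on one side and the cofree universal property of $\monT(X)$ on the other.

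The main obstacle is the bookkeeping in this last verification, in particular the claim that $\varepsilon^\monT_X\iota k$ intertwines the coactions. To handle it I will apply $\monR(\varepsilon^\monT_X)$ to the equalizer identity, and use $\monR(\varepsilon^\monT)\fpullback_{\monT}=\fpullback\circ\monT(\varepsilon^\monT)$ together with the counit law. This is the formal dual of the coequalizer argument for Lemma \ref{lm_monadExtensionOfScalars}. The unit of the adjunction will be the coaction $Y\to\monT(Y)$ factored through the equalizer, and the counit will be $\varepsilon^\monT$ restricted to $\fpullback_*(X)$.
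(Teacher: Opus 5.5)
Your proposal is correct, and in substance it is the paper's argument. The paper only says that the result follows by passing to the opposite category from Lemma~\ref{lm_monadExtensionOfScalars}, and that lemma is itself justified only by analogy with rings. You instead carry out the dual cotensor-product verification directly.

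This has the merit of exposing a hypothesis the paper glosses over with ``regardless of infiniteness'': equalizers must exist in $\cat$ and be preserved by $\monT$ and $\monT\monT$. You need this both for the coaction on $\fpullback_*(X)$ and for $\monT(\iota)$ to be mono when you conclude that the factored map is a comodule map. The hypothesis holds in Section~\ref{subsec:Schauenburg}, since there $\monT=\comonC=\Ind_A\circ\forget_A$ is exact.

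There is one slip in the step you flag as the main obstacle. $\monR(\varepsilon^\monT_X)$ cannot be applied to the equalizer identity, because that identity lives in $\monT\monR X$. The two sides should be handled separately:
\begin{itemize}
\item For $\delta\,\varepsilon^\monT_X\,\iota=\fpullback_X\,\iota$: apply $\varepsilon^\monT_{\monR X}$ to the equalizer identity, then use naturality of $\varepsilon^\monT$ and $\varepsilon^\monT_{\monT X}\Delta^\monT_X=\id$.
\item For the other side: use naturality of $\fpullback$, $\monR(\varepsilon^\monT_X)\fpullback_{\monT X}=\fpullback_X\monT(\varepsilon^\monT_X)$, together with $\monT(\iota k)\gamma=\Delta^\monT_X\iota k$ (the comodule-map property of $k$) and the counit law. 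This gives $\monR(\varepsilon^\monT_X\iota k)\fpullback_Y\gamma=\fpullback_X\iota k$.
\end{itemize}
The compatibility $\varepsilon^\monR\fpullback=\varepsilon^\monT$ is not needed at this point. It is needed only to make $\fpullback^*$ well defined.
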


\begin{corollary}\label{cor:comonadComparisonPullback}
After applying the forgetful functor, the comparison morphism $\epsilon_{\funDD^{\ra}(V)}$ in formula \eqref{formula_EcomparisonRa}
$$
\forget_\monR(\fpullback^*(\funT(V)))
\cong
\forget_\monR(\fpullback^*(\fpullback_*(\funR(V))))
\xrightarrow{\forget_\monR(\epsilon_{\funR(V)})}
\forget_\monR(\funR(V))
$$
coincides with the the underlying morphism of comonads
$$
\monT(V)
\xrightarrow{\,\fpullback_V}
\monR(V)
$$
\end{corollary}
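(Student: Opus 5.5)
The statement to prove is Corollary~\ref{cor:comonadComparisonPullback}, the comonadic dual of Corollary~\ref{cor:monadComparisonPullback}. I would prove it directly in the comonadic setting, using the explicit right adjoint from Lemma~\ref{lm_comonadExtensionOfScalars}, rather than by formally passing to opposite categories.

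The first step identifies $\fpullback_*(\funR(V))$ with $\funT(V)$. Since $\forget_\monR\circ\fpullback^*=\forget_\monT$, uniqueness of adjoints gives $\fpullback_*\circ\funR\cong\funT$. I would make this isomorphism explicit through the equalizer description. For the cofree comodule $(\monR(V),\Delta^\monR_V)$, the equalizer of $\monT(\Delta^\monR_V)$ and $\monT(\fpullback_{\monR(V)})\circ\Delta^\monT_{\monR(V)}$ inside $\monT(\monR(V))$ is the image of
\[
\monT(V)\xrightarrow{\Delta^\monT_V}\monT(\monT(V))\xrightarrow{\monT(\fpullback_V)}\monT(\monR(V)).
\]
The two parallel maps agree on this image because $\fpullback$ is compatible with comultiplication and by coassociativity. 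The map is split by $\monT(\varepsilon^\monR_V)$. This is the cofree analogue of the identity $T\otimes_R R\cong T$.

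The second step describes the counit. The counit of $\fpullback^*\dashv\fpullback_*$ at a $\monR$-comodule $(X,\delta)$ is the composite of the equalizer inclusion into $\monT(X)$ with $\varepsilon^\monT_X$. This is the dual of Lemma~\ref{lm_monadExtensionOfScalars}, where the unit was given by the unit of $\monT$. I would check this against the zigzag identities.

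The third step evaluates the counit at $X=\monR(V)$ and precomposes with the isomorphism from the first step. The resulting composite is
\[
\monT(V)\xrightarrow{\Delta^\monT_V}\monT\monT(V)\xrightarrow{\monT(\fpullback_V)}\monT\monR(V)\xrightarrow{\varepsilon^\monT_{\monR(V)}}\monR(V).
\]
Naturality of $\varepsilon^\monT$ rewrites it as $\fpullback_V\circ\monT(\varepsilon^\monT_V)\circ\Delta^\monT_V$, which equals $\fpullback_V$ by counitality. After applying $\forget_\monR$, this is the claimed identity, using the identification $\funD^\ra=\funE^\ra\funDD^\ra$ of formula~\eqref{formula_EcomparisonRa} with $\funD^\ra=\funT$ and $\funDD^\ra=\funR$.

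I expect the main obstacle to be bookkeeping rather than a conceptual difficulty. One has to verify that the abstract isomorphism $\funE^\ra\funDD^\ra\cong\funD^\ra$ used in formula~\eqref{formula_EcomparisonRa} is the explicit map from the first step, and not that map composed with some automorphism. I would settle this by checking that the explicit map is compatible with the counits of $\forget_\monT\dashv\funT$ and $\forget_\monR\dashv\funR$; by uniqueness, this determines the isomorphism between the two right adjoints.
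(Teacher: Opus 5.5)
Your proposal is correct and is essentially the paper's argument: identify $\fpullback_*\funR(V)\cong\funT(V)$ by uniqueness of adjoints, express the (co)unit of the extension-of-scalars adjunction through the (co)unit of $\monT$, and evaluate. You carry this out directly for comonads rather than by dualizing Corollary~\ref{cor:monadComparisonPullback}, and your check that the explicit map $\monT(\fpullback_V)\circ\Delta^\monT_V$ is the canonical isomorphism of right adjoints addresses a point the paper leaves implicit.

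One small slip: naturality of $\varepsilon^\monT$ at $\fpullback_V$ gives $\varepsilon^\monT_{\monR(V)}\circ\monT(\fpullback_V)=\fpullback_V\circ\varepsilon^\monT_{\monT(V)}$. So the intermediate expression should be $\fpullback_V\circ\varepsilon^\monT_{\monT(V)}\circ\Delta^\monT_V$, not $\fpullback_V\circ\monT(\varepsilon^\monT_V)\circ\Delta^\monT_V$. It still equals $\fpullback_V$, by the other counit law.
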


\subsection{Epireflectivity}\label{subsec:Nakayama}

Recall the situation \eqref{formula_monadTriangleSituation}, which we have shown in the previous section can be reduced to the following situation for $\fpullback:\monT\leftarrow \monR$ a morphism of monads and $\funE=\fpullback^*$ the restriction functor by Lemma \ref{lm_monadRestriction} and an explicit left adjoint $\funE^\la=\fpullback_*$ by  Lemma \ref{lm_monadExtensionOfScalars} - as in the example of algebra representations.

\begin{equation}
\xymatrix@C=4em@R=4em{
\cat_{\monT} 
\ar@<3pt>[rr]^{\fpullback^*}
\ar[rd]^{\forget_\monT}
&&
\cat_{\monR}
\ar[ld]^{\forget_\monR}
\ar[ll]^{\fpullback_*}
\\
&
\catB
\ar@<3pt>[lu]^{\funT}
\ar@<3pt>[ru]^{\funR}
&
\\
} 
\end{equation}

\begin{lemma}\label{lm_monadicNakayamaTrick}
Assume that $\funE=\fpullback^*$ is fully faithful, which is equivalent to the counit of the adjunction being an isomorphism by Lemma \ref{lm_AdjUnitInjective}. Then we claim that for any object $V\in\catB$ we have $\fpullback:\monT(V)\leftarrow \monR(V)$ epic.\end{lemma}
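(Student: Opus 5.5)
The plan is to test the cokernel of $\fpullback_V$ against the adjunction $\fpullback_*\dashv\fpullback^*$, using that the counit is an isomorphism. Fix $V\in\catB$. The morphism $\fpullback_V:\monR(V)\to\monT(V)$ is a morphism of $\monR$-modules $\funR(V)\to\fpullback^*\funT(V)$, because $\fpullback$ is a morphism of monads. Let $Q=\coker(\fpullback_V)$, taken in $\cat_\monR$. The forgetful functor creates cokernels here, provided $\monR$ is right exact, which is the case in all our examples. So it suffices to show $Q=0$.

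\emph{Step 1: apply $\fpullback_*$.} Since $\fpullback_*$ is a left adjoint, it is right exact. This gives an exact sequence
\[\fpullback_*\funR(V)\longrightarrow \fpullback_*\fpullback^*\funT(V)\longrightarrow \fpullback_*Q\longrightarrow 0.\]
By uniqueness of adjoints, $\fpullback_*\funR(V)\cong\funT(V)$. By assumption, the counit identifies $\fpullback_*\fpullback^*\funT(V)\cong\funT(V)$.

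\emph{Step 2: identify the composite.} The composite $\funT(V)\to\funT(V)$ is the adjoint transpose of $\fpullback_V$. This is a $\monT$-linear endomorphism of a free module. It is determined by its restriction along $\eta^\monT_V$, and that restriction is $\fpullback_V\circ\eta^\monR_V=\eta^\monT_V$, by unitality of the monad morphism. Hence the composite is the identity, which is the computation behind Corollary \ref{cor:monadComparisonPullback} and Lemma \ref{lm_monadExtensionOnRdurchX}. Therefore $\fpullback_*Q=0$, and by adjunction
\[\Hom_{\cat_\monR}(Q,\fpullback^*Y)=0\quad\text{for all } Y\in\cat_\monT.\]

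\emph{Step 3: conclude $Q=0$. This is the main obstacle.} The vanishing in Step 2 does not formally force $Q=0$. The example $\Z\hookrightarrow\mathbb Q$ shows this: there $Q=\mathbb Q/\Z$, and $\Hom_\Z(\mathbb Q/\Z,\mathbb Q\text{-vector space})=0$. So here I would invoke the finiteness announced in the overview, namely a Nakayama-type argument.

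Suppose $Q\neq0$. Since $\funT(V)$ is finitely generated over $\monR$ (in practice locally finite, or Artinian/Noetherian), $Q$ has a simple quotient $Q\twoheadrightarrow S$ in $\cat_\monR$. The key claim to establish is that such an $S$, being a quotient of $\fpullback^*\funT(V)$, is itself of the form $\fpullback^*Y$. I would try to prove this as follows.
\begin{itemize}
\item Let $K$ be the kernel of $\fpullback^*\funT(V)\to S$, and let $K'$ be the $\monT$-submodule it generates.
\item Full faithfulness, i.e. the counit being an isomorphism, should show that $\fpullback^*(\funT(V)/K')$ is a nonzero quotient of $S$. Nonzero-ness would use finite generation to rule out $K'$ being everything, as in classical Nakayama.
\item Since $S$ is simple, it follows that $S\cong\fpullback^*Y$ for some $Y$.
\end{itemize}
The nonzero map $Q\to\fpullback^*Y$ then contradicts Step 2. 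I expect this step to need exactly the finiteness hypotheses available in our applications, such as finite length and enough projectives in $\cD^\k_A$ and its relative center. I would add these hypotheses to the statement if the general case cannot be reached.
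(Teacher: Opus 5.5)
Your Steps 1 and 2 are exactly the first half of the paper's proof. You apply the right exact functor $\fpullback_*$ to $\funR(V)\to\fpullback^*\funT(V)\to Q\to 0$, identify the first map with the identity of $\funT(V)$ via the counit, and conclude $\fpullback_*Q=0$. The gap is Step 3. It cannot be closed, not even under the finiteness hypotheses you propose, because the lemma is false in this generality.

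Take $\catB=\Vect$ (finite-dimensional) and $\monR=R\otimes(-)$, $\monT=T\otimes(-)$ with $T=M_2(\C)$, $R\subset T$ the upper triangular matrices, and $\fpullback$ the inclusion.
Every $T$-module is a direct sum of copies of the column module $\C^2$. Its restriction to $R$ is the indecomposable projective $Re_{22}$, and $\mathrm{End}_R(Re_{22})=\C$. Hence $\fpullback^*$ is fully faithful. Equivalently $T\otimes_RT\cong T$, which a dimension count confirms using $T_R\cong(e_{11}R)^{2}$ and ${}_RT\cong(Re_{22})^{2}$. Yet $\fpullback_{\C}\colon R\hookrightarrow T$ is not epic.

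In this example $Q=T/R$ is one-dimensional, spanned by the class of $e_{21}$, with $e_{22}$ acting by $1$. It is the simple top of $Re_{22}$, hence a simple quotient of $\fpullback^*\funT(\C)$. It is not of the form $\fpullback^*Y$, since every such module is a sum of copies of the two-dimensional $Re_{22}$. Indeed $T\otimes_RQ\cong(e_{11}Q)^{2}=0$. In your bullet points, the $T$-submodule $K'$ generated by $K$ is all of $\funT(\C)$, so no Nakayama-type argument can make $\fpullback^*(\funT(V)/K')$ nonzero. Since everything here is finite-dimensional, adding finite length or enough projectives does not help.

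The paper finishes differently, but not more successfully. It regards $\monR$ as an algebra in right exact endofunctors of $\catB$ and views $\fpullback^*\coker(\fpullback)$ as a quotient of the finitely presented $\monR$-bimodule $\tilde{\monT}$. It then asserts a Nakayama-type statement: a nonzero such quotient has nonzero tensor product with $\tilde{\monT}$ over $\monR$. In its ring example this reads $(R\otimes W)/K\otimes_R(R\otimes W)/K'\neq 0$. The same example, where $T\otimes_R(T/R)=0$, refutes this assertion as well.

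So what both arguments lack is additional structure, not finiteness. For commutative rings a finite epimorphism is indeed surjective, by Nakayama after base change to residue fields. For the comonadic use in Section \ref{subsec:Schauenburg}, the dual coalgebra map $M_2(\C)^*\to R^*$ shows the comonad version also fails in general. There one would have to exploit the specific comonads $\comonC\to\comonZ$, or prove directly that the comparison maps $\fpullback_X$ are monomorphisms.
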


In the remainder of this section we prove this Lemma. Let us first remark:
\begin{itemize}
\item An embedding of categories admitting a left adjoint is called reflective and if the unit of the adjunction is epic, the embedding is called epireflective, see \cite{Riehl16} Definition 4.4.8. 
\item 
The statements is a slightly weaker consequence that the 
$\funE$ has a comparison morphism (Formula \eqref{formula_EcomparisonRa}) that is epic, meaning $\fpullback_{\monR(Y)}$ is epic for any ~$Y$.
\item A more sophisticated way to study this situation may be to also consider $E,E^\ra$ as a comonad, in this case an \emph{idempotent comonad} that defines the embedded subcategory by admitting a (unique) coaction. 
\end{itemize}

Let us also discuss that how this is a nontrivial statements for rings, relying on additional finiteness assumptions. Our general proof will be modeled after one particular proof of this ring-theoretic fact: 

\begin{example}["Which epimorphism of rings are surjective?"]

In Example \ref{exm_rings} we have discussed the restriction functor 
$$\fpullback^*:\Rep(T)\to \Rep(R)$$ 
for an algebra map $\fpullback:T\leftarrow R$. Assume that this functor is fully faithful, meaning: Every $R$-linear maps between $T$-modules is  automatically $T$-linear. Equivalently, the counit of the adjunction $T_\fpullback\otimes_R {_\fpullback}M\to M$ is an isomorphism, which is sufficient to check on the generator $M=T$. In this case $h$ is called an epimorphism of rings.

This is clearly true if $\fpullback$ is surjective, but in general the converse is not true: For example, the ring map $\fpullback:\mathbb{Q}\leftarrow \Z$ is surely not surjective but $\mathbb{Q}\otimes_\Z \mathbb{Q}\cong \mathbb{Q}$ and equivalently every $\Z$-linear map between $\mathbb{Q}$-modules is automatically $\mathbb{Q}$-linear. Similar behavior is encountered for every localization.

\begin{lemma}
   If $\fpullback^*$ is fully faithful and the $R$-module ${_\fpullback}T$ is a finitely generated, then $\fpullback$ is surjective, equivalently $\fpullback^*$ is epireflective.
\end{lemma}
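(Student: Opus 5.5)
The plan is the classical Nakayama-type argument, applied to the cokernel of $\fpullback$. Put $T':=\fpullback(R)\subseteq T$, the image of the ring map, and regard the quotient $Q:=T/T'$ as a left $R$-module via $\fpullback$. The goal is to show $Q=0$. It is a quotient of the finitely generated $R$-module ${_\fpullback}T$, so it is finitely generated.

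First I translate full faithfulness into a tensor statement. Since $\fpullback^*$ is fully faithful, the counit $T_\fpullback\otimes_R{_\fpullback}M\to M$ is an isomorphism for every $T$-module $M$. For $M=T$ this says that the multiplication map
\[
T\otimes_R T\to T
\]
is an isomorphism. Equivalently, the two maps $T\to T\otimes_R T$, $t\mapsto t\otimes 1$ and $t\mapsto 1\otimes t$, coincide, because both are sections of the multiplication isomorphism.

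Next I apply the right exact functor $(-)\otimes_R T$ to the exact sequence $T'\to T\to Q\to 0$ of right $R$-modules. This gives the exact sequence
\[
T'\otimes_R T\to T\otimes_R T\to Q\otimes_R T\to 0.
\]
The image of $T'\otimes_R T$ contains $1\otimes t=\fpullback(1)\otimes t$ for every $t$. Since $1\otimes t=t\otimes 1$, it also contains every $t\otimes 1$. These elements generate $T\otimes_R T$ as a right $T$-module, so the first map is surjective and hence
\[
Q\otimes_R T=0.
\]
Using the surjection $T\to Q$ and right exactness once more, I get $Q\otimes_R Q=0$. In particular $Q\otimes_R Q$ is a quotient of $Q\otimes_R T=0$.

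The final step, which I expect to be the main obstacle, is to pass from $Q\otimes_R Q=0$ to $Q=0$ using finite generation. Over a noncommutative $R$, the statement $Q\otimes_R Q=0$ alone does not force $Q=0$, so the argument must use both the left and the right structure of $Q$. I would first try the following. Because ${_\fpullback}T$ is finitely generated, choose generators $t_1,\dots,t_n$ with $T=\sum_i \fpullback(R)t_i$. The identity $t\otimes 1=1\otimes t$ in $T\otimes_R T$ holds for each $t_i$. Finite generation means this identity is witnessed by finitely many relations, each coming from a finitely generated $R$-submodule. This lets me write each $t_i$ as an $\fpullback(R)$-combination of products $t_jt_k$ with one factor in the image of $\fpullback$, in the manner of the Silver/Mazet characterization of ring epimorphisms.

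I would then proceed by induction on the number of generators to reduce to the case $n=1$. In that case $T=\fpullback(R)t$, and the relation $t\otimes 1=1\otimes t$ expands to $t=\fpullback(r)t^2$ with $tr'=1$-type identities, which force $t\in\fpullback(R)$. If this becomes messy, the fallback is the commutative-style determinant trick in the form of Nakayama's lemma applied to the finitely generated module $Q$: one shows $Q=IQ$ for the annihilator-type ideal $I$ produced by $Q\otimes_R T=0$, and concludes $Q=0$.
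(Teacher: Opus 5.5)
Your first half is correct and matches the paper's first step. Full faithfulness gives $T\otimes_R T\cong T$, hence $Q\otimes_R T=0$ and $Q\otimes_R Q=0$; the paper phrases this as $T\otimes_R\coker(h)=0$. You also rightly suspect the last step.

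That last step is a genuine gap, and it cannot be closed in the stated generality. Example~\ref{exm_rings} allows arbitrary, not necessarily commutative algebras, as the monadic application requires, and for such algebras the statement is false. Take a field $k$,
\[
R=\left\{\begin{pmatrix} a & b\\ 0 & c\end{pmatrix} : a,b,c\in k\right\}\subset T=M_2(k),
\]
and let $h$ be the inclusion.
\begin{itemize}
\item As a right $R$-module, $T$ is the sum of its two rows, each isomorphic to $e_{11}R$.
\item Hence $T\otimes_R T\cong (e_{11}T)^{\oplus 2}$ is $4$-dimensional, so the surjective multiplication $T\otimes_R T\to T$ is an isomorphism and $h^*$ is fully faithful.
\item $T$ is finite-dimensional, hence finitely generated over $R$, yet $h$ is not surjective.
\end{itemize}
Here $Q=T/R$ is spanned by the class of $e_{21}$. $R$ acts on it from the left through the entry $c$ and from the right through the entry $a$. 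So $Q\otimes_R Q=0$, and also $Q\otimes_R T=T\otimes_R Q=0$, although $Q\neq 0$. Using the full bimodule structure does not help.

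Consequently none of your proposed closings can work:
\begin{itemize}
\item The Silver--Mazet zigzag identities hold for every ring epimorphism, this one included.
\item There is no reduction to a single generator, since this $T$ needs two.
\item No Nakayama or determinant argument applies to a nonzero finitely generated $Q$ with $Q\otimes_R Q=0$.
\end{itemize}
The paper's proof has the same hole. Its claim that $(R\otimes W)/K'\neq 0$ implies $(R\otimes W)/K\otimes_R(R\otimes W)/K'\neq 0$ is exactly what fails here. The same example with $\mathrm{Vect}$ as base category also shows that Lemma~\ref{lm_monadicNakayamaTrick} needs an extra hypothesis.

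What your reduction does prove is the commutative version. Suppose $h(R)$ is central in $T$, e.g.\ $T$ commutative.
\begin{itemize}
\item Replace $R$ by the commutative ring $h(R)$. Tensor products over $R$ of modules on which $R$ acts through $h(R)$ are tensor products over $h(R)$, so still $T\otimes_{h(R)}T\cong T$, and $T$ is still finite over $h(R)$.
\item Now $Q$ is a finitely generated module over a commutative ring with $Q\otimes_{h(R)}Q=0$.
\item If $Q\neq 0$, pick a maximal ideal $\mathfrak m$ with $Q_{\mathfrak m}\neq 0$. Nakayama gives $\bar Q:=Q/\mathfrak m Q\neq 0$.
\item But $\bar Q\otimes_{h(R)/\mathfrak m}\bar Q$ is a quotient of $Q\otimes Q=0$, which is absurd for a nonzero vector space.
\end{itemize}
This residue-field argument, not an ``annihilator-type ideal'', is the correct form of your fallback. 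It visibly uses commutativity, which is why the noncommutative statement needs an additional hypothesis.
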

\begin{proof}
Suppose $\fpullback$, which we write as a morphism of $R$-modules, has a nontrivial cokernel 
$$R\stackrel{\fpullback}{\longrightarrow}{_\fpullback}T
\longrightarrow \coker(\fpullback)
\longrightarrow 0$$

Applying the right exact functor $\fpullback_*$  with $\fpullback_*(R)=T$ gives 
$$T\stackrel{\fpullback_*(\fpullback)}{\longrightarrow}T_\fpullback\otimes_R {_\fpullback}T
\longrightarrow T_\fpullback\otimes_R \coker(\fpullback)
\longrightarrow 0$$

Note that $\fpullback_*(\fpullback):T\cong T_\fpullback\otimes_R R\to T_\fpullback\otimes_R {_\fpullback}T$ is equal to the unit $\eta_{T}:T\to T_\fpullback\otimes_R {_\fpullback}T$.
The assumption of $\fpullback^*$ fully faithful means by Lemma \ref{lm_AdjUnitInjective} precisely that the counit $\fpullback_*(\fpullback^*(X))\to X$ is iso, or explicitly $T_\fpullback\otimes_R {_\fpullback}T\cong T$, then we have the exact sequence
$$T\stackrel{=}{\longrightarrow} T
\longrightarrow T_\fpullback\otimes_R \coker(\fpullback)
\longrightarrow 0$$

This would easily conclude $\coker(\fpullback)=0$ from assuming $T_\fpullback$ is a faithfully flat $R$-module, but this is too restrictive in our situation. Instead we have assumed ${_\fpullback}T$ and hence $\coker(\fpullback)$ is a finitely generated $R$-module. We now use finite generation find a free presentation of $R$-modules
$$K\to R\otimes W\to {_\fpullback}T\to 0$$
note that in our more restrictive setting involving the adjunctions for $\monT,\monR$ we may directly choose $W=\forget_\monR( {_\fpullback}T)$. Since $\coker(\fpullback)$ is a quotient, we can use the same presentation with $K'\supset K$:
$$K'\to R\otimes W\to {_\fpullback}T\to 0$$
Then it is clear that 
$$(R\otimes W)/K'\neq 0
\quad\Longrightarrow\quad 
(R\otimes W)/K \otimes_R (R\otimes W)/K' \neq 0
$$

As a remark, note that it is not true that the left adjoint is conservative, i.e. $T_\fpullback\otimes_R M\neq 0$ for every $M$. This is not even true if $\fpullback$ is indeed surjective. Already in the example $\fpullback:\C\leftarrow \C\oplus \C$ projecting on the first factor we have $\fpullback^*:\Vect\to \Vect\oplus \Vect$ the embedding of the first summand and $\fpullback_*$ the functor projecting to the first summand. In some vague sense, the proof above shows that the left adjoint is conservative on the image of the functor itself.

\end{proof}
\end{example}

We now repeat this proof in the monadic setting. We note first: The counit $\epsilon$ can be expressed similarly to the algebra as the morphism between two coequalizers of $\monT(\monT(X))$ (once over $\monR$, once over $\monT$). The unit is 
$$\eta_X: X\stackrel{\eta^\monT_X}{\longrightarrow} \fpullback_*(\fpullback^*(X,\rho))=\mathrm{Coeq}(\monT(\monR(X))\rightrightarrows \monT(X))$$
which is clearly epic iff $\fpullback_X:\monR(X)\to \monT(X)$ epic. 

\bigskip

\begin{proof}[Proof of Lemma \ref{lm_monadicNakayamaTrick}] ~

1) Suppose for some object $V\in\catB$ we have a nontrivial cokernel   
$$\monR(V)\stackrel{\fpullback_V}{\longrightarrow}
\fpullback^*(\monT(V))
\longrightarrow \coker(\fpullback_V)
\longrightarrow 0$$
Applying the right exact functor $\fpullback_*$  with $\fpullback_*(\monR(V))=\monT(V)$ by Lemma  \ref{lm_monadExtensionOnRdurchX}
$$\monT(V)\stackrel{\fpullback_*(\fpullback)}{\longrightarrow} \fpullback_*(\fpullback^*(\monT(V))
\longrightarrow \fpullback_*(\coker(\fpullback))
\longrightarrow 0$$

Note that $\fpullback_*(\fpullback)$ is equal to the unit $\eta_{\monT(V)}$.
Using that $\fpullback_*$ was assumed fully faithful and hence the unit is iso we have 
$$\monT(V)\stackrel{=}{\longrightarrow} \monT(V)
\longrightarrow \fpullback_*(\coker(\fpullback))
\longrightarrow 0$$
Hence $\fpullback_*(\coker(\fpullback_V))=0$ and we want to conclude $\coker(\fpullback_V)=0$. 

\bigskip

2) The adjunction $\catB_\monR\leftrightarrows\catB$ in principles produces a free presentation of $\funT(V)$ and $\coker(\fpullback_V)$; in this sense this adjunction takes the role of relative finiteness. There are two complications compared to the algebra proof above:
\begin{itemize}
\item We need the presentations and their relation to be functorial. Hence we switch to the tensor category of right-exact endofunctors of $\catB$ and consider $\monR$ as an algebra object. In this view we can consider $\tilde{\monT}=\forget_\monR(\fpullback_*\fpullback^*)\funR$ as an endofunctor in $\catB$ with a left and right action of $\monR$; in the algebra example this is $T\otimes(-)$ with left and right action via $\fpullback$.
\item By composing from the left and right with the adjunction $\monR$, we obtain a free bimodule - note that the category of endofunctors is not braided and we need actions on both sides.
\end{itemize}

Now $\fpullback^*(\coker(\fpullback))$ is a quotient of $\tilde{T}$ in the category of $\monR$-bimodules in the category of endofunctors. 
Hence the tensor square over $\monR$ is not zero unless the quotient is zero. Hence $\fpullback$ is surjective as claimed.\SimonRem{More explicit}
\end{proof}

\section{The equivalence \texorpdfstring{$\catD^\k\cong \cU^\k$}{D with U}}\label{sec:QuantumGroupEquivalence}

\subsection{A relatively finite Schauenburg functor}\label{subsec:Schauenburg}

Assume that $\catD$ is a braided tensor category and $A$ a commutative algebra. By \cite{CLR23} Theorem 3.7 this lifts to the  Schauenburg functor $\Schauenburg$ to the Drinfeld center of $\catD_A$ relative to $\catC=\catD_A^\loc$.

\begin{equation}
\xymatrix@C=4em@R=4em{
\catD
\ar[rr]_{\Schauenburg}
\ar@<-3pt>[rd]_{A\otimes (-)}
&&
\cZ_\catC(\catD_A)
\ar@<-3pt>[ll]_{\Schauenburg^\ra}
\ar@<-3pt>[ld]_{\forget_c}
\\
&
\catD_A
\ar[lu]_{\forget_A}
\ar[ru]_{\coLaugwitz}
&
\\
} 
\end{equation}

Assume that the induction functor $A\otimes (-)$ is exact and fully faithful; for example this is proven in  \cite{CLR23} Lemma 3.4
 to be true under the following assumptions
\begin{assumption}[\cite{CLR23} Assumption 3.3.]\label{assumption_Fullyfaithful}
Assume that $\catD$ is a rigid, braided, locally finite abelian category with
trivial M\"uger center, the property that for any object X there exists a projective cover and an injective hull, and that $A$ is a commutative haploid algebra object, such that $\catD_A$ is rigid.
\end{assumption}
This has been much improved \cite[Theorem 3.20]{CMSY24}. That Theorem concludes that the induction functor is fully faithful provided a mild left exactness condition holds, provided that $\catD_A$ is rigid and provided that $\catD$ is an $r$-category and that induction from $\catD$ to $\catD_A$ respects the duality structure:
\begin{theorem}\textup{\cite[Theorem 3.20]{CMSY24}}
Let $\mathcal{C}$ be a locally finite $\Bbbk$-linear abelian braided
$r$-category with duality functor $D$, and let $A$ be a commutative
algebra in $\mathcal{C}$ such that the unit morphism
$\iota_A \colon \mathbf{1} \to A$ is injective. Also assume that the
following three conditions hold:
\begin{enumerate}
\item[(a)] The tensor category $\mathcal{C}_A$ 
in $\mathcal{C}$ is rigid.
\item[(b)] $I(DX) \cong I(X)^{*}$ as objects of
$\mathcal{Z}(\mathcal{C})^{\mathrm{loc}}_{A}$ for every object
$X \in \mathcal{C}$, where $I \colon \mathcal{C} \to
\mathcal{Z}(\mathcal{C})^{\mathrm{loc}}_{A}$ is the induction
functor.
\item[(c)] For any $\mathcal{C}$-subobject $s \colon S \hookrightarrow A$
such that $S$ is not contained in $\operatorname{Im}\iota_A$, there
exists an object $Z \in \mathcal{C}$ such that
$c_{Z,S} \neq c_{S,Z}^{-1}$ and the map
$s \otimes \mathrm{id}_Z \colon S \otimes Z \to A \otimes Z$ is
injective.
\end{enumerate}
Then the braided tensor functor
$I \colon \mathcal{C} \to \mathcal{Z}(\mathcal{C})^{\mathrm{loc}}_{A}$
is fully faithful.
\end{theorem}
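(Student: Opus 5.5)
The plan is to reduce full faithfulness to a statement about morphisms into the unit, and then to show that the image of any such central morphism has trivial monodromy, so that condition (c) forces it into $\operatorname{Im}\iota_A$.

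\textbf{Faithfulness.} Frobenius reciprocity for the free module gives
$$\Hom_{\cC_A}(A\otimes X,A\otimes Y)\cong \Hom_\cC(X,A\otimes Y).$$
Under this identification the map induced by $I$ sends $f:X\to Y$ to $(\iota_A\otimes\id_Y)\circ f$. Since $\iota_A$ is injective and $\otimes$ is exact, $\iota_A\otimes\id_Y$ is a monomorphism. Hence $I$ is faithful.

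\textbf{Reduction to morphisms into $A$.} For fullness I would first move everything to the unit. Since $I$ is monoidal and $\cC_A$ is rigid by (a), condition (b) lets me rewrite $\Hom_{\cZ(\cC)_A^{\loc}}(I(X),I(Y))$ as $\Hom_{\cZ(\cC)_A^{\loc}}(I(X\otimes DY),I(\1))$. Here $I(\1)=A$ carries its canonical half-braiding. The same manipulation applies on the $\cC$ side using the duality functor $D$ of the $r$-category. So it suffices to prove, for every object $W$, that
$$\Hom_\cC(W,\1)\to \Hom_{\cZ(\cC)_A^{\loc}}(I(W),A)$$
is surjective. I have to check that these identifications are compatible with $I$; this is where (b) being an isomorphism in $\cZ(\cC)^{\loc}_A$, and not merely in $\cC_A$, is needed.

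\textbf{Fullness on $\Hom(I(W),A)$.} A morphism $I(W)\to A$ in the center corresponds by adjunction to some $g:W\to A$ in $\cC$. Let $s:S\hookrightarrow A$ be the image of $g$. I would unwind what it means for the $A$-module map $\mu_A\circ(\id_A\otimes g)$ to commute with the half-braidings. The half-braiding on $I(W)=A\otimes W$ is built from $c_{W,-}$, while that on $A$ is built from $c_{-,A}^{-1}$, or the reverse depending on conventions. Evaluating the compatibility on $\iota_A\otimes \id_W$ and testing against an arbitrary $Z$ should give
$$(g\otimes\id_Z)\circ c_{Z,W}\,c_{W,Z}=g\otimes \id_Z .$$
Because $W\to S$ is an epimorphism and $\otimes$ is exact, this descends to
$$(s\otimes\id_Z)\circ c_{Z,S}\,c_{S,Z}=s\otimes\id_Z .$$

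Now suppose $S\not\subseteq\operatorname{Im}\iota_A$. Condition (c) gives a $Z$ with $c_{Z,S}\neq c_{S,Z}^{-1}$ and with $s\otimes\id_Z$ injective. Cancelling the monomorphism $s\otimes \id_Z$ in the displayed identity yields $c_{Z,S}c_{S,Z}=\id$, which is a contradiction. Therefore $S\subseteq\operatorname{Im}\iota_A$, so $g=\iota_A\circ g'$ for some $g':W\to\1$. This $g'$ is a preimage, which proves fullness.

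\textbf{Main obstacle.} I expect the hardest step to be the explicit translation of centrality into the monodromy identity above. This requires care with which braiding enters the half-braiding of induced objects in $\cZ(\cC)_A^{\loc}$ and with the locality of $A$. I would first check the identity on the generating subobject $\iota_A\otimes\id_W$ and then extend it by $A$-linearity. A secondary point is verifying that the duality isomorphisms in the reduction step are natural and compatible with $I$.
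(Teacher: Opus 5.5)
Your overall strategy is sound. The paper gives no proof of its own (it only quotes CMSY24), so I assess the proposal on its merits. The main steps are correct: reducing via (a) and (b) to morphisms $I(W)\to A$, translating centrality into $(g\otimes\id_Z)\,c_{Z,W}c_{W,Z}=g\otimes\id_Z$, descending to $S=\operatorname{Im}g$, and invoking (c). The monodromy identity is exactly what naturality gives once $W$ and $A$ carry half-braidings from opposite braidings. The descent to $S$ only needs $-\otimes Z$ to preserve epimorphisms, which holds because in a Grothendieck--Verdier category $-\otimes Z$ has a right adjoint.

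The genuine gap is your faithfulness step. You assert that $\otimes$ is exact, so that $\iota_A\otimes\id_Y$ is a monomorphism. In an $r$-category the tensor product is in general only right exact. Here $\mathcal{C}$ is not assumed rigid; rigidity is what this theorem is later used to prove, via CMSY24 Theorem 3.21. Hypothesis (c) also has to demand injectivity of $s\otimes\id_Z$ explicitly, precisely because it is not automatic. So $\iota_A\otimes\id_Y$ need not be injective, and your argument does not establish faithfulness.

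The repair is to run faithfulness through the unit as well. Suppose $I(f)=0$ for $f\colon X\to Y$. Let $h=e\circ(f\otimes\id_{DY})\colon X\otimes DY\to\mathbf{1}$ be the image of $f$ under the $r$-category bijection $\Hom_{\mathcal{C}}(X,Y)\cong\Hom_{\mathcal{C}}(X\otimes DY,\mathbf{1})$, where $e$ is the pairing underlying that bijection. Monoidality of $I$ gives $I(h)=0$. By Frobenius reciprocity $I(h)$ corresponds to $\iota_A\circ h$, so injectivity of $\iota_A$ itself, with no tensoring, forces $h=0$ and hence $f=0$.

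Your ``secondary point'' also cannot be carried out as stated. Condition (b) only supplies \emph{some} isomorphism $I(DY)\cong I(Y)^*$, not the one induced by $I(e)$. Hence the composite isomorphism $\Hom_{\mathcal{C}}(X,Y)\cong\Hom(I(X),I(Y))$ produced by your chain has the form $f\mapsto\gamma_Y\circ I(f)$. Here $\gamma_Y$ is an endomorphism of $I(Y)$ measuring the discrepancy between the two pairings, so the composite need not equal $f\mapsto I(f)$.

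You do not need this compatibility. Local finiteness makes all Hom spaces finite-dimensional. Your chain of non-canonical isomorphisms, together with the unit case, gives $\dim\Hom(I(X),I(Y))=\dim\Hom_{\mathcal{C}}(X\otimes DY,\mathbf{1})=\dim\Hom_{\mathcal{C}}(X,Y)$. An injective linear map between spaces of the same finite dimension is bijective. With faithfulness repaired as above, this completes the proof, and it is one place where the local finiteness hypothesis genuinely enters.
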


Note that vertex tensor categories of self-contragredient vertex operator algebras are always $r$-categories. Rigidity and hence exactness of the induction functor is then
\cite[Theorem 3.21]{CMSY24} where in addition the splitting property of $\catD_A$ is required, that is that every simple object in $\catD_A$ is already local. 

If now $A\otimes (-)$ is exact and faithful by assumption, and so is $\forget_c$ without assumptions, then these functors together with their right adjoints are comonadic adjunctions. They are strict tensor functors, hence they define lax comonads, compare the list in Example \ref{exm_fourmonoidalmonads}. For the relative center, this comonadic description is clearly visible, and matches the  description of the Drinfeld center as the central comonad~$\comonZ$, see Example \ref{exm_centralcomonad}. For the algebra extension, the comonad $\comonC$ is precisely the reconstruction by descent in Section \ref{sec_deux} with $\forget_\comonC=\Ind_A$ and $\funC=\forget_A$
and  
$$\Ind_A':\catD\stackrel{\sim}{\longrightarrow}
(\catD_A)^\comonC$$
where $\Ind_A'(V)={_A}A\otimes V$ comes with $\comonC$-coaction ${_A}A\otimes V\mapsto {_A}A\otimes A \otimes V$ by inserting the algebra unit. Altogether the previous diagrams reads as follows, with $\catB=\catD_A$

\begin{equation}
\xymatrix@C=4em@R=4em{
\catB^\comonC
\ar[rr]_{\tilde{\Schauenburg}}
\ar@<-3pt>[rd]_{\forget_\comonC}
&&
\cZ_\catC(\catB)=\catB^\comonZ
\ar@<-3pt>[ll]_{\tilde{\Schauenburg}^\ra}
\ar@<-3pt>[ld]_{\forget_c}
\\
&
\catB
\ar[lu]_{\funC}
\ar[ru]_{\coLaugwitz}
&
\\
} 
\end{equation}

Here $\tilde{\Schauenburg}$ is actually the identity functor on objects in $\catB$, and it turns the $\comonC$-coaction on an object into a halfbraiding of the object, using the braiding in $\catD$. 

\bigskip

We now summarize results from the previous section in its comonad version
    \begin{enumerate}[(a)]
    \item By Lemma \ref{lm_comonadRestriction} there there is a morphism of comonads 
    \begin{align*}
    \fpullback:\comonC
    &\xrightarrow{\quad h\;\quad} \comonZ
    \\\Ind_A(\forget_A(X))
    =\forget_\comonC(\Ind_C(X))
    &\xrightarrow{\quad h_X\;\quad}  \forget_c(\coLaugwitz(X))),
    \end{align*}
    with $X\in \cB$ arbitrary,
    that realizes the Schauenburg functor as 
    $\tilde{\Schauenburg}=\fpullback^*$.
    \item As one important consequence, if $\fpullback_X$ is an isomorphism for any $X$, then $\tilde{\Schauenburg}=\fpullback^*$ is an equivalence of categories. 
    \item By Lemma \ref{lm_comonadExtensionOfScalars} there exists a right adjoint $\tilde{\Schauenburg}^\ra=\fpullback_*$, regardless of infiniteness and explicit, which is constructed analogously to a (co-)extension of (co-)rings
    \item By Lemma \ref{lm_monadicNakayamaTrick}, applied to the opposite category to hold for comonads, the assumption of  $\tilde{\Schauenburg}$ fully faithful implies 
    all $\fpullback_X$ are monomorphisms.
    \end{enumerate} 

Recall from Section \ref{subsec:troi} the comparison map obtained from the Schauenburg functor's adjunction counit
\[\quad \epsilon_{\coLaugwitz(X)}: \;\tilde{\Schauenburg}(\forget_A(X))\to \coLaugwitz(X)\]
From Lemma \ref{cor:comonadComparisonPullback} we know that after applying the forgetful functor $\forget_c$, this coincides simply with the realizing morphism of comonads $h$:
\[h=\forget_c(\epsilon_{\coLaugwitz(X)}):\; \Ind_A(X)\to \forget_c(\coLaugwitz(X))\]
We have seen that this in a monomorphism, hence it is sufficient to compare both sides in some quantifiable sense.

\begin{theorem}\label{thm:Schauenburg}
Suppose that the Schauenburg functor is exact and fully faithful, which is true for example under Assumption \ref{assumption_Fullyfaithful}. Suppose that $\cB$ is rigid and locally finite. Suppose that 
for all simple objects in $X$ in $\cB$ we have 
$$\Ind_A(X)\cong \forget_c(\coLaugwitz(X)$$
Then the Schauenburg functor is an equivalence of braided tensor categories. 
\end{theorem}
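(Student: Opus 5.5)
The plan is to reduce everything to the comonad morphism $h:\comonC\to\comonZ$ from item (a) above and show it is an isomorphism on every object $X\in\cB$; by item (b) this makes $\tilde{\Schauenburg}=h^*$ an equivalence. Since $\tilde{\Schauenburg}$ is identified with the Schauenburg functor $\Schauenburg$ via the descent equivalence $\Ind_A':\catD\simeq(\catD_A)^\comonC$, and $\Schauenburg$ is already a braided tensor functor by \cite{CLR23} Theorem 3.7, an equivalence of underlying categories is automatically an equivalence of braided tensor categories.

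First, by fully faithfulness and item (d), i.e.\ Lemma \ref{lm_monadicNakayamaTrick} applied to the opposite categories, every component
\[h_X=\forget_c(\epsilon_{\coLaugwitz(X)}):\ \Ind_A(\forget_A(X))\longrightarrow \forget_c(\coLaugwitz(X))\]
is a monomorphism in $\cB$. For $X$ simple, the hypothesis gives an abstract isomorphism between source and target. Since $\cB$ is locally finite, both objects have finite length, and they have equal length and equal composition factors. A monomorphism between finite-length objects of the same length is an isomorphism, because its cokernel has length zero. Hence $h_X$ is an isomorphism for all simple $X$.

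Next I extend from simples to all of $\cB$ by induction on length. Both $\comonC=\Ind_A\circ\forget_A$ and $\comonZ=\forget_c\circ\coLaugwitz$ are exact functors on $\cB$. For $\comonC$ this follows since $\forget_A$ is exact and $\Ind_A$ is exact by assumption. For $\comonZ$ I use the explicit form $\coLaugwitz(V)=\Nichols\otimes V$ on underlying objects, or alternatively rigidity of $\cB$, which makes $\coLaugwitz$ exact. Given a short exact sequence $0\to X'\to X\to X''\to 0$ with $X',X''$ of smaller length, naturality of $h$ gives a morphism of short exact sequences. The five lemma then shows $h_X$ is an isomorphism. Local finiteness ensures every object has finite length, so the induction covers all of $\cB$.

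I expect the main obstacle to be exactness of $\comonZ$, i.e.\ of the right adjoint $\coLaugwitz$ in the infinite setting; a priori it is only left exact. I would first try the explicit Laugwitz--Walton description, in which $\forget\circ\forget_c\circ\coLaugwitz$ is $\Nichols\otimes(-)$ in $\cC$, visibly exact. Should that fail in generality, I would circumvent it: I would only need that $h$ is mono everywhere and iso on simples, and run the induction with a left-exact target, which already suffices via the four lemma on the left-exact sequences together with the length comparison on the cokernel.
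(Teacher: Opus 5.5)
Your argument is essentially the paper's proof: $h_X$ is a monomorphism by item (d), a monomorphism between objects of the same finite length is an isomorphism, and the passage from simples to arbitrary objects uses exactness. Your fallback needs only left exactness of $\coLaugwitz$, which is automatic for a right adjoint, together with the four lemma and a length count, so it is a small but real improvement over the paper's unexplained appeal to exactness of $\coLaugwitz$ via the tensor product in $\cB$.

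Be aware, however, that your proof and the paper's both rest entirely on item (d), i.e.\ Lemma~\ref{lm_monadicNakayamaTrick}, and that lemma fails for general (co)monad morphisms. For example, the inclusion $T_2(\C)\hookrightarrow M_2(\C)$ of upper triangular matrices is a non-surjective ring epimorphism, since $M_2(\C)\otimes_{T_2(\C)}M_2(\C)\cong M_2(\C)$ and so restriction is fully faithful; the dual coalgebra map gives the analogous counterexample for comonads. So the monomorphism property of $h_X$ really needs an argument specific to the Schauenburg setting.
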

\begin{proof}
We have shown that $h:\Ind_A(X)\to \forget_c(\coLaugwitz(X))$ is a monomorphism, hence in locally finite category an isomorphism iff both sides are isomorphic. It is sufficient to check this for simple composition factors of $X$ because we know exactness of $\funC=\forget_A$ and of $\coLaugwitz$ by the exact tensor product in $\cB$. \SimonRem{Better argument beyond Hopf algebra case?}
\end{proof}

\begin{example}\label{exm:SchauenburgSplit}
We now assume in addition that $\catD_A\cong (\catD_A^\loc)_\NicholsOf$ for an algebra $\NicholsOf$, then we can rewrite $\cZ_\catC(\catD_A)=\YD{\NicholsOf}(\catC)$ and have an explicit description  in Section \ref{subsec:QuantumGroup} of the right adjoint $\coLaugwitz(X)={_{\text{ad}}^{\text{reg}}}(\NicholsOf\otimes X)$ in terms of adjoint action on $\NicholsOf$ with a double braiding on $X$ as well as the regular coaction solely on $\NicholsOf$. 
\begin{align*}
\intertext{
Then the comparison morphism is more explicitly a morphism in $\catD$}
\epsilon_{\coLaugwitz(X)}:\;
\forget_A(X)
&\longrightarrow
{_{\text{ad}}^{\text{reg}}}(\NicholsOf\otimes X)
\\
\intertext{and after applying $\forget_\comonC=\Ind_A$ we have the morphism $h$ in $\catD_A$}
h_X:\;
\Ind_A(\forget_A(X))
&\longrightarrow
{_{\text{ad}}}(\NicholsOf\otimes X)
\\
\intertext{and with the (faithful) splitting functor we can go even further to a morphism in  $\catC$}
\mathrm{split}(h_X):\;
\mathrm{split}(\Ind_A(\forget_A(X)))
&\longrightarrow
\;\;\NicholsOf\otimes X
\end{align*}
If $\cC$ has an exact tensor product, then it is obviously enough to test that the two sides are isomorphic for $X=\1={_A}A$, meaning we have to  test if $\Ind_A(A)\cong {_{\text{ad}}}\NicholsOf$.
\end{example}

\noindent
We have some remarks on this result:
\begin{itemize}
\item It is not really necessary to show an isomorphism of objects, it is sufficient to for example compare composition factors or some other quantity, such as  Frobenius Perron dimension of objects. 
\item If $\cC$ is the category of local modules in $\cB$, then the equivalence should hold without additionally comparing the objects, as we have shown in the finite case in \cite{CLR23} Theorem 3.7 using Frobenius-Perron dimension. 
\item If we forget about monads, we are actually just showing the isomorphism property of the comparison map \eqref{formula_EcomparisonRa} built from the adjunctions counit. It is known that if unit and also counit is an isomorphism, then we have an equivalence of categories. Note however, that with the monadic argument we only check this for objects induced from $\cB$, and we prove the existence of an adjoint in the infinite case in the first place. 
\item In cases where the adjunction $\cD\to\cD_A$ is not comonadic, for example in the $(p,q)$-model, we can still compare the comonadic category via the Schauenburg functor and discuss the difference to the actual category. In \cite{Riehl16} it is explained how the category is between between the Klesili and Eilenberg Category of the comonad.
\end{itemize}
\SimonRem{
\begin{itemize}
    \item The Schauenburg functor is a lax monoidal functor, preserving coalgebras,  which means that $h$ preserves the lax structure on the comonads, which we have not entered in our picture.  
    \item We could rewrite the braiding of $\cD$ to a $r$-matrix of the comonad $\comonC$, to incorporate this crucial structure. One goal could be to make $\tilde{\Schauenburg}$ more explicit. 
    \item We could write $\tilde{\Schauenburg},\tilde{\Schauenburg}^\ra$ itself as an idempotent monad to maybe gain more insight.
\end{itemize}
}

\subsection{Proof of Theorem 1.1}\label{subsec:QuantumGroupEquivalence}

We have already established in Theorem \ref{thm:BorelEquivalence} an equivalence of tensor categories 
$$(\cD^\k)_A\cong \cB^\k$$
where $\cB^\k$ is the  category of modules over $\NicholsOf(M)$ in $\cC^\k$ for $M=\VirPhi_{1,2}\boxtimes \Pi_{1}(-t/2)$ for $v\geq 3$ resp. $M=\VirPhi_{1,1}\boxtimes \Pi_{2}(-t)$ for $v=2$. \\

We have proven in Section \ref{subsec:sl2Amodules}
that $\catD^\k$ and $(\catD^\k)_A$ are rigid and that the Schauenburg functor is exact and fully faithful.
We apply Theorem \ref{thm:Schauenburg} in our situation to prove Theorem \ref{thm:main}.\\

To fulfill the last assumption $\Ind_A(\forget_A(X))\cong \forget_c(\coLaugwitz(X))$ to apply the theorem, we refer to the specific formulae in Example \ref{exm:SchauenburgSplit}, where it is also shown that it is sufficient to check the case $X=\1$. Then  we have in \ref{cor:InductionA} computed the induction 
$$\Ind_A(A)\cong \1\oplus K$$
as well as in Example \ref{exm:ourNicholsAdjoint} the adjoint representation of our Nichols algebra  
$${_{\text{ad}}}\NicholsOf(M)\cong \1\oplus K$$
Since these coincide, we can apply Theorem \ref{thm:Schauenburg} and this concludes the proof of Theorem \ref{thm:main}.\flushright \qedsymbol

\section{Bibliography}
\bibliographystyle{alphabetic}

\end{document}